\theoremstyle{plain}
\newtheorem{theorem}{Theorem}[section]
\newtheorem{corollary}[theorem]{Corollary}
\newtheorem{lemma}[theorem]{Lemma}
\newtheorem{proposition}[theorem]{Proposition}
\theoremstyle{definition}
\newtheorem{definition}[theorem]{Definition}
\newtheorem{remark}[theorem]{Remark}
\newtheorem{example}[theorem]{Example}
\numberwithin{equation}{section}
\numberwithin{figure}{section}
\newcommand{\C}{\mathbb{C}}
\newcommand{\F}{\mathbb{F}}
\newcommand{\I}{\mathrm{i}}
\newcommand{\N}{\mathbb{N}}
\newcommand{\Q}{\mathbb{Q}}
\newcommand{\R}{\mathbb{R}}
\newcommand{\T}{\mathbb{T}}
\newcommand{\Z}{\mathbb{Z}}
\newcommand{\bb}{\mathbf{b}}
\newcommand{\bc}{\mathbf{c}}
\newcommand{\bone}{\mathbf{1}}
\newcommand{\bm}{\mathbf{m}}
\newcommand{\bn}{\mathbf{n}}
\newcommand{\bs}{\mathbf{s}}
\newcommand{\bu}{\mathbf{u}}
\newcommand{\bv}{\mathbf{v}}
\newcommand{\bw}{\mathbf{w}}
\newcommand{\bx}{\mathbf{x}}
\newcommand{\bX}{\mathbf{X}}
\newcommand{\by}{\mathbf{y}}
\newcommand{\bz}{\mathbf{z}}
\newcommand{\bzero}{\mathbf{0}}
\newcommand{\cC}{\mathcal{C}}
\newcommand{\cH}{\mathcal{H}}
\newcommand{\cK}{\mathcal{K}}
\newcommand{\cP}{\mathcal{P}}
\newcommand{\cS}{\mathcal{S}}
\newcommand{\cU}{\mathcal{U}}
\newcommand{\Cov}{\mathop{\mathrm{Cov}}}
\newcommand{\diag}{\mathop{\mathrm{diag}}}
\newcommand{\HTN}{HTN}
\newcommand{\moment}{\mathcal{M}}
\newcommand{\pos}{\cH_{\mathrm{pos}}}
\newcommand{\sign}{\mathop{\mathrm{sign}}}
\newcommand{\rk}{\mathop{\mathrm{rank}}}
\newcommand{\rd}{\mathrm{d}}
\newcommand{\std}{\,\rd}
\newcommand{\Var}{\mathop{\mathrm{Var}}}
\begin{document}

\title{A panorama of positivity}

\author[A.~Belton]{Alexander Belton}
\address[A.~Belton]{Department of Mathematics and Statistics, Lancaster
University, Lancaster, UK}
\email{\tt a.belton@lancaster.ac.uk}

\author[D.~Guillot]{Dominique Guillot}
\address[D.~Guillot]{University of Delaware, Newark, DE, USA}
\email{\tt dguillot@udel.edu}

\author[A.~Khare]{Apoorva Khare}
\address[A.~Khare]{Indian Institute of Science;
Analysis and Probability Research Group; Bangalore, India}
\email{\tt khare@iisc.ac.in}

\author[M.~Putinar]{Mihai Putinar}
\address[M.~Putinar]{University of California at Santa Barbara, CA,
USA and Newcastle University, Newcastle upon Tyne, UK} 
\email{\tt mputinar@math.ucsb.edu, mihai.putinar@ncl.ac.uk}

\date{\today}

\thanks{D.G.~is partially supported by a University of Delaware Research
Foundation grant, by a Simons Foundation collaboration grant for
mathematicians, and by a University of Delaware Research Foundation
Strategic Initiative grant. A.K.~is partially supported by Ramanujan
Fellowship SB/S2/RJN-121/2017 and MATRICS grant MTR/2017/000295 from SERB
(Govt.~of India), by grant F.510/25/CAS-II/2018(SAP-I) from UGC (Govt.~of
India), and by a Young Investigator Award from the Infosys Foundation.}

\begin{abstract}
This survey contains a selection of topics unified by the concept of
positive semi-definiteness (of matrices or kernels), reflecting natural
constraints imposed on discrete data (graphs or networks) or continuous
objects (probability or mass distributions). We put emphasis on entrywise
operations which preserve positivity, in a variety of guises. Techniques
from harmonic analysis, function theory, operator theory, statistics,
combinatorics, and group representations are invoked. Some partially
forgotten classical roots in metric geometry and distance transforms are
presented with comments and full bibliographical references. Modern
applications to high-dimensional covariance estimation and regularization
are included.
\end{abstract}

\keywords{metric geometry,
positive semidefinite matrix,
Toeplitz matrix,
Hankel matrix,
positive definite function,
completely monotone functions,
absolutely monotonic functions,
entrywise calculus,
generalized Vandermonde matrix,
Schur polynomials,
symmetric function identities,
totally positive matrices,
totally non-negative matrices,
totally positive completion problem,
sample covariance,
covariance estimation,
hard / soft thresholding,
sparsity pattern,
critical exponent of a graph,
chordal graph,
Loewner monotonicity, convexity, and super-additivity}

\subjclass[2010]{15-02, 26-02, 15B48, 51F99, 15B05, 05E05, 44A60, 15A24, 15A15, 15A45, 15A83, 47B35, 05C50, 30E05, 62J10}

\maketitle

\tableofcontents


\section{Introduction}

Matrix positivity, or positive semidefiniteness, is one of the most
wide-reaching concepts in mathematics, old and new. Positivity of a
matrix is as natural as positivity of mass in statics or positivity of
a probability distribution. It is a notion which has attracted the
attention of many great minds. Yet, after at least two centuries of
research, positive matrices still hide enigmas and raise challenges
for the working mathematician.

The vitality of matrix positivity comes from its breadth, having many
theoretical facets and also deep links to mathematical modelling. It
is not our aim here to pay homage to matrix positivity in the large.
Rather, the present survey, split for technical reasons into two
parts, has a limited but carefully chosen scope.

Our panorama focuses on entrywise transforms of matrices which
preserve their positive character. In itself, this is a rather bold
departure from the dogma that canonical transformations of matrices
are not those that operate entry by entry. Still, this apparently
esoteric topic reveals a fascinating history, abundant characteristic
phenomena and numerous open problems. Each class of positive matrices
or kernels (regarding the latter as continuous matrices) carries a
specific toolbox of internal transforms. Positive Hankel forms or
Toeplitz kernels, totally positive matrices, and group-invariant
positive definite functions all possess specific \emph{positivity
preservers}. As we see below, these have been thoroughly studied for
at least a century.

One conclusion of our survey is that the classification of positivity
preservers is accessible in the dimension-free setting, that is, when
the sizes of matrices are unconstrained. In stark contrast, precise
descriptions of positivity preservers in fixed dimension are elusive,
if not unattainable with the techniques of modern mathematics.
Furthermore, the world of applications cares much more about matrices
of fixed size than in the free case. The accessibility of the latter
was by no means a sequence of isolated, simple observations. Rather,
it grew organically out of distance geometry, and spread rapidly
through harmonic analysis on groups, special functions, and
probability theory. The more recent and highly challenging path
through fixed dimensions requires novel methods of algebraic
combinatorics and symmetric functions, group representations, and
function theory.

As well as its beautiful theoretical aspects, our interest in these
topics is also motivated by the statistics of big data. In this
setting, functions are often applied entrywise to covariance matrices,
in order to induce sparsity and improve the quality of statistical
estimators (see \cite{GuillotRajaratnam2012,
GuillotRajaratnam2012b,Rothman2009}). Entrywise techniques have
recently increased in popularity in this area, largely because of
their low computational complexity, which makes them ideal to handle
the ultra high-dimensional datasets arising in modern applications. In
this context, the dimensions of the matrices are fixed, and correspond
to the number of underlying random variables. Ensuring that positivity
is preserved by these entrywise methods is critical, as covariance
matrices must be positive semidefinite. Thus, there is a clear need to
produce characterizations of entrywise preservers, so that these
techniques are widely applicable and mathematically justified. We
elaborate further on this in the second part of the survey.

We conclude by remarking that, while we have tried to be comprehensive
in our coverage of the field of matrix positivity and the entrywise
calculus, our panorama is far from being complete. We apologize for any omissions. 


\section{From metric geometry to matrix positivity}

\subsection{Distance geometry}

During the first decade of the 20th century, the concept of a metric
space emerged from the works of Fr\'echet and Hausdorff, each having
different and well-anchored roots, in function spaces and in set theory
and measure theory. We cannot think today of modern mathematics and
physics without referring to metric spaces, which touch areas as diverse
as economics, statistics, and computer science. Distance geometry is one
of the early and ever-lasting by-products of metric-space
theory. One of the key figures of the Vienna Circle, Karl Menger, started
a systematic study in the 1920s of the geometric and topological features
of spaces that are intrinsic solely to the distance they carry. Menger
published his findings in a series of articles having the generic name
``Untersuchungen \"uber allgemeine Metrik,'' the first one being
\cite{Menger-MathAnn28}; see also his synthesis \cite{Menger-AJM31}. His
work was very influential in the decades to come \cite{Blumenthal-book},
and by a surprising and fortunate stroke not often encountered in
mathematics, Menger's distance geometry has been resurrected in recent
times by practitioners of convex optimization and network analysis
\cite{Dattorro-LAA08,Liberti-SIAM-Rev14}.

Let $( X, \rho )$ be a metric space. One of the naive, yet
unavoidable, questions arising from the very beginning concerns the
nature of operations $\phi( \rho )$ which may be performed on the
metric and which enhance various properties of the topological space
$X$. We all know that $\rho / ( \rho + 1 )$ and $\rho^\gamma$, if
$\gamma \in ( 0, 1 )$, also satisfy the axioms of a metric, with the
former making it bounded. Less well known is an observation due to
Blumenthal, that the new metric space $( X, \rho^\gamma )$ has the
four-point property if $\gamma \in ( 0 , 1 / 2 ]$: every four-point
subset of $X$ can be embedded isometrically into Euclidean space 
\cite[Section~49]{Blumenthal-book}.

Metric spaces which can be embedded isometrically into Euclidean
space, or into infinite-dimensional Hilbert space, are, of course,
distinguished and desirable for many reasons. We owe to Menger a
definitive characterization of this class of metric spaces. The core
of Menger's theorem, stated in terms of certain matrices built from
the distance function (known as Cayley--Menger matrices) was slightly
reformulated by Fr\'echet and cast in the following simple form by
Schoenberg.

\begin{theorem}[Schoenberg \cite{Schoenberg-Annals35}]\label{Tembed}
Let $d \geq 1$ be an integer and let $( X, \rho )$ be a metric space.
An $(n + 1)$-tuple of points $x_0$, $x_1$, \ldots, $x_n$ in $X$ can be
isometrically embedded into Euclidean space $\R^d$, but not into
$\R^{d - 1}$, if and only if the matrix
\[
[ \rho( x_0, x_j )^2 + \rho( x_0, x_k )^2 - %
\rho( x_j, x_k )^2 ]_{j, k = 1}^n,
\]
is positive semidefinite with rank equal to $d$.
\end{theorem}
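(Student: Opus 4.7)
\medskip

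\noindent\textbf{Proof proposal.} The plan is to convert the statement about isometric embeddings into a statement about Gram matrices, via the polarization identity
\[
\langle u, v \rangle = \tfrac{1}{2} \bigl( \|u\|^2 + \|v\|^2 - \|u - v\|^2 \bigr),
\]
which is the Euclidean identity mirroring the expression appearing in the matrix in the theorem. Let $M$ denote that matrix, with entries $M_{jk} = \rho(x_0,x_j)^2 + \rho(x_0,x_k)^2 - \rho(x_j,x_k)^2$ for $1 \leq j,k \leq n$.

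\emph{Forward direction.} Suppose there is an isometric embedding $\iota : \{x_0,\ldots,x_n\} \to \R^d$ whose image spans an affine subspace of dimension exactly $d$. Translating, assume $\iota(x_0)=0$, and set $v_j := \iota(x_j) \in \R^d$ for $1 \leq j \leq n$. Then by polarization
\[
M_{jk} = \|v_j\|^2 + \|v_k\|^2 - \|v_j - v_k\|^2 = 2\langle v_j, v_k \rangle,
\]
so if $V$ is the $d \times n$ matrix with columns $v_1,\ldots,v_n$ then $M = 2 V^{\top} V$. This is manifestly positive semidefinite, and its rank equals $\rk V$, which is the dimension of the linear span of the $v_j$ — equivalently, since $v_0=0$, the affine dimension of the image, namely $d$.

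\emph{Reverse direction.} Conversely, suppose $M$ is positive semidefinite with $\rk M = d$. By the spectral theorem (or a Cholesky-type factorization) we can write $M = 2 V^{\top} V$ for some $d \times n$ matrix $V$ of full row rank $d$. Let $v_j$ be the $j$th column of $V$ for $1 \leq j \leq n$, and set $v_0 := 0 \in \R^d$. From $2\langle v_j,v_k \rangle = M_{jk}$ and $2\|v_j\|^2 = M_{jj} = 2\rho(x_0,x_j)^2$, one checks directly that $\|v_j - v_0\| = \rho(x_0,x_j)$ and
\[
\|v_j - v_k\|^2 = \tfrac{1}{2}M_{jj} + \tfrac{1}{2}M_{kk} - M_{jk} = \rho(x_j,x_k)^2,
\]
so $x_i \mapsto v_i$ is an isometric embedding into $\R^d$. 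Since the $v_i$ span $\R^d$ (as $V$ has rank $d$ and $v_0=0$), no embedding into $\R^{d-1}$ exists, for any such embedding would produce a Gram matrix of rank at most $d-1$, contradicting $\rk M = d$ by the forward direction.

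\emph{Main obstacle.} There is no deep obstacle: the whole content lies in recognizing that the matrix written in the theorem is, up to the factor $2$, exactly the Gram matrix of the difference vectors $\iota(x_j) - \iota(x_0)$. The one point that deserves care is matching rank to minimal embedding dimension; this reduces to the standard fact that a positive semidefinite matrix of rank $d$ admits a Gram factorization in $\R^d$ and in no smaller Euclidean space, which follows from the spectral theorem applied to $M$.
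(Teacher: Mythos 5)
Your proof is correct. The forward direction is exactly the paper's necessity argument: polarization identifies the matrix as twice the Gram matrix of the difference vectors $\iota(x_j)-\iota(x_0)$. For the converse, however, you take a genuinely different and cleaner route. The paper first treats the case $n=d$ with $\rk M = d$ by viewing $M$ as a positive definite quadratic form $Q(\lambda)$, diagonalizing it by a linear change of variables to $\mu_1^2+\cdots+\mu_d^2$, and transporting the standard simplex through that change of coordinates to produce the points $P_0,\ldots,P_d$; the general case $n>d$ is then only sketched (``after taking an appropriate projection''). You instead factor $M = 2V^{\top}V$ directly via the spectral theorem, with $V$ of size $d\times n$ and full row rank, which handles all $n$ uniformly, makes the rank bookkeeping transparent, and disposes of the non-embeddability into $\R^{d-1}$ by a one-line appeal to the forward direction. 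The trade-off is that the paper's construction is more explicitly geometric (it exhibits the embedded simplex), while yours is shorter and avoids the unproven projection step; both rest on the same underlying fact that a rank-$d$ positive semidefinite matrix is a Gram matrix of vectors spanning $\R^d$ and of no smaller space.
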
 

\begin{proof}
This is surprisingly simple. Necessity is immediate, since the
Euclidean norm and scalar product in $\R^d$ give that
\begin{align*}
& \rho( x_0, x_j )^2 + \rho( x_0, x_k )^2 - \rho( x_j, x_k )^2 \\
 & = \| x_0 - x_j \|^2 + \| x_0 - x_k \|^2 - \| ( x_0 - x_j) - %
( x_0 - x_k ) \|^2 \\
 & = 2 \langle x_0 - x_j, x_0 - x_k \rangle,
\end{align*}
and the latter are the entries of a positive semidefinite Gram matrix
of rank less than or equal to $d$.

For the other implication, we consider first a full-rank $d \times d$
matrix associated with a $( d + 1 )$-tuple. The corresponding
quadratic form
\[
Q(\lambda) = \frac{1}{2} \sum_{j, k = 1}^d ( \rho(x_0,x_j)^2 + %
\rho(x_0,x_k)^2 - \rho(x_j,x_k)^2 ) \lambda_j \lambda_k
\]
is positive definite. Hence there exists a linear change of variables 
\[
\lambda_k = \sum_{j = 1}^d a_{j k} \mu_j \qquad ( 1 \leq j \leq d )
\]
such that
\[
Q( \lambda ) = \mu_1^2 + \mu_2^2 + \cdots +\mu_d^2.
\]
Interpreting $( \mu_1, \mu_2, \ldots,\mu_d )$ as coordinates in
$\R^d$, the standard simplex with vertices
\[
e_0 = ( 0, \ldots , 0 ), \quad e_1 = ( 1, 0, \ldots, 0 ), \quad \ldots,
\quad e_d = ( 0, \ldots, 0, 1 )
\]
has the corresponding quadratic form (of distances) equal to
$\mu_1^2 + \mu_2^2 + \cdots +\mu_d^2$.  Now we perform the coordinate
change $\mu_j \mapsto \lambda_j$. Specifically, set $P_0 = 0$ and let
$P_j \in \R^d$ be the point with coordinates $\lambda_j = 1$ and
$\lambda_k =0$ if $k \neq j$. Then one identifies distances:
\begin{align*}
\| P_0 - P_j \| & = \rho( x_0, x_j ) \qquad ( 0 \leq j \leq d ) \\
\text{and} \quad %
\| P_j - P_k \| & = \rho( x_j, x_k ) \qquad ( 1 \leq j, k \leq d ).
\end{align*}
The remaining case with $n > d$ can be analyzed in a similar way,
after taking an appropriate projection.
\end{proof}

In the conditions of the theorem, fixing a ``frame'' of $d$ points and
letting the $( d + 1 )$-th point float, one obtains an embedding of
the full metric space $( X, \rho )$ into $\R^d$. This idea goes back
to Menger, and it led, with Schoenberg's touch, to the following
definitive statement. Here and below, all Hilbert spaces are assumed
to be separable.

\begin{corollary}%
[Schoenberg \cite{Schoenberg-Annals35}, following Menger]%
\label{Menger-Schoenberg}
A separable metric space $( X, \rho )$ can be isometrically embedded
into Hilbert space if and only if, for every $( n + 1 )$-tuple of
points $( x_0, x_1, \ldots, x_n )$ in $X$, where $n \geq 2$, the
matrix
\[
[ \rho( x_0, x_j )^2 + \rho( x_0, x_k )^2 - %
\rho( x_j, x_k )^2 ]_{j, k = 1}^n
\]
is positive semidefinite.
\end{corollary}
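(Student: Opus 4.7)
The plan is to derive the corollary from Theorem~\ref{Tembed} by passing from finite subsets to the whole space. Necessity is easy and parallels the Euclidean case of Theorem~\ref{Tembed}: if $\iota \colon X \to \cH$ is an isometric embedding into a separable Hilbert space $\cH$, then for any $(n+1)$-tuple $x_0, x_1, \ldots, x_n$ the vectors $\iota(x_j) - \iota(x_0)$ span a finite-dimensional subspace of $\cH$, and the identity
\[
\rho(x_0, x_j)^2 + \rho(x_0, x_k)^2 - \rho(x_j, x_k)^2 = 2 \langle \iota(x_0) - \iota(x_j), \iota(x_0) - \iota(x_k) \rangle
\]
exhibits the matrix as twice the Gram matrix of these vectors, hence positive semidefinite.

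For sufficiency, the idea is to regard
\[
K(x, y) := \tfrac{1}{2}\bigl( \rho(x_0, x)^2 + \rho(x_0, y)^2 - \rho(x, y)^2 \bigr) \qquad (x, y \in X)
\]
as a positive semidefinite kernel on $X$ (with $x_0$ a fixed base point) and to associate to it a Hilbert-space-valued map $\phi \colon X \to \cH$ satisfying $\langle \phi(x), \phi(y) \rangle = K(x, y)$. The PSD hypothesis on every finite subset is precisely what ensures that such a $\phi$ exists. Granted this, the computation
\[
\| \phi(x) - \phi(y) \|^2 = K(x, x) - 2 K(x, y) + K(y, y) = \rho(x_0, x)^2 - 2 K(x, y) + \rho(x_0, y)^2 = \rho(x, y)^2
\]
(using $K(x, x) = \rho(x_0, x)^2$) together with $\phi(x_0) = 0$ shows that $\phi$ is an isometry onto its image, whose closed linear span gives the required separable Hilbert space.

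To construct $\phi$ concretely and stay within the spirit of the preceding theorem, I would proceed by exhaustion. Fix a countable dense subset $\{ y_1, y_2, \ldots \}$ of $X$. For each $n$, Theorem~\ref{Tembed} (applied to the tuple $x_0, y_1, \ldots, y_n$) produces $d_n \leq n$ and an isometric map $\phi_n \colon \{ x_0, y_1, \ldots, y_n \} \to \R^{d_n}$ with $\phi_n(x_0) = 0$. The maps $\phi_n$ can be chosen to be compatible under the natural inclusions $\R^{d_n} \hookrightarrow \R^{d_{n+1}} \hookrightarrow \cdots \hookrightarrow \ell^2$ by selecting at each step an orthonormal basis of the new Gram matrix that extends the previous one: this is the recursive construction implicit in the change of variables $\lambda_k = \sum_j a_{jk} \mu_j$ in the proof of Theorem~\ref{Tembed}. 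Taking a common refinement, the direct limit $\phi \colon \{ y_n \} \to \ell^2$ is an isometry on the dense subset, and then the $1$-Lipschitz property extends $\phi$ uniquely and continuously to all of $X$ as an isometry into $\ell^2$.

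The main technical obstacle is the compatibility of the finite-dimensional embeddings as $n$ increases: the Gram matrix on $\{ y_1, \ldots, y_{n+1} \}$ may have rank strictly greater than that on $\{ y_1, \ldots, y_n \}$, so the orthonormal frame must be enlarged rather than rebuilt. Handling this cleanly (e.g.\ via a Gram--Schmidt procedure on the sequence $\phi(y_n)$ in $\ell^2$, or equivalently via the Moore--Aronszajn construction of the reproducing kernel Hilbert space associated with $K$) is the one place where care is needed; everything else is a direct translation of Theorem~\ref{Tembed} together with density and continuity.
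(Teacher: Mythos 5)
Your proposal is correct. The paper does not actually write out a proof of this corollary: it only remarks that one fixes a ``frame'' of points and lets an additional point float, which is essentially the exhaustion argument you describe second. Your primary argument --- viewing $K(x,y) := \tfrac{1}{2}\bigl( \rho(x_0,x)^2 + \rho(x_0,y)^2 - \rho(x,y)^2 \bigr)$ as a positive semidefinite kernel on $X$ and invoking the standard factorization $K(x,y) = \langle \phi(x), \phi(y) \rangle$ through a Hilbert space --- is a genuinely different and cleaner route: it bypasses entirely the compatibility problem for the nested finite-dimensional embeddings that you correctly identify as the technical crux of the exhaustion approach (the rank of the Gram matrix may jump as points are added, so the orthonormal frames must be extended rather than rebuilt). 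Your polarization computation $\| \phi(x) - \phi(y) \|^2 = K(x,x) - 2K(x,y) + K(y,y) = \rho(x,y)^2$ is exactly right, and separability of the closed linear span of $\phi(X)$ follows from separability of $X$ because $\phi$ is an isometry. What the exhaustion route buys is that it stays within the elementary linear algebra of Theorem~\ref{Tembed} and makes the finite-dimensional statement transparent; what the kernel route buys is a two-line sufficiency proof once the factorization of a positive semidefinite kernel through a Hilbert space (Moore--Aronszajn, or the quotient-and-complete construction on the free vector space over $X$) is granted. Either version is acceptable; only the kernel version needs no further care.
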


The notable aspect of the two previous results is the interplay
between purely geometric concepts and matrix positivity. This will be
a recurrent theme of our survey.

\subsection{Spherical distance geometry}

One can specialize the embedding question discussed in the previous
section to submanifolds of Euclidean space. A natural choice is the
sphere.

For two points $x$ and $y$ on the unit sphere
$S^{d - 1} \subset \R^d$, the rotationally invariant distance between
them is
\[
\rho( x, y ) = \sphericalangle( x, y ) = \arccos\langle x, y \rangle,
\]
where the angle between the two vectors is measured on a great circle
and is always less than or equal to $\pi$.

A straightforward application of the simple, but central,
Theorem~\ref{Tembed}] yields the following result.

\begin{theorem}[Schoenberg \cite{Schoenberg-Annals35}]
Let $( X, \rho )$ be a metric space and let $( x_1, \ldots, x_n )$ be
an $n$-tuple of points in $X$. For any integer $d \geq 2$, there
exists an isometric embedding of $( x_1, \ldots, x_n )$ into
$S^{d - 1}$ endowed with the geodesic distance but not
$S^{d - 2}$ if and only if
\[
\rho( x_j, x_k ) \leq \pi \qquad ( 1 \leq j, k \leq n )
\]
and the matrix $\bigl[ \cos \rho( x_j, x_k ) \bigr]_{j, k = 1}^n$
is positive semidefinite of rank $d$.
\end{theorem}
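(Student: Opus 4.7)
The plan is to apply Theorem~\ref{Tembed} (or rather its Gram-matrix factorization technique) by exploiting the dictionary between spherical and Euclidean geometry: for unit vectors $v,w \in S^{d-1} \subset \R^d$, the cosine of the geodesic distance is exactly the Euclidean inner product $\langle v,w\rangle$. Under this correspondence, the matrix $[\cos\rho(x_j,x_k)]_{j,k=1}^n$ is the Gram matrix of the would-be images of the $x_j$'s on the sphere, and both positive semidefiniteness and the rank condition translate into the existence of such an embedding.

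For the necessity, I would start from an isometric embedding $x_j \mapsto v_j \in S^{d-1}$ that does not factor through $S^{d-2}$, and note that $\cos\rho(x_j,x_k) = \langle v_j,v_k\rangle$, so $[\cos\rho(x_j,x_k)]_{j,k=1}^n$ is a Gram matrix and therefore positive semidefinite of rank equal to $\dim\mathrm{span}\{v_1,\ldots,v_n\}$. That dimension is exactly $d$: were it smaller, the $v_j$ would lie in a hyperplane through the origin and hence in the corresponding great subsphere, which is isometric to $S^{d-2}$, contradicting the minimality assumption. The inequality $\rho(x_j,x_k) \le \pi$ is automatic for geodesic distance on a sphere.

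For the sufficiency, I would factor the given rank-$d$ positive semidefinite matrix using the diagonalization step from the proof of Theorem~\ref{Tembed}, obtaining vectors $v_1,\ldots,v_n \in \R^d$ with $\langle v_j,v_k\rangle = \cos\rho(x_j,x_k)$. The diagonal identity $\cos\rho(x_j,x_j) = \cos 0 = 1$ forces $\|v_j\| = 1$, so $v_j \in S^{d-1}$, and the rank being $d$ ensures the $v_j$ span $\R^d$, ruling out any embedding into a great subsphere and hence into $S^{d-2}$. It then remains to verify that the spherical distance $\arccos\langle v_j,v_k\rangle$ actually coincides with $\rho(x_j,x_k)$.

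The main (though mild) obstacle is precisely this last step, and it is where the hypothesis $\rho(x_j,x_k) \le \pi$ plays its role: because $\rho(x_j,x_k) \in [0,\pi]$, it lies on the interval where $\arccos\circ\cos$ is the identity, so the geodesic distance between $v_j$ and $v_k$ recovers $\rho(x_j,x_k)$ exactly and the embedding is an isometry. Without that bound one would recover $\rho$ only up to the symmetry of cosine about $\pi$, and the embedding could fail to be isometric. Everything else in the argument is just bookkeeping on top of the Gram-matrix factorization already present in the proof of Theorem~\ref{Tembed}.
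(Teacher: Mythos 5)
Your argument is correct and coincides with the proof the paper itself gives: the necessity direction via the Gram matrix of the spherical images, and the sufficiency direction via the factorization $A = B^T B$ into unit columns, with the hypothesis $\rho(x_j,x_k) \le \pi$ invoked exactly where the paper invokes it, namely to ensure $\arccos \circ \cos$ recovers $\rho$ (this is the paper's footnoted ``alternate proof of sufficiency''; its main-text sketch instead adjoins an auxiliary point at the origin and appeals to Theorem~\ref{Tembed}). No gaps.
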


Indeed, the necessity is assured by choosing $x_0$ to be the origin in
$\R^d$. In this case,
\begin{align*}
\rho( x_0, x_j )^2 + \rho( x_0, x_k )^2 - \rho( x_j, x_k )^2 & = %
\| x_j\|^2 + \| x_k \|^2 - \| x_j - x_k\|^2 \\
 & = 2 \langle x_j, x_k \rangle \\
 & = 2 \cos \rho( x_j, x_k ).
\end{align*}

The condition is also sufficient, by possibly adding an external point
$x_0$ to the metric space, subject to the constraints that
$\rho( x_0, x_j ) = 1$ for all $j$. The details can be found in
\cite{Schoenberg-Annals35}.%
\footnote{An alternate proof of sufficiency is to note that
$A := [ \cos \rho( x_j, x_k ) ]_{j, k = 1}^n$ is a
Gram matrix of rank $r$, hence equal to $B^T B$ for some
$r \times n$ matrix  $B$ with unit columns. Denoting these columns by
$\bb_1$, \ldots, $\bb_n \in S^{r-1}$, the map
$x_j \mapsto b_j$ is an isometry since
$\rho( x_j, x_k )$ and $\sphericalangle( y_j, y_k ) \in [ 0, \pi ]$.
Moreover, since $A$ has rank $r$, the $\bb_j$ cannot all lie in a
smaller-dimensional sphere.}

\subsection{Distance transforms}

A notable step forward in the study of the existence of isometric
embeddings of a metric space into Euclidean or Hilbert space was made
by Schoenberg. In a series of articles
\cite{Schoenberg-Annals37,Schoenberg-TAMS38,Schoenberg-Annals39,vonNeumann-Schoenberg-TAMS41},
he changed the set-theoretic lens of Menger, by initiating a
harmonic-analysis interpretation of this embedding problem. This was a
major turning point, with long-lasting, unifying, and unexpected
consequences.

We return to a separable metric space $( X, \rho )$ and seek
distance-function transforms $\rho \mapsto \phi( \rho )$ which enhance
the geometry of $X$, to the extent that the new metric space
$\bigl( X, \phi( \rho) \bigr)$ is isometrically equivalent to a
subspace of Hilbert space. Schoenberg launched this whole new
chapter from the observation that the Euclidean norm is such that the
matrix
\[
[ \exp\bigl( -\| x_j - x_k \|^2 \bigr) ]_{j, k = 1}^N
\]
is positive semidefinite for any choice of points $x_1$, \ldots, $x_N$
in the ambient space. Once again, we see the presence of matrix
positivity. While this claim may not be obvious at first sight, it is
accessible once we recall a key property of Fourier transforms.

An even function $f : \R^d \to \C$ is said to be \emph{positive
definite} if the complex matrix $[ f( x_j - x_k ) ]_{j, k = 1}^N$ is
positive semidefinite for any $N \geq 1$ and any choice of points
$x_1$, \ldots, $x_N \in \R^d$. We will call $f( x - y )$ a
\emph{positive semidefinite kernel} on $\R^d \times \R^d$ in this case.
(See \cite{Stewart} for a comprehensive survey of this class of maps.)

Bochner's theorem \cite{Bochner-MathAnn33} characterizes
positive definite functions on $\R^d$ as Fourier transforms of even
positive measures of finite mass:
\[
f( \xi ) = \int e^{-\I x \cdot \xi} \std\mu( x ).
\]
Indeed,
\[
f( \xi - \eta) = %
\int e^{-\I x \cdot \xi} e^{\I x \cdot \eta} \std\mu( x )
\]
is a positive semidefinite kernel because it is the average over
$\mu$ of the positive kernel
$( \xi, \eta ) \mapsto e^{-\I x \cdot \xi} e^{\I x \cdot \eta}$. Since
the Gaussian $e^{-x^2}$ is the Fourier transform
of itself (modulo constants), it turns out that it is a
positive definite function on $\R$, whence $\exp( -\| x \|^2 )$ has
the same property as a function on $\R^d$. Taking one step further,
the function $x \mapsto \exp( -\| x \|^2 )$ is positive definite on
any Hilbert space.
 
With this preparation we are ready for a second
characterization of metric subspaces of Hilbert space.

\begin{theorem}[Schoenberg \cite{Schoenberg-TAMS38}]\label{Tsch-pos-def}
A separable metric space $( X, \rho )$ can be embedded isometrically
into Hilbert space if and only if the kernel
\[
X \times X \to ( 0, \infty ); \ %
( x, y ) \mapsto \exp( -\lambda^2 \rho( x, y )^2 )
\]
is positive semidefinite for all $\lambda \in \R$.
\end{theorem}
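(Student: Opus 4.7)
\medskip

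\noindent\textbf{Proof plan.}

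For necessity, I would simply transport the observation already recorded in the excerpt. If $T : (X,\rho) \to \cH$ is an isometry into a Hilbert space, then $\lambda^2 \rho(x,y)^2 = \|\lambda T(x) - \lambda T(y)\|^2$, and since $u \mapsto \exp(-\|u\|^2)$ is positive definite on any Hilbert space, the kernel $(x,y) \mapsto \exp(-\lambda^2 \rho(x,y)^2)$ is a positive semidefinite kernel for every $\lambda \in \R$ by pulling back along $\lambda T$.

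For sufficiency, my target is Corollary~\ref{Menger-Schoenberg}: given any $(n+1)$-tuple $x_0,x_1,\ldots,x_n$ in $X$, I must show that the matrix
\[
A = [\rho(x_0,x_j)^2 + \rho(x_0,x_k)^2 - \rho(x_j,x_k)^2]_{j,k=1}^n
\]
is positive semidefinite. The bridge from the hypothesis to $A$ goes through a Taylor expansion of the Gaussian kernel at $\lambda = 0$. Fixing the $(n+1)$ points and writing
\[
\exp(-\lambda^2 \rho(x_j,x_k)^2) = 1 - \lambda^2 \rho(x_j,x_k)^2 + O(\lambda^4),
\]
with the remainder uniform over the finite set of pairs, I apply the hypothesis with a test vector $(c_0,c_1,\ldots,c_n)$ chosen so that $\sum_{j=0}^n c_j = 0$. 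The constant term in the expansion then contributes $(\sum_j c_j)^2 = 0$, so the leading surviving contribution is at order $\lambda^2$. Dividing by $\lambda^2$ and letting $\lambda \to 0^+$ yields the conditional inequality
\[
\sum_{j,k=0}^n c_j c_k \, \rho(x_j,x_k)^2 \le 0 \qquad \text{whenever } \sum_{j=0}^n c_j = 0,
\]
i.e.\ the kernel $-\rho^2$ is conditionally positive semidefinite.

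To pass from this conditional statement to unconditional positivity of $A$, I would specialize: given arbitrary scalars $c_1,\ldots,c_n$, set $c_0 := -\sum_{j=1}^n c_j$ to enforce the zero-sum constraint. Expanding $\sum_{j,k=0}^n c_j c_k \rho(x_j,x_k)^2$ by splitting off the $j=0$ and $k=0$ rows, using $\rho(x_0,x_0)=0$, and substituting the value of $c_0$, the cross terms combine (after symmetrizing in $j,k$) into exactly $-\sum_{j,k=1}^n c_j c_k (\rho(x_0,x_j)^2 + \rho(x_0,x_k)^2)$, and the inequality rearranges precisely to $\bc^T A \bc \geq 0$. An appeal to Corollary~\ref{Menger-Schoenberg} then finishes the argument.

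The main obstacle is conceptual rather than computational: one has to recognize that the family of Gaussian positivity conditions, indexed by $\lambda$, secretly encodes only one piece of information about $\rho^2$ — its conditional negative definiteness — and that this single piece of information is exactly what Menger--Schoenberg needs. Once that is seen, the Taylor-expansion extraction of the $\lambda^2$-coefficient and the zero-sum trick with $c_0 = -\sum_{j\ge 1} c_j$ are both routine. A minor care point is justifying the $\lambda \to 0$ limit, but since only finitely many points are involved, the $O(\lambda^4)$ remainder is controlled trivially.
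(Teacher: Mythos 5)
Your proposal is correct and follows essentially the same route as the paper: necessity by pulling back the Gaussian's positive definiteness along the isometry, and sufficiency by extracting the $\lambda^2$-coefficient of the expanded Gaussian kernel under the zero-sum constraint to get conditional negative definiteness of $\rho^2$, which is equivalent (via the substitution $c_0=-\sum_{j\ge1}c_j$) to the Menger--Schoenberg positivity condition. You merely spell out the ``elementary algebra'' step that the paper leaves implicit.
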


\begin{proof}
Necessity follows from the positive definiteness of the Gaussian
discussed above. (We also provide an elementary proof below; see
Lemma~\ref{Lpolya} and the subsequent discussion). To prove
sufficiency, we recall the Menger--Schoenberg characterization of
isometric subspaces of Hilbert space. We have to derive, from the
positivity assumption, the positivity of the matrix
\[
[ \rho( x_0, x_j )^2 + \rho( x_0, x_k )^2 - %
\rho( x_j, x_k )^2 ]_{j, k = 1}^n.
\]
Elementary algebra transforms this constraint into the requirement
that
\[
\sum_{j,k=0}^n \rho( x_j, x_k )^2 c_j c_k \leq 0 \qquad %
\text{whenever } \sum_{j = 0}^n c_j = 0.
\]
By expanding $\exp( -\lambda^2 \rho( x_j, x_k )^2 )$ as a power series
in $\lambda^2$, and invoking the  positivity of the exponential
kernel, we see that
\[
0 \leq -\lambda^2 \sum_{j, k = 0}^n \rho( x_j, x_k )^2 c_j c_k + %
\frac{\lambda^4}{2} \sum_{j, k = 0}^n \rho( x_j, x_k )^4 c_j c_k - %
\cdots
\]
for all $\lambda > 0$. Hence the coefficient of $-\lambda^2$ is
non-positive.
\end{proof}
 
The flexibility of the Fourier-transform approach is illustrated by
the following application, also due to Schoenberg
\cite{Schoenberg-TAMS38}.
 
\begin{corollary}
Let $H$ be a Hilbert space with norm $\| \cdot \|$. For every
$\delta \in ( 0, 1 )$, the metric space $( H, \| \cdot \|^\delta )$ is
isometric to a subspace of a Hilbert space.
\end{corollary}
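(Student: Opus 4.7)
The plan is to apply Theorem~\ref{Tsch-pos-def} with the transformed metric $\rho(x,y) := \|x-y\|^\delta$ on $H$ (this is indeed a metric, as was noted at the start of Subsection~2.1). What must be verified is that, for every $\lambda \in \R$, the kernel
\[
K_\lambda(x,y) := \exp\bigl(-\lambda^2 \|x-y\|^{2\delta}\bigr)
\]
is positive semidefinite on $H \times H$. The case $\lambda = 0$ is trivial, and by evenness in $\lambda$ it suffices to treat $\lambda > 0$.

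The analytic heart of the matter is the classical fact that, for $\delta \in (0,1)$, the function $g_\delta(t) := e^{-t^\delta}$ is completely monotone on $[0,\infty)$: all its derivatives alternate in sign, as one checks either by direct computation or by noting that $g_\delta$ is the composition of the completely monotone $e^{-u}$ with the Bernstein function $u = t^\delta$. By the Bernstein--Hausdorff--Widder theorem, there exists a positive Borel measure $\mu_\delta$ on $[0,\infty)$ such that
\[
e^{-t^\delta} = \int_0^\infty e^{-s t} \std\mu_\delta(s) \qquad (t \geq 0).
\]
Rewriting $\lambda^2 \|x-y\|^{2\delta} = \bigl(\lambda^{2/\delta} \|x-y\|^2\bigr)^\delta$ and substituting $t = \lambda^{2/\delta} \|x-y\|^2$ gives the integral representation
\[
K_\lambda(x,y) = \int_0^\infty \exp\bigl(-s \lambda^{2/\delta} \|x-y\|^2\bigr) \std\mu_\delta(s).
\]

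Each integrand $(x,y) \mapsto \exp(-c^2\|x-y\|^2)$, with $c := \sqrt{s \lambda^{2/\delta}} \geq 0$, is a Gaussian kernel, which was already shown via the Bochner-transform argument immediately preceding Theorem~\ref{Tsch-pos-def} to be positive semidefinite on any (separable) Hilbert space. Since positive semidefiniteness is preserved under nonnegative linear combinations, and therefore under integration against a positive measure, $K_\lambda$ is positive semidefinite. Theorem~\ref{Tsch-pos-def} then delivers the desired isometric embedding of $(H,\|\cdot\|^\delta)$ into a Hilbert space. The only genuine hurdle in the whole argument is the integral representation of $e^{-t^\delta}$; once that is in hand, the remainder is a transparent averaging of Gaussian kernels.
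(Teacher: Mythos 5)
Your proof is correct, but it takes a genuinely different route from the paper's. The paper does not invoke Theorem~\ref{Tsch-pos-def} at all: it verifies the Menger--Schoenberg criterion (Corollary~\ref{Menger-Schoenberg}) directly, using the explicit subordination identity
\[
\xi^\alpha = c_\alpha \int_0^\infty \bigl( 1 - e^{-s^2 \xi^2} \bigr) s^{-1-\alpha} \std s \qquad ( \xi > 0,\ 0 < \alpha < 2 ),
\]
with $\alpha = 2\delta$, which exhibits $\| x - y \|^{2\delta}$ as a constant minus an average of Gaussian kernels and hence shows $\sum_{j,k} \| x_j - x_k \|^{2\delta} c_j c_k \leq 0$ whenever $\sum_j c_j = 0$. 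You instead check the hypothesis of Theorem~\ref{Tsch-pos-def}, namely positive semidefiniteness of $\exp( -\lambda^2 \| x - y \|^{2\delta} )$ for every $\lambda$, by appealing to the complete monotonicity of $t \mapsto e^{-t^\delta}$ and the Bernstein--Hausdorff--Widder theorem to realize that kernel as a positive mixture of Gaussians. Both arguments ultimately average Gaussian kernels against a positive measure; the trade-off is that the paper's measure is written down explicitly (so no representation theorem is needed, and the restriction $\delta \in (0,1)$ appears transparently as the convergence condition $0 < \alpha < 2$), whereas your route leans on Bernstein's theorem as a black box and passes through one extra layer (Theorem~\ref{Tsch-pos-def}, whose own proof reduces back to the Menger--Schoenberg condition via a power-series expansion). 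Your approach has the small advantage of making the positive definiteness of the transformed exponential kernel itself explicit, which is the property Schoenberg's Theorem~\ref{Tsch-pos-def} is really about.
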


\begin{proof}
Note first the identity
\[
\xi^\alpha = %
c_\alpha \int_0^\infty ( 1 - e^{-s^2 \xi^2} ) s^{-1 - \alpha} \std s %
\qquad ( \xi > 0, \ 0 < \alpha < 2 ),
\]
where $c_\alpha$ is a normalization constant. Consequently,
\[
\| x - y \|^\alpha = c_\alpha %
\int_0^\infty ( 1 - e^{-s^2 \| x - y \|^2} ) s^{-1 - \alpha} \std s.
\]
Let $\delta = \alpha / 2$. For points $x_0$, $x_1$, \ldots, $x_n$ in
$H$ and weights $c_0$, $c_1$, \ldots, $c_n$ satisfying
\[
c_0 + c_1 + \cdots + c_n = 0,
\]
it holds that
\[
\sum_{j, k = 0}^n \| x_j - x_k \|^{2 \delta} c_j c_k = %
-c_\alpha \int_0^\infty %
\sum_{j, k = 0}^n c_j c_k e^{-s^2 \| x_j - x_k \|^2} %
s^{-1 - \alpha} \std s %
\leq 0,
\]
and the proof is complete.
\end{proof}
 
Several similar consequences of the Fourier-transform approach are
within reach. For instance, Schoenberg observed in the same article
that if the $L^p$ norm is raised to the power $\gamma$, where
$0 < \gamma \leq p / 2$ and $1 \leq p \leq 2$, then $L^p( 0, 1 )$ is
isometrically embeddable into Hilbert space.

\subsection{Altering Euclidean distance}

By specializing the theme of the previous section to Euclidean space,
Schoenberg and von Neumann discovered an arsenal of powerful tools
from harmonic analysis that were able to settle the question of
whether Euclidean space equipped with the altered distance
$\phi\bigl( \| x - y \| \bigr)$ may be isometrically embedded into
Hilbert space
\cite{Schoenberg-Annals38,vonNeumann-Schoenberg-TAMS41}. The key
ingredients are characterizations of Laplace and Fourier transforms of
positive measures, that is, Bernstein's completely monotone functions
\cite{Bernstein-Acta29} and Bochner's positive definite functions
\cite{Bochner-MathAnn33}.
 
Here we present some highlights of the Schoenberg--von Neumann
framework. First, we focus on an auxiliary class of distance
transforms. A real continuous function $\phi$ is called \emph{positive
definite in Euclidean space $\R^d$} if the kernel
\[
( x, y ) \mapsto \phi\bigl( \| x - y \| \bigr)
\]
is positive semidefinite. Bochner's theorem and the rotation-invariance
of this kernel prove that such a function $\phi$ is characterized by the
representation
\[
\phi( t ) = \int_0^\infty \Omega_d( t u ) \std\mu( u ),
\]
where $\mu$ is a positive measure and
\[
\Omega_d\bigl( \| x \| \bigr) = %
\int_{\| \xi \| = 1} e^{\I x \cdot \xi} \std\sigma( \xi ),
\]
with $\sigma$ the normalized area measure on the unit sphere in
$\R^d$; see \cite[Theorem~1]{Schoenberg-Annals38}. By letting $d$ tend
to infinity, one finds that positive definite functions on
infinite-dimensional Hilbert space are precisely of the form
\[
\phi( t ) = \int_0^\infty e^{-t^2 u^2} \std\mu( u ),
\]
with $\mu$ a positive measure on the semi-axis. Notice that
positive definite functions in $\R^d$ are not necessarily differentiable
more than $( d - 1 ) / 2$ times, while those which are positive definite
in Hilbert space are smooth and even complex analytic in the sector
$| \arg t | < \pi / 4$.

The class of functions $f$ which are continuous on
$\R_+ := [ 0, \infty )$, smooth on the open semi-axis $( 0, \infty )$,
and such that
\[
( -1 )^n f^{( n )}( t ) \geq 0 \qquad \text{for all } t > 0
\]
was studied by S.~Bernstein, who proved that they coincide with Laplace
transforms of positive measures on $\R_+$:
\begin{equation}\label{Ebernstein}
f( t ) = \int_0^\infty e^{-t u} \std\mu( u ).
\end{equation}
Such functions are called \emph{completely monotonic} and have proved
highly relevant for probability theory and approximation theory;
see~\cite{Bernstein-Acta29} for the foundational reference. Thus we
have obtained a valuable equivalence.

\begin{theorem}[Schoenberg]
A function $f$ is completely monotone if and only if
$t \mapsto f( t^2 )$ is positive definite on Hilbert space.
\end{theorem}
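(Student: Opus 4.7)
The plan is to combine the two integral representations already displayed in the excerpt: Bernstein's representation \eqref{Ebernstein} of completely monotone functions as Laplace transforms of positive measures on $\R_+$, together with the Schoenberg representation $\phi(t) = \int_0^\infty e^{-t^2 u^2}\std\mu(u)$ of functions that are positive definite on infinite-dimensional Hilbert space. These two formulas differ only in whether the exponent is linear or quadratic in $t$, so the argument amounts to a change of variables $v = u^2$ on the measure side.

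For the forward direction, I would suppose $f$ is completely monotone and invoke \eqref{Ebernstein} to obtain a positive measure $\mu$ on $\R_+$ with $f(t) = \int_0^\infty e^{-t v}\std\mu(v)$. Substituting $t^2$ for $t$ gives $f(t^2) = \int_0^\infty e^{-t^2 v}\std\mu(v)$. Letting $\nu$ be the pushforward of $\mu$ under the homeomorphism $v \mapsto \sqrt{v}$ of $\R_+$, this rewrites as $f(t^2) = \int_0^\infty e^{-t^2 u^2}\std\nu(u)$. By the Schoenberg representation of positive definite functions on Hilbert space quoted in the previous subsection, $t \mapsto f(t^2)$ is positive definite on Hilbert space.

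For the converse, I would suppose that $t \mapsto f(t^2)$ is positive definite on Hilbert space and use the Schoenberg representation to write $f(t^2) = \int_0^\infty e^{-t^2 u^2}\std\mu(u)$. Setting $s = t^2 \geq 0$, this reads $f(s) = \int_0^\infty e^{-s u^2}\std\mu(u)$; pushing $\mu$ forward through $u \mapsto u^2$ yields a positive measure $\nu$ on $\R_+$ with $f(s) = \int_0^\infty e^{-s v}\std\nu(v)$. By Bernstein's theorem (the ``if'' part of~\eqref{Ebernstein}, which is elementary since differentiation under the integral shows every Laplace transform of a positive measure is completely monotonic), $f$ is completely monotone.

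There is no genuine obstacle once the two integral characterizations are granted: the equivalence is forced by the trivial substitution $v = u^2$. The only mild point of care is to confirm that pushing a positive Borel measure on $\R_+$ forward under a continuous bijection of $\R_+$ yields again a positive Borel measure, and that $f$ is well defined and continuous where the statements require it, both of which are immediate from the quoted representations.
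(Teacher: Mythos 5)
Your proof is correct and takes essentially the same route as the paper: the theorem is stated there as an immediate consequence of juxtaposing Bernstein's representation~\eqref{Ebernstein} with the representation $\phi(t)=\int_0^\infty e^{-t^2u^2}\std\mu(u)$ of positive definite functions on Hilbert space, which is exactly the change-of-variables argument you spell out. No gaps.
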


The direct consequences of this apparently innocent observation are
quite deep. For example, the isometric-embedding question for altered
Euclidean distances is completely answered via this route. The
following results are from \cite{Schoenberg-Annals38} and
\cite{vonNeumann-Schoenberg-TAMS41}.

\begin{theorem}[Schoenberg--von Neumann]
Let $H$ be a separable Hilbert space with norm $\| \cdot \|$.
\begin{enumerate}
\item For any integers $n \geq d > 1$, the metric space
$( \R^d, \phi\bigl( \| \cdot \| \bigr) )$ may be isometrically
embedded into $( \R^n, \| \cdot \| )$ if and only if $\phi( t ) = c t$
for some $c > 0$.

\item The metric space $( \R^d, \phi\bigl( \| \cdot \| \bigr) )$ 
may be isometrically embedded into~$H$ if and only if
\[
\phi( t )^2 = %
\int_0^\infty \frac{1 - \Omega_d( t u )}{u^2} \std\mu( u ),
\]
where $\mu$ is a positive measure on the semi-axis such that
\[
\int_1^\infty \frac{1}{u^2} \std\mu( u ) < \infty.
\]

\item The metric space $( H, \phi\bigl (\| \cdot \| \bigr) )$ may be
isometrically embedded into $H$ if and only if
\[
\phi( t )^2 = \int_0^\infty \frac{1 - e^{-t^2 u}}{u} \std\mu( u ),
\]
where $\mu$ is a positive measure on the semi-axis such that
\[
\int_1^\infty \frac{1}{u} \std\mu( u ) < \infty.
\]
\end{enumerate}
\end{theorem}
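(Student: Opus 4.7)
\emph{Common framework.} All three parts reduce to a kernel-positivity question by the Menger--Schoenberg criterion (Corollary~\ref{Menger-Schoenberg}): $( \R^d, \phi( \| \cdot \| ) )$ or $( H, \phi( \| \cdot \| ) )$ admits an isometric embedding into a Hilbert space exactly when the kernel $( x, y ) \mapsto \phi( \| x - y \| )^2$ is \emph{conditionally negative definite}, meaning that $\sum_{j, k} c_j c_k \phi( \| x_j - x_k \| )^2 \leq 0$ whenever $\sum_j c_j = 0$. Schoenberg's exponential device, familiar from the proof of Theorem~\ref{Tsch-pos-def}, upgrades this to the statement that $\exp\bigl( -\lambda \phi( \| x - y \| )^2 \bigr)$ is positive definite for every $\lambda > 0$. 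Since the resulting kernel is rotation- and translation-invariant, Bochner's theorem and the radial representations of the preceding discussion are now available.

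\emph{Parts (2) and (3).} On $\R^d$, positive definite functions of the distance $\| x - y \|$ are integrals over $u > 0$ of the radial building blocks $\Omega_d( t u )$. Feeding $\exp( -\lambda \phi( t )^2 )$ through Bochner and invoking the classical L\'evy--Khinchin-type correspondence between conditionally negative definite functions and the exponential families they generate yields the necessity of
\[
\phi( t )^2 = \int_0^\infty \frac{1 - \Omega_d( t u )}{u^2} \std\mu( u ).
\]
The $u^{-2}$ weight appears because $1 - \Omega_d( t u ) = O( u^2 )$ as $u \to 0$, keeping the integrand bounded near the origin; the tail hypothesis $\int_1^\infty u^{-2} \std\mu( u ) < \infty$ combined with $| 1 - \Omega_d | \leq 2$ handles convergence at infinity. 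Sufficiency is easier: for each fixed $u$, the kernel $( x, y ) \mapsto 1 - \Omega_d( u \| x - y \| )$ is conditionally negative definite (since $\Omega_d$ is positive definite with value $1$ at zero), and a non-negative integral average preserves this. Part~(3) is structurally identical, but with the building blocks replaced by the Hilbert-space family $e^{-t^2 u^2}$ identified earlier in the section; after the substitution $u^2 \mapsto u$ (absorbing the Jacobian into the measure), the condition reads
\[
\phi( t )^2 = \int_0^\infty \frac{1 - e^{-t^2 u}}{u} \std\mu( u ),
\]
with the weight $1/u$ reflecting the reparametrization and the vanishing $1 - e^{-t^2 u} = O( u )$ at $u = 0$. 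Sufficiency follows at once from Theorem~\ref{Tsch-pos-def}.

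\emph{Part (1) and main obstacle.} The finite-dimensional rigidity in Part~(1) can be extracted from Part~(2) by a dimension count: the integral representation of $\phi( t )^2$ corresponds, via the standard GNS-type construction from a conditionally negative definite kernel, to an isometric embedding of $( \R^d, \phi( \| \cdot \| ) )$ into a Hilbert space that decomposes as a direct integral over $\mu$ of pieces indexed by $u > 0$. For the target to collapse to the finite-dimensional $\R^n$, the measure $\mu$ must be concentrated at $u = 0$ in a limiting sense, and the Taylor expansion $1 - \Omega_d( t u ) = c_d ( t u )^2 + O( ( t u )^4 )$ then forces $\phi( t )^2 = c t^2$, whence $\phi( t ) = \sqrt{c}\, t$ with $c > 0$. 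Alternatively, one may argue directly: three collinear points $x$, $y$, $z \in \R^d$ with $y$ in the middle, together with a fourth point varying on the perpendicular hyperplane through $y$, produce, via Cayley--Menger constraints in $\R^n$, the identity $\phi( s + t ) = \phi( s ) + \phi( t )$, and Cauchy's equation with continuity finishes the argument. The principal obstacle is the necessity direction in Parts~(2) and~(3): the full L\'evy--Khinchin decomposition for radial conditionally negative definite kernels is the technical crux, while sufficiency and the rigidity in Part~(1) are comparatively routine.
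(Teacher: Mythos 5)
The paper itself offers no proof of this theorem; it simply cites \cite{Schoenberg-Annals38} and \cite{vonNeumann-Schoenberg-TAMS41}, so there is nothing internal to compare against. Your framework is the right one (and is the classical route): reduce to conditional negative definiteness of $\phi(\|x-y\|)^2$ via the Menger--Schoenberg criterion, and your sufficiency arguments for (2) and (3) are sound --- $1 - \Omega_d(u\|x-y\|)$ and $1 - e^{-u\|x-y\|^2}$ are conditionally negative definite because they are constants minus positive definite kernels with constant diagonal, and positive integral averages preserve this.

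The genuine gap is that the necessity direction of (2) and (3) --- which you correctly flag as the crux --- is not proved but merely renamed: "the classical L\'evy--Khinchin-type correspondence between conditionally negative definite functions and the exponential families they generate" \emph{is} the content of parts (2) and (3), so invoking it is circular. What actually has to be done is the limiting argument: for each $\lambda > 0$, Bochner plus radiality gives $e^{-\lambda\phi(t)^2} = \int_0^\infty \Omega_d(tu)\std\mu_\lambda(u)$ with $\mu_\lambda$ a probability measure (since $\phi(0)=0$), hence $\lambda^{-1}\bigl(1 - e^{-\lambda\phi(t)^2}\bigr) = \int_0^\infty (1-\Omega_d(tu))\,\lambda^{-1}\std\mu_\lambda(u)$; one must then show that the rescaled measures $\lambda^{-1}u^2\std\mu_\lambda(u)$ are vaguely precompact (this requires a uniform tail bound extracted from the behaviour of $\phi$ near a fixed $t_0$), pass to a subsequential limit $\mu$ via Helly selection, and verify that no mass escapes in a way that breaks the identity. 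None of this appears in your sketch. Part (1) has a similar issue: the "dimension count" version does not explain why finite-dimensionality of the target forces the representing measure toward $u=0$, and the collinear-points version only yields $\phi(s+t) \leq \phi(s) + \phi(t)$ from the triangle inequality --- obtaining the reverse inequality (equivalently, that the images of collinear points are collinear) is exactly where the Cayley--Menger rank constraints must be used quantitatively, and you do not carry that computation out. As an outline of the Schoenberg--von Neumann strategy the proposal is accurate; as a proof it defers every hard step.
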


In von Neumann and Schoenberg's article
\cite{vonNeumann-Schoenberg-TAMS41}, special attention is paid to the
case of embedding a modified distance on the line into Hilbert
space. This amounts to characterizing all \emph{screw lines} in a
Hilbert space $H$: the continuous functions
\[
f : \R \to H; \ t \mapsto f_t
\]
with the translation-invariance property
\[
\| f_s - f_t \| = \| f_{s + r} - f_{t + r}\| %
\qquad \text{for all } s, r, t \in \R.
\]
In this case, the gauge function $\phi$ is such that
$\phi( t - s ) = \| f_s - f_t \|$ and $t \mapsto f_t$ provides the
isometric embedding of $( \R, \phi\bigl( | \cdot | \bigr) )$ into $H$.
Von Neumann seized the opportunity to use Stone's theorem on
one-parameter unitary groups, together with the spectral decomposition of
their unbounded self-adjoint generators, to produce a purely
operator-theoretic proof of the following result.

\begin{corollary}
The metric space $( \R, \phi\bigl( | \cdot | ) \bigr)$ isometrically
embeds into Hilbert space if and only if
\[
\phi( t )^2 = \int_0^\infty \frac{\sin^2( t u )}{u^2} \std\mu( u ) %
\qquad (t \in \R),
\]
where $\mu$ is a positive measure on $\R_+$ satisfying
\[
\int_1^\infty \frac{1}{u^2} \std\mu( u ) < \infty.
\]
\end{corollary}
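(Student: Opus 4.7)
The plan is to follow von Neumann's operator-theoretic route, exploiting Stone's theorem rather than reducing to the Fourier-analytic theorem that precedes the corollary. For \emph{necessity}, let $t \mapsto f_t \in H$ be a screw line realizing the gauge $\phi$, and translate so that $f_0 = 0$. Let $K$ be the closed linear span of $\{ f_t : t \in \R \}$. The identity $\| f_s - f_t \| = \| f_{s + r} - f_{t + r} \|$ says that the shift map $f_t \mapsto f_{t + r}$, restricted to differences $f_t - f_s$, is a densely defined linear isometry; extending by linearity and continuity gives a strongly continuous one-parameter unitary group $\{ U_r \}_{r \in \R}$ on $K$ satisfying the cocycle identity
\[
f_{s + t} = f_s + U_s f_t \qquad ( s, t \in \R ).
\]

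By Stone's theorem, $U_r = \int_\R e^{\I r \lambda} \std E( \lambda )$ for a projection-valued measure $E$ on $\R$. Inserting the spectral representation of $f_t$ into the cocycle identity reduces it, spectral-fibre-wise, to the Cauchy-type equation $\widehat{f}_{s + t}( \lambda ) = \widehat{f}_s( \lambda ) + e^{\I s \lambda} \widehat{f}_t( \lambda )$, whose continuous solutions are $\widehat{f}_t( \lambda ) = c( \lambda ) ( e^{\I t \lambda } - 1 )$ for $\lambda \neq 0$ and a branch linear in $t$ at $\lambda = 0$. Taking squared norms against the scalar spectral measure yields
\[
\phi( t )^2 = c_0 t^2 + \int_{\R \setminus \{ 0 \}} | e^{\I t \lambda} - 1 |^2 \std\nu( \lambda ) = c_0 t^2 + 4 \int_0^\infty \sin^2( t \lambda / 2 ) \std\tilde\nu( \lambda ),
\]
after symmetrizing $\lambda \mapsto -\lambda$ and using $|e^{\I t \lambda} - 1|^2 = 4 \sin^2( t \lambda / 2 )$. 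The change of variable $\lambda = 2 u$ and a renormalization of $\tilde\nu$ into a measure $\mu$ recast this as $\int_0^\infty \sin^2( t u ) / u^2 \std\mu( u )$; the $c_0 t^2$ term is absorbed as a point mass of $\mu$ at $0$, using $\lim_{u \to 0} \sin^2( t u ) / u^2 = t^2$. Finiteness of the spectral measure on compacta away from the origin translates to $\int_1^\infty u^{-2} \std\mu( u ) < \infty$.

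For \emph{sufficiency}, one can exhibit the screw line directly from the integral formula. Take $H := L^2\bigl( ( 0, \infty ), \std\mu \bigr)$ (with a one-dimensional summand to accommodate any atom at $0$) and define $f_t( u ) := ( e^{\I t u } - 1 ) / u$; then
\[
\| f_s - f_t \|^2 = \int_0^\infty \frac{| e^{\I s u} - e^{\I t u} |^2}{u^2} \std\mu( u ) = 4 \int_0^\infty \frac{\sin^2( ( s - t ) u / 2 )}{u^2} \std\mu( u ),
\]
which returns $\phi( | s - t | )^2$ after an inessential rescaling of $\mu$, and translation-invariance is built in. The main obstacle I anticipate is the spectral bookkeeping in the necessity step: cleanly isolating the $\lambda = 0$ fibre of the cocycle so as to produce the linear-in-$t$ branch (hence the $t^2$ contribution), and routing the spectral measure through the change of variable so that both the exact form of the integrand and the integrability condition at infinity come out with the stated normalization.
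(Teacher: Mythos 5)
Your proposal is correct and follows exactly the route the paper indicates for this corollary, namely von Neumann's operator-theoretic argument: build the one-parameter unitary group from the translation-invariance of the screw line, apply Stone's theorem, solve the resulting cocycle equation on spectral fibres to get $c(\lambda)(e^{\I t\lambda}-1)$ plus a linear branch at $\lambda=0$, and absorb the $t^2$ term as an atom of $\mu$ at the origin. The sufficiency construction via $f_t(u)=(e^{\I tu}-1)/u$ in $L^2(\mu)$ is also the standard converse and is sound.
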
 

Moreover, in the conditions of the corollary, the space
$( \R, \phi\bigl( | \cdot | \bigr) )$ embeds isometrically into $\R^d$
if and only if the measure $\mu$ consists of finitely many point
masses, whose number is roughly $d / 2$; see
\cite[Theorem~2]{vonNeumann-Schoenberg-TAMS41} for the precise
statement. To give a simple example, consider the function
\[
\phi : \R \to \R_+; \ t \mapsto \sqrt{t^2 + \sin^2 t}.
\]
This is indeed a screw function, because
\begin{align*}
\phi( t - s )^2  &  = ( t - s )^2 + \sin^2( t - s ) \\
& = ( t - s )^2 + \frac{1}{4} %
\bigl( \cos( 2 t ) - \cos( 2 s ) \bigr)^2 + \frac{1}{4} %
\bigl( \sin( 2 t ) - \cos( 2 s ) \bigr)^2.
\end{align*}

Note that a screw line is periodic if and only if it is not
injective. Furthermore, one may identify screw lines with period
$\tau > 0$ by the geometry of the support of the representing
measure: this support must be contained in the lattice
$( \pi / \tau ) \Z_+$, where $\Z_+ := \Z \cap \R_+ = \{ 0, 1, 2, \dots
\}$.  Consequently, all periodic screw lines in
Hilbert space have a gauge function $\phi$ such that
\begin{equation}\label{periodic-screws}
\phi( t )^2 = \sum_{k = 1}^\infty c_k \sin^2( k \pi t / \tau ),
\end{equation}
where $c_k \geq 0$ and $\sum_{k = 1}^\infty c_k < \infty$; see
\cite[Theorem~5]{vonNeumann-Schoenberg-TAMS41}.

\subsection{Positive definite functions on homogeneous spaces}
 
Having resolved the question of isometrically embedding Euclidean
space into Hilbert space, a natural desire was to extend the analysis
to other special manifolds with symmetry. This was done almost
simultaneously by Schoenberg on spheres~\cite{Schoenberg-Duke42} and
by Bochner on compact homogeneous spaces~\cite{Bochner-Annals41}.
 
Let $X$ be a compact space endowed with a transitive action of a group
$G$ and an invariant measure. We seek $G$-invariant distance
functions, and particularly those which identify $X$ with a subspace
of a Hilbert space. To simplify terminology, we call the latter
\emph{Hilbert distances}.

The first observation of Bochner is that a $G$-invariant symmetric
kernel $f: X \times X \to \R$ satisfies the Hilbert-space
embeddability condition,
\[
\sum_{k = 0}^n c_k = 0 \quad \implies \quad %
\sum_{j, k = 0}^n f( x_j, x_k ) c_j c_k \geq 0,
\]
for all choices of weights $c_j$ and points $x_j \in X$, if and only if
$f$ is of the form
\[
f( x, y ) = h( x, y ) - h( x_0, x_0) \qquad ( x, y \in X ),
\]
where $h$ is a $G$-invariant positive definite kernel and $x_0$ is a
point of $X$. One implication is clear. For the other, we start with a
$G$-invariant function $f$ subject to the above constraint and prove,
using $G$-invariance and integration over $X$, the existence of a
constant $c$ such that $h( x, y ) = f( x, y ) + c$ is a
positive semidefinite kernel. This gives the following result.
 
\begin{theorem}[Bochner~\cite{Bochner-Annals41}]
Let $X$ be a compact homogeneous space. A continuous invariant
function $\rho$ on $X \times X$ is a Hilbert distance if and only if
there exists a continuous, real-valued, invariant, positive definite
kernel $h$ on $X$ and a point $x_0 \in X$, such that
\[
\rho( x, y ) = \sqrt{h( x_0, x_0 ) - h( x, y )} \qquad %
( x, y \in X ).
\]
\end{theorem}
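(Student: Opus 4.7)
My plan is to reduce the theorem to the preliminary observation sketched in the text just before its statement, by taking $f(x,y) := -\rho(x,y)^2$. This $f$ is continuous, symmetric, $G$-invariant and satisfies $f(x_0,x_0)=0$. By Corollary~\ref{Menger-Schoenberg}, $\rho$ is a Hilbert distance if and only if $f$ is \emph{conditionally positive definite}, that is,
\[
\sum_{k=0}^n c_k = 0 \implies \sum_{j,k=0}^n f(x_j,x_k) c_j c_k \geq 0
\]
for all choices of points $x_j \in X$ and real weights $c_j$. Thus the theorem reduces to the claim that a continuous, $G$-invariant, symmetric kernel $f$ on $X \times X$ is conditionally positive definite if and only if $f = h - h(x_0,x_0)$ for some continuous, real-valued, $G$-invariant, positive definite kernel $h$.

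The easy direction of this claim, which gives sufficiency in the theorem, is just the identity
\[
\sum_{j,k} \bigl( h(x_j,x_k) - h(x_0,x_0) \bigr) c_j c_k = \sum_{j,k} h(x_j,x_k) c_j c_k - h(x_0,x_0) \Bigl( \sum_k c_k \Bigr)^2,
\]
in which the last term vanishes when $\sum_k c_k = 0$. Non-negativity of $h(x_0,x_0) - h(x,y)$, needed for $\rho$ to be real-valued, follows by applying Cauchy--Schwarz to the $2 \times 2$ principal submatrix of $h$ and noting that the invariance of $h$ together with transitivity of the $G$-action forces its diagonal to be constant.

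For the harder direction, I would first, for each fixed $z \in X$, consider the kernel
\[
g_z(x,y) := f(x,y) - f(x,z) - f(z,y) + f(z,z),
\]
and show it is positive definite by applying the conditional hypothesis to the augmented configuration $(z, x_1, \dots, x_n)$ with weights $(-\sum_j c_j, c_1, \dots, c_n)$, which sum to zero; expanding and using symmetry of $f$ recovers exactly the bilinear form associated with $g_z$. Since $g_z$ depends on $z$, it is not $G$-invariant, so I would then use compactness of $X$ and the normalized $G$-invariant probability measure $\mu$ to average:
\[
h(x,y) := \int_X g_z(x,y) \std\mu(z).
\]
This $h$ is automatically $G$-invariant (as $\mu$ and $f$ are invariant and $G$ acts transitively) and positive definite (as an average of positive definite kernels). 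Invariance of $f$ and $\mu$ also forces $\int_X f(x,z) \std\mu(z)$, $\int_X f(z,y) \std\mu(z)$, and $\int_X f(z,z) \std\mu(z)$ each to be independent of $x$ and $y$, so $h - f$ is a constant; since invariance pins down $h(x_0,x_0)$ as that constant (because $f(x_0,x_0)=0$ in our application), this yields exactly $f = h - h(x_0,x_0)$.

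The main obstacle is the tension between invariance and positive definiteness: the natural positive definite modification $g_z$ of $f$ breaks $G$-invariance, while the obvious invariant modifications do not immediately produce new positive definite kernels. The averaging trick is what reconciles the two, and it uses essentially both hypotheses on $X$ — compactness (to obtain a finite invariant measure) and transitivity (so that invariant functions of one or two variables collapse to constants). Once the preliminary claim is in hand, the theorem is a direct application to $-\rho^2$.
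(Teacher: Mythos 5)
Your proposal is correct and follows essentially the same route the paper sketches: reduce to the conditional positive definiteness of $-\rho^2$ via the Menger--Schoenberg criterion, and recover a positive definite invariant kernel by integrating over $X$ against the invariant measure. You have in fact filled in the details (the auxiliary kernels $g_z$ and the averaging argument) that the paper leaves implicit in its one-line indication ``using $G$-invariance and integration over $X$.''
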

 
Privileged orthonormal bases of $G$-invariant functions, in the $L^2$
space associated with the invariant measure, provide a
canonical decompositions of positive definite kernels. These generalized
spherical harmonics were already studied by E.~Cartan, H.~Weyl and
J.~von Neumann; see, for instance~\cite{Weyl-Annals34}. We elaborate
on two important particular cases.

Let $X = \T = \{ e^{\I \theta} : \theta \in \R \}$ be the unit torus,
endowed with the invariant arc-length measure. A continuous
positive definite function $h : \T \times \T \to \R$ admits a Fourier
decomposition
\[
h( e^{\I x}, e^{\I y} ) = %
\sum_{j, k \in \Z} a_{j k} e^{\I j x} e^{-\I k y}.
\]
If $h$ is further required to be rotation invariant, we find that
\[
h( e^{\I x}, e^{\I y} ) = \sum_{k \in \Z} a_k e^{\I k ( x - y ) },
\]
where $a_k \geq 0$ for all $k \in \Z$ and $a_k = a_{-k}$ because $h$
takes real values. Moreover, the series is Abel summable:
$\sum_{k = 0}^\infty a_k = h( 1, 1 ) < \infty$. Therefore, a
rotation-invariant Hilbert distance $\rho$ on the torus has the
expression (after taking its square):
\begin{align*}
\rho( e^{\I x}, e^{\I y} )^2 = h( 1, 1 ) - h( e^{\I x}, e^{\I y}) & = %
\sum_{k = 1}^\infty a_k ( 2 - e^{\I k ( x - y ) } - %
e^{-\I k ( x - y ) }) \\
 & = 2 \sum_{k = 1}^\infty a_k ( 1 - \cos k ( x - y ) ) \\
& = 4 \sum_{k = 1}^\infty a_k \sin^2( k ( x - y ) / 2 ).
\end{align*}
These are the periodic screw lines (\ref{periodic-screws}) already
investigated by von Neumann and Schoenberg.

As a second example, we follow Bochner in examining a separable,
compact group $G$. A real-valued, continuous, positive definite and
$G$-invariant kernel $h$ admits the decomposition
\[
h( x, y ) = \sum_{k \in \Z} c_k \chi_k( y x^{-1} ),
\]
where $c_k \geq 0$ for all $k \in \Z$, $\sum_{k \in Z} c_k < \infty$
and $\chi_k$ denote the characters of irreducible representations
of~$G$. In conclusion, an invariant Hilbert distance~$\rho$ on~$G$ is
characterized by the formula
\[
\rho( x, y )^2 = \sum_{k \in \Z} c_k \bigl( %
1- \frac{ \chi_k( y x^{-1}) + \chi_k( x y^{-1} )}{2 \chi_k(1)} \bigr),
\]
where $c_k \geq 0$ and $\sum_{k \in \Z} c_k <\infty$.

For details and an analysis of similar decompositions on more general
homogeneous spaces, we refer the reader to \cite{Bochner-Annals41}.

The above analysis of positive definite functions on homogeneous
spaces was carried out separately by Schoenberg in
\cite{Schoenberg-Duke42}. First, he remarks that a continuous,
real-valued, rotationally invariant and positive definite kernel~$f$
on the sphere $S^{d - 1}$ has a distinguished Fourier-series
decomposition with non-negative coefficients. Specifically,
\begin{equation}\label{EschoenbergSphere}
f( \cos \theta ) = %
\sum_{k = 0}^\infty c_k P_k^{(\lambda)}( \cos \theta )
\end{equation}
where $\lambda = ( d - 2 ) / 2$, $P^{(\lambda)}_k$ are the
ultraspherical orthogonal polynomials, $c_k \geq 0$ for all $k \geq 0$
and $ \sum_{k=0}^\infty c_k < \infty$. This decomposition is in accord
with Bochner's general framework, with the difference lying in
Schoenberg's elementary proof, based on induction on dimension.
As with all our formulas concerning the sphere, $\theta$ represents
the geodesic distance (arc length along a great circle) between two
points.

To convince the reader that expressions in the cosine of the geodesic
distance are positive definite, let us consider points $x_1$, \ldots,
$x_n \in S^{d-1}$. The Gram matrix with entries
\[
\langle x_j, x_k \rangle = \cos \theta( x_j, x_k )
\]
is obviously positive semidefinite, with constant diagonal elements
equal to~$1$. According to the Schur product theorem \cite{Schur1911},
all functions of the form $\cos^k \theta$, where $k$ is a non-negative
integer, are therefore positive definite on the sphere.

At this stage, Schoenberg makes a leap forward and studies invariant
positive definite kernels on $S^\infty$, that is, functions
$f( \cos \theta )$ which admit representations as above for all
$d \geq 2$. His conclusion is remarkable in its simplicity.

\begin{theorem}[Schoenberg \cite{Schoenberg-Duke42}]
A real-valued function $f( \cos \theta )$ is positive definite on all
spheres, independent of their dimension, if and only if
\begin{equation}\label{cos-decomposition}
f( \cos \theta ) = \sum_{k = 0}^\infty c_k \cos^k \theta,
\end{equation}
where $c_k \geq 0$ for all $k \geq 0$ and
$\sum_{k = 0}^\infty c_k < \infty$.
\end{theorem}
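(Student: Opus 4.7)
The plan is to handle the two directions separately. Sufficiency is essentially a corollary of the Schur product theorem, while necessity will follow from passing to the limit $d \to \infty$ in Schoenberg's earlier expansion~(\ref{EschoenbergSphere}), once the asymptotic behaviour of the ultraspherical polynomials $P_k^{(\lambda)}$ as $\lambda \to \infty$ is invoked.

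For the ``if'' direction, observe that for any $x_1, \ldots, x_n \in S^{d-1}$ the matrix $\bigl[ \langle x_j, x_k \rangle \bigr] = \bigl[ \cos\theta(x_j,x_k) \bigr]$ is a Gram matrix, hence positive semidefinite, with all diagonal entries equal to~$1$. The Schur product theorem (as already noted in the excerpt) then shows that every entrywise power $\bigl[ \cos^k\theta(x_j,x_k) \bigr]$ is positive semidefinite, and hence so is any convergent non-negative combination $\sum_{k\geq 0} c_k \cos^k\theta$ with $c_k \geq 0$ and $\sum c_k < \infty$. Since this argument is independent of~$d$, the series is positive definite on every sphere simultaneously.

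For the ``only if'' direction, fix $f$ positive definite on all $S^{d-1}$. For each $d \geq 2$, Schoenberg's expansion~(\ref{EschoenbergSphere}) gives coefficients $c_k^{(d)} \geq 0$ with $\sum_k c_k^{(d)} = f(1) < \infty$. I will renormalize by writing $Q_k^{(\lambda)}(x) := P_k^{(\lambda)}(x) / P_k^{(\lambda)}(1)$, so that $Q_k^{(\lambda)}(1) = 1$ and $|Q_k^{(\lambda)}(\cos\theta)| \leq 1$, and absorb the constants into new non-negative coefficients $b_k^{(d)}$ satisfying $\sum_k b_k^{(d)} = f(1)$. The key classical asymptotic, which I would quote from the Szegő reference on orthogonal polynomials, is that for each fixed $k$,
\[
\lim_{\lambda \to \infty} Q_k^{(\lambda)}(x) = x^k \qquad \text{uniformly on } [-1, 1].
\]
View each sequence $\bigl( b_k^{(d)} \bigr)_{k \geq 0}$ as a finite positive measure $\mu_d$ on $\Z_+$ of fixed total mass $f(1)$; by a diagonal argument, pass to a subsequence $d_j \to \infty$ along which $b_k^{(d_j)} \to c_k$ for every $k$, with $c_k \geq 0$ and $\sum_k c_k \leq f(1)$ by Fatou's lemma. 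Using the uniform bound $|Q_k^{(\lambda)}| \leq 1$ together with the summability in $k$, a dominated-convergence argument lets me exchange limit and sum to conclude
\[
f(\cos\theta) = \sum_{k=0}^\infty c_k \cos^k\theta,
\]
and evaluating at $\theta = 0$ forces $\sum_k c_k = f(1)$, so no mass is lost at infinity.

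The main technical obstacle is the double limit: for each fixed $d$ the series in $k$ converges uniformly on $[-1,1]$, but as $d \to \infty$ one must control both the coefficients $b_k^{(d)}$ and the functions $Q_k^{(\lambda_d)}$ simultaneously. The saving grace is that $|Q_k^{(\lambda)}(\cos\theta)| \leq 1$ holds uniformly in $\lambda$ and $\theta$, which, combined with the uniform-in-$d$ bound $\sum_k b_k^{(d)} = f(1)$, provides an integrable majorant for dominated convergence on $\Z_+$. Once this exchange is justified, the representation~(\ref{cos-decomposition}) with non-negative, summable coefficients follows, and the sufficiency of this form has already been established.
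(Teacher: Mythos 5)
The survey does not actually prove this theorem (it is quoted from \cite{Schoenberg-Duke42}), so the only question is whether your argument stands on its own. Your sufficiency direction is fine and is exactly the Schur-product observation already made in the text. Your necessity direction follows Schoenberg's original strategy — expand in each dimension via~(\ref{EschoenbergSphere}), normalize so that $Q_k^{(\lambda)}(1)=1$ and $|Q_k^{(\lambda)}|\leq 1$, use $Q_k^{(\lambda)}\to x^k$ as $\lambda\to\infty$, and extract coefficient limits $b_k^{(d_j)}\to c_k$ by a diagonal argument. All of that is correct.

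The gap is in the exchange of limit and sum. Dominated convergence on $\Z_+$ requires a summable majorant $g_k$ with $b_k^{(d_j)}\leq g_k$ for all $j$; the uniform bound $\sum_k b_k^{(d_j)}=f(1)$ controls only the \emph{total} mass and does not produce such a majorant, because the mass may drift to higher and higher indices $k$ as $j\to\infty$ (the sequences $(b_k^{(d_j)})_k$ need not be tight). What your truncation actually yields, rigorously, is only the inequality
\[
\Bigl| f(\cos\theta)-\sum_{k=0}^{\infty}c_k\cos^k\theta \Bigr|\;\leq\; f(1)-\sum_{k=0}^{\infty}c_k ,
\]
valid for all $\theta$, and the right-hand side vanishes only if no mass escapes. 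Your proposed fix — ``evaluating at $\theta=0$ forces $\sum_k c_k=f(1)$'' — is circular: at $\theta=0$ one has $Q_k^{(\lambda)}(1)=1$ for every $k$, so the escaping mass contributes exactly $f(1)-\sum_k c_k$ to the tail and is invisible to the termwise limit; you cannot use the identity at $\theta=0$ to prove the identity at $\theta=0$. Ruling out this escape of mass to infinity is the genuine crux of Schoenberg's proof and requires a separate argument (or an altogether different route, such as the Choquet-theoretic one of Christensen--Ressel mentioned later in the survey, or the Hankel-matrix approach of Theorem~\ref{Thankel}); as written, your proof does not supply it.
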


This provides a return to the dominant theme, of isometric embedding
into Hilbert space.

\begin{corollary}
The function $\rho( \theta )$ is a Hilbert distance on $S^\infty$ if
and only if
\[
\rho( \theta )^2 = \sum_{k = 0}^\infty c_k ( 1 - \cos^k \theta ),
\]
where $c_k \geq 0$ for all $k \geq 0$ and
$\sum_{k = 0}^\infty c_k < \infty$.
\end{corollary}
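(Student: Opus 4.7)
The plan is to combine the Bochner characterization of Hilbert distances on a homogeneous space with Schoenberg's series decomposition for rotation-invariant positive definite kernels on $S^\infty$, both of which are stated in the excerpt. The notation $\rho(\theta)$ implicitly asserts that the candidate distance depends only on the geodesic separation $\theta$, i.e., is rotation-invariant; this is what lets the two ingredients match up.

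For sufficiency, suppose $\rho(\theta)^2 = \sum_{k=0}^\infty c_k(1 - \cos^k \theta)$ with $c_k \geq 0$ and $\sum c_k < \infty$. I would set
\[
f(\cos \theta) := \sum_{k=0}^\infty c_k \cos^k \theta,
\]
which, by the immediately preceding theorem of Schoenberg, is a continuous, real-valued, rotation-invariant, positive definite kernel on $S^\infty$. Writing $\sigma := f(1) = \sum_{k=0}^\infty c_k$, the identity
\[
\rho(\theta)^2 = \sigma - f(\cos \theta)
\]
places $\rho$ in precisely the form given by Bochner's theorem (applied with $h(x,y) = f(\cos \theta(x,y))$ and any fixed $x_0$), so $\rho$ is a Hilbert distance. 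Equivalently, one can produce the embedding directly from the Gram-matrix formulation: for any finite tuple $x_1,\dots,x_n$, the matrix $A_{jk} := f(\cos \theta(x_j,x_k))$ is positive semidefinite with constant diagonal $\sigma$, so $A = B^T B$ and $x_j \mapsto b_j/\sqrt{2}$ is an isometry of the restricted metric into Euclidean space; the standard RKHS construction promotes this to a global isometric embedding into a separable Hilbert space.

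For necessity, suppose $\rho(\theta)$ is a rotation-invariant Hilbert distance on $S^\infty$. By Bochner's theorem, there exists a continuous invariant positive definite kernel $\tilde h$ and a point $x_0$ with $\rho(x,y)^2 = \tilde h(x_0,x_0) - \tilde h(x,y)$. Averaging $\tilde h$ over the rotation group (taking the obvious finite-dimensional inductive limit, since on each $S^{d-1}$ Haar averaging is legitimate) produces an invariant kernel $h(x,y) = f(\cos \theta(x,y))$ that still satisfies $\rho^2 = f(1) - f(\cos \theta)$. Since $f$ is rotation-invariant and positive definite on every $S^{d-1}$, the Schoenberg theorem (\ref{cos-decomposition}) gives $f(\cos\theta) = \sum_{k=0}^\infty c_k \cos^k \theta$ with $c_k \geq 0$ and $\sum c_k = f(1) < \infty$. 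Substituting yields the claimed representation.

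The routine steps are algebraic. The only real subtlety, and the place I would spend the most care, is the passage to $S^\infty$ in the necessity direction, since $S^\infty$ is not compact in the sense used to state Bochner's theorem: one has to verify that rotation invariance together with a Hilbert distance on the full inductive limit is equivalent to the same conditions holding on each finite-dimensional $S^{d-1}$ with uniform control, so that the finite-dimensional Bochner decompositions and Schoenberg's $S^\infty$ expansion can be glued together. Once that passage is justified, the corollary is an immediate bookkeeping combination of the two theorems already in hand.
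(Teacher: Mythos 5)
Your proof is correct and follows exactly the route the paper intends: the corollary is stated there without proof, as the immediate combination of Bochner's characterization of Hilbert distances ($\rho^2 = h(x_0,x_0) - h$) with Schoenberg's expansion $f(\cos\theta) = \sum_k c_k\cos^k\theta$ of invariant positive definite kernels on $S^\infty$. The one subtlety you flag in the necessity direction --- gluing the finite-dimensional Bochner decompositions into a single invariant kernel on the non-compact $S^\infty$ --- closes in one line: on each $S^{d-1}$ the Bochner constant may be taken to be the average $\iint \rho(x,y)^2 \std\sigma_d(x)\std\sigma_d(y) \leq \sup_\theta \rho(\theta)^2$, which is bounded uniformly in $d$, so $f(\cos\theta) := \sup_\theta \rho(\theta)^2 - \rho(\theta)^2$ is positive definite on every sphere and Schoenberg's theorem applies directly.
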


However, there is much more to derive from Schoenberg's theorem, once
it is freed from the spherical context.

\begin{theorem}[Schoenberg \cite{Schoenberg-Duke42}]\label{Tschoenberg}
Let $f: [ -1, 1 ] \to \R$ be a continuous function. If the matrix
$[ f( a_{j k}) ]_{j, k = 1}^n$ is positive semidefinite for all $n
\geq 1$ and all positive semidefinite matrices
$[ a_{j k} ]_{j, k = 1}^n$ with entries in $[ -1, 1 ]$, then, and only
then,
\[
f( x ) = \sum_{k = 0}^\infty c_k x^k \qquad ( x \in [ -1, 1 ] ),
\]
where $c_k \geq 0$ for all $k \geq 0$ and
$\sum_{k = 0}^\infty c_k < \infty$.
\end{theorem}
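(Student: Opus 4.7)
The plan is to derive this theorem as essentially a reformulation of the preceding characterization of rotation-invariant positive definite kernels on all spheres (equation~(\ref{cos-decomposition})), exploiting the fact that Gram matrices of unit vectors on $S^{d-1}$ form a distinguished subfamily of positive semidefinite matrices with entries in $[-1,1]$. Sufficiency follows from the Schur product theorem, while necessity reduces, essentially tautologically, to the previous theorem.

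For sufficiency, assume $f(x) = \sum_{k=0}^\infty c_k x^k$ with $c_k \geq 0$ and $\sum_k c_k < \infty$. Given any positive semidefinite matrix $A = [a_{jk}]_{j,k=1}^n$ with entries in $[-1,1]$, the Schur product theorem gives that each entrywise power $[a_{jk}^k]$ is positive semidefinite (the $k = 0$ case being the all-ones matrix). Hence each partial sum $\sum_{k=0}^N c_k [a_{jk}^k]$ is positive semidefinite, and since $|a_{jk}^k| \leq 1$ and $\sum_k c_k < \infty$, these partial sums converge entrywise to $[f(a_{jk})]$. A pointwise limit of positive semidefinite matrices is positive semidefinite.

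For necessity, suppose $f$ preserves positivity entrywise on all positive semidefinite matrices with entries in $[-1,1]$. Fix any dimension $d \geq 2$ and any points $x_1, \ldots, x_n \in S^{d-1}$. The Gram matrix $[\langle x_j, x_k\rangle] = [\cos\theta(x_j,x_k)]$ is positive semidefinite with entries in $[-1,1]$, so by hypothesis $[f(\cos\theta(x_j,x_k))]$ is positive semidefinite. Since $d$ and the points $x_j$ were arbitrary, this is exactly the statement that $f(\cos\theta)$ is a continuous, rotation-invariant, positive definite kernel on $S^{d-1}$ for every $d \geq 2$. Applying the preceding theorem yields $f(\cos\theta) = \sum_{k=0}^\infty c_k \cos^k\theta$ with $c_k \geq 0$ and $\sum_k c_k < \infty$; setting $x = \cos\theta$, which sweeps out $[-1,1]$ as $\theta$ ranges over $[0,\pi]$, delivers the desired series representation of $f$. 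The main conceptual step---and in a sense the only obstacle---is recognizing that the entrywise positivity hypothesis on matrices translates to positive definiteness on every sphere simultaneously; once this translation is made, all substantive work has already been done in establishing~(\ref{cos-decomposition}).
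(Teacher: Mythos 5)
Your proposal is correct and follows essentially the same route as the paper: sufficiency via the Schur product theorem together with the closedness of the cone of positive semidefinite matrices, and necessity by identifying positive semidefinite matrices with unit diagonal (equivalently, Gram matrices of points on spheres of arbitrary dimension) and invoking the characterization~(\ref{cos-decomposition}) of positive definite functions on all spheres. The only cosmetic difference is the direction in which you traverse the Gram-matrix correspondence; the substance is identical.
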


\begin{proof}
One implication follows from the Schur product theorem \cite{Schur1911},
which says that if the $n \times n$ matrices $A$ and $B$ are positive
semidefinite, then so is their entrywise product
$A \circ B := [ a_{j k} b_{j k} ]_{j, k = 1}^n$. Indeed, inductively
setting $B = A^{\circ k} = A \circ \cdots \circ A$, the $k$-fold
entrywise power, shows that every monomial $x^k$ preserves positivity
when applied entrywise.  That the same property holds for functions
$f( x ) = \sum_{k \geq 0} c_k x^k$, with all $c_k \geq 0$, now follows
from the fact that the set of positive semidefinite $n \times n$
matrices forms a closed convex cone, for all $n \geq 1$.

For the non-trivial, reverse implication we restrict the test matrices
to those with leading diagonal terms all equal to $1$. By interpreting
such a matrix $A$ as a Gram matrix, we identify $n$ points on the
sphere $x_1$, \ldots, $x_n \in S^{n - 1}$ satisfying
\[
a_{j k} = \langle x_j, x_k \rangle = \cos \theta(x_j,x_k)%
\quad ( 1 \leq j, k \leq n ).
\]
Then we infer from Schoenberg's theorem that $f$ admits a
uniformly convergent Taylor series with non-negative coefficients.
\end{proof}

We conclude this section by mentioning some recent avenues of research
that start from Bochner's theorem (and its generalization in 1940, by
Weil, Povzner, and Raikov, to all locally compact abelian groups) and
Schoenberg's classification of positive definite functions on spheres.
On the theoretical side, there has been a profusion of recent
mathematical activity on classifying positive definite functions (and
strictly positive definite functions) in numerous settings, mostly
related to spheres~\cite{BC,BCX,CMS,Xu,XC,Ziegel}, two-point
homogeneous spaces\footnote{Recall \cite{Wang} that a metric space
$( X, \rho )$ is \emph{$n$-point homogeneous} if, given finite
sets $X_1$, $X_2 \subset X$ of equal size no more than $n$, every
isometry from $X_1$ to $X_2$ extends to a self-isometry of $X$. This
property was first considered by Birkhoff~\cite{Birkhoff}, and of
course differs from the more common usage of the terminology of a
homogeneous space $G / H$, whose study by Bochner was mentioned
above.}\cite{Barbosa1,Barbosa2,BGM}, 
locally compact abelian groups and homogeneous
spaces~\cite{Emonds-Fuhr,Guella-2017}, and products of
these~\cite{BPP,Berg-Porcu,Guella-2016,GMP1,GMP3,GMP2}.

Moreover, this line of work directly impacts applied fields. For
instance, in climate science and geospatial statistics, one uses
positive definite kernels and Schoenberg's results (and their sequels)
to study trends in climate behavior on the Earth, since it can be
modelled by a sphere, and positive definite functions on
$S^2 \times \R$ characterize space-time covariance functions on
it. See~\cite{Gneiting2013, MNPR, PAF, PBG} for more details on these
applications. There is a natural connection to probability theory,
through the work of L\'evy; see e.g.~\cite{Gangolli}. Other applied
fields include genomics and finance, through high-dimensional covariance
estimation. We elaborate on this in Chapter~\ref{Sstats} below.

There are several other applications of Schoenberg's work on positive
definite functions on spheres (his paper~\cite{Schoenberg-Duke42} has
more than 160 citations) and we mention here just a few of them.
Schoenberg's results were used by Musin~\cite{Musin-Annals} to compute
the kissing number in four dimensions, by an extension of Delsarte's
linear-programming method. Moreover, the results also apply to obtain
new bounds on spherical codes \cite{Musin}, with further applications
to sphere packing~\cite{Cohn1, Cohn2, Cohn3, CZ}.
There are also applications to
approximating functions and interpolating data on spheres,
pseudodifferential equations with radial basis functions,
and Gaussian random fields.

\begin{remark}\label{Rpinkus}
Another modern-day use of Schoenberg's results
in~\cite{Schoenberg-Duke42} is in Machine Learning;
see~\cite{Steinwart,Vapnik}, for example. Given a real inner-product
space $H$ and a function $f : \R \to \R$, an alternative notion of $f$
being \emph{positive definite} is as follows: for any finite set of
vectors $x_1$, \ldots, $x_n \in H$, the matrix
\[
[ f( \langle x_j, x_k \rangle ) ]_{j, k = 1}^n
\]
is positive semidefinite. This is in contrast to the notion promoted
by Bochner, Weil, Schoenberg, P\'olya, and others, which concerns
positivity of the matrix with entries
$f( \langle x_j - x_k, x_j - x_k \rangle^{1/2} )$.  It turns out that
every positive definite kernel on $H$, given by
\[
( x, y ) \mapsto f( \langle x, y \rangle )
\]
for a function $f$ which is positive definite in this alternate sense,
gives rise to a reproducing-kernel Hilbert space, which is a central
concept in Machine Learning. We restrict ourselves here to mentioning
that, in this setting, it is desirable for the kernel to be strictly
positive definite; see~\cite{pinkus04} for further clarification and
theoretical results along these lines.
\end{remark}

\subsection{Connections to harmonic analysis}

Positivity and sharp continuity bounds for linear transformations
between specific normed function spaces go hand in hand, especially
when focusing on the kernels of integral transforms. The end of 1950s
marked a fortunate condensation of observations, leading to a
quasi-complete classification of preservers of positive or bounded
convolution transforms acting on spaces of functions on locally
compact abelian groups. In particular, these results can be
interpreted as Schoenberg-type theorems for Toeplitz matrices or
Toeplitz kernels. We briefly recount the main developments.

A groundbreaking theorem of the 1930s attributed to Wiener and Levy
asserts that the pointwise inverse of a non-vanishing Fourier series
with coefficients in $L^1$ exhibits the same summability behavior of
the coefficient sequence. To be more precise, if $\phi$ is never zero and
has the representation
\[
\phi( \theta ) = \sum_{n = -\infty}^\infty c_n e^{\I n \theta}, %
\quad \text{where } \sum_{n= -\infty}^\infty | c_n | < \infty,
\]
then its reciprocal has a representation of the same form:
\[
( 1 / \phi )( \theta ) = \sum_{n= -\infty}^\infty d_n e^{\I n \theta},
\quad \text{where } \sum_{n= -\infty}^\infty | d_n | < \infty.
\]
It was Gelfand \cite{Gelfand-Sbornik41} who in 1941 cast this
permanence phenomenon in the general framework of commutative Banach
algebras. Gelfand's theory applied to the Wiener algebra
$W := \widehat{L^1( \Z )}$ of Fourier transforms of $L^1$ functions on
the dual of the unit torus proves the following theorem.

\begin{theorem}[Gelfand \cite{Gelfand-Sbornik41}]
Let $\phi \in W$ and let $f( z )$ be an analytic function defined in a
neighborhood of $\phi(\T)$. Then $f( \phi ) \in W$.
\end{theorem}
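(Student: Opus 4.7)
The plan is to recognize $W$ as a commutative unital Banach algebra and to invoke the holomorphic (Dunford--Riesz) functional calculus, following Gelfand's programme.

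First I would verify that $W$, equipped with pointwise multiplication, is a commutative unital Banach algebra. The Fourier-coefficient map $\phi = \sum c_n e^{\I n \theta} \mapsto (c_n)_{n \in \Z}$ identifies $W$ with $\ell^1( \Z )$ under convolution, and the norm $\| \phi \|_W := \sum_n | c_n |$ is submultiplicative by the usual estimate for convolution. The constant function $1$ serves as the unit.

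Next I would identify the maximal ideal space $M_W$ with the torus $\T$. Each point $\theta \in \T$ yields a character $\chi_\theta : \phi \mapsto \phi( e^{\I \theta} )$. Conversely, any character $\chi$ on $W$ is continuous and multiplicative, hence is determined by $z := \chi( e^{\I \theta} )$. Since both $e^{\I \theta}$ and $e^{-\I \theta}$ have $W$-norm equal to $1$, their images $z$ and $z^{-1}$ both satisfy $| z |, | z^{-1} | \leq 1$, forcing $z \in \T$; thus $\chi = \chi_z$. Consequently the Gelfand transform of $\phi \in W$ is the function $\phi$ itself, and the Banach-algebra spectrum of $\phi$ coincides with its range:
\[
\sigma_W( \phi ) = \phi( \T ).
\]

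With this in hand, the holomorphic functional calculus finishes the proof. Since $f$ is analytic on an open neighborhood $U$ of $\sigma_W( \phi ) = \phi( \T )$, one can choose a finite union of rectifiable Jordan curves $\Gamma \subset U$ winding once around $\sigma_W( \phi )$ while avoiding it. For each $z \in \Gamma$ the element $z - \phi$ is invertible in $W$ because $z \notin \sigma_W( \phi )$, and $z \mapsto ( z - \phi )^{-1}$ is continuous from $\Gamma$ into $W$. One then defines
\[
f( \phi ) := \frac{1}{2\pi \I} \oint_\Gamma f( z ) ( z - \phi )^{-1} \std z,
\]
interpreted as a norm-convergent Bochner integral in $W$. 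The standard Dunford--Riesz theory yields $f( \phi ) \in W$, and its Gelfand transform is $\theta \mapsto f\bigl( \phi( e^{\I \theta} ) \bigr)$, the desired pointwise composition.

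The main obstacle is the identification of $M_W$ with $\T$: a priori there could be ``exotic'' characters failing to be point evaluations. This is precisely ruled out by the spectral-radius argument above, which exploits that the generators $e^{\pm \I \theta}$ are units of $W$-norm one. Once the spectrum is pinned down, the rest is abstract machinery. As a sanity check, the original Wiener--Levy reciprocal theorem is recovered by taking $f( z ) = 1 / z$, which is analytic on some neighborhood of $\phi( \T )$ precisely when $\phi$ is nowhere vanishing.
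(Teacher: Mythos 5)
Your proof is correct and follows exactly the route the paper indicates: it gives no argument beyond the remark that ``Gelfand's theory applied to the Wiener algebra $W$ proves the following theorem,'' and your write-up simply fills in that standard machinery (identifying the maximal ideal space of $W \cong \ell^1(\Z)$ with $\T$ via the norm-one generators $e^{\pm\I\theta}$, so that $\sigma_W(\phi) = \phi(\T)$, then applying the Riesz--Dunford calculus). Nothing to correct.
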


The natural inverse question of deriving smoothness properties of
inner transformations of Lebesgue spaces of Fourier transforms was
tackled almost simultaneously by several analysts.  For example, Rudin
proved in 1956 \cite{Rudin-CRAS56} that a coefficient-wise
transformation $c_n \mapsto f( c_n )$ mapping the space
$\widehat{L^1( \T )}$ into itself implies the analyticity of $f$ in a
neighborhood of zero. In a similar vein, Rudin and Kahane proved in
1958 \cite{Kahane-Rudin-CRAS58} that a coefficient-wise transformation
$c_n \mapsto f(c_n)$ which preserves the space of Fourier transforms
$\widehat{M( \T )}$ of finite measures on the torus implies that $f$ is
an entire function. In the same year, Kahane \cite{Kahane-CRAS58}
showed that no quasi-analytic function (in the sense of
Denjoy--Carleman) preserves the space $\widehat{L^1(\Z)}$ and
Katznelson \cite{Katznelson-CRAS58} refined an inverse to Gelfand's
theorem above, by showing the semi-local analyticity of transformers
of elements of $\widehat{L^1( \Z )}$ subject to some support
conditions.

Soon after, the complete picture emerged in full clarity. It was
unveiled by Helson, Kahane, Katznelson and Rudin in an Acta
Mathematica article \cite{Helson-all-Acta59}. Given a function $f$
defined on a subset $E$ of the complex plane, we say that $f$ operates
on the function algebra $A$, if $f( \phi ) \in A$ for every
$\phi \in A$ with range contained in $E$. The following metatheorem is
proved in the cited article.

\begin{theorem}[Helson--Kahane--Katznelson--Rudin \cite{Helson-all-Acta59}]
Let $G$ be a locally compact abelian group and let $\Gamma$ denote its
dual, and suppose both are endowed with their respective Haar
measures. Let $f : [-1,1] \to \C$ be a function satisfying $f(0) = 0$.
\begin{enumerate}
\item If $\Gamma$ is discrete and $f$ operates on
$\widehat{L^1( G )}$, then $f$ is analytic in some neighborhood of
the origin.

\item If $\Gamma$ is not discrete and $f$ operates on
$\widehat{L^1(G)}$, then $f$ is analytic in $[-1,1]$.

\item If $\Gamma$ is not compact and $f$ operates on
$\widehat{M(G)}$, then $f$ can be extended to an entire function. 
\end{enumerate}
\end{theorem}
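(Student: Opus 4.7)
The plan is to convert the algebraic hypothesis that $f$ operates on $A$ (where $A$ denotes $\widehat{L^1(G)}$ or $\widehat{M(G)}$) into a quantitative norm inequality via a Baire-category argument, and then to probe that inequality with carefully chosen test elements of $A$ built from characters of $\Gamma$ or from measures concentrated on independent subsets of $G$. Using the continuity of $f$ together with the Banach algebra structure of $A$, the sets
\[
E_N := \{ \phi \in A : \phi(\Gamma) \subset [-1,1] \text{ and } \| f \circ \phi \|_A \le N \}
\]
are closed and exhaust $\{ \phi \in A : \phi(\Gamma) \subset [-1,1] \}$, so by Baire category together with a translation argument one obtains constants $\delta, M > 0$ with
\[
\| f \circ \phi \|_A \le M \qquad \text{whenever } \phi \in A \text{ and } \| \phi \|_A \le \delta.
\]

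For part (1), $\Gamma$ being discrete makes $G$ compact. I would take test elements of the form $\phi = \sum_{j=1}^N c_j ( \chi_j + \overline{\chi_j} )$, with the $\chi_j$ lying in a dissociate subset of $\Gamma$, so that $\| \phi \|_A = 2 \sum_j | c_j |$, the range of $\phi$ lies in $[-1, 1]$ for small $c_j$, and the monomials $\phi^n$ expand as linear combinations of distinct characters with explicit combinatorial coefficients. Comparing Fourier coefficients of $f \circ \phi$ against the bound $M$ then yields decay of the form $| f^{(n)}( 0 ) / n! | \le C \rho^n$ for suitable $\rho > 0$, giving analyticity of $f$ in a neighborhood of the origin. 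For part (2), the non-discreteness of $\Gamma$ endows $A$ with partition-of-unity (regularity) properties that allow the same construction to be carried out centred at any $t_0 \in [-1, 1]$ rather than only at the origin; combining the resulting local analyticity at every $t_0$ with the uniform constant $M$ from the Baire step produces a common radius of convergence, and hence analyticity of $f$ in an open neighborhood of the full interval $[ -1, 1 ]$.

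For part (3), the hypothesis that $\Gamma$ is non-compact (equivalently, $G$ is non-discrete) enlarges the algebra from $\widehat{L^1(G)}$ to $\widehat{M(G)}$, which admits Fourier transforms of singular measures: by an independent-set (Sidon) construction inside $G$, one produces $\phi \in \widehat{M(G)}$ of $\widehat{M(G)}$-norm at most $1$ whose values have modulus close to $1$ on arbitrarily many prescribed characters of $\Gamma$. Inserting such $\phi$ into the Baire bound and re-running the character-product analysis from part (1) forces $| f^{(n)}( 0 ) / n! | \le C \rho^n$ for \emph{every} $\rho > 0$, which is precisely the condition for $f$ to extend to an entire function on $\C$.

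The main obstacle is the construction and harmonic-analytic exploitation of the test elements in each case: for parts (1) and (3) one needs sufficiently rich dissociate (Sidon) subsets of $\Gamma$ and of $G$ respectively, together with sharp estimates on the $A$-norms of the resulting trigonometric polynomials or singular measures, and the combinatorial bookkeeping that translates the $A$-norm bound on $f \circ \phi$ into coefficient estimates for $f$ must be calibrated carefully to the scale $\delta$ coming from the Baire step. A secondary subtlety, most visible in part (2), is the gluing argument that promotes local analyticity at each point of $[ -1, 1 ]$ to analyticity in an open neighborhood of the whole compact interval.
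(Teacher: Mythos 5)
The paper does not prove this theorem at all: it is stated as a ``metatheorem'' of the survey, with the proof delegated entirely to the cited Acta Mathematica article of Helson, Kahane, Katznelson and Rudin. So there is no in-paper argument to compare against; what I can say is that your outline does reproduce the broad architecture of the known proof (a quantitative norm bound obtained by a category argument, followed by probing that bound with trigonometric polynomials built on dissociate subsets of $\Gamma$ in the discrete case, localization via the regularity of $A(\Gamma)$ in the non-discrete case, and Fourier transforms of measures carried by independent subsets of $G$ in the $M(G)$ case).

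There is, however, a genuine gap at the very first step. You justify the closedness of the sets $E_N$ ``using the continuity of $f$,'' but continuity is not a hypothesis: $f$ is merely a function on $[-1,1]$ with $f(0)=0$, and the absence of any continuity assumption is precisely one of the points emphasized in this circle of results (the survey stresses exactly this feature in discussing Rudin's refinements immediately after the theorem). If $\phi_n \to \phi$ in the algebra norm (hence uniformly on $\Gamma$), you cannot conclude that $f\circ\phi_n \to f\circ\phi$ pointwise, nor identify a weak-$*$ limit of the $f\circ\phi_n$ with $f\circ\phi$, without knowing something about the regularity of $f$; so $E_N$ need not be closed and Baire category does not apply as written. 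In the actual proof one must first extract boundedness and continuity of $f$ near $0$ from the operating hypothesis itself (by a direct construction with test elements, not by category), and only then run the uniform-bound argument. Beyond this, your sketch explicitly defers the constructions that carry all the analytic weight --- the dissociate and independent (Kronecker-type) sets, the norm estimates for the resulting polynomials and singular measures, and the combinatorial passage from the bound on $\|f\circ\phi\|_A$ to the coefficient estimates $|f^{(n)}(0)/n!| \le C\rho^n$ --- so as it stands the proposal is a plausible roadmap rather than a proof.
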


Rudin refined the above results to apply in the case of various $L^p$
norms \cite{Rudin-PAMS59,Rudin-Canadian62}, by stressing the lack of
continuity assumption for the transformer $f$ in all results (similar
in nature to the statements in the above theorem). From Rudin's work
we extract a highly relevant observation, \textit{\`a la} Schoenberg's
theorem, aligned to the spirit of the present survey.

\begin{theorem}[Rudin~\cite{Rudin-Duke59}]\label{Trudin}
Suppose $f : (-1,1) \to \R$ maps every positive semidefinite Toeplitz
kernel with elements in $( -1, 1 )$ into a positive semidefinite
kernel:
\[
[ a_{j - k} ]_{j, k = -\infty}^\infty \geq 0 \qquad \implies \qquad %
[ f( a_{j - k} ) ]_{j, k = -\infty}^\infty \geq 0.
\]
Then $f$ is absolutely monotonic, that is analytic on $( -1, 1 )$ with
a Taylor series having  non-negative coefficients:
\[
f( x ) = \sum_{n = 0}^\infty c_k x^k, \qquad %
\text{where } c_k \geq 0 \text{ for all } k \geq 0.
\]
\end{theorem}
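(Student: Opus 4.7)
The strategy is to deduce this result from Schoenberg's Theorem~\ref{Tschoenberg} by showing that Toeplitz positivity preservation implies entrywise positivity preservation on a sufficiently rich class of positive semidefinite matrices with entries in $(-1,1)$. The correspondence between positive semidefinite Toeplitz kernels on $\Z$ and positive finite measures on the torus $\T$ (Herglotz/Bochner) supplies the necessary stock of test kernels.

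For parameters $r \in (0,1)$ and $\beta \in (0, \pi)$, the symmetric two-point measure $\mu = r(\delta_\beta + \delta_{-\beta})/2$ on $\T$ has Fourier coefficients $a_n = r \cos(n \beta)$, giving a positive semidefinite Toeplitz kernel with all entries in $(-r, r) \subset (-1,1)$. Any finite principal submatrix of this kernel indexed by integers $k_1, \ldots, k_N$ equals $r$ times the rank-two Gram matrix of the points $(\cos(k_j \beta), \sin(k_j \beta))$ on the unit circle $S^1$, so Rudin's hypothesis forces $[f(r\cos((k_j - k_l) \beta))]_{j,l=1}^N$ to be positive semidefinite. Next I would invoke that the Hadamard product of positive semidefinite Toeplitz kernels is again of this type, since $\hat\mu_1(n)\hat\mu_2(n) = \widehat{\mu_1 * \mu_2}(n)$. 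Iterating, the sequences $r^d \prod_{i=1}^d \cos(n \beta_i)$ are positive semidefinite Toeplitz kernels, whose principal submatrices yield (rescaled) Gram matrices of points on spheres $S^{2^d - 1}$ via the iterated tensor embedding of $\T^d$.

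To complete the reduction to Schoenberg's theorem one needs density and continuity arguments: varying $r \in (0,1)$, the dimension $d$, the angles $\beta_i$, and the integers $k_j$ (using Kronecker's equidistribution theorem for the approximation of arbitrary angles), the accessible matrices should be dense in the set of all positive semidefinite spherical Gram matrices with entries in $(-1,1)$. Schoenberg's Theorem~\ref{Tschoenberg}, applied on each closed sub-interval $[-r, r] \subset (-1,1)$, then delivers the representation $f(x) = \sum_{k=0}^\infty c_k x^k$ with all $c_k \geq 0$ on $(-1,1)$. The principal obstacle I foresee lies in establishing continuity of $f$, which is famously not part of Rudin's hypothesis. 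Extracting regularity of $f$ from positivity preservation alone -- typically by exploiting PSD Toeplitz kernels whose entries concentrate near a prescribed value and using the rigidity imposed by the positive-semidefiniteness condition on $[f(a_{j-k})]$ -- constitutes the technical heart of Rudin's original argument, and would be the hardest step to make rigorous.
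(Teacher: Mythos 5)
Your outline correctly identifies the two pillars of any Rudin-style argument --- a restricted family of Toeplitz test kernels built from measures on $\T$, and an eventual appeal to spherical positive definiteness in the style of Schoenberg --- but both steps on which your reduction actually rests are left unproved, and they are precisely the hard parts. First, continuity: you defer it explicitly, yet without it nothing else works. Theorem~\ref{Tschoenberg} as stated assumes $f$ continuous, and your density argument also needs continuity of $f$ to pass from a dense subfamily of Gram matrices to all of them. The survey records that Rudin obtains continuity by a mollifier argument, and the mechanism exploits the additive constant in the test family~(\ref{Erudin}), $a_n = \alpha + \beta\cos(n\theta)$: translating $\alpha$ keeps one inside the test class, so convolving $f$ in that variable against a mollifier again yields a preserver, which is then smooth. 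Your family $r\cos(n\beta)$ has no constant term (one could add the constant kernel, the transform of a point mass at the identity of $\T$, but you do not), and in any case you have not carried out the mollification or the limiting argument that recovers the conclusion for $f$ itself.

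Second, the density claim is a substantive unproved assertion, not a routine verification. The matrices you can access are, up to the scalar $r^d$ and a Weyl-type approximation (which itself requires $2\pi, \beta_1, \ldots, \beta_d$ to be rationally independent, since the same integers $k_1, \ldots, k_N$ index every circle factor), exactly Hadamard products of $N \times N$ circle Gram matrices, i.e.\ Gram matrices of configurations on flat tori tensor-embedded in spheres. Whether these are dense among all $N \times N$ correlation matrices with entries in $(-1,1)$ is far from obvious, and you offer no argument beyond ``should be dense.'' Rudin's proof, as described immediately after the theorem in the survey, does not make this reduction: it reruns the ultraspherical-harmonics expansion directly on the restricted rank-three family~(\ref{Erudin}), which is also the philosophy behind the later strengthening in Theorem~\ref{Thankel}, where one proves directly that low-rank test matrices suffice rather than trying to show they are dense in the full cone. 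As it stands, your proposal is a plan whose two load-bearing steps are both missing.
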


The converse is obviously true by the Schur product theorem. The
elementary proof, quite independent of the derivation of the
metatheorem stated above, is contained in \cite{Rudin-Duke59}. Notice
again the lack of a continuity assumption in the hypotheses.

In fact, Rudin proves more, by restricting the test domain of positive
semidefinite Toeplitz kernels to the two-parameter family
\begin{equation}\label{Erudin}
a_n = \alpha + \beta \cos( n \theta ) \qquad (n \in \Z)
\end{equation}
with $\theta$ fixed so that $\theta / \pi$ is irrational and $\alpha$,
$\beta \geq 0$ such that $\alpha + \beta < 1$. Rudin's proof commences
with a mollifier argument to deduce the continuity of the transformer,
then uses a development in spherical harmonics very similar to the
original argument of Schoenberg. We will resume this topic in
Section~\ref{S33}, setting it in a wider context.

With the advances in abstract duality theory for locally convex
spaces, it is not surprising that proofs of Schoenberg-type theorems
should be accessible with the aid of such versatile tools. We will
confine ourselves here to mentioning one pertinent convexity-theoretic
proof of Schoenberg's theorem, due to Christensen and Ressel
\cite{Christensen-Ressel-TAMS78}. (See also \cite{Christensen-Ressel-2}
for a complex sphere variant.)

Skipping freely over the details, the main observation of these two
authors is that the multiplicatively closed convex cone of positivity
preservers of positive semidefinite matrices of any size, with entries
in $[ -1, 1 ]$, is closed in the product topology of $\R^{[ -1, 1 ]}$,
with a compact base $K$ defined by the normalization $f( 1 ) = 1$. The
set of extreme points of $K$ is readily seen to be closed, and an
elementary argument identifies it as the set of all monomials $x^n$,
where $n \geq 0$, plus the characteristic functions $\chi_{1} \pm
\chi_{-1}$. An application of Choquet's representation theorem now
provides a proof of a generalization of Schoenberg's theorem,
by removing the continuity assumption in the statement.


\section{Entrywise functions preserving positivity in all dimensions}

\subsection{History}\label{Shistory}

With the above history to place the present survey in context, we move
to its dominant theme: entrywise positivity preservers. In analysis
and in applications in the broader mathematical sciences, one is
familiar with applying functions to the spectrum of diagonalizable
matrices: $A = U D U^*$ then $f( A ) = U f( D ) U^*$. More formally,
one uses the Riesz--Dunford holomorphic functional calculus to define
$f( A )$ for classes of matrices $A$ and functions $f$.

Our focus in this survey will be on the parallel philosophy of
\emph{entrywise calculus}. To differentiate this from the functional
calculus, we use the notation $f[ A ]$.

\begin{definition}
Fix a domain $I \subset \C$ and integers $m$, $n \geq 1$. Let
$\cP_n( I )$ denote the set of $n \times n$ Hermitian
positive semidefinite matrices with all entries in~$I$.

A function $f : I \to \C$ acts \emph{entrywise} on a matrix
\[
A = %
[ a_{j k} ]_{1 \leq j \leq m, \ 1 \leq k \leq n} \in I^{m \times n}
\]
by setting
\[
f[ A ] := [ f( a_{j k} ) ]_{1 \leq j \leq m, \ 1 \leq k \leq n} %
\in \C^{m \times n}.
\]
Below, we allow the dimensions $m$ and $n$ to vary, while keeping the
uniform notation $f[-]$.

We also let $\bone_{m \times n}$ denote the $m \times n$ matrix with
each entry equal to one. Note that
$\bone_{n \times n} \in \cP_n( \R )$.
\end{definition}

In this survey, we explore the following overarching question in
several different settings.

\emph{Which functions preserve positive semidefiniteness when applied
entrywise to a class of positive matrices?}

This question was first asked by P\'olya and Szeg\"o in their
well-known book \cite{polya-szego}. The authors observed that Schur's
product theorem, together with the fact that the positive matrices
form a closed convex cone, has the following consequence: if $f( x )$
is any power series with non-negative Maclaurin coefficients that
converges on a domain $I \subset \R$, then $f$ preserves positivity
(that is, preserves positive semidefiniteness) when applied entrywise
to positive semidefinite matrices with entries in~$I$.  P\'olya and
Szeg\"o then asked if there are any other functions that possess this
property. As discussed above, Schoenberg's theorem
\ref{Tschoenberg} provides a definitive answer to their question
(together with the improvements by Rudin or Christensen--Ressel to remove
the continuity hypothesis). Thanks to P\'olya and Szeg\"o's observation,
Schoenberg's result may be considered as a rather challenging converse to
the Schur product theorem.

In a similar vein, Rudin \cite{Rudin-Duke59} observed that if one
moves to the complex setting, then the conjugation map also preserves
positivity when applied entrywise to positive semidefinite complex
matrices. Therefore the maps
\[
z \mapsto z^j \overline{z}^k \qquad ( j, k \geq 0 )
\]
preserve positivity when applied entrywise to complex matrices of all
dimensions, again by the Schur product theorem. The same property is
now satisfied by non-negative linear combinations of these
functions. In \cite{Rudin-Duke59}, Rudin made this observation and
conjectured, \textit{\`a la} P\'olya--Szeg\"o, that these are all of
the preservers. This was proved by Herz in 1963.

\begin{theorem}[Herz \cite{Herz63}]\label{Therz}
Let $D( 0, 1 )$ denote the open unit disc in $\C$, and suppose
$f : D( 0, 1 ) \to \C$. The entrywise map $f[-]$ preserves positivity
on $\cP_n\bigl( D( 0, 1 ) \bigr)$ for all $n \geq 1$, if and only if
\[
f( z ) = \sum_{j, k \geq 0} c_{j k} z^j \overline{z}^k \qquad %
\text{for all } z \in D( 0, 1 ),
\]
where $c_{j k} \geq 0$ for all $j$, $k \geq 0$.
\end{theorem}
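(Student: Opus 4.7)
The sufficiency direction extends the argument used for Schoenberg's Theorem~\ref{Tschoenberg}: since $\overline{A}$ is positive semidefinite if and only if $A$ is, the Schur product theorem gives that each mixed monomial $A^{\circ j} \circ \overline{A}^{\circ k}$ is positive semidefinite, and non-negative linear combinations and pointwise limits (using closedness of the positive semidefinite cone) complete the ``if'' direction.

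For necessity, suppose $f[-]$ preserves positivity on $\cP_n(D(0,1))$ for every $n \geq 1$. My plan is to show that for each $n \in \Z$, the Fourier coefficient
\[
\gamma_n(a) := \frac{1}{2\pi} \int_0^{2\pi} f(a e^{\I\phi}) e^{-\I n\phi} \std\phi \qquad (a \in D(0, 1))
\]
is itself an entrywise positivity preserver. The key trick is a Fej\'er-style tensor averaging. Given $A = [a_{pq}] \in \cP_N(D(0,1))$, form the larger matrix $B := A \otimes v v^* \in \cP_{NM}(D(0, 1))$ with $v = (1, e^{\I\phi}, \ldots, e^{\I(M-1)\phi})^\top \in \C^M$. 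Testing the positive semidefinite matrix $f[B]$ against the vector $c_{(p, r)} := d_p e^{-\I n r \phi}$ and averaging the resulting inequality over $\phi \in [0, 2\pi)$, the Fej\'er identity $\sum_{|r| < M}(M-|r|) = M^2$ yields
\[
M \sum_{p, q} d_p \bar{d}_q f(a_{pq}) + M(M-1) \sum_{p, q} d_p \bar{d}_q \gamma_n(a_{pq}) \geq 0.
\]
Dividing by $M(M-1)$ and letting $M \to \infty$ shows that $[\gamma_n(a_{pq})]$ is positive semidefinite, so $\gamma_n$ preserves positivity.

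A substitution $\psi = \phi + \arg a$ in the defining integral yields the angular rigidity $\gamma_n(a) = e^{\I n \arg a} \gamma_n(|a|)$ for $a \neq 0$, which in particular forces $\gamma_n$ to be real-valued on the real diameter $(-1, 1)$ (by Hermiticity of $\gamma_n[-]$ on real symmetric inputs). Schoenberg's Theorem~\ref{Tschoenberg} then yields $\gamma_n(s) = \sum_{k \geq 0} \beta_{n, k} s^k$ with $\beta_{n, k} \geq 0$; the identity $\gamma_n(-s) = (-1)^n \gamma_n(s)$ forces $\beta_{n, k} = 0$ unless $k \equiv n \pmod 2$. To rule out remaining ``bad'' indices $0 \leq k < n$, apply $\gamma_n$ to scaled matrices $tA$: if $k_0$ is the smallest $k$ with $\beta_{n, k_0} \neq 0$, dividing $\gamma_n[tA]$ by $t^{k_0}$ and letting $t \to 0^+$ forces $z \mapsto e^{\I n \arg z} |z|^{k_0}$ to preserve positivity. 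For $k_0 \geq n$ this is the monomial $z^{(n+k_0)/2} \bar{z}^{(k_0 - n)/2}$, a Schur-style preserver; for $k_0 < n$ the putative preserver $z^{(n+k_0)/2}/\bar{z}^{(n-k_0)/2}$ is ruled out by an explicit rank-$\geq 3$ complex counterexample (e.g., for $n = 3$, $k_0 = 1$, a suitable $3 \times 3$ matrix built from three complex unit vectors gives $\det \gamma_3[A] < 0$). Consequently $\beta_{n, k} = 0$ for all $k < n$, so $\gamma_n(s) = \sum_{m \geq 0} c_{n+m, m} s^{n+2m}$ with $c_{n+m, m} \geq 0$ (and analogously for $n < 0$). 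Reassembling the Fourier series on each circle of radius $r$ yields
\[
f(r e^{\I\phi}) = \sum_{n \in \Z} \gamma_n(r) e^{\I n \phi} = \sum_{j, k \geq 0} c_{jk} z^j \bar{z}^k \qquad (z = r e^{\I\phi}),
\]
which is the desired representation.

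\textbf{Main obstacle.} The principal technical difficulties are: (i) verifying the Fej\'er averaging identity with its precise $M^2$ coefficient, which must separate the $r = 0$ diagonal contribution from the Fourier projection; (ii) constructing the complex counterexample that rules out $z^a / \bar{z}^b$ as a preserver for $b \geq 1$ (this cannot be done with rank-$1$ or with $2 \times 2$ tests, forcing a rank-$\geq 3$ construction); and (iii) a preliminary continuity reduction for $f$ (needed for the Fej\'er integrals to be well-defined), which can be established either by direct mollification or by invoking the Christensen--Ressel / Rudin strengthenings that remove the continuity hypothesis in Schoenberg's theorem.
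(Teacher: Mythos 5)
The paper offers no proof of this theorem: it is stated with a bare citation to Herz, so there is nothing in the text to compare against. Judged on its own, your architecture is sound and is essentially the Rudin--Herz strategy. The sufficiency direction is complete. The Fej\'er-type tensor averaging is correct: with $B = A \otimes vv^*$ the quadratic form collapses to $\sum_{|r|<M}(M-|r|)\,e^{-\I nr\phi}f(a_{pq}e^{\I r\phi})$, and since $\sum_{|r|<M}(M-|r|)=M^2$, averaging in $\phi$ does give $Mf + M(M-1)\gamma_n$ applied entrywise, so each $\gamma_n$ is a preserver. The rotation identity $\gamma_n(a)=e^{\I n\arg a}\gamma_n(|a|)$, the reduction to a real absolutely monotonic function on $(-1,1)$ via the continuity-free Schoenberg theorem (Theorem~\ref{Thankel}), the parity constraint, and the scaling argument isolating the lowest-order coefficient $\beta_{n,k_0}$ all check out; the final Fourier resummation is justified because the Fej\'er means at $\phi=0$ show $\sum_n\gamma_n(r)\leq 2f(r)<\infty$, giving absolute and uniform convergence.

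Two of the items you list as ``obstacles'' are, however, genuine gaps rather than verifications. First, the integrals defining $\gamma_n$ require $\phi\mapsto f(ae^{\I\phi})$ to be measurable; the $2\times 2$ tests give $|f(z)|\leq f(|z|)$ and hence local boundedness away from the origin, but not measurability, and invoking the Christensen--Ressel complex-sphere result here is uncomfortably close to assuming what is to be proved. Second, and more seriously, the exclusion of $g(z)=e^{\I n\arg z}|z|^{k_0}$ for $0\leq k_0<|n|$ is the mathematical heart of necessity --- it is precisely what separates the admissible monomials $z^j\overline{z}^k$ from the inadmissible ones --- and you leave it as an assertion. Beware that the obvious candidates can fail: for $n=2$, $k_0=0$, the matrix built from the three cube roots of unity maps under $g$ to a circulant with eigenvalues $\{0,3,0\}$, which is positive semidefinite. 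A construction that does work is perturbative: take $A=a(\bone_{3\times 3}+\epsilon vv^*)$ with $v=(e^{\I\theta_1},e^{\I\theta_2},e^{\I\theta_3})^T$ and $a(1+\epsilon)<1$. Expanding to first order in $\epsilon$ gives
\[
g[A] = a^{k_0}\Bigl(\bone_{3\times 3}+\tfrac{\epsilon}{2}(k_0+n)\,vv^*+\tfrac{\epsilon}{2}(k_0-n)\,\overline{v}\,\overline{v}^*\Bigr)+O(\epsilon^2),
\]
and testing against a vector $c$ orthogonal to the all-ones vector and to $v$ but not to $\overline{v}$ (possible for generic $\theta_j$) yields $c^*g[A]c=\tfrac{\epsilon}{2}(k_0-n)a^{k_0}|\overline{v}^*c|^2+O(\epsilon^2)<0$ when $k_0<n$, with the case $n<0$ following by conjugation. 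With the measurability point and this computation supplied, your proof closes.
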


Akin to the above results by Schoenberg, Rudin, Christensen and
Ressel, and Herz, we mention one more Schoenberg-type theorem, for
matrices with positive entries. The following result again
demonstrates the rigid principle that analyticity and absolute
monotonicity follow from the preservation of positivity in all
dimensions.

\begin{theorem}[Vasudeva \cite{vasudeva79}]\label{Tvasudeva}
Let $f : ( 0, \infty ) \to \R$. Then $f[-]$ preserves positivity on
$\cP_n\bigl( ( 0, \infty ) \bigr)$ for all $n \geq 1$, if and only if
$f( x ) = \sum_{k = 0}^\infty c_k x^k$ on $( 0, \infty )$, where
$c_k \geq 0$ for all $k \geq 0$.
\end{theorem}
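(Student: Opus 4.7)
The sufficiency direction is immediate from the Schur product theorem: entrywise powers $A^{\circ k}$ of any matrix in $\cP_n((0,\infty))$ are positive semidefinite, and closedness of the positive semidefinite cone under non-negative combinations makes the convergent series $\sum_k c_k A^{\circ k} = f[A]$ positive semidefinite. For necessity, my plan is a three-stage argument: extract basic regularity from tests on small matrices; boost continuity to smoothness via a multiplicative mollification; then apply a Horn-type rank-perturbation argument to deduce absolute monotonicity, and conclude with a Bernstein-type power series representation.

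Testing on $1 \times 1$ matrices gives $f \geq 0$; testing on $\begin{pmatrix} b & a \\ a & b \end{pmatrix}$ with $b \geq a > 0$ forces $f(b) \geq f(a)$, so $f$ is non-decreasing; and testing on the rank-one matrix $\begin{pmatrix} a & \sqrt{ac} \\ \sqrt{ac} & c \end{pmatrix}$ yields $f(\sqrt{ac})^2 \leq f(a) f(c)$. This last inequality says that $u \mapsto \log f(e^u)$ is midpoint convex wherever $f > 0$; combined with monotonicity (hence local boundedness), this upgrades to convexity and thus continuity of $f$ on $(0, \infty)$, after dispensing with the degenerate case $f \equiv 0$.

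To boost continuity to smoothness without breaking positivity preservation, mollify multiplicatively: let $\phi_\epsilon$ be a smooth non-negative bump of unit mass supported near $1$, and set $f_\epsilon(x) := \int_0^\infty f(x/s)\, \phi_\epsilon(s) \std s$. The scaling $A \in \cP_n((0,\infty)) \Rightarrow A/s \in \cP_n((0,\infty))$ makes $f_\epsilon[A]$ a non-negative integral of positive semidefinite matrices $f[A/s]$, so each $f_\epsilon$ is a smooth positivity preserver converging to $f$ locally uniformly. Next, for fixed $x_0 > 0$ and distinct $v_1, \ldots, v_n \in (0,1)$, the matrix $A(t) := (x_0 + t v_i v_j)_{i,j}$ is positive semidefinite for $t \geq 0$ as the sum of two rank-one positive matrices, and smoothness of $f_\epsilon$ gives the Taylor expansion
\[
f_\epsilon[A(t)] = \sum_{k = 0}^\infty \frac{f_\epsilon^{(k)}(x_0)}{k!}\, t^k\, \mathbf{w}_k \mathbf{w}_k^T, \qquad \mathbf{w}_k := (v_1^k, \ldots, v_n^k)^T.
\]
Vandermonde independence of $\mathbf{w}_0, \ldots, \mathbf{w}_{n-1}$ furnishes, for each $k \leq n-1$, a vector $\mathbf{y}$ orthogonal to $\mathbf{w}_j$ for $j \in \{0, \ldots, n-1\} \setminus \{k\}$ with $\langle \mathbf{w}_k, \mathbf{y}\rangle \neq 0$; the leading-order behavior of $\mathbf{y}^T f_\epsilon[A(t)] \mathbf{y}$ as $t \to 0^+$ then forces $f_\epsilon^{(k)}(x_0) \geq 0$. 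Since $n$ and $k$ are arbitrary, each $f_\epsilon$ is absolutely monotone on $(0,\infty)$, extends to $[0, \infty)$ via its finite limit at $0^+$, and admits a convergent power series $f_\epsilon(x) = \sum_k c_k^{(\epsilon)} x^k$ with $c_k^{(\epsilon)} \geq 0$ by Bernstein's theorem.

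The main obstacle I expect is the final limit $\epsilon \to 0^+$. Evaluating at any fixed $x_1 > 0$ gives the uniform bound $c_k^{(\epsilon)} \leq f_\epsilon(x_1)/x_1^k$, and since $f_\epsilon(x_1) \to f(x_1) < \infty$, the coefficients $c_k^{(\epsilon)}$ are bounded as $\epsilon \to 0^+$. A diagonal extraction produces $c_k^{(\epsilon_m)} \to c_k \geq 0$ for each $k$, and dominated convergence (with the $c_k^{(\epsilon_m)} x^k$ dominated by $f(x_1) (x/x_1)^k$ summable for $x < x_1$) transfers the power series expansion to $f$, yielding $f(x) = \sum_k c_k x^k$ on $(0, \infty)$.
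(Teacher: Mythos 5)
Your proof is correct, and it reaches Vasudeva's theorem by a route that differs from the paper's in both of its key technical steps. The paper obtains the necessity direction as a corollary of the Horn--Loewner theorem (Theorem~\ref{Thorn}): there, non-negativity of the derivatives is extracted from Loewner's determinant identity $\Delta^{(\binom{n}{2})}(0) = c\, f(a) f'(a) \cdots f^{(n-1)}(a)$ together with an induction on $n$ and the perturbation $g_\tau = f + \tau x^n$, and the passage from continuous to smooth $f$ goes through the Boas--Widder converse to the mean-value theorem for divided differences; letting $n \to \infty$ and invoking Bernstein then yields the power series. You instead extract $f_\epsilon^{(k)}(x_0) \ge 0$ directly from the quadratic form $\mathbf{y}^T f_\epsilon[A(t)]\mathbf{y}$ with $\mathbf{y}$ taken from the dual basis of the Vandermonde system $\mathbf{w}_0, \ldots, \mathbf{w}_{n-1}$ --- essentially the rank-one mechanism behind Lemma~\ref{Lhorn}, Corollary~\ref{Cmaster} and the FitzGerald--Horn converse --- which avoids Loewner's determinant computation and the induction entirely. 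You also sidestep Boas--Widder by mollifying \emph{multiplicatively}, so that each $f_\epsilon$ is itself a smooth preserver (the additive mollification in Loewner's argument does not have this property, which is why the paper needs Boas--Widder), applying Bernstein to each $f_\epsilon$, and passing to the limit at the level of Maclaurin coefficients via the bound $c_k^{(\epsilon)} \le f_\epsilon(x_1)/x_1^{k}$ and a diagonal extraction. The trade-off is that your route is more elementary in its ingredients but must pay for this with the limiting argument at the end, which you carry out correctly. Two small points to tidy: (i) $f_\epsilon$ is smooth but not yet known to be analytic at the point where you write $f_\epsilon[A(t)] = \sum_{k \ge 0} \frac{f_\epsilon^{(k)}(x_0)}{k!}\, t^k\, \mathbf{w}_k \mathbf{w}_k^T$, so this should be a finite Taylor expansion to order $n-1$ with an $O(t^n)$ remainder --- which suffices, since $k \le n-1$ makes the remainder $o(t^k)$ after testing against $\mathbf{y}$; (ii) the coefficients $c_k$ produced by the extraction a priori depend on $x_1$ and on the subsequence, but uniqueness of power-series coefficients removes the ambiguity and gives the representation on all of $(0,\infty)$.
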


\subsection{The Horn--Loewner necessary condition in fixed dimension}

The previous section contains several variants of a ``dimension-free''
result: namely, the classification of entrywise maps that preserve
positivity on test sets of matrices of all sizes. In the next section, we
discuss a dimension-free result that parallels Rudin's work in
\cite{Rudin-Duke59}, by approaching the problem via preservers of moment
sequences for positive measures on the real line. In other words, we will
work with Hankel instead of Toeplitz matrices.

In the later part of this survey, we focus on entrywise functions that
preserve positivity when the test set consists of matrices of a fixed
size. For both of these settings, the starting point is an important
result first published by R.~Horn (who in \cite{Horn67}
attributes it to his PhD advisor C.~Loewner).

\begin{theorem}[\cite{Horn67}]\label{Thorn}
Let $f : ( 0, \infty ) \to \R$ be continuous. Fix a positive integer
$n$ and suppose $f[-]$ preserves positivity on
$\cP_n\bigl( ( 0, \infty ) \bigr)$. Then $f \in C^{n - 3}((0, \infty))$,
\[
f^{( k )}( x ) \geq 0 \qquad \text{whenever } x \in ( 0, \infty ) %
\text{ and } 0 \leq k \leq n - 3,
\]
and $f^{(n - 3)}$ is a convex non-decreasing function on
$( 0, \infty )$. Furthermore, if
$f \in C^{n - 1}\bigl( ( 0, \infty ) \bigr)$, then
$f^{( k )}( x ) \geq 0$ whenever $x \in ( 0, \infty )$ and
$0 \leq k \leq n-1$.
\end{theorem}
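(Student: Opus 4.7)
The plan is to follow Loewner's classical strategy: build a one-parameter family of rank-two positive matrices, apply $f$ entrywise, and read off derivative information from the leading terms of a Taylor expansion by testing against Vandermonde-orthogonal vectors. The general continuous case is then reduced to the smooth case by mollification, at the cost of two orders of differentiability.

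For the smooth case, suppose $f \in C^{n-1}\bigl( ( 0, \infty ) \bigr)$, fix $a \in ( 0, \infty )$, and choose distinct positive reals $u_1 < \cdots < u_n$. Writing $\bu := ( u_1, \ldots, u_n )^T$ and $\bu^{\circ k} := ( u_1^k, \ldots, u_n^k )^T$, the matrix
\[
M( \epsilon ) := a\, \bone_{n \times n} + \epsilon\, \bu \bu^T
\]
lies in $\cP_n\bigl( ( 0, \infty ) \bigr)$ for $\epsilon \geq 0$ as a sum of two rank-one positive semidefinite matrices with positive entries. An entrywise Taylor expansion yields
\[
f[ M( \epsilon ) ] = \sum_{k = 0}^{n - 1} \frac{f^{( k )}( a )}{k!}\, \epsilon^k\, \bu^{\circ k} ( \bu^{\circ k} )^T + O( \epsilon^n ).
\]
For any $0 \leq k \leq n - 1$, non-singularity of the $n \times n$ Vandermonde matrix built from $u_1, \ldots, u_n$ furnishes $\bv \in \R^n$ orthogonal to $\bu^{\circ 0}, \ldots, \bu^{\circ ( k - 1 )}$ with $\langle \bv, \bu^{\circ k} \rangle \neq 0$. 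Testing $\bv^T f[ M( \epsilon ) ] \bv \geq 0$, dividing by $\epsilon^k$, and letting $\epsilon \downarrow 0$ isolates $f^{( k )}( a )\, \langle \bv, \bu^{\circ k} \rangle^2 / k! \geq 0$, giving $f^{( k )}( a ) \geq 0$ and establishing the ``furthermore'' claim.

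For the general continuous case, regularize by convolution. For a non-negative smooth bump $\psi_\delta$ supported in $[ 0, \delta ]$ with $\int \psi_\delta = 1$, set
\[
f_\delta( x ) := \int_0^\delta f( x + t )\, \psi_\delta( t ) \std t \qquad ( x > 0 ).
\]
Then $f_\delta \in C^\infty\bigl( ( 0, \infty ) \bigr)$ and $f_\delta \to f$ locally uniformly as $\delta \downarrow 0$. Crucially, $f_\delta$ still preserves positivity on $\cP_n\bigl( ( 0, \infty ) \bigr)$: if $A \in \cP_n\bigl( ( 0, \infty ) \bigr)$ and $t \geq 0$, then $A + t\, \bone_n \bone_n^T$ again lies in this cone, so
\[
f_\delta[ A ] = \int_0^\delta f\bigl[ A + t\, \bone_n \bone_n^T \bigr]\, \psi_\delta( t ) \std t
\]
is a non-negative superposition of positive semidefinite matrices. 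Applying the smooth case to $f_\delta$ gives $f_\delta^{( k )} \geq 0$ on $( 0, \infty )$ for $0 \leq k \leq n - 1$.

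It remains to pass to the limit $\delta \downarrow 0$. For $0 \leq k \leq n - 2$ each $f_\delta^{( k )}$ is non-decreasing (since $f_\delta^{( k + 1 )} \geq 0$), and for $0 \leq k \leq n - 3$ each is additionally convex (since $f_\delta^{( k + 2 )} \geq 0$). Divided-difference estimates pinch $f_\delta^{( k )}$ between slope quotients of $f_\delta^{( k - 1 )}$, which iterate down to slope quotients of $f_\delta$ controlled uniformly in $\delta$ by those of $f$; this furnishes uniform local bounds. Helly's selection principle then yields a subsequence $\delta_m \downarrow 0$ along which $f_{\delta_m}^{( k )}$ converges pointwise to some monotone $g_k$ for every $0 \leq k \leq n - 2$. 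Passing to the limit in $f_\delta^{( k )}( x ) = f_\delta^{( k )}( x_0 ) + \int_{x_0}^x f_\delta^{( k + 1 )}( t ) \std t$ yields $g_k( x ) = g_k( x_0 ) + \int_{x_0}^x g_{k + 1}( t ) \std t$. Since $g_{k + 1}$ is convex (as a pointwise limit of convex functions) whenever $k + 1 \leq n - 3$, and convex functions on open intervals are continuous, an upward induction on $k$ upgrades $f$ to $C^{n - 3}\bigl( ( 0, \infty ) \bigr)$ with $f^{( k )} = g_k \geq 0$ for $0 \leq k \leq n - 3$, and $f^{( n - 3 )} = g_{n - 3}$ inheriting the convex non-decreasing property. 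The main obstacle is precisely this limiting bookkeeping: the two-order gap between the smooth and continuous cases reflects that one degree of smoothness is spent promoting a monotone limit to a continuous one, and a second degree to promote it further to a continuously differentiable one, using the convexity gained from the next derivative.
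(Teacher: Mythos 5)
Your proof is correct, but both halves depart from the paper's argument. For the smooth case, the paper follows Loewner: it computes derivatives of $\Delta(t) = \det f[a\bone_{n\times n} + t\bu\bu^T]$, showing the first $\binom{n}{2}$ vanish while $\Delta^{(\binom{n}{2})}(0) = c\, f(a)f'(a)\cdots f^{(n-1)}(a)$, then perturbs to $g_\tau = f + \tau x^n$ and inducts on $n$ to peel off $f^{(n-1)}(a)\geq 0$ from that product. You instead isolate each $f^{(k)}(a)$ separately by Taylor-expanding $f[a\bone_{n\times n} + \epsilon\,\bu\bu^T]$ and testing against a vector orthogonal to $\bu^{\circ 0},\dots,\bu^{\circ(k-1)}$ but not to $\bu^{\circ k}$; this avoids both the determinant identity and the induction, and is essentially the mechanism behind the paper's Lemma~\ref{Lhorn} and Theorem~\ref{Tmaster}. (One pedantic point: with $f\in C^{n-1}$ the Peano remainder is only $o(\epsilon^{n-1})$, not $O(\epsilon^n)$, but that is all you actually use, since you divide by $\epsilon^k$ with $k\leq n-1$.) For the continuous case, both proofs mollify and both rest on $f_\delta[A]$ being an average of the matrices $f[A + t\,\bone_{n\times n}]$; but the paper then transfers non-negativity of divided differences from $f_\delta$ to $f$ by continuity and invokes the Boas--Widder theorem as a black box to recover the $C^{n-3}$ regularity, whereas you run a self-contained compactness argument: uniform local bounds via iterated slope estimates, Helly selection, convexity of pointwise limits, and chaining of fundamental-theorem identities. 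Your route is more elementary and bypasses Boas--Widder entirely (at the cost of a limiting bookkeeping you state somewhat tersely but which does go through); the paper's route is shorter on the page and, via Boas--Widder, also delivers the monotone one-sided derivatives of $f^{(n-3)}$ with no extra work.
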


This result and its variations are the focus of the present section.

Theorem~\ref{Thorn} is remarkable for several reasons.

\begin{enumerate}
\item Modulo variations, it remains to this day the only known
criterion for a general entrywise function to preserve positivity in a
fixed dimension. Later on, we will see more precise conclusions drawn when
$f$ is a polynomial or a power function, but for a general function
there are essentially no other known results.

\item While Theorem~\ref{Thorn} is a fixed-dimension result, it can be
used to prove some of the aforementioned dimension-free
characterizations. For instance, if $f[-]$ preserves positivity on
$\cP_n\bigl( ( 0, \infty ) \bigr)$ for all $n \geq 1$, then, by
Theorem~\ref{Thorn}, the function $f$ is absolutely monotonic on
$( 0, \infty )$. A classical result of Bernstein on absolutely
monotonic functions now implies that $f$ is necessarily given by
a power series with non-negative coefficients, which is precisely
Vasudeva's Theorem~\ref{Tvasudeva}.

In the next section, we will outline an approach to prove a stronger
version of Schoenberg's theorem~\ref{Tschoenberg} (in the spirit of
Theorem~\ref{Trudin} by Rudin), starting from Theorem~\ref{Tvasudeva}.

\item Theorem~\ref{Thorn} is also significant because there is a sense
in which it is sharp. We elaborate on this when studying polynomial and
power-function preservers; see Chapters~\ref{Spolynomial}
and~\ref{Spower}.
\end{enumerate}

\begin{remark}\label{Rsymm}
There are other, rather unexpected consequences of Theorem~\ref{Thorn}
as well. It was recently shown that the key determinant computation
underlying Theorem~\ref{Thorn} can be generalized to yield a new class
of symmetric function identities for any formal power series. The only
such identities previously known were for the case
$f( x ) = \frac{1 - c x}{1 - x}$. This is discussed in
Section~\ref{Ssymm}.
\end{remark}

We next explain the steps behind the proof of the Horn--Loewner
theorem~\ref{Thorn}. These also help in proving certain strengthenings
of Theorem~\ref{Thorn}, which are mentioned below. In turn, these
strengthenings additionally serve to clarify the nature of the
Horn--Loewner necessary condition.

\begin{proof}[Proof of Theorem~\ref{Thorn}]
The proof by Loewner is in two steps. First he assumes $f$ to be
smooth and shows the result by induction on $n$. The base case of
$n = 1$ is immediate, and for the induction step one proceeds as
follows. Fix $a>0$, choose any vector
$\bu = (u_1, \ldots, u_n)^T \in \R^n$ with distinct coordinates, and
define
\[
\Delta( t ) := \det[ f( a + t u_j u_k ) ]_{j, k = 1}^n = %
\det f[ a \bone_{n \times n} + \bu \bu^T ] \qquad ( 0 < t \ll 1).
\]
Then Loewner shows that
\begin{align}\label{Eloewner}
\begin{aligned}
\Delta(0) & = \Delta'( 0 ) = \cdots = %
\Delta^{\binom{n}{2} - 1}( 0 ) = 0, \\
\Delta^{\binom{n}{2}}(0) & = c f( a ) f'( a ) \cdots f^{(n - 1)}(a)
\qquad \text{for some } c > 0.
\end{aligned}
\end{align}
(See Remark~\ref{Rsymm} above.)

Returning to the proof of Theorem~\ref{Thorn} for smooth functions:
apply the above treatment not to $f$ but to
$g_\tau( x ) := f( x ) + \tau x^n$, where $\tau > 0$. By the Schur
product theorem, $g_\tau$ satisfies the hypotheses, whence
$\Delta( t ) / t^{\binom{n}{2}} \geq 0$ for $t > 0$.  Taking
$t \to 0^+$, by L'H\^{o}pital's rule we obtain
\[
g_\tau( a ) g'_\tau( a ) \cdots g^{(n - 1)}_\tau( a ) \geq 0, %
\qquad \text{for all } \tau > 0.
\]
Finally, the induction hypothesis implies that $f$, $f'$, \ldots,
$f^{(n - 2)}$ are non-negative at $a$, whence $g_\tau( a )$, \ldots,
$g_\tau^{(n - 2)}( a ) > 0$. It follows that
$g_\tau^{(n - 1)}( a ) \geq 0$ for all $\tau > 0$, and hence,
$f^{(n - 1)}( a ) \geq 0$, as desired.

\begin{remark}
The above argument is amenable to proving more refined results. For
example, it can be used to prove the positivity of the first $n$ non-zero
derivatives of a smooth preserver $f$; see Theorem~\ref{Tmaster}.
\end{remark}

The second step of Loewner's proof begins by using mollifiers. 
Suppose $f$ is
continuous; approximate it by a mollified family $f_\delta \to f$ as
$\delta \to 0^+$. Thus $f_\delta$ is smooth and its first $n$
derivatives are non-negative on $(0,\infty)$. By the mean-value
theorem for divided differences, this implies that the divided
differences of each $f_\delta$, of orders up to $n-1$ are
non-negative. Since $f$ is continuous, the same holds for $f$.

Now one invokes a rather remarkable result by Boas and Widder
\cite{Boas-Widder}, which can be viewed as a converse to the
mean-value theorem for divided differences. It asserts that given an
integer $k \geq 2$ and an open interval $I \subset \R$, if all $k$th
order ``equi-spaced'' forward differences (whence divided differences) of
a continuous function $f : I \to \R$ are non-negative on $I$, then $f$ is
$k-2$ times differentiable on $I$; moreover, $f^{(k-2)}$ is continuous
and convex on $I$, with non-decreasing left- and right-hand
derivatives. Applying this result for each $2 \leq k \leq n-1$ concludes
the proof of Theorem~\ref{Thorn}.
\end{proof}

Note that this proof only uses matrices of the form
$a \bone_{n \times n} + t \bu \bu^T$, and the arguments are all
local. Thus it is unsurprising that strengthened versions of the
Horn--Loewner theorem can be found in the literature; see
\cite{BGKP-hankel,GKR-lowrank}, for example. We present here the
stronger of these variants.

\begin{theorem}[{See \cite[Section 3]{BGKP-hankel}}]\label{Thorn2}
Suppose $0 < \rho \leq \infty$, $I = ( 0, \rho )$, and $f : I \to \R$.
Fix $u_0 \in ( 0, 1 )$ and an integer $n \geq 1$, and define
$\bu := ( 1, u_0, \ldots, u_0^{n - 1} )^T$. Suppose
$f[ A ] \in \cP_2( \R )$ for all $A \in \cP_2( I )$, and also that
$f[ A ] \in \cP_n( \R )$ for all Hankel matrices
$A = a \bone_{n \times n} + t \bu \bu^T$, with $a$, $t \geq 0$ such
that $a + t \in I$. Then the conclusions of Theorem~\ref{Thorn} hold.
\end{theorem}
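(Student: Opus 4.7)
My plan is to adapt Loewner's original proof of Theorem~\ref{Thorn}, noting that both steps --- the smooth determinant argument and the mollification --- work with only the restricted family of test matrices, because Loewner's argument is local and depends on a single rank-two perturbation. The essential verification is that the determinant identity~(\ref{Eloewner}) holds for the specific choice $\bu = ( 1, u_0, u_0^2, \ldots, u_0^{n - 1} )^T$; once that is in place, the rest of Loewner's argument is imported essentially verbatim.

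For the smooth case, define $\Delta( t ) := \det f[ a \bone_{n \times n} + t \bu \bu^T ]$ and expand entrywise:
\[
f[ a \bone_{n \times n} + t \bu \bu^T ] =
\sum_{m = 0}^{\infty} \frac{f^{(m)}(a)}{m!} t^m \bv_m \bv_m^T,
\qquad \bv_m := ( 1, u_0^m, u_0^{2 m}, \ldots, u_0^{(n - 1) m} )^T.
\]
By the Cauchy--Binet formula, the lowest-order contribution to $\Delta( t )$ comes from the truncation $\sum_{m = 0}^{n - 1}$ and equals
\[
( \det V )^2 \prod_{m = 0}^{n - 1} \frac{f^{(m)}(a)}{m!} \cdot t^{\binom{n}{2}},
\]
where $V$ is the Vandermonde matrix with nodes $1, u_0, u_0^2, \ldots, u_0^{n - 1}$. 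Since $u_0 \in ( 0, 1 )$ these nodes are distinct, so $\det V \neq 0$ and Loewner's identity~(\ref{Eloewner}) holds with a positive constant~$c$. Induction on $n$ then proceeds as outlined in the text: preservation on the $n$-dimensional family restricts to the corresponding $k$-dimensional family $a \bone_{k \times k} + t \bu_k \bu_k^T$ via leading principal submatrices; the $\cP_2( I )$ hypothesis seeds the induction with $f \geq 0$ and $f$ non-decreasing; and perturbation by $g_\tau := f + \tau x^n$, which still preserves positivity on the family by the Schur product theorem, combined with L'H\^opital's rule, delivers $f^{(n - 1)}( a ) \geq 0$ for every $a \in I$.

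To remove smoothness, I would use a multiplicative mollifier
\[
f_\delta( x ) := \int_0^\infty f( x y ) \phi_\delta( y ) \std y,
\]
with $\phi_\delta$ a non-negative smooth bump concentrating at $y = 1$. The restricted family is invariant under positive scaling, since $y( a \bone_{n \times n} + t \bu \bu^T ) = ( y a ) \bone_{n \times n} + ( y t ) \bu \bu^T$ is again of the required form; likewise $\cP_2( I )$ scales into itself on a slightly shrunken subinterval. Hence $f_\delta$ inherits both hypotheses there, and the smooth case yields $f_\delta^{(k)} \geq 0$ for $0 \leq k \leq n - 1$. All $k$-th order forward differences of $f_\delta$, and in the limit of $f$, are therefore non-negative, and Boas--Widder's converse to the mean-value theorem for divided differences then delivers the claimed continuity, monotonicity, and convexity of $f^{(n - 3)}$.

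The main obstacle I anticipate is the Cauchy--Binet computation in the smooth step, showing that the coefficient of $t^{\binom{n}{2}}$ in $\Delta( t )$ is a strictly positive constant times $\prod_{m = 0}^{n - 1} f^{(m)}( a )$. The squared Vandermonde $( \det V )^2$ is the quantitative source of positivity and is precisely where the Hankel rank-two restriction earns its role. A secondary subtlety is the use of multiplicative rather than additive convolution in the mollification, which is what allows $f_\delta$ to inherit preservation on the restricted family.
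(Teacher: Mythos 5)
Your overall strategy is the paper's: rerun Loewner's induction on the restricted Hankel family, the only new verification being that the determinant identity~(\ref{Eloewner}) survives the specialization $\bu = ( 1, u_0, \ldots, u_0^{n-1} )^T$. Your Cauchy--Binet computation is correct: the lowest-order term in $t$ is governed by the Vandermonde matrix with nodes $1, u_0, \ldots, u_0^{n-1}$, which are distinct for $u_0 \in ( 0, 1 )$, so the constant $c$ in~(\ref{Eloewner}) is again strictly positive. The restriction to leading principal submatrices for the induction, the perturbation $g_\tau = f + \tau x^n$, and the observation that the test family is stable under the averaging used to mollify (multiplicative, or equally well additive, since $A + s \bone_{n \times n}$ stays in the family) are all as in \cite{BGKP-hankel}. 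One standard technical caveat: for merely smooth $f$ the entrywise power-series expansion should be a Taylor polynomial of order $\binom{n}{2}$ plus a remainder contributing $o( t^{\binom{n}{2}} )$, as in Theorem~\ref{Phorn}.

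The genuine gap is the continuity of $f$, which is simultaneously part of the conclusion ($f \in C^{n-3}$), a hypothesis of the Boas--Widder theorem you invoke, and precisely the point of removing Loewner's continuity assumption. Your mollification step tacitly assumes $f_\delta \to f$ pointwise, but for an arbitrary $f$ satisfying the hypotheses this is not known a priori: the monotonicity you extract from $\cP_2( I )$ does make $f_\delta$ well defined and smooth, but at a jump of $f$ the mollified family converges to a one-sided limit of $f$ rather than to $f( x )$, so your limiting forward-difference inequalities would apply to a regularization of $f$, not to $f$ itself. The paper closes this with Vasudeva's argument, which uses a second family of $2 \times 2$ test matrices: from $A = \begin{bmatrix} x & \sqrt{x y} \\ \sqrt{x y} & y \end{bmatrix} \in \cP_2( I )$ one gets that either $f \equiv 0$ or $f$ is never zero and multiplicatively mid-convex, $f\bigl( \sqrt{x y} \bigr)^2 \leq f( x ) f( y )$; then $g( y ) = \log f( e^y )$ is midpoint convex and locally bounded, hence continuous by Proposition~\ref{Pconvex}, so $f$ is continuous on $I$. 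With that in hand, your mollification and Boas--Widder steps go through as written.
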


Beyond the above strengthenings, the notable feature here is that the
continuity hypothesis has been removed, akin to the Rudin and
Christensen--Ressel results. We reproduce here an elegant argument to
show continuity; this can be found in Vasudeva's paper
\cite{vasudeva79}, and uses only the test set $\cP_2( I )$. By
considering $f[ A ]$ for
$A = \begin{bmatrix} a & b \\ b & a \end{bmatrix}$ with
$0 < b < a < \rho$, it follows that $f$ is non-negative and
non-decreasing on $I$. One also shows that $f$ is either identically
zero or never zero on $I$. In the latter case, considering $f[ A ]$
for
$A = \begin{bmatrix} a & \sqrt{a b} \\
\sqrt{a b} & b \end{bmatrix} \in \cP_2( I )$
shows that $f$ is \emph{multiplicatively mid-convex}: the function
\[
g( y ) := \log f( e^y ) \qquad ( y < \log \rho )
\]
is midpoint convex and locally bounded on the interval $\log I$. Now
the following classical result \cite[Theorem~71.C]{roberts-varberg}
shows that $g$ is continuous on $\log I$, so $f$ is continuous on $I$.

\begin{proposition}\label{Pconvex}
Let $U$ be a convex open set in a real normed linear space.  If
$g : U \to \R$ is midpoint convex on $U$ and bounded above in an open
neighborhood of a single point in $U$, then $g$ is continuous, so
convex, on $U$.
\end{proposition}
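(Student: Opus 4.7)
The strategy is to upgrade the single local upper bound at one point of $U$ first to two-sided local boundedness everywhere, and from that to continuity, in three stages. The workhorse throughout is that midpoint convexity self-improves to \emph{dyadic convexity}: for every dyadic rational $\lambda = k/2^m \in [0,1]$ and all $u$, $v \in U$,
\[
g(\lambda u + (1 - \lambda) v) \leq \lambda g(u) + (1-\lambda) g(v),
\]
which follows by straightforward induction on $m$.

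First I would propagate upper boundedness from the distinguished point $x_0$, where $g \leq M$ on some open ball $B(x_0, r) \subset U$, to an arbitrary $x \in U$. The idea is to write $x = \lambda_0 x_0 + (1-\lambda_0) y$ with $y \in U$ and $\lambda_0$ a dyadic rational; this is possible because the auxiliary point $y = x_0 + (1-\lambda_0)^{-1}(x - x_0)$ tends to $x$ as $\lambda_0 \to 0^+$, and $U$ is open. Then any $z$ near $x$ admits the analogous decomposition $z = \lambda_0 z_0 + (1-\lambda_0) y$ with $z_0$ close to $x_0$, and dyadic convexity transfers the bound $M$ from $B(x_0, r)$ to a ball around $x$. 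Second, I would upgrade upper to two-sided boundedness: if $g \leq M'$ on $B(x, s)$, then for $z \in B(x, s)$ the reflection $w := 2x - z$ also lies in $B(x, s)$, and midpoint convexity applied to $x = (z + w)/2$ yields the lower bound $g(z) \geq 2 g(x) - M'$.

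Third, to prove continuity at $x$, fix $\varepsilon > 0$ and a ball $B(x, s)$ on which $|g|$ is bounded by some $M''$, then choose $n$ so that $2^{-n}(M'' + |g(x)|) < \varepsilon$. For any $k$ with $\|k\| < 2^{-n} s$, the point $h := 2^n k$ satisfies $x \pm h \in B(x, s)$, and the identity $x + k = (1 - 2^{-n}) x + 2^{-n}(x + h)$ combined with dyadic convexity gives $g(x + k) - g(x) < \varepsilon$; the matching lower bound $g(x + k) - g(x) > -\varepsilon$ comes from applying the same estimate to $-k$ and exploiting midpoint convexity at $x$ via $g(x) \leq \tfrac{1}{2}(g(x+k) + g(x-k))$. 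Full convexity of $g$ on $U$ then follows by approximating any real $\lambda \in [0, 1]$ by dyadic rationals and passing to the limit via the now-established continuity. The principal obstacle is the first stage: one must secure a genuinely dyadic interpolation coefficient $\lambda_0$ in order to legitimately invoke midpoint convexity, and this is handled by exploiting the openness of $U$ to gain enough flexibility in the choice of $y$.
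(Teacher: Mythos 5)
Your proof is correct. Note that the paper does not actually prove Proposition~\ref{Pconvex}; it cites it as a classical fact from Roberts--Varberg \cite[Theorem~71.C]{roberts-varberg}. Your three-stage argument --- (i) dyadic convexity by induction on the denominator, used to slide the upper bound from the distinguished point $x_0$ to a ball $B(x,\lambda_0 r)$ around any $x\in U$ via the decomposition $x=\lambda_0 x_0+(1-\lambda_0)y$ with $\lambda_0=2^{-m}$ small and $y\in U$ by openness; (ii) the reflection trick $w=2x-z$ to get a lower bound from the upper bound; (iii) the estimate $g(x+k)-g(x)\le 2^{-n}(g(x+h)-g(x))$ with $h=2^nk$, combined with $g(x)\le\tfrac12(g(x+k)+g(x-k))$ for the matching lower bound --- is the standard textbook proof and all the steps check out, including the point you flag about needing a genuinely dyadic coefficient $\lambda_0$. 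Passing from dyadic to real coefficients by continuity at the end is also fine.
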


We now move to variants of the Horn--Loewner result. Notice that
Theorems~\ref{Thorn} and~\ref{Thorn2} are results for arbitrary
positivity preservers $f(x)$. When more is known about $f$, such as
smoothness or even real analyticity, stronger conclusions can be drawn
from smaller test sets of matrices. A recent variant is the following
lemma, shown by evaluating $f[-]$ at matrices $(t u_j u_k)_{j,k=1}^n$
and using the invertibility of ``generic'' generalized Vandermonde
matrices.

\begin{lemma}[Belton--Guillot--Khare--Putinar \cite{BGKP-fixeddim} and
Khare--Tao \cite{Khare-Tao}]\label{Lhorn}
Let $n \geq 1$ and $0 < \rho \leq \infty$.
Suppose $f( x ) = \sum_{k \geq 0} c_k x^k$ is a convergent power
series on $I = [ 0,\rho )$ that is positivity preserving entrywise on
rank-one matrices in $\cP_n(I)$. Further assume that $c_{m'} < 0$ for
some $m'$.
\begin{enumerate}
\item If $\rho < \infty$, then we have $c_m > 0$ for at least $n$ values
of $m < m'$. (In particular, the first $n$ non-zero Maclaurin
coefficients of $f$, if they exist, must be positive.)

\item If instead $\rho = \infty$, then we have $c_m > 0$ for at least $n$
values of $m < m'$ and at least $n$ values of $m > m'$. (In particular,
if $f$ is a polynomial, then the first $n$ non-zero coefficients and the
last $n$ non-zero coefficients of $f$, if they exist, are all positive.)
\end{enumerate}
\end{lemma}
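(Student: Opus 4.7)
The plan is to evaluate $f[-]$ on rank-one matrices of the form $R^2 \bw \bw^T$, where $\bw = (w_1, \ldots, w_n)^T$ has distinct positive entries and $R > 0$ is a scale parameter. Expanding the power series entrywise gives
\[
f[R^2 \bw \bw^T] = \sum_{k \geq 0} c_k R^{2k} \bw^{\circ k} (\bw^{\circ k})^T,
\]
and testing against a vector $\bv \in \R^n$ produces the scalar
\[
Q(R) := \bv^T f[R^2 \bw \bw^T] \bv = \sum_{k \geq 0} c_k R^{2k} (\bv^T \bw^{\circ k})^2,
\]
which must be non-negative whenever the entries $R^2 w_j w_\ell$ all lie in $I = [ 0, \rho )$; absolute convergence of the series inside its radius of convergence makes the sum well-defined. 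The strategy is to choose $\bw$ and $\bv$ so that the single negative term at $k = m'$ dominates the sum and forces $Q(R) < 0$.

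The central algebraic ingredient is the invertibility of generalized Vandermonde matrices: if $w_1, \ldots, w_n$ are distinct positive reals and $k_1 < \cdots < k_n$ are non-negative integers, then $\bw^{\circ k_1}, \ldots, \bw^{\circ k_n}$ are linearly independent in $\R^n$. Arguing by contradiction, suppose in part~(1) that $S := \{ m < m' : c_m > 0 \}$ has cardinality $r < n$, or in part~(2) that $S := \{ m > m' : c_m > 0 \}$ has cardinality $r < n$. In either case $| S \cup \{ m' \} | \leq n$, so generalized Vandermonde invertibility yields $\bw^{\circ m'} \notin V := \mathrm{span} \{ \bw^{\circ k} : k \in S \}$. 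Let $\bv$ be the orthogonal projection of $\bw^{\circ m'}$ onto $V^\perp$; then $\bv^T \bw^{\circ k} = 0$ for all $k \in S$ and $\bv^T \bw^{\circ m'} = \| \bv \|^2 > 0$.

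With this choice of $\bv$, split $Q(R)$ into the three blocks $k < m'$, $k = m'$, and $k > m'$. In part~(1), send $R \to 0^+$: terms with $k \in S$ vanish, terms with $k < m'$ and $k \notin S$ have $c_k \leq 0$ and contribute non-positively, the $k = m'$ term equals the strictly negative quantity $c_{m'} R^{2m'} \| \bv \|^4$, and the tail $\sum_{k > m'} c_k R^{2k} (\bv^T \bw^{\circ k})^2$ is $O(R^{2(m'+1)})$ by absolute convergence of $f$ inside its radius of convergence. Hence $Q(R) < 0$ for $R$ sufficiently small, contradicting positivity. In part~(2), send $R \to \infty$: the partial sum over $k < m'$ is a polynomial in $R$ of degree $2(m' - 1)$, the $k = m'$ term equals $c_{m'} R^{2m'} \| \bv \|^4$, the terms $k > m'$ with $k \in S$ vanish, and the remaining $k > m'$ terms have $c_k \leq 0$ and contribute non-positively. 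Hence $Q(R) < 0$ for $R$ large, again a contradiction. The main obstacle is precisely this bookkeeping: in part~(1) one must verify that the infinite tail is of strictly lower order than $R^{2m'}$, and in part~(2) one must ensure that the potentially infinite number of non-eliminated terms with $k > m'$ cannot rescue positivity — both of which are arranged exactly by the Vandermonde-based choice of $\bv$ combined with the appropriate small-$R$ or large-$R$ regime.
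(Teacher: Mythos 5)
Your proof is correct and follows exactly the route the paper indicates for this lemma: evaluate $f[-]$ on rank-one matrices of the form $t\,\bu\bu^T$, use the invertibility of generic generalized Vandermonde matrices to construct a test vector annihilating all the positive-coefficient terms on the relevant side of $m'$, and then send the scale parameter to $0^+$ (bounded case) or to $\infty$ (unbounded case) so that the negative $k=m'$ term dominates. The only cosmetic remark is that part (2) also asserts at least $n$ positive coefficients \emph{below} $m'$, which you should note is covered by running the small-$R$ argument of part (1) verbatim (that argument nowhere uses $\rho < \infty$ beyond restricting to small $R$).
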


Notice that this lemma (a)~talks about the derivatives of $f$ at $0$
and not in $(0,\rho)$; and moreover, (b)~considers not the first few
derivatives, but the first few \emph{non-zero} derivatives. Thus, it
is morally different from the preceding two theorems, and one
naturally seeks a common unification of these three results. This was
recently achieved:

\begin{theorem}[Khare~\cite{Khare}]\label{Tmaster}
Let $0 \leq a < \infty, \epsilon \in (0,\infty), I = [a, a+\epsilon)$,
and let $f : I \to \R$ be smooth. Fix integers $n \geq 1$ and $0 \leq p
\leq q \leq n$, with $p=0$ if $a=0$, and such that $f(x)$ has $q-p$
non-zero derivatives at $x=a$ of order at least $p$. Now let
\[
m_0 := 0, \quad \ldots \quad m_{p-1} := p-1;
\]
suppose further that
\[
p \leq m_p < m_{p+1} < \cdots < m_{q-1}
\]
are the lowest orders (above $p$) of the first $q-p$ non-zero
derivatives of $f(x)$ at $x=a$.

Also fix distinct scalars $u_1$, \ldots, $u_n \in ( 0, 1 )$,
and let $\bu := ( u_1, \ldots, u_n )^T$. If
$f[ a \bone_{n \times n} + t \bu \bu^T ] \in \cP_n( \R)$ for all
$t \in [ 0, \epsilon )$, then the derivative $f^{( k )}( a )$ is
non-negative whenever $0 \leq k \leq m_{q-1}$.
\end{theorem}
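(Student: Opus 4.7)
My plan is to adapt the Cauchy--Binet / Taylor-expansion technique behind Theorem~\ref{Thorn} and Lemma~\ref{Lhorn}, but, instead of using induction on dimension, to apply a single congruence transformation by a generalized Vandermonde matrix that extracts the individual derivative inequalities all at once. First I would pass to the $q \times q$ principal submatrix
\[
M_q( t ) := [ f( a + t u_\alpha u_\beta ) ]_{\alpha, \beta = 1}^q,
\]
which belongs to $\cP_q( \R )$ for every $t \in [ 0, \epsilon )$ by hypothesis and the fact that principal submatrices of positive semidefinite matrices are positive semidefinite.

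Next, I Taylor-expand each entry about $a$ through order $m_{q-1}$. By hypothesis, the only Taylor coefficients of $f$ at $a$ of order in $[ p, m_{q - 1} ]$ that can be non-zero sit at the positions $m_p < m_{p + 1} < \cdots < m_{q - 1}$, while coefficients at orders $0, 1, \ldots, p - 1$ carry no a priori vanishing information; labelling these latter orders as $m_0 = 0, \ldots, m_{p - 1} = p - 1$ and grouping terms yields the matrix identity
\[
M_q( t ) = U D( t ) U^T + E( t ),
\]
where $U := [ u_\alpha^{m_j} ]_{\alpha = 1, j = 0}^{q, q - 1}$ is a $q \times q$ generalized Vandermonde matrix, $D( t ) := \diag\bigl( f^{( m_j )}( a )\, t^{m_j} / m_j! \bigr)_{j = 0}^{q - 1}$, and $E( t )$ has every entry $O( t^{m_{q - 1} + 1} )$ as $t \to 0^+$, uniformly in $\alpha, \beta$.

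Since $u_1, \ldots, u_q \in ( 0, 1 )$ are distinct and $m_0 < m_1 < \cdots < m_{q - 1}$ are distinct non-negative integers, $U$ is invertible. Applying the congruence $A \mapsto U^{-1} A U^{-T}$, which preserves positive semidefiniteness, produces
\[
D( t ) + U^{-1} E( t ) U^{-T} \succeq 0 \qquad ( t \in [ 0, \epsilon ) ).
\]
Reading off the $j$-th diagonal entry (non-negative, since the matrix is positive semidefinite) and dividing by $t^{m_j}$ gives
\[
\frac{f^{( m_j )}( a )}{m_j!} + O( t^{m_{q - 1} + 1 - m_j } ) \geq 0,
\]
and letting $t \to 0^+$ yields $f^{( m_j )}( a ) \geq 0$ for every $j = 0, 1, \ldots, q - 1$. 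For any remaining $k \in [ 0, m_{q - 1} ] \setminus \{ m_0, \ldots, m_{q - 1} \}$ we must have $k \geq p$ and $f^{( k )}( a ) = 0$ by the very definition of the $m_j$ as the \emph{first} $q - p$ non-zero derivative orders above $p$, so the required non-negativity holds trivially.

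The hardest conceptual step, in my view, is recognising that a direct induction on $q$ in the style of Loewner's original argument would stall whenever some $f^{( k )}( a )$ with $k < p$ vanishes: one would then be trying to extract the sign of an individual derivative from a product of derivatives in which several factors are zero, which is impossible. The congruence approach above sidesteps this by simultaneously diagonalising the leading-order contributions to $M_q( t )$, so that the derivatives $f^{( m_j )}( a )$ decouple and the sign of each is controlled separately. The remaining work---uniform estimation of the Taylor remainder from smoothness of $f$, and propagating that bound through the fixed congruence $U^{-1} ( \cdot ) U^{-T}$---is entirely routine.
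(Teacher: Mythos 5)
Your argument is correct, and it reaches the conclusion by a genuinely different route from the one in~\cite{Khare} that the survey describes. The paper's proof runs through the refined determinant computation of Theorem~\ref{Phorn}: one studies $\Delta(t) = \det f[a\bone_{n \times n} + t\bu\bu^T]$, identifies its first potentially non-vanishing derivative at $t=0$ as a positive multiple of the product $\prod_j f^{(m_j)}(a)$ (with Schur polynomials supplying the combinatorial coefficients), and must then run a Loewner-style perturbation-and-induction scheme to disentangle the sign of each individual factor from the sign of the product. Your proof sidesteps the determinant entirely: conjugating $M_q(t) = U D(t) U^T + E(t)$ by $U^{-1}$ amounts to testing the quadratic form of $M_q(t)$ against the dual-basis vectors $U^{-T}e_j$, which annihilates every rank-one Taylor term except the $j$-th and so isolates $f^{(m_j)}(a)$ at leading order in $t$, with no induction and no perturbation. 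This is precisely the mechanism behind Lemma~\ref{Lhorn} (invertibility of generic generalized Vandermonde matrices), transported from power series at the origin to a smooth function at a general base point via Taylor's theorem. Two points are worth making explicit but are indeed routine: the uniformity of the bound $E(t) = O(t^{m_{q-1}+1})$, which holds because $f^{(m_{q-1}+1)}$ is continuous on $I$ and hence bounded on $[a, a+\epsilon/2]$; and the invertibility of $U$, the standard non-vanishing of generalized Vandermonde determinants with distinct positive nodes and distinct exponents. What the paper's longer route buys is the determinantal identity itself, which is of independent interest and yields the symmetric function identities of Theorem~\ref{Tsymm}; what your route buys is a shorter, self-contained proof of the positivity statement. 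Your closing observation --- that a naive induction stalls when some $f^{(k)}(a)$ with $k < p$ vanishes, since one would be extracting a sign from a product with vanishing factors --- correctly identifies why decoupling the derivatives is the essential point, and why the Loewner-style argument needs its perturbation step where yours does not.
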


Notice that varying $p$ allows one to control the number of initial
derivatives versus~the number of subsequent non-zero derivatives of
smallest order. In particular, if $p = q = n$, then the result implies
the ``stronger'' Horn--Loewner theorem~\ref{Thorn2} (and so
Theorem~\ref{Thorn}) pointwise at every $a > 0$. At the other extreme
is the special case of $p = 0$ (at any $a \geq 0$), which strengthens
the conclusions of Theorems~\ref{Thorn} and~\ref{Thorn2} for smooth
functions.

\begin{corollary}\label{Cmaster}
Suppose $a$, $\epsilon$, $I$, $f$, $n$ and $\bu$ are as in
Theorem~\ref{Tmaster}. If
$f[ a \bone_{n \times n} + t \bu \bu^T ] \in \cP_n( \R )$ for all
$t \in [ 0, \epsilon )$, then the first $n$ non-zero derivatives of
$f( x )$ at $x=a$ are positive. 
\end{corollary}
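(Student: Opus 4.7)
The plan is to deduce this corollary directly from Theorem \ref{Tmaster} by specializing the parameter $p$. Since $p = 0$ is permitted regardless of whether $a = 0$ or $a > 0$, I would apply Theorem \ref{Tmaster} in exactly that regime. With $p = 0$ the artificial ``filler'' indices $m_0 = 0, \ldots, m_{p-1} = p-1$ disappear entirely, and the sequence $m_0 < m_1 < \cdots < m_{q-1}$ consists solely of the orders of the first $q$ non-zero derivatives of $f$ at $x = a$.

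Let $N \in \{0, 1, 2, \ldots\} \cup \{\infty\}$ denote the total number of non-zero derivatives of $f$ at $x = a$. The case $N = 0$ is trivial: then $f$ is constant near $a$, and there is nothing to prove. Otherwise, I would set $q := \min(n, N) \geq 1$; by construction $f$ then has at least $q$ non-zero derivatives, so the hypotheses of Theorem \ref{Tmaster} are satisfied with this choice of $p$ and $q$. The theorem yields $f^{(k)}(a) \geq 0$ for every $0 \leq k \leq m_{q-1}$, and in particular $f^{(m_j)}(a) \geq 0$ for each $j = 0, 1, \ldots, q-1$.

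The final step is essentially a tautology: by the very definition of the $m_j$ we have $f^{(m_j)}(a) \neq 0$, and combined with the non-negativity just established this forces $f^{(m_j)}(a) > 0$ for $j = 0, 1, \ldots, q-1$. Since the indices $m_0, m_1, \ldots, m_{q-1}$ enumerate precisely the first $q = \min(n, N)$ non-zero derivatives of $f$ at $a$, the desired positivity conclusion follows.

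I do not anticipate any genuine obstacle, since Corollary \ref{Cmaster} is essentially a direct restatement of Theorem \ref{Tmaster} in the regime that assumes nothing a priori about the low-order derivatives of $f$. The only bookkeeping subtlety is handling the case in which $f$ has fewer than $n$ non-zero derivatives at $a$, which is precisely what the $q := \min(n, N)$ choice above is designed to address; the substantive mathematical content sits entirely inside Theorem \ref{Tmaster} itself.
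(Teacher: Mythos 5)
Your proof is correct and follows exactly the route the paper intends: Corollary~\ref{Cmaster} is precisely the $p=0$ specialization of Theorem~\ref{Tmaster}, with $q=\min(n,N)$ handling the case of fewer than $n$ non-zero derivatives, and positivity then following tautologically from non-negativity plus non-vanishing of $f^{(m_j)}(a)$. The only (harmless) slip is the claim that $N=0$ forces $f$ to be constant near $a$ — a smooth function can have all derivatives vanish at a single point without being locally constant — but in that case the conclusion is vacuous anyway, so the argument stands.
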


\begin{remark}
Theorem~\ref{Tmaster} further clarifies the nature of the
Horn--Loewner result and its proof. The reduction from arbitrary
functions, to continuous functions, to smooth functions, requires an
open domain $( 0, \rho )$, in order to use mollifiers, for
example. However, the result for smooth functions actually holds
pointwise, as shown by Theorem~\ref{Tmaster}.
\end{remark}

The proof of Theorem~\ref{Tmaster} combines novel arguments together
with the previously mentioned techniques of Loewner. The refinement of
the determinant computations~(\ref{Eloewner}) is of particular note;
see Theorem~\ref{Phorn} and its consequence, Theorem~\ref{Tsymm}.

\subsection{Schoenberg redux: moment sequences and Hankel
matrices}\label{S33}

In this section, we outline another approach to proving Schoenberg's
theorem~\ref{Tschoenberg}, which yields a stronger version parallel to
the strengthening by Rudin of Theorem~\ref{Trudin}. The present
section reveals connections between positivity preservers, totally
non-negative Hankel matrices, moment sequences of positive measures on
the real line, and also a connection to semi-algebraic geometry.

We begin with Rudin's Theorem~\ref{Trudin} and the
family~(\ref{Erudin}). Notice that the positive definite sequences
in~(\ref{Erudin}) give rise to the Toeplitz matrices
$A(n, \alpha, \beta, \theta)$ with $(j,k)$ entry equal to
$\alpha + \beta \cos\bigl( ( j - k ) \theta \bigr)$. From the
elementary identity
\[
\cos( p - q ) = \cos p \cos q + \sin p \sin q %
\qquad ( p, q \in \R ),
\]
it follows that these Toeplitz matrices have rank at most three:
\begin{equation}\label{EAnabt}
A( n, \alpha, \beta, \theta ) = %
\alpha \bone_{n \times n} + \beta \bu \bu^T + \beta \bv \bv^T,
\end{equation}
where
\[
\bu := \bigl( %
\cos \theta, \cos( 2 \theta ), \ldots, \cos( n \theta) %
\bigr)^T \ \text{ and } \ %
\bv := \bigl( %
\sin \theta, \sin( 2 \theta ), \ldots, \sin( n \theta ) %
\bigr)^T.
\]
In particular, Rudin's work (see Theorem~\ref{Trudin} and the
subsequent discussion) implies the following result.

\begin{proposition}\label{Prudin}
Let $\theta \in \R$ such that $\theta / \pi$ is irrational.
An entrywise map $f : \R \to \R$ preserves positivity on the set of
Toeplitz matrices
\[
\{ A( n,\alpha,\beta,\theta) : n \geq 1, \ \alpha, \beta > 0 \}
\]
if and only if $f( x ) = \sum_{k = 0}^\infty c_k x^k$ is a convergent
power series on $\R$, with $c_k \geq 0$ for all $k \geq 0$.
\end{proposition}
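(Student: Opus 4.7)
The sufficiency ($\Leftarrow$) is standard: by the Schur product theorem each monomial $x \mapsto x^k$ preserves positivity entrywise, non-negative combinations inherit the same property, and the closedness of the positive semidefinite cone in each fixed dimension upgrades this to any power series $\sum_{k=0}^\infty c_k x^k$ with $c_k \geq 0$ that converges on $\R$, since its partial sums converge to $f[A]$ entrywise on any fixed test matrix.

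For necessity, I would invoke the refined form of Rudin's Theorem~\ref{Trudin} noted in the surrounding discussion: the conclusion of absolute monotonicity on $(-1,1)$ already follows from positivity-preservation restricted to the two-parameter Toeplitz family $A(n,\alpha,\beta,\theta)$ from~(\ref{EAnabt}), with $\theta/\pi$ irrational and $\alpha + \beta < 1$. The plan is then to (i) match Proposition~\ref{Prudin}'s hypothesis to Rudin's setting, and (ii) promote the conclusion from a power series on $(-1,1)$ to one on all of $\R$. Both tasks are handled by a single rescaling trick. Fix $r > 0$ and set $g_r(y) := f(ry)$. For every $B = A(n,\alpha,\beta,\theta)$ with $\alpha,\beta > 0$ and $\alpha + \beta < 1$, the scaled matrix $rB = A(n, r\alpha, r\beta, \theta)$ lies in the Proposition's test set, so by hypothesis $g_r[B] = f[rB]$ is positive semidefinite. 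Rudin's refined result applied to $g_r$ yields $g_r(y) = \sum_{k=0}^\infty c_k^{(r)} y^k$ with $c_k^{(r)} \geq 0$, converging for $|y| < 1$. Substituting $y = x/r$ gives $f(x) = \sum_{k=0}^\infty (c_k^{(r)} r^{-k}) x^k$ for $x \in (-r, r)$. By uniqueness of the Maclaurin coefficients at $0$, the quantities $c_k := c_k^{(r)} r^{-k} \geq 0$ are independent of $r$, so $\sum_{k=0}^\infty c_k x^k$ converges on $(-r, r)$ for every $r > 0$, hence on all of $\R$.

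The main obstacle is the subtle discrepancy between the test families: Rudin's stated version allows the boundary cases $\alpha = 0$ or $\beta = 0$, while Proposition~\ref{Prudin} imposes strict positivity. This is harmless because Rudin's argument (a mollifier step to establish continuity, followed by a spherical-harmonics expansion in the spirit of Schoenberg) requires only that $(\alpha, \beta)$ range over an open subset of the closed admissible region $\{\alpha,\beta \geq 0, \ \alpha+\beta < 1\}$, which is exactly what we have. If one wished to sidestep this delicate point entirely, an alternative is to first deduce continuity of $f$ from $2 \times 2$ Toeplitz matrices $A(2,\alpha,\beta,\theta)$ with $\alpha,\beta > 0$ (in the Vasudeva style recalled after Proposition~\ref{Pconvex}), and then recover the boundary cases by a limiting argument before invoking Rudin's theorem verbatim.
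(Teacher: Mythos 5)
Your proof is correct and follows essentially the same route as the paper's: both reduce to Rudin's refined theorem on the two-parameter family, apply it to a rescaled restriction of $f$, and then glue the local power-series representations into a global one via uniqueness of the Maclaurin coefficients (the identity theorem). Your extra care about the strict positivity of $\alpha$, $\beta$ versus Rudin's closed admissible region is a legitimate point that the paper passes over silently, and your proposed fixes for it are sound.
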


Thus, one can significantly reduce the set of test matrices.

\begin{proof}
Given $0 < \rho < \infty$, let the restriction
$f_\rho := f|_{( -\rho, \rho )}$. Observe from the discussion
following Theorem~\ref{Trudin} that Rudin's work explicitly shows the
result for $f_1$, whence for any $f_\rho$ by a change of variables.
Thus,
\[
f_\rho( x ) = \sum_{k = 0}^\infty c_{k, \rho} x^k, \qquad %
c_{k, \rho} \geq 0 \text{ for all } k \geq 0 \text{ and } \rho > 0.
\]
Given $0 < \rho < \rho' < \infty$, it follows by the identity theorem
that $c_{k,\rho} = c_{k,\rho'}$ for all $k$. Hence
$f(x) = \sum_{k \geq 0} c_{k,1} x^k$ (which was Rudin's $f_1(x)$), now
on all of $\R$.
\end{proof}

In a parallel vein to Rudin's results and Proposition~\ref{Prudin},
the following strengthening of Schoenberg's result can be shown, using
a different (and perhaps more elementary) approach than those of
Schoenberg and Rudin.

\begin{theorem}[Belton--Guillot--Khare--Putinar
\cite{BGKP-hankel}]\label{Thankel}
Suppose $0 < \rho \leq \infty$ and $I = (-\rho,\rho)$. Then the following
are equivalent for a function $f : I \to \R$.
\begin{enumerate}
\item The entrywise map $f[-]$ preserves positivity on $\cP_n(I)$, for
all $n \geq 1$.

\item The entrywise map $f[-]$ preserves positivity on the Hankel
matrices in $\cP_n(I)$ of rank at most $3$, for all $n \geq 1$.

\item The function $f$ is real analytic on $I$ and absolutely monotonic
on $(0,\rho)$. In other words, $f(x) = \sum_{k \geq 0} c_k x^k$ on $I$,
with $c_k \geq 0\ \forall k$.
\end{enumerate}
\end{theorem}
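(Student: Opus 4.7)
The plan is to prove the cycle $(3) \Rightarrow (1) \Rightarrow (2) \Rightarrow (3)$. Both easy directions come essentially for free. For $(3) \Rightarrow (1)$, the P\'olya--Szeg\"o observation recalled in Section~\ref{Shistory} applies: given $f(x) = \sum_{k \ge 0} c_k x^k$ with $c_k \ge 0$ and any $A \in \cP_n(I)$, the Schur product theorem gives $A^{\circ k} \in \cP_n(\R)$ for each $k$, and $f[A] = \sum_k c_k A^{\circ k}$ lies in the closed convex cone $\cP_n(\R)$. The implication $(1) \Rightarrow (2)$ is immediate, since the test set in (2) is a subset of that in (1).

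The substantive content is $(2) \Rightarrow (3)$, which I would split into two stages. Stage~1 identifies $f$ with a power series on the positive semi-axis. The matrices $a \bone_{n \times n} + t \bu \bu^T$ with $\bu = (1, u_0, u_0^2, \ldots, u_0^{n-1})^T$ for some fixed $u_0 \in (0, 1)$ and $a, t \ge 0$ with $a + t < \rho$ are rank-$\le 2$ Hankel matrices in $\cP_n((0,\rho))$, hence included in the test class of (2). Applying Theorem~\ref{Thorn2} for each $n \ge 1$ forces $f$ to be smooth on $(0, \rho)$ with all derivatives non-negative. By Bernstein's theorem one obtains
\[
f(x) = F(x) := \sum_{k=0}^\infty c_k x^k \qquad (x \in (0,\rho)),
\]
with $c_k \ge 0$; the series converges absolutely on all of $(-\rho, \rho)$, giving an analytic function $F$ there.

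Stage~2 is to identify $f$ with $F$ on $(-\rho, 0]$. Testing with the $2 \times 2$ Hankel matrices $\bigl(\begin{smallmatrix} a & b \\ b & a \end{smallmatrix}\bigr)$ for $0 \le |b| \le a < \rho$, and following the Vasudeva--type argument sketched after Theorem~\ref{Thorn2} using Proposition~\ref{Pconvex}, yields continuity of $f$ on $I$, and in particular $f(0) = c_0 = F(0)$. For $x \in (-\rho, 0)$, I would exploit the rank-$\le 3$ freedom by considering $2$- or $3$-atomic positive measures $\mu = w_+ \delta_{t_+} + w_- \delta_{t_-}(+ w_0 \delta_0)$ with $t_- < 0 < t_+$, so that the associated Hankel matrix $H_n(\mu) = [w_+ t_+^{j+k-2} + w_- t_-^{j+k-2} + w_0 \cdot 0^{j+k-2}]$ is rank-$\le 3$, PSD, and has entries of both signs lying in $I$ once the parameters are chosen small. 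Since $F$ itself satisfies $(3) \Rightarrow (1)$, the matrix $F[H_n(\mu)]$ is PSD as well, and the strategy is to compare the two moment sequences $(f(h_i))_i$ and $(F(h_i))_i$ arising from $f[H_n(\mu)]$ and $F[H_n(\mu)]$: they coincide wherever the entry $h_i$ is non-negative (by Stage~1), and the rank-$\le 3$ structure, combined with varying $\mu$ and letting $n \to \infty$, should force the two moment sequences to agree at the remaining negative-entry positions as well, yielding $f = F$ on $(-\rho, 0)$.

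The hard part will be this last step of Stage~2. Horn--Loewner-type tools are intrinsically one-sided: every PSD Hankel matrix has non-negative $(1,1)$ entry, so there is no direct ``mirror'' argument producing absolute monotonicity on $(-\rho, 0)$. The rigidity must instead be extracted from the interplay between $f[H_n(\mu)]$ and $F[H_n(\mu)]$ as moment sequences of bounded positive measures, using that a rank-$\le 3$ truncated moment problem has essentially six free parameters and hence enough resolution to isolate the value $f(x_0)$ at a chosen negative $x_0$. Setting this comparison up so as to obtain pointwise equality, rather than mere Hankel positivity, is the crux of the proof in~\cite{BGKP-hankel}.
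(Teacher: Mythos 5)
Your treatment of $(3)\Rightarrow(1)\Rightarrow(2)$ and of Stage~1 is correct and matches the paper: the rank-$\leq 2$ Hankel matrices $a\bone_{n\times n}+t\bu\bu^T$ feed into Theorem~\ref{Thorn2}, and Bernstein's theorem then gives $f=F=\sum_k c_k x^k$ with $c_k\geq 0$ on $(0,\rho)$ (this is exactly the ``stronger Vasudeva theorem''~\ref{Tvasudeva2}). But Stage~2 has two genuine gaps. First, the $2\times 2$ Vasudeva argument does not yield continuity of $f$ on all of $I$: the matrices $\bigl(\begin{smallmatrix} a & \sqrt{ab} \\ \sqrt{ab} & b\end{smallmatrix}\bigr)$ used to get multiplicative mid-convexity have non-negative entries, so Proposition~\ref{Pconvex} gives continuity only on $(0,\rho)$; on $(-\rho,0]$ the $2\times 2$ constraints give merely the bound $|f(b)|\leq f(|b|)$. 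Second, and more seriously, the proposed ``comparison of moment sequences'' does not work as stated: knowing that $f[H_\mu]$ and $F[H_\mu]$ are both positive semidefinite Hankel matrices agreeing at every position where the entry is non-negative does not pin down $f$ at the negative entries, since a positive-definite Hankel matrix admits perturbations of individual moments that preserve positivity. Without some regularity of $f$ on $(-\rho,0)$, agreement with $F$ on $(0,\rho)$ cannot propagate across the origin, and your sketch supplies no mechanism for obtaining that regularity.

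The paper's actual mechanism is the ``integration trick'' of Section~\ref{Sinttrick}: from positive semidefiniteness of $f[H_\mu]$ for the two- and three-atom measures $\mu$, one extracts the linear inequalities $\sum_k b_k f\bigl(s_k(\mu)\bigr)\geq 0$ for every polynomial $p(t)=\sum_k b_k t^k$ that is non-negative on $[-1,1]$ (via limiting sum-of-squares certificates). Applied with $p_{\pm,1}(t)=(1\pm t)(1-t^2)$ and the measures $\mu_b=(\beta+bu_0)\delta_{-1}+b\delta_{u_0}$, this gives continuity of $f$ at every $-\beta\leq 0$; applied with $p_{\pm,n}(t)=(1\pm t)(1-t^2)^n$ to a smooth $f$, it gives the derivative bounds $|H_a^{(n)}(x)|\leq H_{|a|}^{(n)}(x)$ for $H_a(x)=f(a+e^x)$, which force $f$ to be real analytic on all of $I$; the identity theorem against the expansion on $(0,\rho)$ then yields $f\equiv F$. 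A mollifier and normal-families argument reduces continuous preservers to smooth ones. So your outline correctly locates the crux but is missing its key idea, namely the conversion of matrix positivity into these integral inequalities.
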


\begin{remark}
Recall the alternate notion of positive definite functions discussed
in Remark~\ref{Rpinkus}. In~\cite{pinkus04} and related works, Pinkus
and other authors study this alternate notion of positive definite
functions on $H$. Notice that such matrices form precisely the set of
positive semidefinite symmetric matrices of rank at most $\dim H$. In
particular, Theorem~\ref{Thankel} and the far earlier 1959
paper~\cite{Rudin-Duke59} of Rudin both provide a characterization of
these functions, on every Hilbert space of dimension $3$ or more.
\end{remark}

Parallel to the discussions of the proofs of Schoenberg's and Rudin's
results (see the previous chapter), we now explain how to prove
Theorem~\ref{Thankel}. Clearly, $(3) \implies (1) \implies (2)$ in the
theorem. We first outline how to weaken the condition $(2)$ even
further and still imply $(3)$. The key idea is to consider
\emph{moment sequences} of certain non-negative measures on the real
line. This parallels Rudin's considerations of Fourier--Stieltjes
coefficients of non-negative measures on the circle.

\begin{definition}\label{Dadmissible}
A measure $\mu$ with support in $\R$ is said to be \emph{admissible}
if $\mu \geq 0$ on $\R$, and all moments of $\mu$ exist and are
finite:
\[
s_k( \mu ) := \int_\R x^k \std\mu( x ) < \infty \qquad ( k \geq 0 ).
\]
The sequence $\bs( \mu ) := \bigl( s_k( \mu ) \bigr)_{k=0}^\infty$ is
termed the \emph{moment sequence} of $\mu$. Corresponding to $\mu$ and
this moment sequence is the \emph{moment matrix} of $\mu$:
\[
H_\mu := \begin{bmatrix}
s_0( \mu ) & s_1( \mu ) & s_2( \mu ) & \cdots \\
s_1( \mu ) & s_2( \mu ) & s_3( \mu ) & \cdots \\
s_2( \mu ) & s_3( \mu ) & s_4( \mu ) & \cdots \\
\vdots & \vdots & \vdots & \ddots
\end{bmatrix};
\]
note that $H_\mu = [ s_{i + j}( \mu )] _{i,j \geq 0}$ is a
semi-infinite Hankel matrix. Finally, a function $f : \R \to \R$ acts
entrywise on moment sequences, to yield real sequences:
\[
f[ \bs( \mu ) ] := ( f\bigl( s_0( \mu ) \bigr), \ldots, %
f\bigl( s_k( \mu ) \bigr), \ldots ).
\]
\end{definition}

We are interested in understanding which entrywise functions preserve
the space of moment sequences of admissible measures. The connection
to positive semidefinite matrices is made through Hamburger's theorem,
which says that a real sequence $( s_0, s_1, \ldots )$ is the moment
sequence of an admissible measure on $\R$ if and only if every
(finite) principal minor of the moment matrix $H_\mu$ is positive
semidefinite. For simplicity, this last will be reformulated below to
saying that $H_\mu$ is positive semidefinite.

The weakening of Theorem~\ref{Thankel}(2) is now explained: it suffices
to consider the reduced test set of those Hankel matrices, which arise as
the moment matrices of admissible measures supported at three points.
Henceforth, let~$\delta_x$ denote the Dirac probability measure
supported at $x \in \R$. It is not hard to verify that the $m$-point
measure $\mu = \sum_{j = 1}^m c_j \delta_{x_j}$ has Hankel
matrix~$H_\mu$ with rank no more than~$m$:
\begin{align}\label{Emoments}
\begin{aligned}
s_k(\mu) & = \sum_{j=1}^m c_j x_j^k \qquad ( k \geq 0 ) \\
\implies \quad
H_\mu & = \sum_{j=1}^m c_j \bu_j \bu_j^T, \text{ where } %
\bu_j := (1, x_j, x_j^2, \ldots )^T.
\end{aligned}
\end{align}
Thus, a further strengthening of Schoenberg's result is as follows.

\begin{theorem}[Belton--Guillot--Khare--Putinar
\cite{BGKP-hankel}]\label{Thankel2}
In the setting of Theorem~\ref{Thankel}, the three assertions
contained therein are also equivalent to
\begin{enumerate}
\setcounter{enumi}{3}
\item For each measure
\begin{equation}\label{EsSchoenberg}
\mu = a \delta_1 + b \delta_{u_0} + c \delta_{-1},
\quad \text{with } u_0 \in ( 0, 1 ), \ a, b, c \geq 0, \ %
a + b + c \in ( 0, \rho ),
\end{equation}
there exists an admissible measure $\sigma_\mu$ on $\R$ such that
$f\bigl( s_k( \mu ) \bigr) = s_k( \sigma_\mu )$ for all $k \geq 0$.
\end{enumerate}
\end{theorem}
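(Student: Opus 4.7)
My plan is to complete the cyclic chain $(3) \Rightarrow (1) \Rightarrow (2) \Rightarrow (4) \Rightarrow (3)$, thereby adjoining~(4) to the equivalences of Theorem~\ref{Thankel}. The first two arrows are free: $(3) \Rightarrow (1)$ is Schur's product theorem, and $(1) \Rightarrow (2)$ is a tautological inclusion of test sets.

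For $(2) \Rightarrow (4)$, I would fix $\mu$ as in~\eqref{EsSchoenberg} and use the decomposition~\eqref{Emoments} to present $H_\mu$ as a sum of three rank-one positive semidefinite Hankel matrices. The bound $|s_k(\mu)| \leq s_0(\mu) = a + b + c < \rho$ forces all entries of $H_\mu$ into $I$, so hypothesis~(2) keeps $f[H_\mu]$ positive semidefinite Hankel at every finite truncation. Hamburger's theorem then produces the admissible measure $\sigma_\mu$ with $s_k(\sigma_\mu) = f(s_k(\mu))$ for all $k \geq 0$.

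The substantive direction $(4) \Rightarrow (3)$ I would approach in two stages. First, specialize~(4) to the two-point measures $\mu = a\delta_1 + b\delta_{u_0}$ (taking $c = 0$): the moment matrix is exactly $H_\mu = a\bone_{n \times n} + b\bu\bu^T$ with $\bu = (1, u_0, \ldots, u_0^{n-1})^T$, which is precisely the test matrix of the strengthened Horn--Loewner Theorem~\ref{Thorn2}. The $\cP_2$ hypothesis there is secured from $2 \times 2$ submatrices of such $H_\mu$ by Vasudeva's continuity trick recalled after Proposition~\ref{Pconvex}. Running Theorem~\ref{Thorn2} through all $n \geq 1$ forces $f$ to be smooth on $(0, \rho)$ with non-negative derivatives of all orders, so by Bernstein it agrees there with a power series $\tilde f(x) := \sum_{k \geq 0} c_k x^k$, $c_k \geq 0$, absolutely convergent throughout $I$.

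The principal obstacle is extending the identity $f = \tilde f$ from $(0, \rho)$ to the negative half of $I$, and this is where the three-point measures with $c > 0$ enter essentially. A preliminary $2 \times 2$ bound extracted from $\mu = c\delta_{-1}$ yields $|f(-c)| \leq f(c)$ for every $c \in (0, \rho)$, transporting local boundedness from $(0, \rho)$ to $(-\rho, 0)$. To pin down the exact values, I would invoke the Schur-power identity
\[
\tilde f[H_\mu] = H_\Sigma, \qquad \Sigma := \sum_{k \geq 0} c_k \mu^{\odot k},
\]
where $\mu^{\odot k}$ denotes the push-forward of $\mu^{\otimes k}$ under the iterated product map. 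For $\mu$ supported in $\{-1, u_0, 1\}$, both $\Sigma$ and the measure $\sigma_\mu$ furnished by~(4) are admissible and supported in $[-1, 1]$ (the latter via uniform moment bounds together with Markov's inequality), and agree at every even-indexed moment since $s_{2k}(\mu) \in (0, \rho)$. The remaining task --- forcing agreement of odd-indexed moments as $\mu$ sweeps its parameter range so that $s_{2k+1}(\mu) = a + bu_0^{2k+1} - c$ covers $(-\rho, 0)$ densely --- is the crux of the argument, and I would attempt it by combining the Hausdorff determinacy of $\sigma_\mu$ and $\Sigma$ with the matching of two-sided Taylor coefficients of $f$ at $0$, yielding real analyticity of $f$ in a neighborhood of $0$ and, by connectedness and the identity theorem, on all of $I$.
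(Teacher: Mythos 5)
Your reduction of the positive half-line to the two-point measures $a\delta_1 + b\delta_{u_0}$ via the strengthened Horn--Loewner theorem and Bernstein is exactly the paper's first step (it is the ``stronger Vasudeva'' Theorem~\ref{Tvasudeva2}), and the easy implications $(3)\Rightarrow(1)\Rightarrow(2)\Rightarrow(4)$ together with the localization of the support of $\sigma_\mu$ to $[-1,1]$ from uniform moment bounds are all fine. The gap is in the step you yourself flag as the crux: passing from $f = \tilde f$ on $(0,\rho)$ to control of $f$ on $(-\rho,0)$. Knowing that $\sigma_\mu$ and $\Sigma$ share all even moments constrains their odd moments not at all ($\delta_1$ and $\delta_{-1}$ already share every even moment), so determinacy of each measure separately yields nothing; and ``matching two-sided Taylor coefficients of $f$ at $0$'' presupposes that $f$ is even continuous, let alone differentiable, from the left at $0$ --- which is precisely what has not been established. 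As written, the final paragraph is circular rather than a proof.

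The missing device is the integration trick of Section~\ref{Sinttrick}: since $f[H_\mu] = H_{\sigma_\mu} \succeq 0$ with $\sigma_\mu$ supported in $[-1,1]$, every polynomial $p(t) = \sum_k b_k t^k$ that is non-negative on $[-1,1]$ yields the scalar inequality $\sum_k b_k f\bigl(s_k(\mu)\bigr) \geq 0$, justified either by integrating $p$ against $\sigma_\mu$ or, avoiding Hamburger entirely, by a limiting sum-of-squares certificate for $p$ paired against the positive semidefinite matrix $f[H_\mu]$. Applying this with $p_{\pm,1}(t) = (1\pm t)(1-t^2)$ to the measures $\mu_b = (\beta + bu_0)\delta_{-1} + b\delta_{u_0}$ traps $|f(-\beta) - f(-\beta - bu_0(1-u_0^2))|$ between differences of $f$ at nearby positive arguments, and letting $b \to 0^+$ gives continuity of $f$ on the negative axis. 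For smooth $f$, the higher certificates $p_{\pm,n}(t) = (1\pm t)(1-t^2)^n$ give the derivative bounds $|H_a^{(n)}(x)| \leq H_{|a|}^{(n)}(x)$ for $H_a(x) := f(a + e^x)$, which force real analyticity of $f$ on all of $I$ and hence $f = \tilde f$ there; the merely continuous case then follows by mollification and a normal-families argument. Without some substitute for these inequalities your argument does not close on $(-\rho,0)$.
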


In fact, we will see in Section~\ref{Sinttrick} below that this
assertion~(4) can be simplified to just assert that $f[ H_\mu ]$ is
positive semidefinite, and so completely avoid the use of Hamburger's
theorem.

We now discuss the proof of these results, working with
$\rho = \infty$ for ease of exposition. The first observation is that
the strengthening of the Horn--Loewner theorem~\ref{Thorn2}, together
with the use of Bernstein's theorem (see remark~(2) following
Theorem~\ref{Thorn}), implies the following ``stronger'' form of
Vasudeva's theorem~\ref{Tvasudeva}:

\begin{theorem}[see \cite{BGKP-hankel}]\label{Tvasudeva2}
Suppose $I = (0,\infty)$ and $f : I \to \R$. Also fix $u_0 \in (0,1)$.
The following are equivalent:
\begin{enumerate}
\item The entrywise map $f[-]$ preserves positivity on $\cP_n(I)$ for all
$n \geq 1$.

\item The entrywise map $f[-]$ preserves positivity on all moment
matrices $H_\mu$ for $\mu = a \delta_1 + b \delta_{u_0}, \ a,b > 0$.

\item The function $f$ equals a convergent power series
$\sum_{k=0}^\infty c_k x^k$ for all $x \in I$,
with the Maclaurin coefficients $c_k \geq 0$ for all $k \geq 0$.
\end{enumerate}
\end{theorem}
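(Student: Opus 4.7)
The plan is to prove the cyclic implications $( 3 ) \Rightarrow ( 1 ) \Rightarrow ( 2 ) \Rightarrow ( 3 )$. The implication $( 3 ) \Rightarrow ( 1 )$ is immediate from Schur's product theorem combined with the fact that $\cP_n( \R )$ is a closed convex cone: every non-negative combination of entrywise monomial maps $x \mapsto x^k$ preserves positivity, so the power series in (3) does as well. The implication $( 1 ) \Rightarrow ( 2 )$ is trivial, since formula \eqref{Emoments} expresses the moment matrix of any positive atomic measure as a sum of rank-one positive semidefinite matrices, so in particular all its finite principal submatrices lie in $\cP_n( \R )$.

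The substantive direction is $( 2 ) \Rightarrow ( 3 )$. The starting observation is that, for $\mu = a \delta_1 + b \delta_{u_0}$, one has $s_k( \mu ) = a + b u_0^k$, so the $n \times n$ principal submatrix of $H_\mu$ is exactly
\[
H_\mu^{( n )} = a \bone_{n \times n} + b \bu \bu^T, \qquad
\bu = ( 1, u_0, u_0^2, \ldots, u_0^{n-1} )^T,
\]
and the coordinates of $\bu$ are pairwise distinct since $u_0 \in ( 0, 1 )$. These are precisely the Hankel matrices appearing in the hypothesis of the strengthened Horn--Loewner Theorem \ref{Thorn2}, for this fixed $u_0$ and every $n \geq 1$.

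The main obstacle is that Theorem \ref{Thorn2}, as stated, also requires preservation of positivity on all of $\cP_2( I )$, whereas (2) only supplies preservation on the two-parameter subfamily $\{ H_\mu^{( 2 )} : a, b > 0 \}$. I would close this gap by extracting continuity of $f$ directly from (2): the $1 \times 1$ case gives $f \geq 0$ on $( 0, \infty )$, and the $2 \times 2$ case gives
\[
f( a + b ) \, f( a + b u_0^2 ) \geq f( a + b u_0 )^2
\qquad ( a, b > 0 ).
\]
A dichotomy argument using this inequality shows that either $f \equiv 0$ on $( 0, \infty )$ (in which case (3) is immediate) or $f$ is strictly positive throughout; in the latter case $g := \log f$ satisfies the asymmetric mid-convexity inequality $g\bigl( \lambda y_1 + ( 1 - \lambda ) y_3 \bigr) \leq \tfrac{1}{2} \bigl( g( y_1 ) + g( y_3 ) \bigr)$ with $\lambda = 1 / ( 1 + u_0 )$. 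Iterating this relation (to generate standard midpoint convexity as a dyadic limit) and applying Proposition \ref{Pconvex} after establishing local upper boundedness of $g$ via the same inequality forces $g$, hence $f$, to be continuous on $( 0, \infty )$.

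With continuity of $f$ in hand, the determinant computation \eqref{Eloewner} underlying the Horn--Loewner theorem applies directly to the matrices $a \bone_{n \times n} + t \bu \bu^T$ supplied by (2), since the Vandermonde-type constant appearing there is non-zero whenever $\bu$ has distinct entries. Running the two-step argument sketched after Theorem \ref{Thorn} (mollify to the smooth case, induct on $n$ to obtain positivity of the first $n - 1$ derivatives, then descend via Boas--Widder) yields, for every $n \geq 1$, that $f \in C^{n - 3}\bigl( ( 0, \infty ) \bigr)$ with $f^{( k )} \geq 0$ for $0 \leq k \leq n - 3$. Letting $n \to \infty$ makes $f$ smooth on $( 0, \infty )$ with every derivative non-negative, that is, absolutely monotonic. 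Bernstein's theorem on absolutely monotonic functions (cited in remark~(2) following Theorem \ref{Thorn}) then identifies $f$ with a convergent power series $\sum_{k = 0}^\infty c_k x^k$ having $c_k \geq 0$ for all $k$, which is~(3).
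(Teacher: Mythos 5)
Your architecture matches the paper's: $(3) \Rightarrow (1)$ via the Schur product theorem and the closed convex cone structure, $(1) \Rightarrow (2)$ trivially from~(\ref{Emoments}), and $(2) \Rightarrow (3)$ by feeding the Hankel matrices $a \bone_{n \times n} + b \bu \bu^T$ into the strengthened Horn--Loewner theorem and then invoking Bernstein --- which is exactly the derivation the survey indicates. You have also correctly spotted the real issue: Theorem~\ref{Thorn2} additionally assumes that $f[-]$ preserves positivity on \emph{all} of $\cP_2(I)$, which condition~(2) does not supply. In the source \cite{BGKP-hankel} that $\cP_2$ hypothesis is present, and it is precisely what makes Vasudeva's continuity argument run: the matrices $\begin{bmatrix} a & b \\ b & a \end{bmatrix}$ give monotonicity (hence local boundedness) and $\begin{bmatrix} a & \sqrt{ab} \\ \sqrt{ab} & b \end{bmatrix}$ give multiplicative mid-convexity, after which Proposition~\ref{Pconvex} applies.

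The gap is in your substitute continuity argument, in two places. First, the inequality you extract is an \emph{asymmetric} convexity condition, $g\bigl( \lambda y_1 + (1-\lambda) y_3 \bigr) \leq \tfrac12 g(y_1) + \tfrac12 g(y_3)$ with $\lambda = 1/(1+u_0) \neq 1/2$. Proposition~\ref{Pconvex} requires genuine midpoint convexity, and it is neither a standard fact nor proved by you that iterating a $(\lambda, \tfrac12)$-inequality with mismatched weights yields midpoint convexity ``as a dyadic limit''; the classical upgrades from $t$-convexity to $\Q$-convexity require the weight on the argument to match the weight on the values. Second, local upper boundedness of $g = \log f$ is asserted to follow ``via the same inequality,'' but that inequality only bounds the value at an interior point by the average of values at two outer points and gives no a priori bound; the monotonicity that supplies local boundedness in Vasudeva's argument is unavailable here because the constant-diagonal test matrices are not in your test set. (A repair along your lines is plausible: additive counterexamples to your functional inequality can be ruled out, and \emph{if} two-sided local boundedness of $g$ were secured, iterating $g(y) - g(y_0) \leq \tfrac12 \bigl( g(x) - g(y_0) \bigr)$ along the chain $x^{(k)} = y_0 + (y - y_0)/\lambda^k$ gives a H\"older modulus directly, bypassing midpoint convexity. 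But as written the continuity step does not close; the safe route is to prove the theorem with condition (2) augmented by $\cP_2$-preservation, as in the hypotheses of Theorem~\ref{Thorn2}.)
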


Notice that the test matrices in assertion~(2) are all Hankel, and of
rank at most two. This severely weakens Vasudeva's original hypotheses.

Now suppose the assertion in Theorem~\ref{Thankel2}(4) holds. By the
preceding result, $f(x)$ is given on $(0,\infty)$ by an absolutely
monotonic function $\sum_{k \geq 0} c_k x^k$. The next step is to show
that $f$ is continuous. For this, we will crucially use the following
``integration trick''. Suppose for each admissible measure $\mu$ as
in~(\ref{EsSchoenberg}), there is a non-negative measure $\sigma_\mu$
supported on $[ -1, 1 ]$ such that
$f\bigl( s_k( \mu ) \bigr) = s_k( \sigma_\mu )$ for all $k \geq 0$.
(Note here that it is not immediate that the support
is contained in $[-1,1]$.)

Now let $p( t ) = \sum_{k \geq 0} b_k t^k$ be a polynomial that takes
non-negative values on $[ -1, 1 ]$. Then,
\begin{equation}\label{Einttrick}
0 \leq \int_{-1}^1 p( t ) \std\sigma_\mu( t ) = %
\sum_{k = 0}^\infty \int_{-1}^1 b_k t^k \std\sigma_\mu( t ) = %
\sum_{k = 0}^\infty b_k s_k( \sigma_\mu ) = %
\sum_{k = 0}^\infty b_k f\bigl( s_k( \mu ) \bigr).
\end{equation}

\begin{remark}
For example, suppose $p( t ) = 1 - t^d$ for some $d \geq 1$. If
$\mu = a \delta_1 + b \delta_{u_0} + c \delta_{-1}$, where
$u_0 \in ( 0, 1 )$ and $a$, $b$, $c > 0$, then the
inequality~(\ref{Einttrick}) gives that
\[
0 \leq f\bigl( s_0( \mu ) \bigr) - f\bigl( s_d( \mu ) \bigr) = %
f( a + b + c) - f( a + b u_0^d + c (-1)^d ).
\]
It is not clear \emph{a priori} how to deduce this inequality using
the fact that $f[-]$ preserves matrix positivity and the Hankel moment
matrix of $\mu$. The explanation, which we provide in
Section~\ref{Sinttrick} below, connects moment problems, matrix
positivity, and real algebraic geometry.
\end{remark}

We now outline how (\ref{Einttrick}) can be used to prove of the
continuity of $f$.  First note that $| s_k( \mu ) | \leq s_0( \mu )$
for $\mu$ as above and all $k \geq 0$. This fact and the easy
observation that $f$ is bounded on compact subsets of $\R$ together
imply that all moments of $\sigma_\mu$ are uniformly bounded. From
this we deduce that $\sigma_\mu$ is necessarily supported
on~$[ -1, 1 ]$.

The inequality~(\ref{Einttrick}) now gives the left-continuity of $f$
at $-\beta$, for every $\beta \geq 0$. Fix $u_0 \in ( 0, 1)$, and let
\[
\mu_b := ( \beta + b u_0 ) \delta_{-1} + b \delta_{u_0} %
\qquad ( b > 0).
\]
Applying~(\ref{Einttrick}) to the polynomials
$p_{\pm,1}( t ) := ( 1 \pm t )( 1 - t^2 )$, we deduce that
\[
f\bigl( \beta + b ( 1 + u_0 ) \bigr) - %
f\bigl( \beta + b ( u_0 + u_0^2 ) \bigr) \geq %
| f( -\beta ) - f\bigl( -\beta - b u_0 ( 1 - u_0^2 ) \bigr) |.
\]
Letting $b \to 0^+$, the left continuity of $f$ at $-\beta$ follows.
Similarly, to show that $f$ is right continuous at $-\beta$, we apply
the integral trick to $p_{\pm,1}( t )$ and to
$\mu'_b := ( \beta + b u_0^3 ) \delta_{-1} + b \delta_{u_0}$ instead
of $\mu_b$.

Having shown continuity, to prove the stronger Schoenberg theorem, we
next assume that $f$ is smooth on $\R$. For all $a \in \R$, define the
function
\[
H_a : \R \to \R; \ x \mapsto f( a + e^x ).
\]
The function $H_a$ satisfies the estimates
\begin{equation}\label{Eestimates}
| H_a^{( n )}( x ) | \leq H_{|a|}^{(n)}( x ) \qquad %
( a, x \in \R, \ n \in \Z_+ ).
\end{equation}
This is shown by another use of the integration
trick~(\ref{Einttrick}), this time for the polynomials
$p_{\pm,n}( t ) := ( 1 \pm t ) ( 1 - t^2 )^n$ for all $n \geq 0$. In
turn, the estimates~(\ref{Eestimates}) lead to showing that $H_a$ is
real analytic on $\R$, for all $a \in \R$. Now composing $H_{-a}$ for
$a > | x |$ with the function $L_a( y ) := \log( a + y )$ shows that
$f( x )$ is real analytic on $\R$ and agrees with
$\sum_{k \geq 0} a_k x^k$ on $( 0, \infty )$. This concludes the proof
for smooth functions.

Finally, to pass from smooth functions to continuous functions, we again
use a mollified family $f_\delta \to f$ as $\delta \to 0^+$. Each
$f_\delta$ is the restriction of an entire function, say
$\widetilde{f}_\delta$, and the family
$\{ \widetilde{f}_{1/n} : n \geq 1 \}$ forms a normal family on each
open disc $D( 0, r )$. It follows from results by Montel and Morera
that $\widetilde{f}_{1/n}(z)$ converges uniformly to a function
$g_r$ on each closed disc $\overline{D( 0, r )}$, and $g_r$ is
analytic. Since $g_r$ restricts to $f$ on $( -r, r )$, it follows that
$f$ is necessarily also real analytic on $\R$, and we are done.

\subsection{The integration trick, and positivity
certificates}\label{Sinttrick}

Observe that the inequality~(\ref{Einttrick}) can be
written more generally as follows.

\emph{Given a polynomial
$p( t ) = \sum_{k \geq 0} b_k t^k$ which takes non-negative values on
$[ -1, 1 ]$, as well as a positive semidefinite Hankel matrix
$H = ( s_{i + j} )_{i,j \geq 0}$, we have that
\begin{equation}\label{Etrick}
\sum_{k \geq 0} b_k s_k \geq 0.
\end{equation}}

As shown in~(\ref{Einttrick}), this assertion is clear via an
application of Hamburger's theorem. We now demonstrate how the
assertion can instead be derived from first principles, with
interesting connections to positivity certificates.

First note that the inequality~(\ref{Etrick}) holds if
$p( t )$ is the square of a polynomial. For instance, if
$p( t ) = (1-3t)^2 = 1 - 6 t + 9 t^2$ on $[ -1, 1 ]$, then
\begin{equation}\label{Einttrick2}
s_0 - 6 s_1 + 9s _2 = ( e_0 - 3 e_1 )^T H ( e_0 - 3 e_1 ),
\end{equation}
where $e_0 = ( 1, 0, 0, \ldots )$ and $e_1 = ( 0, 1, 0, 0, \ldots )$.
The non-negativity of~(\ref{Einttrick2}) now follows immediately from
the positivity of the matrix~$H$. The same reasoning applies if
$p( t )$ is a sum of squares of polynomials, or even the limit of a
sequence of sums of squares. Thus, one approach to showing the
inequality~(\ref{Etrick}) for an arbitrary polynomial $p( t )$ which is
non-negative on $[ -1, 1 ]$ is to seek a \emph{limiting sum-of-squares
representation}, which is also known as a \emph{positivity
certificate}, for~$p$.

If a $d$-variate real polynomial is a sum of squares of real polynomials,
then it is clearly non-negative on $\R^d$, but the converse is not true
for $d > 1$.%
\footnote{This is connected to semi-algebraic geometry and to
Hilbert's seventeenth problem: recall the famous result of Motzkin
that there are non-negative polynomials on $\R^d$ that are not sums of
squares, such as $x^4 y^2 + x^2 y^4 - 3 x^2 y^2 + 1$.
Such phenomena have been studied in several settings, including
polytopes (by Farkas, Handelman, and P{\'o}lya) and more general
semi-algebraic sets (by Putinar, Schm{\"u}dgen, Stengel, Vasilescu,
and others).}
Even when $d = 1$, while a sum-of-squares representation is an
equivalent characterization for one-variable polynomials that are
non-negative on $\R$, here we are working on the compact
semi-algebraic set $[ -1, 1 ]$. We now give three proofs of the
existence of such a positivity certificate in the setting used above.

\begin{proof}[Proof 1]
A result of Berg, Christensen, and Ressel (see the end of~\cite{BCR})
shows more generally that, for every dimension $d \geq 1$, any
non-negative polynomial on $[ -1, 1 ]^d$ has a limiting
sum-of-squares representation.
\end{proof}

\begin{proof}[Proof 2]
The only polynomials used in proving the stronger form of Schoenberg's
theorem, Theorems~\ref{Thankel} and~\ref{Thankel2}, appear
following~(\ref{Eestimates}):
\[
p_{\pm, n}(t) := ( 1 \pm t ) ( 1 - t^2 )^n \qquad ( n \geq 0 ).
\]
Each of these polynomials is composed of factors of the form
$p_{\pm, 0}( t ) = 1 \pm t$, so it suffices to produce a limiting
sum-of-squares representation for these two polynomials on
$[ -1, 1 ]$. Note that
\begin{align*}
\frac{1}{2}(1 \pm t)^2 = &\ \frac{1}{2} \pm t + \frac{t^2}{2},\\
\frac{1}{4}(1 - t^2)^2 = &\ \frac{1}{4} - \frac{t^2}{2} +
\frac{t^4}{4},\\
\frac{1}{8}(1 - t^4)^2 = &\ \frac{1}{8} - \frac{t^4}{4} +
\frac{t^8}{8},
\end{align*}
and so on. Adding the first $n$ equations shows that
$( 1 \pm t ) + 2^{-n}( t^{2^n} - 1 )$ is a sum-of-squares polynomial
for all $n$. Taking $n \to \infty$ finishes the proof.
\end{proof}

\begin{proof}[Proof 3]
In fact, for any $d \geq 1$ and any compact set $K \subset \R^d$, if
$f$ is a non-negative continuous function on $K$, then $f$ has a
positivity certificate. The Stone--Weierstrass theorem gives a
sequence of polynomials which converges to~$\sqrt{f}$, and the squares
of these polynomials then provide the desired limiting representation
for $f$. This is a simpler proof than Proof 1 from~\cite{BCR}, but the
convergence here is uniform, whereas the convergence in~\cite{BCR} is
stronger.
\end{proof}

\begin{remark}
In (\ref{Einttrick}), we used $H = H_{\sigma_\mu}$, which was positive
semidefinite by assumption. The previous discussion shows that
Theorem~\ref{Thankel2}(4) can be further weakened, by requiring only that
$f[ H_\mu ]$ is positive semidefinite, as opposed to being equal to
$H_{\sigma}$ for some admissible measure $\sigma$. Hence we do not
require Hamburger's theorem in order to prove the strengthening of
Schoenberg's theorem that uses the test set of low-rank Hankel matrices.
\end{remark}


\subsection{Variants of moment-sequence transforms}

We now present a trio of results on functions which
preserve moment sequences.

For $K \subset \R$, let $\moment( K )$ denote the set of moment
sequences corresponding to admissible measures with support in $K$.
We say that $F$ maps $\moment( K )$ into $\moment( L )$, where $K$,
$L \subset \R$, if for every admissible measure $\mu$ with support in $K$
there exists an admissible measure $\sigma$ with support in $L$ such that
\[
F( s_k( \mu ) ) = s_k( \sigma ) \quad \text{for all } k \in \Z_+,
\]
where $s_k( \mu )$ is the $k$th-power moment of $\mu$, as in
Definition~\ref{Dadmissible}.

\begin{theorem}\label{T2sided}
A function $F : \R \to \R$ maps $\moment([-1,1])$ into itself if and
only if $F$ is the restriction to~$\R$ of an absolutely monotonic
entire function.
\end{theorem}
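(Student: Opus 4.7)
The plan is to handle the two directions separately, with the converse being a direct construction and the forward direction being essentially immediate from Theorem~\ref{Thankel2}.

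For the converse, suppose $F(x) = \sum_{k \geq 0} c_k x^k$ is entire and absolutely monotonic, so $c_k \geq 0$ for all $k$. Given any admissible $\mu$ on $[-1,1]$, I would exhibit an admissible $\sigma$ on $[-1,1]$ with moment sequence $\bigl( F(s_k(\mu)) \bigr)_k$ by the following probabilistic-flavored construction. For each $j \geq 0$, let $\mu^{[j]}$ denote the pushforward of the product measure $\mu^{\otimes j}$ under the multiplication map $(x_1, \ldots, x_j) \mapsto x_1 x_2 \cdots x_j$, with the convention $\mu^{[0]} := \delta_1$. Then $\mu^{[j]}$ is non-negative on $[-1,1]$ with $s_k( \mu^{[j]} ) = s_k( \mu )^j$ and total mass $s_0( \mu )^j$. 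Setting $\sigma := \sum_{j \geq 0} c_j \mu^{[j]}$ yields a non-negative Borel measure on $[-1,1]$ of finite total mass $F(s_0(\mu)) < \infty$ (here it is essential that $F$ is entire); the dominated convergence theorem, with the bound $|x|^k \leq 1$ on $[-1,1]$, then gives $s_k( \sigma ) = \sum_j c_j s_k( \mu )^j = F( s_k( \mu ) )$, as required.

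For the forward direction, the hypothesis furnishes, for each admissible $\mu$ on $[-1,1]$, an admissible $\sigma_\mu$ supported on $[-1,1] \subset \R$ with $F( s_k( \mu ) ) = s_k( \sigma_\mu )$ for all $k \geq 0$. Specializing to the three-point measures $\mu = a \delta_1 + b \delta_{u_0} + c \delta_{-1}$ of~(\ref{EsSchoenberg}), where $u_0 \in (0,1)$ is fixed and $a + b + c$ ranges over $(0, \infty)$, shows that $F$ satisfies assertion~(4) of Theorem~\ref{Thankel2} with $\rho = \infty$ and $I = \R$. Invoking that theorem immediately yields $F( x ) = \sum_{k \geq 0} c_k x^k$ on all of~$\R$ with $c_k \geq 0$, which means precisely that $F$ is the restriction to~$\R$ of an absolutely monotonic entire function.

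I do not foresee a substantive obstacle in either direction. The only slightly delicate point is justifying the exchange of sum and integral when computing $s_k(\sigma)$ in the converse, and this is routine given that $\sigma$ has finite total mass and the integrand is bounded by $1$ on $[-1,1]$. The entire argument rests on two ideas: that the multiplicative pushforward converts the termwise action of $F$ on moments into a bona fide measure on $[-1,1]$, and that the strengthened Schoenberg theorem already carries all the analytic weight for the forward direction, with the correct test family of rank-at-most-three moment matrices built in.
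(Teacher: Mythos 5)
Your proposal is correct, and the forward direction is exactly the route the paper indicates: the survey states that Theorem~\ref{T2sided} ``follows from Theorems~\ref{Thankel} and~\ref{Thankel2} for $\rho = \infty$,'' which is precisely your specialization to the three-point measures of~(\ref{EsSchoenberg}) followed by an appeal to assertion~(3) of Theorem~\ref{Thankel}; a power series with non-negative coefficients converging on all of $\R$ is automatically entire, so the conclusion is as stated.

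Where you genuinely diverge is in the converse. The paper's framework handles this direction through Hamburger's theorem: the Schur product theorem shows that $F[H_\mu]$ is a positive semidefinite Hankel matrix, Hamburger then produces \emph{some} admissible $\sigma$ on $\R$ with the right moments, and the support of $\sigma$ is forced into $[-1,1]$ by the uniform bound $|F(s_k(\mu))| \leq F(s_0(\mu))$ together with the characterization of compactly supported measures by bounded moment sequences. Your construction $\sigma = \sum_{j \geq 0} c_j \mu^{[j]}$, with $\mu^{[j]}$ the multiplicative pushforward of $\mu^{\otimes j}$, bypasses all of this: it exhibits the representing measure explicitly, keeps it on $[-1,1]$ by construction, and needs only Tonelli (for non-negativity of $c_j$ and $\mu^{[j]}$) plus the finiteness of $F(s_0(\mu))$ to justify the interchange $s_k(\sigma) = \sum_j c_j s_k(\mu)^j = F(s_k(\mu))$. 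This is more elementary and more informative than the abstract existence argument, at the cost of being specific to this one direction; the Hamburger route is the one that generalizes to the other variants (Theorems~\ref{T1sided}, \ref{Tminus}, \ref{TEuclidean}) where no such clean convolution structure is available. Both arguments are sound.
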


\begin{theorem}\label{T1sided}
A function $F : \R_+ \to \R$ maps $\moment( [ 0, 1 ] )$ into itself if
and only if $F$ is absolutely monotonic on $( 0, \infty )$ and
$0 \leq F( 0 ) \leq \lim_{\epsilon \to 0^+} F( \epsilon )$.
\end{theorem}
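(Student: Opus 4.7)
The strategy is to leverage the strengthened one-sided Schoenberg theorem (Theorem~\ref{Tvasudeva2}) for necessity and a multiplicative-convolution construction for sufficiency, treating the special case where $\mu$ is concentrated at $0$ separately.

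For necessity, assume $F$ maps $\moment([0,1])$ into itself. First, I would test the hypothesis on two-atom measures $\mu = a \delta_1 + b \delta_{u_0}$ with $a, b > 0$ and $u_0 \in (0,1)$. The moments are $s_k(\mu) = a + b u_0^k \in (0,\infty)$, and the existence of an admissible $\sigma$ with matching transformed moments forces (via Hamburger's theorem) the Hankel matrix $F[H_\mu] = [F(a + b u_0^{i+j})]_{i,j \geq 0}$ to be positive semidefinite. This is precisely the test family of Theorem~\ref{Tvasudeva2}(2), so $F$ is absolutely monotonic on $(0,\infty)$ with a power series $F(x) = \sum_{k \geq 0} c_k x^k$, $c_k \geq 0$, convergent on $(0,\infty)$. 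Next, I would apply the hypothesis to $\mu = \alpha \delta_0$ with $\alpha > 0$, whose moment sequence is $(\alpha, 0, 0, \ldots)$. The image Hankel matrix has $F(\alpha)$ in the $(0,0)$ entry and $F(0)$ elsewhere; its $(1,1)$ entry yields $F(0) \geq 0$, while the leading $2 \times 2$ principal minor has determinant $F(0)(F(\alpha) - F(0))$, forcing $F(\alpha) \geq F(0)$ whenever $F(0) > 0$. Letting $\alpha \to 0^+$ gives $F(0) \leq c_0 = \lim_{\epsilon \to 0^+} F(\epsilon)$ (and this inequality is automatic when $F(0) = 0$).

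For sufficiency, given $F$ satisfying the stated conditions and an admissible $\mu$ on $[0,1]$, I write $\mu = \lambda \delta_0 + \mu_+$ with $\mu_+$ the restriction of $\mu$ to $(0,1]$, and introduce the multiplicative convolution $\mu_+^{\boxtimes n}$, defined as the pushforward of $\mu_+^{\otimes n}$ under $(x_1, \ldots, x_n) \mapsto x_1 \cdots x_n$, whose key property is $s_k(\mu_+^{\boxtimes n}) = s_k(\mu_+)^n$ for all $k \geq 0$. When $\mu_+ \neq 0$, set
\[
\sigma := c_0 \delta_1 + \sum_{n \geq 1} c_n \mu_+^{\boxtimes n} + \bigl[ F(s_0(\mu)) - F(s_0(\mu_+)) \bigr] \delta_0,
\]
noting that the series converges in total mass since $\sum_n c_n s_0(\mu_+)^n = F(s_0(\mu_+)) < \infty$, and the $\delta_0$ coefficient is non-negative by monotonicity of $F$ on $(0,\infty)$ together with $\lambda \geq 0$. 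When $\mu_+ = 0$ (so $\mu = \lambda \delta_0$), instead set $\sigma := F(0) \delta_1 + [F(\lambda) - F(0)] \delta_0$, which is non-negative thanks to $F(0) \geq 0$ and $F(\lambda) \geq c_0 \geq F(0)$. A direct moment check using $s_k(\mu) = s_k(\mu_+)$ for $k \geq 1$ and $s_0(\mu) = \lambda + s_0(\mu_+)$ then confirms $s_k(\sigma) = F(s_k(\mu))$ for all $k \geq 0$ in both cases, and $\sigma$ is supported in $[0,1]$ by construction.

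The main obstacle is not the analytic component---Theorem~\ref{Tvasudeva2} delivers absolute monotonicity cleanly---but the careful bookkeeping at the origin. Because $F(0)$ may be strictly less than $\lim_{\epsilon \to 0^+} F(\epsilon) = c_0$, a single-formula construction $\sigma = \sum_n c_n \mu^{\boxtimes n}$ produces $c_0$ rather than $F(0)$ for the higher moments whenever $\mu$ is concentrated at $0$, and no positive-mass correction concentrated at $\{0, 1\}$ can simultaneously repair the $k = 0$ and $k \geq 1$ moments in that degenerate case. Splitting the construction into the $\mu_+ = 0$ and $\mu_+ \neq 0$ sub-cases resolves this, and the two-sided bound $0 \leq F(0) \leq c_0$ is precisely what is needed for the two resulting corrective measures to be non-negative.
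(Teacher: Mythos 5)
Your proof is correct, and the necessity direction takes a genuinely different route from the paper's. For necessity, the paper does not invoke Theorem~\ref{Tvasudeva2} at all: it applies the integration trick~(\ref{Einttrick}) on $[0,1]$ with $p(t)=(1-t)^n$ to measures $\mu = \sum_j e^{-t_j\alpha}c_j\delta_{e^{-t_j h}}$, deduces that $(-1)^n\Delta_h^n(F\circ g)(\alpha)\geq 0$ for every finite exponential sum $g$, concludes by weak density and Bernstein's theorem that $F\circ g$ is completely monotonic for \emph{every} completely monotonic $g$, and then invokes the Lorch--Newman theorem to get absolute monotonicity of $F$. Your route instead observes that the two-atom test measures $a\delta_1+b\delta_{u_0}$ lie in $\moment([0,1])$, so the hypothesis hands you exactly condition~(2) of the strengthened Vasudeva theorem~\ref{Tvasudeva2}, which delivers the power series with non-negative coefficients directly; the boundary inequalities $0\leq F(0)\leq c_0$ then come from a $2\times 2$ minor of $F[H_{\alpha\delta_0}]$. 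This is cleaner and more self-contained relative to the machinery already built in the survey (it avoids the external Lorch--Newman composition theorem), whereas the paper's argument is the one that generalizes to the facewise/multivariable setting of Theorem~\ref{T1sided-multi}. For sufficiency the paper is terse --- it notes that a zero moment occurs only for $\mu=a\delta_0$ and otherwise appeals to the Schur product theorem and closure of the cone --- while you make the representing measure explicit via multiplicative convolutions $\mu_+^{\boxtimes n}$ and a mass correction at $\{0,1\}$; the underlying idea (isolate the atom at the origin, where $F(0)<c_0$ is possible, and use the power series on the strictly positive moments) is the same, and your bookkeeping there is accurate.
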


\begin{theorem}\label{Tminus}
A function $F : \R \to \R$ maps $\moment( [ -1, 0 ] )$ into
$\moment( ( -\infty, 0 ] )$ if and only if there exists an absolutely
monotonic entire function $\widetilde{F} : \C \to \C$ such that
\[
F( x ) = \left\{\begin{array}{ll}
\widetilde{F}( x ) & \text{if } x \in ( 0, \infty ), \\
0 & \text{if } x = 0, \\
-\widetilde{F}( -x ) \qquad & \text{if } x \in ( -\infty, 0 ).
\end{array}\right.
\]
\end{theorem}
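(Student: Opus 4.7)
The plan is to treat both directions through the reflection $\mu \leftrightarrow \mu^*$ defined by $\mu^*(A) := \mu(-A)$, which bijects admissible measures on $[-1, 0]$ with those on $[0, 1]$ and shifts moments by $(-1)^k$, i.e., $s_k(\mu^*) = (-1)^k s_k(\mu)$; the analogous reflection relates admissible $\sigma$ on $(-\infty, 0]$ to admissible $\sigma^*$ on $[0, \infty)$.

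For the sufficiency, given $\widetilde F(z) = \sum_{n \geq 0} c_n z^n$ entire with $c_n \geq 0$, the plan is to reflect any admissible $\mu$ on $[-1, 0]$ to $\mu^*$ on $[0, 1]$ and invoke the sufficiency direction of Theorem~\ref{T1sided}, which applies because $\widetilde F|_{\R_+}$ is absolutely monotonic with $\widetilde F(0) = \lim_{\epsilon \to 0^+} \widetilde F(\epsilon) = c_0 \geq 0$. This produces an admissible $\sigma^*$ on $[0, 1]$ with $s_k(\sigma^*) = \widetilde F(s_k(\mu^*))$. Reflecting $\sigma^*$ to $\sigma$ on $[-1, 0] \subseteq (-\infty, 0]$ and checking the two parities of $k$ using the piecewise definition of $F$ then yields $s_k(\sigma) = (-1)^k \widetilde F((-1)^k s_k(\mu)) = F(s_k(\mu))$.

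For the necessity, I extract rigidity properties by specialization. Testing $\mu = \delta_0$ forces $s_k(\sigma) = F(0)$ for all $k \geq 1$, and the sign-alternation constraint on moments of $\sigma$ (even moments $\geq 0$, odd moments $\leq 0$) then forces $F(0) = 0$. Testing $\mu = c\delta_{-1}$ with $c > 0$ yields a $\sigma$ whose reflection $\sigma^*$ has moments alternating as $F(c), -F(-c), F(c), -F(-c), \ldots$; since these are uniformly bounded, $\sigma^*$ must be supported in $[0, 1]$, where moments are non-increasing, forcing the two values to coincide and giving $F(-c) = -F(c)$. This proves $F$ is odd on $\R$. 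Next, restricting the hypothesis to $\mu^* = a\delta_1 + b\delta_{u_0}$ with $u_0 \in (0, 1)$ and $a, b > 0$, reflecting to $\mu$ on $[-1, 0]$ and using oddness gives $s_k(\sigma^*) = F(s_k(\mu^*))$, so $F[H_{\mu^*}] = H_{\sigma^*}$ is positive semidefinite. This is exactly condition~(2) of Theorem~\ref{Tvasudeva2}, which yields $F(x) = \sum_{k \geq 0} c_k x^k$ with $c_k \geq 0$ on $(0, \infty)$. Convergence throughout $(0, \infty)$ forces infinite radius of convergence, so $\widetilde F(z) := \sum c_k z^k$ is entire, and combining with $F(0) = 0$ and oddness recovers the claimed form.

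The main obstacle will be the oddness step: one must confine the auxiliary $\sigma^*$ arising from $\mu = c\delta_{-1}$ to $[0, 1]$ before exploiting the ``non-increasing moments'' trick. This hinges on the elementary but essential fact that a positive measure on $[0, \infty)$ with uniformly bounded moments is necessarily supported in $[0, 1]$ (otherwise any tail on $[1+\epsilon, \infty)$ would make moments blow up). With oddness in hand, the remaining reductions to Theorem~\ref{T1sided} and Theorem~\ref{Tvasudeva2} are routine.
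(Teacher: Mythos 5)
The paper does not actually prove Theorem~\ref{Tminus}; it sketches only Theorem~\ref{T1sided} and defers the other two results to \cite{BGKP-hankel}. Judged on its own terms, your argument is correct and follows the natural route: the reflection $\mu \mapsto \mu^*$ with $s_k(\mu^*) = (-1)^k s_k(\mu)$, the forced vanishing $F(0)=0$ from $\mu=\delta_0$ (even moments of a measure on $(-\infty,0]$ are $\geq 0$, odd ones $\leq 0$), oddness from $\mu = c\delta_{-1}$ via the bounded-moments-implies-support-in-$[0,1]$ observation and monotonicity of moments there, and then reduction of necessity to condition~(2) of Theorem~\ref{Tvasudeva2} and of sufficiency to Theorem~\ref{T1sided}. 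All of these steps are sound, and the key technical point you single out (confining $\sigma^*$ to $[0,1]$ before comparing consecutive moments) is exactly the right one.

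One small gap in the sufficiency direction: your final identity $s_k(\sigma) = (-1)^k\widetilde F\bigl((-1)^k s_k(\mu)\bigr) = F(s_k(\mu))$ fails when $s_k(\mu)=0$ for some $k\geq 1$, since then $F(0)=0$ while $\widetilde F(0)=c_0$ may be positive; the measure $\sigma$ produced via Theorem~\ref{T1sided} then has the wrong moments. This happens precisely for the degenerate measures $\mu = a\delta_0$ (for $k\geq 1$, $s_k(\mu)=\int x^k\,\rd\mu = 0$ forces $\mu$ to be concentrated at the origin), and for those one should simply take $\sigma = F(a)\delta_0$ directly, noting $F(a)\geq 0$. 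For every other admissible $\mu$ on $[-1,0]$ one has $s_k(\mu)\neq 0$ for all $k$, and your parity check goes through verbatim. With that one-line patch the proof is complete.
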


It is striking to observe the possibility of a discontinuity at the
origin which may occur in the latter two of these three theorems.

We will content ourselves here with sketching the proof of the second
result. For the others, see \cite{BGKP-hankel}, noting that the first of
the results follows from Theorems~\ref{Thankel} and~\ref{Thankel2} for
$\rho = \infty$.

\begin{proof}[Proof of Theorem~\ref{T1sided}]

Note that the moment matrix corresponding to an element of
$\moment( [ 0, 1 ] )$ has a zero entry if and only if
$\mu = a \delta_0$ for some $a \geq 0$. This and the Schur product
theorem give one implication.

For the converse, suppose $F$ preserves $\moment( [ 0, 1 ] )$. Fix
finitely many scalars $c_j$, $t_j > 0$ and an integer $n \geq 0$, and
set
\begin{equation}\label{Echoice}
p( t ) = ( 1 - t )^n \quad \text{and} \quad
\mu = \sum_j e^{- t_j \alpha} c_j \delta_{e^{-t_j h}},
\end{equation}
where $\alpha > 0$ and $h>0$. If $g( x ) := \sum_j c_j e^{-t_j x}$
then the integration trick (\ref{Einttrick}), but working on
$[ 0, 1 ]$, shows that the forward finite differences of $F \circ g$
alternate in sign:
\[
\sum_{k=0}^n (-1)^k \binom{n}{k} F\Bigl(
\sum_j c_j e^{- t_j ( \alpha + k h )}\Bigr) \geq 0,
\]
so $(-1)^n \Delta^n_h(F \circ g)(\alpha) \geq 0$. As this holds
for all $\alpha$, $h > 0$ and all $n \geq 0$, it follows that
$F \circ g : (0,\infty) \to (0,\infty)$ is completely monotonic. The
weak density of measures of the form $\mu$, together with  Bernstein's
theorem (\ref{Ebernstein}), gives that $F \circ g$ is completely
monotonic on $(0,\infty)$ for every completely monotonic function
$g : (0,\infty) \to (0,\infty)$. Finally, a theorem of Lorch and
Newman \cite[Theorem 5]{Lorch-Newman} now gives that
$F : (0,\infty) \to ( 0, \infty )$ is absolutely monotonic.
\end{proof}

\subsection{Multivariable positivity preservers and moment families}

We now turn to the multivariable case, and begin with two results of
FitzGerald, Micchelli, and Pinkus \cite{fitzgerald}. We first introduce
some notation and a piece of terminology.

Fix $I \subset \C$ and an integer $m \geq 1$, and let
\[
A^k = ( a^k_{i j} )_{i, j = 1}^N \in I^{N \times N} \quad %
\text{for } k = 1, \ldots, m.
\]
For any function $f : I^m \to \C$, we have the $N \times N$ matrix
\[
f( A^1, \ldots, A^m ) := %
\bigl( f( a^1_{i j}, \ldots, a^m_{i j} ) \bigr)_{i, j = 1}^N \in %
\C^{N \times N}.
\]
We say that $f : \R^m \to \R$ is \emph{real positivity preserving} if
\[
f( A^1, \ldots, A^m ) \in \cP_N( \R ) %
\text{ for all } A^1, \ldots, A^m \in \cP_N( \R ) %
\text{ and all } N \geq 1,
\]
where, as above $\cP_N( \R )$ is the collection of $N \times N$ positive
semidefinite matrices with real entries. Similarly, we say that
$f : \C^m \to \C$ is positivity preserving if
\[
f( A^1, \ldots, A^m ) \in \cP_N %
\qquad \text{for all } A^1, \ldots, A^m \in \cP_N %
\text{ and all } N \geq 1,
\]
where $\cP_N$ is the collection of $N \times N$ positive semidefinite
matrices with complex entries. Finally, recall that a function
$f : \R^m \to \R$ is said to be \emph{real entire} if there exists an
entire function $F : \C^m \to \C$ such that $F |_{\R^m} = f$. We will
also use the multi-index notation
\[
\bx^\alpha := x_1^{\alpha_1} \cdots x_m^{\alpha_m} \quad \text{if }
\bx = ( x_1, \ldots, x_m ) \text{ and } %
\alpha = ( \alpha_1, \ldots, \alpha_m ).
\]

The following theorems are natural extensions of Schoenberg's theorem
and Herz's theorem, respectively.

\begin{theorem}[{\cite[Theorem~2.1]{fitzgerald}}]\label{TrealFMP}
Let $f : \R^m \to \R$, where $m \geq 1$. Then $f$ is real
positivity preserving if and only $f$ is real entire of the form
\[
f( \bx ) = \sum_{\alpha \in \Z_+^m} c_{\alpha} \bx^\alpha \qquad %
( \bx \in \R^m ),
\]
where $c_{\alpha} \geq 0$ for all $\alpha \in \Z_+^m$.
\end{theorem}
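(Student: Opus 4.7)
\emph{Plan.} Sufficiency is straightforward: the Schur product theorem yields $(A^1)^{\circ \alpha_1} \circ \cdots \circ (A^m)^{\circ \alpha_m} \in \cP_N( \R )$ for any $\alpha \in \Z_+^m$ and any $A^1, \ldots, A^m \in \cP_N( \R )$, and $\cP_N( \R )$ is a closed convex cone. Since the series $\sum_\alpha c_\alpha \bx^\alpha$ is assumed to converge on all of $\R^m$, the entrywise sum $\sum_\alpha c_\alpha (A^1)^{\circ \alpha_1} \circ \cdots \circ (A^m)^{\circ \alpha_m}$ converges absolutely in each entry and therefore lies in $\cP_N( \R )$.

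For the necessity, my strategy adapts Schoenberg's single-variable proof in three moves. First, I would establish that $f$ is continuous on $\R^m$ by an $m$-dimensional version of Vasudeva's $2 \times 2$ argument using Proposition~\ref{Pconvex}, then pass to the smooth case by mollification on $\R^m$. Second, a univariate reduction: for any $c_2, \ldots, c_m \geq 0$ the matrices $c_\ell \bone_{N \times N}$ lie in $\cP_N( \R )$, so $g( x ) := f( x, c_2, \ldots, c_m )$ is a univariate positivity preserver on $\R$. The $\rho = \infty$ case of Theorems~\ref{Thankel}--\ref{Thankel2}, combined with the identity theorem on nested intervals, forces $g$ to be real entire with non-negative Taylor coefficients; in particular, $\partial_\ell^k f( \bc ) \geq 0$ for every coordinate $\ell$, every $k \geq 0$, and every $\bc \in [0, \infty)^m$.

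The crux is to upgrade these \emph{pure} partial-derivative inequalities to \emph{mixed} ones, i.e., $\partial^\alpha f( \bc ) \geq 0$ for every $\alpha \in \Z_+^m$. I would fix $\bc \in [0, \infty)^m$, choose vectors $\bu^{(1)}, \ldots, \bu^{(m)} \in \R^N$ whose pointwise tuples $( u_i^{(1)}, \ldots, u_i^{(m)} )_{i = 1}^N$ lie in general position in $\R^m$, and for small $s_1, \ldots, s_m \geq 0$ apply $f[-]$ to
\[
A^k := c_k \bone_{N \times N} + s_k \bu^{(k)} \bigl( \bu^{(k)} \bigr)^T \in \cP_N( \R ).
\]
Taylor-expanding around $\bc$ represents the resulting matrix as
\[
f( A^1, \ldots, A^m ) =
\sum_{\alpha \in \Z_+^m} \frac{\partial^\alpha f( \bc )}{\alpha!} s^\alpha
\bw^{(\alpha)} \bigl( \bw^{(\alpha)} \bigr)^T,
\qquad \bw^{(\alpha)}_i :=
\prod_{k = 1}^m \bigl( u_i^{(k)} \bigr)^{\alpha_k}.
\]
Positive semidefiniteness forces $\sum_\alpha \frac{\partial^\alpha f( \bc )}{\alpha!} s^\alpha ( \bv^T \bw^{(\alpha)} )^2 \geq 0$ for every $\bv \in \R^N$. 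Choosing $N$ and the $\bu^{(k)}$'s so that the generalized Vandermonde vectors $\bw^{(\alpha)}$ with $|\alpha| \leq D$ are linearly independent, I can select $\bv$ that annihilates all but one $\bw^{(\alpha_0)}$ among these, then scale $s_k = t a_k$ with $a_k > 0$ fixed and $t \to 0^+$ to isolate $\partial^{\alpha_0} f( \bc ) \geq 0$. Letting $D \to \infty$ yields absolute monotonicity of $f$ on $[0, \infty)^m$; the Taylor series at $\bzero$ then has non-negative coefficients, and the coordinatewise entireness from Step~2 extends the expansion to all of $\R^m$. The principal obstacle is the multivariate Vandermonde/Veronese linear independence step, which parallels in spirit the generalized Vandermonde computations referenced before Lemma~\ref{Lhorn}; once this algebraic input is in hand, the argument proceeds cleanly by bootstrapping from the single-variable Schoenberg result.
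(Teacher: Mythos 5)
The survey itself does not prove this theorem---it is quoted from FitzGerald--Micchelli--Pinkus---so your proposal can only be measured against the general machinery the paper develops; the closest in-paper relative is Theorem~\ref{T2sided-multi}, proved in \cite{BGKP-hankel} via multivariate moment sequences. Your sufficiency argument and the core of your necessity argument are sound: the perturbation $A^k = c_k\bone_{N\times N} + s_k\bu^{(k)}(\bu^{(k)})^T$, the Taylor expansion producing the Veronese vectors $\bw^{(\alpha)}$, and the generic linear independence of $\{\bw^{(\alpha)} : |\alpha|\le D\}$ for $N \ge \binom{D+m}{m}$ do isolate each mixed partial $\partial^{\alpha_0}f(\bc)\ge 0$ on the positive orthant. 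Two technical caveats: Vasudeva's argument yields joint continuity only on the \emph{open positive orthant}, not on $\R^m$ (which is, however, all the mollification step needs, provided you use a one-sided mollifier so that $f_\delta(A^1,\ldots,A^m)=\int f(A^1+y_1\bone,\ldots,A^m+y_m\bone)\,\phi_\delta(\by)\,\rd\by$ remains positive semidefinite); and non-negativity of derivatives does not pass directly to the limit $f_\delta\to f$, so one should route the conclusion through divided differences and a multivariate Bernstein--Boas--Widder statement, as in the Horn--Loewner proof.

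The genuine gap is your last sentence. Once Bernstein gives $f=F$ on $(0,\infty)^m$ with $F$ entire, ``coordinatewise entireness from Step~2'' cannot extend this identity to all of $\R^m$: your univariate slices freeze the remaining coordinates at constants $c_\ell\ge 0$ (a positive semidefinite matrix with constant entries forces the constant to be non-negative), so no such slice passes through any point of $\R^m$ having two or more negative coordinates, and that entire region is left untouched. The fix is to take slices along affine rays rather than coordinate lines: for $\bc,\mathbf{a}\in\R_+^m$, the function $g(t):=f(\bc+t\mathbf{a})$ satisfies $g[B]=f(c_1\bone+a_1B,\ldots,c_m\bone+a_mB)$ for every $B\in\cP_N(\R)$, so $g$ is a genuine univariate positivity preserver in every dimension; Theorem~\ref{Thankel} with $\rho=\infty$ makes $g$ entire, it agrees with $t\mapsto F(\bc+t\mathbf{a})$ for all large $t>0$ and hence for all $t$, and choosing $a_\ell=|x_\ell|+1$ and $c_\ell=x_\ell+a_\ell\ge 1$ places any prescribed $\bx\in\R^m$ at $t=-1$. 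With that replacement your outline closes.
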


\begin{theorem}[{\cite[Theorem~3.1]{fitzgerald}}]
Let $f : \C^m \to \C$, where $m \geq 1$. Then $f$ is positivity
preserving if and only $f$ is of the form
\[
f( \bz ) = \sum_{\alpha, \beta \in \Z_+^m} c_{\alpha \beta} %
\bz^\alpha \overline{\bz}^\beta \qquad ( \bz \in \C^m ),
\]
where $c_{\alpha \beta} \geq 0$ for all $\alpha$, $\beta \in \Z_+^m$
and the power series converges absolutely for all $\bz \in \C$.
\end{theorem}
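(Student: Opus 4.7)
The sufficiency direction is a direct consequence of the Schur product theorem together with the fact that complex conjugation and non-negative conic combinations preserve positive semidefiniteness. Concretely, if $A^1, \ldots, A^m \in \cP_N$, then each $\overline{A^k} \in \cP_N$, and the iterated Schur product
\[
(A^1)^{\circ \alpha_1} \circ \cdots \circ (A^m)^{\circ \alpha_m} \circ \overline{A^1}^{\circ \beta_1} \circ \cdots \circ \overline{A^m}^{\circ \beta_m}
\]
is positive semidefinite for every $\alpha, \beta \in \Z_+^m$. Since $\cP_N$ is a closed convex cone under entrywise limits and the series $\sum c_{\alpha\beta} \bz^\alpha \overline{\bz}^\beta$ converges absolutely on all of $\C^m$ with $c_{\alpha\beta} \geq 0$, the entrywise-evaluated matrix $f(A^1, \ldots, A^m)$ is positive semidefinite.

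For necessity, the plan is to argue by induction on $m$, with base case $m = 1$ supplied by Herz's Theorem~\ref{Therz}. For the inductive step, the key tool is the rank-one positive semidefinite test matrix: given $\bv^k \in \C^N$, the matrix $A^k := \bv^k (\bv^k)^* \in \cP_N$ has $(i,j)$ entry $v^k_i \overline{v^k_j}$, which can be arranged to equal any prescribed complex number by a suitable choice of $\bv^k$. By freezing $m - 1$ arguments of $f$ at arbitrary complex values realized as such entries and letting the remaining one vary, and invoking Herz's theorem in the free variable, one concludes that $f$ is a convergent power series in each pair $(z_k, \overline{z}_k)$ separately, with the other arguments playing the role of parameters. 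Controlling the dependence on those parameters via Cauchy-type bounds on partial derivatives---these bounds themselves derived from the positivity preserving property applied to higher-rank test matrices---yields a jointly convergent expansion
\[
f(\bz) = \sum_{\alpha, \beta \in \Z_+^m} c_{\alpha \beta} \bz^\alpha \overline{\bz}^\beta \qquad (\bz \in \C^m).
\]

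The main obstacle lies in verifying that every coefficient $c_{\alpha \beta}$ is non-negative. My plan is to extract $c_{\alpha \beta}$ by Fourier integration over the multi-torus $\{\bz : |z_k| = r\}$, which isolates terms of prescribed angular frequency $\alpha - \beta$, followed by radial differentiation in $r$ to separate pairs with that fixed difference. The non-negativity of $c_{\alpha \beta}$ is then established by realizing this extraction as a limit of contractions $\xi^* f(A^1_N, \ldots, A^m_N)\, \xi$ of positive semidefinite matrices against carefully chosen unit vectors $\xi \in \C^N$. The delicate combinatorial step is choosing the test vectors $\bv^k_N$ and $\xi$ so that a single monomial $\bz^\alpha \overline{\bz}^\beta$ dominates the limit while all other monomial contributions vanish---an orthogonality requirement on the vector components tailored to the specific pair $(\alpha, \beta)$. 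This monomial-isolation step is where the bookkeeping becomes heaviest, and I expect it to be the principal technical hurdle of the argument.
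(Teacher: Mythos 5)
The survey states this result without proof, citing FitzGerald--Micchelli--Pinkus, so your proposal must be judged on its own terms. Your sufficiency argument is correct and complete: $\overline{A}\in\cP_N$ whenever $A\in\cP_N$, the Schur product theorem handles each monomial $\bz^\alpha\overline{\bz}^\beta$, and closedness of the cone handles the absolutely convergent series. The necessity direction, however, has one step that fails as stated and one genuine gap that you acknowledge but do not close.

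The step that fails is the inductive ``freezing'' of $m-1$ arguments at arbitrary complex values. To conclude via Herz's theorem that $z\mapsto f(z,w_2,\ldots,w_m)$ preserves positivity for fixed complex $w_2,\ldots,w_m$, you would need to take $A^k=w_k\bone_{N\times N}$ as the $k$th test matrix; but $w_k\bone_{N\times N}$ is positive semidefinite only when $w_k\geq 0$ is real. Realizing $w_k$ instead as a single off-diagonal entry of a rank-one matrix $\bv^k(\bv^k)^*$ does not freeze the $k$th argument: the other entries of that matrix are then different complex numbers, so the hypothesis never yields the one-variable positivity preservation you want to feed into Herz's theorem. This is not a bookkeeping issue but a structural obstruction to the separation-of-variables induction. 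The second, larger issue is that the non-negativity of every $c_{\alpha\beta}$ --- which is the entire content of the theorem --- is deferred to a ``monomial-isolation'' limit you describe only in outline. The known route is different in character: restrict to rank-one test matrices $\bv^k(\bv^k)^*$ with $v^k_j=r_ke^{\I\theta^k_j}$, so that positivity of $f(A^1,\ldots,A^m)$ says precisely that $(\phi_1,\ldots,\phi_m)\mapsto f(r_1^2e^{\I\phi_1},\ldots,r_m^2e^{\I\phi_m})$ is a positive definite function on the torus $\T^m$; the Herglotz--Bochner theorem then gives non-negative Fourier coefficients $b_\gamma(r_1^2,\ldots,r_m^2)$ indexed by $\gamma=\alpha-\beta\in\Z^m$, and the remaining work is to show each $b_\gamma$ is absolutely monotonic in the radial variables (via the real multivariable result, Theorem~\ref{TrealFMP}), which splits $b_\gamma$ into the monomials with that fixed frequency difference. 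Your ``Fourier integration plus radial differentiation'' gestures at exactly this, but you never exhibit why the radial differentiation is a positive operation on the class of preservers, and you never establish the continuity (or measurability) of $f$ needed to invoke Bochner's theorem in the first place. As written, the necessity half is a plan whose decisive steps remain open.
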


We now consider the notion of moment family for measures on $\R^d$. As
above, a measure on $\R^d$ is said to be \emph{admissible} if it is
non-negative and has moments of all orders. Given such a measure
$\mu$, we define the \emph{moment family}
\[
s_\alpha( \mu ) := \int \bx^\alpha \std\mu( \bx ) \qquad %
\text{ for all } \alpha \in \Z_+^m.
\]
In line with the above, we let $\moment(K)$ denote the set of all
moment families of admissible measures supported on $K \subset \R^d$.

Note that a measure $\mu$ is supported in  $[ -1, 1 ]^d$ if and only
if its moment family is uniformly bounded:
\[
\sup\bigl\{  | s_\alpha( \mu ) | : \alpha \in \Z_+^m \bigr\} < \infty.
\]

\begin{theorem}[{\cite[Theorem~8.1]{BGKP-hankel}}]\label{TEuclidean}
A function $F : \R \to \R$ maps $\moment\bigl( [ -1, 1 ]^d \bigr)$ to
itself if and only if~$F$ is absolutely monotonic and entire.
\end{theorem}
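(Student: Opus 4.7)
The plan is to reduce the multivariable ``only if'' direction to the one-variable Theorem~\ref{T2sided} via a diagonal embedding, and to handle the ``if'' direction by constructing the target measure explicitly as a sum of pushforwards of product measures under coordinatewise multiplication.

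For the direction $(\Rightarrow)$, I would start with an arbitrary admissible $\mu$ on $[-1,1]$ and push it forward under the diagonal embedding $\Delta : [-1,1] \to [-1,1]^d$, $\Delta(x) = (x, \ldots, x)$, obtaining $\tilde\mu := \Delta_* \mu \in \moment([-1,1]^d)$ with moments $s_\alpha(\tilde\mu) = s_{|\alpha|}(\mu)$, where $|\alpha| := \alpha_1 + \cdots + \alpha_d$. The hypothesis on $F$ then yields an admissible $\sigma$ on $[-1,1]^d$ satisfying $s_\alpha(\sigma) = F(s_{|\alpha|}(\mu))$ for all $\alpha \in \Z_+^d$. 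Projecting $\sigma$ onto its first coordinate produces an admissible measure on $[-1,1]$ whose $k$th moment equals $s_{(k, 0, \ldots, 0)}(\sigma) = F(s_k(\mu))$. Thus $F$ maps $\moment([-1,1])$ into itself, and Theorem~\ref{T2sided} forces $F$ to be the restriction to $\R$ of an absolutely monotonic entire function.

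For the direction $(\Leftarrow)$, I would write $F(x) = \sum_{k \geq 0} c_k x^k$ with $c_k \geq 0$ and infinite radius of convergence. Given admissible $\mu$ on $[-1,1]^d$, for each $k \geq 0$ define
\[
\nu_k := (\pi_k)_* \mu^{\otimes k}, \qquad \pi_k : ([-1,1]^d)^k \to [-1,1]^d,
\]
where $\pi_k$ is coordinatewise multiplication in each of the $d$ slots. Since $[-1,1]^d$ is stable under this operation, each $\nu_k$ is supported in $[-1,1]^d$, and a Fubini computation gives
\[
s_\alpha(\nu_k) = \int \prod_{i = 1}^k x_i^\alpha \std\mu^{\otimes k}(x_1, \ldots, x_k) = s_\alpha(\mu)^k,
\]
with $\nu_0 := \delta_{(1, \ldots, 1)}$ providing $s_\alpha(\nu_0) = 1 = s_\alpha(\mu)^0$ under the usual convention. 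Set $\sigma := \sum_{k \geq 0} c_k \nu_k$; its total mass is $\sum_k c_k s_0(\mu)^k = F(s_0(\mu)) < \infty$, so $\sigma$ is a finite non-negative Borel measure on $[-1,1]^d$. The uniform bound $|s_\alpha(\nu_k)| \leq s_0(\mu)^k$ legitimates termwise integration and gives $s_\alpha(\sigma) = F(s_\alpha(\mu))$, as required.

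The main obstacle is essentially conceptual rather than technical: one must identify that \emph{coordinatewise} multiplication (rather than some other product operation) is what keeps $\nu_k$ inside $[-1,1]^d$ while making $s_\alpha(\nu_k) = s_\alpha(\mu)^k$ hold exactly. Once that is spotted, the convergence bookkeeping is routine, and all the serious analytic content in the forward direction is outsourced to Theorem~\ref{T2sided}. No new analysis specific to dimension $d$ is needed, which is consistent with the crisp ``absolutely monotonic and entire'' characterization being unchanged from the one-dimensional case.
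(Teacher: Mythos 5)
Your proof is correct, and while the forward direction matches the paper in spirit (the paper embeds $[-1,1]$ as $[-1,1]\times\{0\}^{d-1}$ rather than diagonally, but either embedding reduces matters to Theorem~\ref{T2sided}), your converse direction takes a genuinely different route. The paper invokes a solvability criterion for the moment problem on the cube --- a family $(s_\alpha)_{\alpha\in\Z_+^d}$ lies in $\moment\bigl([-1,1]^d\bigr)$ if and only if the kernels $(\alpha,\beta)\mapsto s_{\alpha+\beta}$ and $(\alpha,\beta)\mapsto s_{\alpha+\beta}-s_{\alpha+\beta+2\bone_j}$ are positive semidefinite --- and then checks that an absolutely monotonic entire $F$ preserves these positivity constraints via Theorem~\ref{Thankel} and the Schur-product chain $A^{\circ n}\geq A^{\circ(n-1)}\circ B\geq\cdots\geq B^{\circ n}$. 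You instead exhibit the representing measure explicitly as $\sigma=\sum_{k\geq 0}c_k(\pi_k)_*\mu^{\otimes k}$, using that coordinatewise multiplication preserves $[-1,1]^d$ and multiplies moments, so that $s_\alpha(\nu_k)=s_\alpha(\mu)^k$; the bound $|s_\alpha(\nu_k)|\leq s_0(\mu)^k$ together with entirety of $F$ justifies the termwise summation. Your construction is more elementary and self-contained: it bypasses the moment-problem characterization entirely and never needs the Schur product theorem, at the cost of being tied to the monomial structure of $F$ (it produces the measure only because $F$ is already known to be a power series with non-negative coefficients, whereas the paper's kernel-positivity route is the one that generalizes to settings where no explicit series is available). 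Both are complete proofs of the stated equivalence.
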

\begin{proof}
Since $[ -1, 1 ]$ can be identified with
$[ -1, 1 ] \times \{ 0 \}\ ^{d-1} \subset [ -1, 1 ]^d$, the forward
implication follows from the one-dimensional result,
Theorem~\ref{T2sided}.

For the converse, we use the fact \cite{Putinar} that a collection of
real numbers $(s_\alpha)_{\alpha \in \Z_+^d}$ is an element of
$\moment\bigl( [ -1, 1 ]^d \bigr)$ if and only if the weighted
Hankel-type kernels on $\Z_+^d \times \Z_+^d$
\[
( \alpha, \beta ) \mapsto s_{\alpha + \beta} \quad \text{and} \quad %
( \alpha, \beta ) \mapsto %
s_{\alpha + \beta} - s_{\alpha + \beta + 2 \bone_j} \quad %
(1 \leq j \leq d)
\]
are positive semidefinite, where
\[
\bone_j := ( 0, \ldots, 0, 1 ,0, \ldots, 0 ) \in \Z_+^d
\]
with $1$ in the $j$th position. Now suppose $F$ is absolutely
monotonic and entire; given a family
$( s_\alpha )_{\alpha \in \Z_+^d}$ subject to these positivity
constraints, we have to verify that the family
$( F(s_\alpha) )_{\alpha \in \Z_+^d}$ satisfies them as well.

Theorem~\ref{Thankel} gives that
$( \alpha, \beta ) \mapsto F( s_{\alpha + \beta} )$
and $( \alpha, \beta ) \mapsto F( s_{\alpha + \beta + 2\bone_j } )$
are positive semidefinite, so we must show that
\[
( \alpha, \beta ) \mapsto %
F( s_{\alpha + \beta} ) - F( s_{\alpha + \beta + 2\bone_j} )
\]
is positive semidefinite for $j = 1$, \ldots, $d$. As $F$ is
absolutely monotonic and entire, it suffices to show that
\[
( \alpha, \beta ) \mapsto ( s_{\alpha + \beta} )^{\circ n} - %
( s_{\alpha + \beta + 2\bone_j} )^{\circ n}
\]
is positive semidefinite for any $n \geq 0$, but this follows from the
Schur product theorem: if $A \geq B \geq 0$, then
\[
A^{\circ n} \geq A^{\circ (n - 1)} \circ B \geq A^{\circ (n-2)} \circ %
B^{\circ 2} \geq \cdots \geq B^{\circ n}.
\qedhere
\]
\end{proof}

We next consider characterizations of real-valued multivariable
functions which map tuples of moment sequences to moment sequences.

Let $K_1$, \ldots, $K_m \subset \R$. A function $F : \R^m \to \R$ acts
on tuples of moment sequences of (admissible) measures
$\moment( K_1 ) \times \cdots \times \moment( K_m )$ as follows:
\begin{equation}
F[ \bs( \mu_1) , \ldots, \bs( \mu_m ) ]_k := %
F\bigl( s_k( \mu_1 ), \ldots, s_k( \mu_m ) \bigr) %
\quad \text{for all } k \geq 0.
\end{equation}

Given $I \subset \R^m$, a function $F : I \to \R$ is
\emph{absolutely monotonic} if $F$ is continuous on~$I$, and for all
interior points $\bx \in I$ and $\alpha \in \Z_+^m$, the mixed partial
derivative $D^\alpha F( \bx )$ exists and is non-negative, where
\[
D^\alpha F( \bx ) := \frac{\partial^{| \alpha |}}%
{\partial x_1^{\alpha_1} \cdots \partial x_m^{\alpha_m}} %
F( x_1, \ldots, x_m ) \quad %
\text{and } | \alpha | := \alpha_1 + \cdots + \alpha_m.
\]
With this definition, the multivariable analogue of Bernstein's
theorem is as one would expect; see \cite[Theorem~4.2.2]{Bochner-book}.

To proceed further, it is necessary to introduce the notion of a
\emph{facewise absolutely monotonic function} on~$\R_+^m$. Observe that
the orthant~$\R_+^m$ is a convex polyhedron, and is therefore the
disjoint union of the relative interiors of its faces. These faces are in
one-to-one correspondence with subsets of
$[m] := \{ 1, \ldots, m \}$:
\begin{equation}\label{Epolyhedral}
J \mapsto \R_+^J := %
\{ ( x_1, \ldots, x_m ) \in \R_+^m : %
x_i = 0 \text{ if } i \not\in J \};
\end{equation}
note that this face has relative interior
$\R_{>0}^J := ( 0, \infty )^J \times \{ 0 \}^{[m] \setminus J}$.

\begin{definition}
A function $F : \R_+^m \to \R$ is \emph{facewise absolutely monotonic}
if, for every $J \subset [m]$, there exists an
absolutely monotonic function $g_J$ on~$\R_+^J$ which agrees
with $F$ on~$\R_{>0}^J$.
\end{definition}

Thus a facewise absolutely monotonic function is piecewise absolutely
monotonic, with the pieces being the relative interiors of the faces
of the orthant $\R_+^m$. See \cite[Example~8.4]{BGKP-hankel} for
further discussion. In the special case $m=1$, this broader class of
functions (than absolutely monotonic functions on $\R_+$) coincides
precisely with the maps which are absolutely monotonic on $(0,\infty)$
and have a possible discontinuity at the origin, as in
Theorem~\ref{T1sided} above.

This definition allows us to characterize the preservers of $m$-tuples
of elements of~$\moment\bigl( [ 0, 1 ] \bigr)$; the preceding observation
shows that Theorem~\ref{T1sided} is precisely the $m=1$ case.

\begin{theorem}[{\cite[Theorem~8.5]{BGKP-hankel}}]\label{T1sided-multi}
Let $F : \R_+^m \to \R$, where the integer $m \geq 1$. The
following are equivalent.
\begin{enumerate}
\item $F$ maps $\moment( [ 0, 1 ] )^m$ into $\moment( [ 0, 1 ] )$.

\item $F$ is facewise absolutely monotonic, and the functions
$\{ g_J : J \subset [m] \}$ are such that
$0 \leq g_J \leq g_K$ on $\R_+^J$ whenever $J \subset K \subset [m]$.

\item $F$ is such that
\[
F\bigl( \sqrt{x_1 y_1}, \ldots, \sqrt{x_m y_m} \bigr)^2 \leq
F( x_1, \ldots, x_m ) F( y_1, \ldots, y_m )
\]
for all $\bx$, $\by \in \R_+^m$ and there exists some
$\bz \in ( 0, 1 )^m$ such that the products
$\bz^\alpha := z_1^{\alpha_1} \cdots z_m^{\alpha_m}$ are distinct for
all $\alpha \in \Z_+^m$ and $F$ maps
$\moment\bigl( \{ 1, z_1 \} \bigr) \times \cdots \times %
\moment\bigl( \{ 1, z_m \} \bigr) \cup \moment(\{ 0, 1 \})^m$
to $\moment( \R )$.
\end{enumerate}
\end{theorem}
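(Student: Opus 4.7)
The plan is to close the cycle $(2) \Rightarrow (1) \Rightarrow (3) \Rightarrow (2)$ using the single-variable result Theorem~\ref{T1sided} as a template and tracking the polyhedral stratification of $\R_+^m$ via the nested family $\{g_J\}_{J \subset [m]}$.

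For $(2) \Rightarrow (1)$: given admissible measures $\mu_1, \ldots, \mu_m$ on $[0, 1]$, partition the index set as $J := \{i : \mu_i \text{ has support meeting } (0, 1]\}$ together with its complement. For every $k \geq 1$, the tuple $(s_k(\mu_i))_{i=1}^m$ lies in the face $\R_+^J$ with all $J$-coordinates strictly positive, whereas $(s_0(\mu_i))_{i=1}^m$ may pick up additional positive entries coming from Dirac masses at zero. On $\R_{>0}^J$ the absolutely monotonic function $g_J$ is represented by a convergent power series with non-negative coefficients (multivariate Bernstein); the Schur product theorem together with closure of $\moment([0, 1])$ under sums and convolutions then forces $(g_J((s_k(\mu_i))_{i \in J}))_{k \geq 1}$ to be itself a moment sequence on $[0, 1]$. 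The nesting $g_J \leq g_{[m]}$ on $\R_+^J$ is precisely what is needed to absorb the $k = 0$ entry and verify both Hankel positivity conditions (for $(s_{i+j})$ and $(s_{i+j} - s_{i+j+1})$) of the Hausdorff moment problem.

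For $(1) \Rightarrow (3)$: the inclusions $\moment(\{1, z_i\}), \moment(\{0, 1\}) \subset \moment([0, 1])$ give the second clause at once. For the Cauchy--Schwarz-type inequality on coordinatewise comparable pairs $\bx \geq \by \geq \bzero$, take $\mu_i := x_i \delta_{\sqrt{y_i/x_i}}$ (with the convention $\mu_i := 0$ when $x_i = 0$), an admissible measure on $[0, 1]$ whose first three moments are $(x_i, \sqrt{x_i y_i}, y_i)$; positive semi-definiteness of the leading $2 \times 2$ block of the output Hankel matrix then yields $F(\sqrt{\bx \by})^2 \leq F(\bx) F(\by)$. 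The extension to arbitrary pairs $\bx, \by \in \R_+^m$ is carried out by applying the same argument with $\mu_i = c_i \delta_{t_i}$ ranging over a larger family, combined with the symmetry of the inequality under interchange of $\bx$ and $\by$ and a continuity argument on each face.

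The main obstacle is $(3) \Rightarrow (2)$. The mid-convexity, together with local boundedness of $F$ (coming from positive semi-definiteness of small output Hankel blocks under the moment-preservation hypothesis), yields via Proposition~\ref{Pconvex} applied to $\bu \mapsto \log F(e^{u_1}, \ldots, e^{u_m})$ the continuity of $F$ on each relative interior $\R_{>0}^J$. To upgrade continuity to facewise absolute monotonicity, fix $J \subset [m]$ and feed in tuples with $\mu_i = a_i \delta_1 + b_i \delta_{z_i}$ for $i \in J$ and $\mu_i = a_i \delta_0 + b_i \delta_1$ for $i \notin J$; the integration and positivity-certificate methods of Section~\ref{Sinttrick}, adapted to several variables and using the distinctness of the products $\bz^\alpha$ to invert a generalized Vandermonde system (in the spirit of Lemma~\ref{Lhorn}), isolate individual mixed partial derivatives $D^\alpha g_J$ and show them to be non-negative on $\R_{>0}^J$. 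Finally, the nesting $g_J \leq g_K$ on $\R_+^J$ for $J \subset K$ is extracted from the mid-convexity itself by letting the coordinates indexed by $K \setminus J$ tend to zero and invoking the facewise continuity just established.
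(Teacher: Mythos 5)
The overall architecture is reasonable and your $(2)\Rightarrow(1)$ is in good shape, but there is a genuine gap in $(1)\Rightarrow(3)$, at the multiplicative mid-convexity inequality for pairs $\bx,\by\in\R_+^m$ that are \emph{not} coordinatewise comparable. Your construction $\mu_i=x_i\delta_{\sqrt{y_i/x_i}}$ requires $y_i\le x_i$ for every $i$, and no enlargement within the family $\mu_i=c_i\delta_{t_i}$ (or indeed within $\moment([0,1])$) repairs this: the $2\times 2$ principal minors of the output Hankel matrix only yield inequalities of the form $t_p t_q\ge t_r^2$ with $r=(p+q)/2$, and a measure supported in $[0,1]$ realizes $(x_i,\sqrt{x_iy_i},y_i)$ at positions $(p,r,q)$ only if it is a point mass (equality in Cauchy--Schwarz) with $x_i\ge y_i$ when $p<q$, since Hausdorff moment sequences are non-increasing. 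Globally interchanging $\bx$ and $\by$ only covers $\by\ge\bx$, not the mixed case where coordinates compare in opposite directions, and continuity cannot bridge the gap: for $m\ge 2$ the comparable pairs form a closed set whose complement is open and non-empty. This gap is fatal to the cycle $(2)\Rightarrow(1)\Rightarrow(3)\Rightarrow(2)$, because the full mid-convexity is precisely the hypothesis of $(3)$ that you need to apply Proposition~\ref{Pconvex} to $\bu\mapsto\log F(e^{u_1},\dots,e^{u_m})$ (midpoint convexity is needed for \emph{all} pairs), i.e.\ it supplies the $\cP_2(\,\cdot\,)^m$ input to Theorem~\ref{Thorn-hankel2}. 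The inequality for mixed pairs is really a \emph{consequence} of the facewise power-series representation in $(2)$ (Cauchy--Schwarz applied to the non-negative coefficients), so it must be obtained after, not before, the structure theorem; within your architecture the only route to $(2)$ is through $(3)$, which creates a circularity.

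Two secondary points. In $(3)\Rightarrow(2)$ you jump from continuity to "isolating mixed partials $D^\alpha g_J$", but differentiability is not free: one needs the mollification/Boas--Widder-type machinery, which is exactly what Theorem~\ref{Thorn-hankel2} packages (the survey indicates that the heart of the theorem is deduced from it, rather than re-derived); also, under $(3)$ the target is only $\moment(\R)$, so output sequences need not be non-increasing and the non-negativity and monotonicity of $F$ require more care than the one-variable Vasudeva argument. Finally, the nesting $0\le g_J\le g_K$ compares $g_J$ with boundary values of $g_K$ on a lower-dimensional face; it should be extracted from the moment conditions supplied by the test family $\moment(\{0,1\})^m$ (the multivariable analogue of $0\le F(0)\le\lim_{\epsilon\to 0^+}F(\epsilon)$ in Theorem~\ref{T1sided}), not from mid-convexity plus a limit, which at best yields one-sided $\limsup$ information.
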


The heart of Theorem~\ref{T1sided-multi} can be deduced from the
following result on positivity preservation on tuples of low-rank Hankel
matrices. In a sense, it is the multi-dimensional generalization of the
`stronger Vasudeva theorem'~\ref{Tvasudeva2}.

Fix $\rho \in ( 0, \infty ]$, an integer $m \geq 1$ and a point
$\bz \in ( 0, 1 )^m$ with distinct products, as in
Theorem~\ref{T1sided-multi}(3). For all $N \geq 1$, let
\[
\cH_N := \{ a \bone_{N \times N} + b \bu_{l, N} \bu_{l, N}^T : %
a \in ( 0, \rho ), \ b \in [ 0, \rho - a ), \ 1 \leq l \leq m \},
\]
where $\bu_{l, N} := ( 1, z_l, \ldots, z_l^{N - 1} )^T$.

\begin{theorem}[{\cite[Theorem~8.6]{BGKP-hankel}}]\label{Thorn-hankel2}
If $F : ( 0, \rho )^m \to \R$ preserves positivity
on~$\cP_2\bigl( ( 0, \rho ) \bigr)^m$ and $\cH_N^m$ for all~$N
\geq 1$, then $F$ is absolutely monotonic and is the restriction of an
analytic function on the polydisc $D( 0, \rho )^m$.
\end{theorem}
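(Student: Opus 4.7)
My plan is to bootstrap from the one-variable Theorem~\ref{Tvasudeva2} in three stages: first establish continuity of $F$ from the $\cP_2((0,\rho))^m$-hypothesis; then, by restricting to scalar matrices in all but one slot, deduce separate absolute monotonicity of $F$ in each variable; and finally, by exploiting the full $\cH_N^m$ test class together with the distinctness of the products $\bz^\alpha$, extract non-negativity of every mixed partial derivative, after which the multivariate Bernstein theorem produces the analytic representation on the polydisc.

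For continuity I would imitate Vasudeva's single-variable argument (reproduced after Theorem~\ref{Thorn2}): applying the $\cP_2$-hypothesis to slot-wise $2 \times 2$ matrices of the form $\begin{bmatrix} a & b \\ b & a \end{bmatrix}$ and $\begin{bmatrix} a & \sqrt{ab} \\ \sqrt{ab} & b \end{bmatrix}$, with the other slots held at constant $2 \times 2$ matrices, forces $F$ to be non-negative, coordinatewise non-decreasing, and multiplicatively mid-convex in each variable separately; Proposition~\ref{Pconvex} applied coordinate by coordinate then yields continuity on $(0,\rho)^m$. For separate absolute monotonicity the crucial observation is that $a \bone_{N \times N} \in \cH_N$ for every $a \in (0,\rho)$ (take $b = 0$ in the definition). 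Freezing the coordinates $a_k$ ($k \neq l$) at such scalar matrices reduces the $\cH_N^m$-hypothesis to the one-variable hypothesis of Theorem~\ref{Tvasudeva2} applied to $x \mapsto F(a_1, \ldots, a_{l-1}, x, a_{l+1}, \ldots, a_m)$ with $u_0 = z_l$. That theorem (in its version for $I = (0,\rho)$) yields a convergent Maclaurin series $\sum_{k \geq 0} c_k^{(l)} x^k$ on $(0,\rho)$ with all $c_k^{(l)} \geq 0$, so $F$ is separately real analytic and absolutely monotonic in each variable, with the coefficients $c_k^{(l)}$ depending continuously on the remaining variables.

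To promote separate to joint absolute monotonicity, fix $(a_1, \ldots, a_m) \in (0,\rho)^m$ and consider the simultaneous rank-$2$ perturbations
\[
A^l(t_l) := a_l \bone_{N \times N} + t_l \bu_{l, N} \bu_{l, N}^T \in \cH_N \qquad (1 \leq l \leq m),
\]
with $t_l > 0$ small. The $(i, j)$-entry of $F[A^1(t_1), \ldots, A^m(t_m)]$ is $F(a_1 + t_1 z_1^{i+j-2}, \ldots, a_m + t_m z_m^{i+j-2})$. Once enough joint smoothness is established (iterating the separate Maclaurin expansions of the previous step, with uniformity in the remaining variables, should suffice), a multivariate Taylor expansion to order $K$ gives
\[
F[A^1(t_1), \ldots, A^m(t_m)] = \sum_{|\alpha| \leq K} \frac{D^\alpha F(a_1, \ldots, a_m)}{\alpha!} \prod_l t_l^{\alpha_l} \bw_\alpha \bw_\alpha^T + R_K,
\]
where $\bw_\alpha := (1, \bz^\alpha, \ldots, (\bz^\alpha)^{N-1})^T$ with $\bz^\alpha := \prod_l z_l^{\alpha_l}$, and $R_K$ has entries of order $O(\max_l t_l^{K+1})$. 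The distinctness of the products $\bz^\alpha$ guarantees, via a generalised Vandermonde determinant in the manner of Lemma~\ref{Lhorn}, that the vectors $\bw_\alpha$ for $|\alpha| \leq K$ are linearly independent once $N$ is sufficiently large. A Horn--Loewner-type determinantal argument in the spirit of (\ref{Eloewner}) and Theorem~\ref{Tmaster}, combined with a scaling $t_l = \epsilon^{M_l}$ whose exponents $M_l$ are tuned so that a prescribed monomial $\prod_l t_l^{\alpha_l}$ strictly dominates every other as $\epsilon \to 0^+$, then extracts each coefficient $D^\alpha F(a_1, \ldots, a_m)$ and forces it to be non-negative. With all mixed partials non-negative on $(0,\rho)^m$, the multi-variable Bernstein theorem (see \cite[Theorem~4.2.2]{Bochner-book}) delivers $F(\bx) = \sum_\alpha c_\alpha \bx^\alpha$ with $c_\alpha \geq 0$, which extends analytically to the polydisc $D(0,\rho)^m$.

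The technical heart, and main obstacle, is this third stage. Separate analyticity controls only one-variable partials, whereas the conclusion demands non-negativity of every mixed partial, and separate real-analyticity does not automatically entail joint smoothness. Disentangling the contributions of different multi-indices in the expansion of $F[A^1, \ldots, A^m]$ is precisely what the distinctness hypothesis on $\bz^\alpha$ and the Vandermonde structure of the $\bw_\alpha$ make feasible; the bulk of the work lies in justifying the joint Taylor expansion rigorously and in executing the dominating-scaling plus Loewner determinant extraction cleanly in the multi-variable setting.
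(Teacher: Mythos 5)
Your overall architecture — joint continuity from the $\cP_2$ test class, one-variable information from Theorem~\ref{Tvasudeva2} applied to slices (using that $a\bone_{N\times N}\in\cH_N$), and then a Taylor/Vandermonde extraction of mixed partials from $F[A^1(t_1),\ldots,A^m(t_m)]$ using the distinctness of the products $\bz^\alpha$ — is the right one, and your extraction step is essentially correct once smoothness is available: choosing $\bv$ orthogonal to all $\bw_\alpha$ with $|\alpha|\le K$, $\alpha\neq\beta$ (possible since the $\bw_\alpha$ are columns of a Vandermonde matrix with distinct nodes) and letting all $t_l=\epsilon\to 0^+$ already isolates $D^\beta F(\ba)\ge 0$ without any exotic scaling. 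But there are two genuine gaps. First, in stage 1 you apply Proposition~\ref{Pconvex} ``coordinate by coordinate,'' which only yields \emph{separate} continuity, and separate continuity does not imply joint continuity. The fix is to let all $m$ slots vary simultaneously: taking $A^l=\bigl[\begin{smallmatrix}x_l&\sqrt{x_ly_l}\\ \sqrt{x_ly_l}&y_l\end{smallmatrix}\bigr]$ for every $l$ gives the joint inequality $F(\sqrt{x_1y_1},\ldots,\sqrt{x_my_m})^2\le F(\bx)F(\by)$, so $\log F(e^{\bw})$ is midpoint convex on an open convex subset of $\R^m$ and locally bounded (by coordinatewise monotonicity), and Proposition~\ref{Pconvex} then gives \emph{joint} continuity.

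Second, and more seriously, the joint smoothness on which all of stage 3 rests is asserted rather than proved: ``iterating the separate Maclaurin expansions'' cannot work, because separate real-analyticity does not entail joint smoothness or even joint continuity (consider $xy/(x^2+y^2)$), and separate absolute monotonicity genuinely falls short of the conclusion (e.g.\ $x^2+y^2-xy+C(x+y)$ is separately absolutely monotonic on a box for large $C$ but has $\partial_x\partial_y<0$). The standard way to close this gap is Loewner's two-step strategy, exactly as the survey describes for Theorem~\ref{Thorn}: mollify $F$ in all $m$ variables. The class $\cH_N$ is stable under entrywise translation, since $a\bone_{N\times N}+b\bu_{l,N}\bu_{l,N}^T+c\,\bone_{N\times N}=(a+c)\bone_{N\times N}+b\bu_{l,N}\bu_{l,N}^T$, so each mollified $F_\delta$ still preserves positivity on $\cH_N^m$ over a slightly shrunken parameter range; $F_\delta$ is smooth, your Taylor--Vandermonde argument applies verbatim to it and yields $D^\alpha F_\delta\ge 0$ for all $\alpha$, the multivariate Bernstein theorem gives the power-series representation of $F_\delta$, and joint continuity of $F$ (from the corrected stage 1) gives $F_\delta\to F$ locally uniformly, so the non-negative-coefficient representation, and hence the analytic extension to $D(0,\rho)^m$, passes to the limit. (The survey itself does not reproduce the proof of this theorem, so this is a comparison with the strategy of \cite{BGKP-hankel} rather than with a proof printed here.)
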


The notion of facewise absolute monotonicity emerges from the study of
positivity preservers of tuples of moment sequences. If one focuses
instead on maps preserving positivity of tuples of all positive
semidefinite matrices, or even all Hankel matrices, then this richer
class of maps does not appear.

\begin{proposition}\label{P1sided-multi}
Suppose $\rho \in ( 0, \infty ]$ and $F : [ 0, \rho )^m \to \R$. The
following are equivalent.
\begin{enumerate}
\item $F[-]$ preserves positivity on the space of $m$-tuples of
Hankel matrices with entries in $[ 0, \rho )$.
\item $F$ is absolutely monotonic on $[ 0, \rho )^m$.
\item $F[-]$ preserves positivity on the space of $m$-tuples of all
matrices with entries in $[0,\rho)$.
\end{enumerate}
\end{proposition}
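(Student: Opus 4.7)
For $(3) \Rightarrow (1)$ there is nothing to prove, since Hankel matrices form a subclass of all matrices. For $(2) \Rightarrow (3)$, the multivariable Bernstein theorem gives a power series expansion $F(\bx) = \sum_{\alpha \in \Z_+^m} c_\alpha \bx^\alpha$ with every $c_\alpha \geq 0$, convergent on $[0,\rho)^m$. Then for any $m$-tuple $A^1, \ldots, A^m$ of positive semidefinite matrices with entries in $[0,\rho)$, the partial sums $\sum_{|\alpha| \leq N} c_\alpha (A^1)^{\circ \alpha_1} \circ \cdots \circ (A^m)^{\circ \alpha_m}$ are positive semidefinite by the Schur product theorem, and their entrywise limit $F(A^1, \ldots, A^m)$ is positive semidefinite as well.

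The substance lies in $(1) \Rightarrow (2)$, which I would prove in two steps. \emph{Step 1 (open interior).} Restrict $F$ to $(0,\rho)^m$ and apply Theorem~\ref{Thorn-hankel2}: the test classes $\cP_2\bigl((0,\rho)\bigr)^m$ (any $2 \times 2$ symmetric matrix is Hankel) and $\cH_N^m$ both consist of positive semidefinite Hankel matrices with entries in $(0,\rho) \subset [0,\rho)$, and are therefore covered by (1). The conclusion is that $F$ agrees on $(0,\rho)^m$ with a convergent power series $G(\bx) = \sum_\alpha c_\alpha \bx^\alpha$ having $c_\alpha \geq 0$, which extends analytically to the polydisc $D(0,\rho)^m \supset [0,\rho)^m$, hence continuously to all of $[0,\rho)^m$.

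\emph{Step 2 (boundary matching).} It remains to show $F(\bx_0) = G(\bx_0)$ for every $\bx_0 \in [0,\rho)^m$ with $S := \{k : x_{0,k} = 0\}$ nonempty. For the upper bound, I would form the $m$-tuple of $2 \times 2$ positive semidefinite Hankel matrices with $A^k := x_{0,k} \bone_{2 \times 2}$ for $k \notin S$ and $A^k := \bigl(\begin{smallmatrix} 0 & 0 \\ 0 & \epsilon \end{smallmatrix}\bigr)$ for $k \in S$, so that the entries of the $m$-tuple at $(0,0)$, $(0,1)$, $(1,0)$ are all $\bx_0$ while the entry at $(1,1)$ is $\bx_\epsilon := \bx_0 + \epsilon \sum_{k \in S} e_k$. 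Positive semidefiniteness of the output $2 \times 2$ matrix forces $F(\bx_0) \leq F(\bx_\epsilon)$ (trivially when $F(\bx_0) = 0$, and otherwise from $F(\bx_0) F(\bx_\epsilon) \geq F(\bx_0)^2$); sending $\epsilon \to 0^+$ and using $F = G$ on the interior with continuity of $G$ gives $F(\bx_0) \leq G(\bx_0)$. For the reverse inequality I would use the admissible measure $\mu := 4 \delta_{-2} + 5 \delta_1 + \delta_3$ on $\R$, whose first seven moments $(s_0, s_1, \ldots, s_6) = (10, 0, 30, 0, 150, 120, 990)$ satisfy $s_1 = s_3 = 0$ while $s_5 > 0$. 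Setting $A^k := [\alpha s_{i+j}]_{i,j=0}^3$ for $k \in S$ and $A^k := x_{0,k} \bone_{4 \times 4}$ for $k \notin S$ with $\alpha > 0$ sufficiently small, each $A^k$ is a $4 \times 4$ positive semidefinite Hankel matrix with entries in $[0,\rho)$. The $m$-tuple at position $(i,j)$ equals $\bx_0$ whenever $s_{i+j} = 0$ and otherwise lies in $(0,\rho)^m$; using the interior identity $F = G$ and continuity of $G$, the matrix $F[A^1, \ldots, A^m]$ converges entrywise as $\alpha \to 0^+$ to
\begin{equation*}
M = \begin{pmatrix} g & f & g & f \\ f & g & f & g \\ g & f & g & g \\ f & g & g & g \end{pmatrix},
\qquad f := F(\bx_0), \quad g := G(\bx_0),
\end{equation*}
which is positive semidefinite by closedness of the positive semidefinite cone. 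Testing $M$ against $\bv := (1, -1, -1, 1)^T$ yields $\bv^T M \bv = 2(f - g) \geq 0$, so $F(\bx_0) \geq G(\bx_0)$. Combining both bounds gives $F \equiv G$ on $[0,\rho)^m$, whence $F$ is absolutely monotonic.

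The main obstacle is arranging the lower bound. A two-atom admissible measure with $s_1 = s_3 = 0$ is forced to be symmetric about the origin, so all odd moments vanish and the resulting $F[H]$ exhibits a harmless chessboard zero-pattern that stays positive semidefinite regardless of any discontinuity of $F$ at $\bx_0$. To detect the jump one must choose an asymmetric three-atom measure for which $s_5 \neq 0$, producing the non-chessboard zero-pattern that the specific test vector $\bv = (1, -1, -1, 1)^T$ exploits; the constraints $s_1 = s_3 = 0$ together with positivity of all three weights are precisely what pin down such a $\mu$.
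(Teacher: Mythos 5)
Your proof is correct, and it follows the same overall strategy as the paper's: reduce the interior case to Theorem~\ref{Thorn-hankel2} (noting, as you do, that $2 \times 2$ symmetric matrices and the matrices in $\cH_N$ are all Hankel), then recover the boundary values by taking entrywise limits of images of carefully chosen Hankel tuples and exploiting closedness of the positive semidefinite cone. The difference is in how the boundary matching is executed. The paper uses a single $3 \times 3$ Hankel matrix $H$ with moment-like sequence $(1,0,1,1,2)$ — a lone zero at the $(1,2)$ position playing the same role as your asymmetric three-atom measure, with $-g(g-F)^2 \geq 0$ and $g \geq 0$ delivering both inequalities at once — but it pays for this economy with an induction on $m$: the off-diagonal arguments of its limiting matrix are not pinned exactly at the boundary point $\bc$, so identifying them as $F(\bc)$ requires continuity of $F$ on the relevant face, supplied by the induction hypothesis. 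Your construction arranges that every ``zero-pattern'' argument equals $\bx_0$ exactly for all values of the parameter, which eliminates the induction entirely at the cost of splitting the argument into an upper bound (the $2 \times 2$ tuple) and a lower bound (the $4 \times 4$ moment matrix of $4\delta_{-2} + 5\delta_1 + \delta_3$, whose moments and the identity $\bv^T M \bv = 2(f-g)$ for $\bv = (1,-1,-1,1)^T$ I have checked). Your closing remark about why a symmetric two-atom measure fails — the chessboard pattern only yields $|f| \leq g$ — is accurate and mirrors the reason the paper's $H$ is chosen with $s_1 = 0$ but $s_3 \neq 0$.
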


\begin{proof}
Clearly $(2) \implies (3) \implies (1)$, so suppose (1) holds. It
follows from Theorem~\ref{Thorn-hankel2} that $F$ is absolutely
monotonic on the domain~$(0,\rho)^m$ and agrees there with an
analytic function $g : D(0,\rho)^m \to \C$. To see that $F \equiv g$
on $[ 0, \rho )^m$, we use induction on $m$, with the $m = 1$ case
being left as an exercise (see
\cite[Proof of Proposition~7.3]{BGKP-hankel}).

Now suppose $m > 1$, let
$\bc = ( c_1, \ldots, c_m ) \in %
[ 0, \rho )^m \setminus ( 0, \rho )^m$ and define
\[
H := \begin{bmatrix}
1 & 0 & 1\\
0 & 1 & 1\\
1 & 1 & 2
\end{bmatrix} \qquad \text{and} \qquad
A_i := \begin{cases}
\bone_{3 \times 3} & \text{if } c_i > 0,\\
H & \text{if } c_i = 0.
\end{cases}
\]
Choosing $\bu_n = ( u_{1,n}, \ldots, u_{m,n} ) \in ( 0, \rho )^m$ such
that $\bu_n \to \bc$, it follows that
\[
\lim_{n \to \infty} F[ u_{1, n} A_1, \ldots, u_{m, n} A_m ] = %
\begin{bmatrix}
g(\bc) & F(\bc) & g(\bc)\\
F(\bc) & g(\bc) & g(\bc)\\
g(\bc) & g(\bc) & g(\bc)
\end{bmatrix}
\in \cP_3,
\]
where the $( 1, 2 )$ and $( 2, 1 )$ entries are as claimed by the
induction hypothesis. The determinants of the first and last principal
minors now give that
\[
g(\bc) \geq 0 \qquad \text{and} \qquad %
-g( \bc ) \bigl( g( \bc ) - F( \bc ) \bigr)^2 \geq 0,
\]
whence $F( \bc ) = g( \bc )$.
\end{proof}

Having considered functions defined on the positive orthant, we now
look at the situation for functions defined over the whole of $\R^m$.

\begin{theorem}[{\cite[Theorem~8.9]{BGKP-hankel}}]\label{T2sided-multi}
Suppose $F : \R^m \to \R$ for some integer $m \geq 1$. The following
are equivalent.
\begin{enumerate}
\item $F$ maps $\moment\bigl( [ -1, 1 ] \bigr)^m$ into
$\moment( \R )$.

\item The function $F$ is real positivity preserving.

\item The function $F$ is absolutely monotonic on $\R_+^m$ and agrees
with an entire function on~$\R^m$.
\end{enumerate}
\end{theorem}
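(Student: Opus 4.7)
The plan is to establish the circuit $(3) \Leftrightarrow (2) \implies (1) \implies (3)$. The equivalence $(3) \Leftrightarrow (2)$ is immediate from Theorem~\ref{TrealFMP}: if $F$ is absolutely monotonic on $\R_+^m$ and extends to an entire $\widetilde F$ on $\C^m$, then the Taylor coefficients of $\widetilde F$ at the origin are $c_\alpha = D^\alpha F(\bzero)/\alpha! \geq 0$, so Theorem~\ref{TrealFMP} yields that $F$ is real positivity preserving; conversely, a non-negative-coefficient entire function is plainly absolutely monotonic on $\R_+^m$. For $(2) \implies (1)$: given admissible $\mu_1, \ldots, \mu_m$ on $[-1, 1]$, each Hankel moment matrix $H_{\mu_l}$ has positive semidefinite principal truncations, so condition~(2) implies the same for $F(H_{\mu_1}, \ldots, H_{\mu_m})$. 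As the $(i, j)$-entry of this matrix is $F(s_{i+j}(\mu_1), \ldots, s_{i+j}(\mu_m))$, it is itself Hankel, and Hamburger's theorem produces an admissible measure on $\R$ whose moment sequence is precisely $(F(s_k(\mu_1), \ldots, s_k(\mu_m)))_{k\geq 0}$.

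The content lies in $(1) \implies (3)$, which I prove by a coordinate-wise slicing reducing to the univariate case. Fix $l_0 \in [m]$ and $\bc = (c_l)_{l \neq l_0} \in \R_+^{m-1}$, and set $\mu_l = c_l \delta_1$ for $l \neq l_0$, so that $s_k(\mu_l) = c_l$ for every $k$. Varying $\mu_{l_0}$ over $\moment([-1, 1])$ and invoking~(1), the univariate function
\[
g_{l_0, \bc}(x) := F(c_1, \ldots, c_{l_0-1}, x, c_{l_0+1}, \ldots, c_m)
\]
maps $\moment([-1, 1])$ into $\moment(\R)$. By Theorem~\ref{Thankel2} at $\rho = \infty$ (the univariate form of the result being proved), $g_{l_0, \bc}$ is the restriction of an entire absolutely monotonic function. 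Running this for every $l_0$ and iterating the differentiation yields $D^\alpha F(\bx) \geq 0$ for every multi-index $\alpha$ and every $\bx$ in $\R_{>0}^m$, giving absolute monotonicity on $\R_+^m$. The multivariate Bernstein theorem \cite[Theorem~4.2.2]{Bochner-book}, applied on arbitrary rectangles $[0, R]^m$, then shows that the Taylor series $\sum_\alpha c_\alpha \bx^\alpha$ with $c_\alpha \geq 0$ converges to $F$ throughout $\R_+^m$; since the coefficients are non-negative and convergence holds on all of $\R_+^m$, the series also converges absolutely on every polydisc in $\C^m$, defining an entire $\widetilde F$ with $\widetilde F|_{\R_+^m} = F|_{\R_+^m}$.

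The main obstacle is promoting the equality $F = \widetilde F$ from $\R_+^m$ to all of $\R^m$. The slicing already delivers it whenever at most one coordinate is negative: for each $\bc \in \R_+^{m-1}$, the entire function $g_{l_0, \bc}$ agrees with $\widetilde F(\bc, \cdot)$ on $\R_+$, and hence on all of $\R$ by the identity theorem for real analytic functions. To handle points with several negative coordinates I plan to induct on $m$, with the base case $m = 1$ being Theorem~\ref{T2sided}. In the inductive step I expand $F(\bx', x_m) = \sum_k \alpha_k(\bx') x_m^k$ in the last variable and verify --- by taking appropriate finite differences of the moment sequences arising from $(\mu_1, \ldots, \mu_{m-1}, t\delta_1)$ as $t \to 0^+$ and extracting weak limits of admissible measures --- that each coefficient function $\alpha_k : \R^{m-1} \to \R$ satisfies the $(m-1)$-dimensional analogue of~(1). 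The inductive hypothesis then gives each $\alpha_k$ as the restriction of an entire absolutely monotonic function, and the growth bound $|F(\bx)| \leq F(|x_1|, \ldots, |x_m|)$, extractable from~(1) by applying it to the symmetric measures $\mu_l = c_l^+ \delta_1 + c_l^- \delta_{-1}$, then guarantees joint convergence of $\sum_k \alpha_k(\bz') z_m^k$ on $\C^m$ and equality with $F$ throughout $\R^m$.
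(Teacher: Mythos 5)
Your implications $(3)\Leftrightarrow(2)$ and $(2)\Rightarrow(1)$ are fine, as is the extraction of the bound $|F(\bx)|\leq F(|x_1|,\ldots,|x_m|)$ from symmetric two-point measures. The genuine gap is in $(1)\Rightarrow(3)$, at the sentence ``Running this for every $l_0$ and iterating the differentiation yields $D^\alpha F(\bx)\geq 0$ for every multi-index $\alpha$.'' Your test tuples freeze all but one measure at point masses $c_l\delta_1$, so all you learn is that every \emph{axis-parallel slice} of $F$ is entire with non-negative Maclaurin coefficients. That does not imply joint absolute monotonicity, and there is nothing to ``iterate'': the slicing gives information about $F$, not about $\partial_{l_0}F$, so you cannot feed a partial derivative back into the hypothesis. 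Concretely, take $m=2$ and
\[
F(x,y) \;=\; 10(y+y^2) \;+\; (1-y)^2\,x \;+\; 10(y+y^2)\,x^2 .
\]
For each fixed $y\geq 0$ the coefficients of $1,x,x^2$ are non-negative, and rewriting $F = x + (10x^2-2x+10)\,y + (10x^2+x+10)\,y^2$ shows the same for each fixed $x\geq 0$; so every slice is a polynomial with non-negative coefficients and passes your univariate test. Yet $\partial_x\partial_y F = -2(1-y) + 20(1+2y)x$, which equals $-2$ at the origin and is negative on a neighbourhood of $\bzero$ in $\R_{>0}^2$, so $F$ is not absolutely monotonic on $\R_+^2$. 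Thus the sign of the mixed coefficients is simply invisible to your test set, and the heart of the theorem is exactly to rule out such $F$.

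The fix requires genuinely multivariate test tuples, which is how the paper proceeds (see the remark after the theorem and Corollary~\ref{C2sided-multi}): one tests $F$ on tuples of rank-at-most-two Hankel matrices $a\bone_{N\times N}+b\,\bu_{l,N}\bu_{l,N}^T$ with $\bu_{l,N}=(1,z_l,\ldots,z_l^{N-1})^T$, where the point $\bz\in(0,1)^m$ is chosen so that the products $\bz^\alpha$ are pairwise distinct (Theorem~\ref{Thorn-hankel2}). That distinctness is what separates the mixed monomials $\bx^\alpha$ and forces every coefficient $c_\alpha$, not just the pure powers, to be non-negative; it is the multivariate analogue of the Horn--Loewner/Vasudeva argument and has no substitute in a one-variable-at-a-time reduction. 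Your final paragraph inherits the same problem (the coefficient functions $\alpha_k$ are only controlled on degenerate slices, and their domain of definition on all of $\R^{m-1}$ is itself part of what is being proved), so the induction on $m$ as written does not close the argument either.
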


As before, the proof reveals that verifying positivity preservation
for tuples of low-rank Hankel matrices suffices. The following notation
and corollary make this precise.

For all $u \in ( 0, \infty )$, let
$\moment_u := \moment\bigl( \{ -1, u, 1 \} \bigr)$ and
\[
\moment_{[ u ]} := %
\bigcup\bigl\{ \moment\bigl( \{ s_1, s_2 \} \bigr) : %
s_1 \in \{ -1, 0, 1 \}, \ s_2 \in \{ -u, 0, u \} \bigr\}.
\]

\begin{corollary}[{\cite[Theorem~8.10]{BGKP-hankel}}]\label{C2sided-multi}
The hypotheses in Theorem \ref{T2sided-multi} are also equivalent to
the following.
\begin{enumerate}
\setcounter{enumi}{3}
\item There exist $u_0 \in ( 0, 1 )$ and $\epsilon > 0$ such that $F$
maps 
\[
\moment_{[u_0]}^m \cup \bigcup \bigl\{ %
\moment_{v_1} \times \cdots \times \moment_{v_m} : %
v_1, \ldots, v_m \in ( 0, 1 + \epsilon ) \bigr\}
\]
into $\moment( \R )$.
\end{enumerate}
\end{corollary}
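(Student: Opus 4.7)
The plan is to complete the chain of equivalences by proving $(3) \Leftrightarrow (4)$, which, together with Theorem \ref{T2sided-multi}, yields the corollary.

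For $(3) \Rightarrow (4)$: by Theorem \ref{T2sided-multi}, assumption (3) implies that $F$ is real positivity preserving. Given any admissible measures $\mu_1, \ldots, \mu_m$ on $\R$, each moment matrix $H_{\mu_l}$ is positive semidefinite (by Hamburger's theorem) and Hankel. Hence $F[H_{\mu_1}, \ldots, H_{\mu_m}]$ is positive semidefinite (by real positivity preservation) and remains Hankel, since its $(j,k)$-entry depends only on $j+k$. A second application of Hamburger's theorem produces an admissible measure $\sigma$ on $\R$ whose moment matrix equals $F[H_{\mu_1}, \ldots, H_{\mu_m}]$, so $F[\bs(\mu_1), \ldots, \bs(\mu_m)] \in \moment(\R)$. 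This applies in particular to every tuple in the test set of (4), for any choice of $u_0 \in (0,1)$ and $\epsilon > 0$.

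For $(4) \Rightarrow (3)$, I would first establish absolute monotonicity on $\R_+^m$ by invoking Theorem \ref{Thorn-hankel2}. Fix a generic tuple $\bv = (v_1, \ldots, v_m) \in (0, 1)^m$ with distinct products $\bv^\alpha$; this lies inside $(0, 1+\epsilon)^m$. The two-point measures $\mu_l = a_l\delta_1 + b_l\delta_{v_l} \in \moment_{v_l}$ have moment matrices $a_l\bone_{N \times N} + b_l\bu_{l,N}\bu_{l,N}^T$, where $\bu_{l,N} := (1, v_l, \ldots, v_l^{N-1})^T$. As $a_l, b_l \geq 0$ vary, these cover the test set $\cH_N^m$ of Theorem \ref{Thorn-hankel2} with $\rho$ arbitrarily large; the supplementary $\cP_2((0,\rho))^m$ hypothesis is verified using rank-one moment matrices of one-point measures drawn from $\moment_{[u_0]}$. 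Theorem \ref{Thorn-hankel2} then delivers that $F$ is absolutely monotonic on $(0,\rho)^m$ and agrees with an analytic function on the polydisc $D(0, \rho)^m$ for every $\rho > 0$, yielding absolute monotonicity on $\R_+^m$ together with agreement with a (necessarily unique) entire function $\widetilde F$ on $\C^m$.

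It remains to show $F = \widetilde F$ on $\R^m \setminus \R_+^m$, and this is the main obstacle. Here I would use measures $\mu_l = a_l\delta_1 + b_l\delta_{v_l} + c_l\delta_{-1} \in \moment_{v_l}$, whose moment sequences $s_k(\mu_l) = a_l + b_l v_l^k + c_l(-1)^k$ take arbitrary real values as the parameters $a_l, b_l, c_l \geq 0$ and the parity of $k$ vary. The freedom $v_l \in (1, 1+\epsilon)$ provides unbounded moment sequences that constrain $F$ at arbitrarily large arguments (in particular determining its growth globally), while the $\delta_{-1}$ atom supplies moments with negative entries. Combining these with the integration trick of Section \ref{Sinttrick}, applied to polynomials non-negative on the supports of the constructed measures, yields a system of inequalities that forces $F = \widetilde F$ on all of $\R^m$. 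The delicate piece is propagating the one-variable mechanism of Theorem \ref{T2sided} (and its refinement in Theorem \ref{Thankel2}) into the multivariable setting under the restricted test set in (4); the argument must parallel the corresponding step in the proof of Theorem \ref{T2sided-multi}, but using only low-rank, low-support test measures, with the coordinatewise bookkeeping of the integration trick being the chief technical demand.
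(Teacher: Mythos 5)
The paper itself offers no proof of this corollary --- it is quoted verbatim from \cite[Theorem~8.10]{BGKP-hankel} --- so your attempt can only be judged against the supporting results stated in the survey. Your forward direction is fine: passing through assertion (2) of Theorem~\ref{T2sided-multi} (real positivity preservation), noting that the entrywise image of a tuple of Hankel moment matrices is again Hankel, and invoking Hamburger's theorem twice shows that $F$ maps all of $\moment(\R)^m$ into $\moment(\R)$, which contains the test set of (4). The reverse direction, however, has two genuine gaps. First, your verification of the hypotheses of Theorem~\ref{Thorn-hankel2} fails: that theorem requires positivity preservation on \emph{all} of $\cP_2\bigl((0,\rho)\bigr)^m$, and you propose to supply this from ``rank-one moment matrices of one-point measures drawn from $\moment_{[u_0]}$.'' One-point measures in $\moment_{[u_0]}$ are supported at one of finitely many points, so they yield only finitely many rank-one directions, nowhere near the full three-parameter cone $\cP_2\bigl((0,\rho)\bigr)$; even the rank-two $2\times 2$ moment matrices of measures in $\moment_{v}$ form a proper subcone (for instance, no rank-one matrix $\lambda(1,x)(1,x)^T$ with $x>1+\epsilon$ arises). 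One must either check that the specific $2\times 2$ matrices needed for Vasudeva's continuity argument (e.g.\ $\begin{bmatrix} a & b\\ b & a\end{bmatrix}$, which is the moment matrix of $\tfrac{a+b}{2}\delta_1+\tfrac{a-b}{2}\delta_{-1}\in\moment_v$) do lie in the test set, or avoid the literal appeal to Theorem~\ref{Thorn-hankel2}.

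Second, and more seriously, the crux of the corollary --- showing that $F$ agrees with the entire function $\widetilde F$ on $\R^m\setminus\R_+^m$, which is exactly where the atoms at $-1$ and the window $(1,1+\epsilon)$ earn their keep --- is not proved. You describe it as ``a system of inequalities that forces $F=\widetilde F$'' obtained from the integration trick, and then explicitly defer ``the chief technical demand'' of the coordinatewise bookkeeping. In the one-variable case (Theorems~\ref{Thankel2} and~\ref{T2sided}) this step already requires the specific polynomials $p_{\pm,n}(t)=(1\pm t)(1-t^2)^n$, carefully chosen measures $\mu_b$, the derivative estimates~(\ref{Eestimates}), and a normal-families argument; none of that is reproduced or adapted here. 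As it stands the proposal is a plausible outline of the intended strategy --- which does match the paper's stated philosophy of reducing to low-rank, low-support Hankel test sets --- but not a proof of the hard implication.
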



\section{Entrywise polynomials preserving positivity in fixed
dimension}\label{Spolynomial}

Having discussed at length the dimension-free setting, we now turn our
attention to functions that preserve positivity in a fixed dimension
$N \geq 2$. This is a natural question from the standpoint of both
theory as well as applications. This latter connection to applied
fields and to high-dimensional covariance estimation will be explained
below in Chapter~\ref{Sstats}.

Mathematically, understanding the functions $f$ such that
$f[-] : \cP_N \to \cP_N$ for fixed $N \geq 2$, is a non-trivial and
challenging refinement of Schoenberg's 1942 theorem. A complete
characterization was found for $N=2$ by Vasudeva~\cite{vasudeva79}:

\begin{theorem}[Vasudeva \cite{vasudeva79}]\label{TK2}
Given a function $f : ( 0, \infty ) \to \R$, the entrywise map $f[-]$
preserves positivity on $\cP_2\bigl( ( 0, \infty ) \bigr)$ if and only
$f$ is non-negative, non-decreasing, and multiplicatively mid-convex:
\[
f( x ) f( y ) \geq f\bigl( \sqrt{x y} \bigr)^2 %
\qquad \text{for all } x, y > 0.
\]
In particular, $f$ is either identically zero or never zero on
$( 0, \infty )$, and $f$ is also continuous.
\end{theorem}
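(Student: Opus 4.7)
The plan is to prove both directions by testing $f[-]$ on just three families of $2 \times 2$ positive semidefinite matrices, and then to deduce the dichotomy and continuity as in the argument following Theorem~\ref{Thorn2}.

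For the forward direction, suppose $f[-] : \cP_2((0,\infty)) \to \cP_2(\R)$. First, applying $f$ entrywise to the rank-one matrix $\begin{pmatrix} a & a \\ a & a \end{pmatrix}$, whose image is $f(a) \bone_{2 \times 2}$, forces $f(a) \geq 0$ for every $a > 0$. Next, for $0 < b \leq a$, the matrix $\begin{pmatrix} a & b \\ b & a \end{pmatrix}$ lies in $\cP_2((0,\infty))$, so applying $f$ and computing the determinant gives $f(a)^2 \geq f(b)^2$; combined with non-negativity, this yields monotonicity. Finally, for $a, b > 0$ the rank-one matrix $\begin{pmatrix} a & \sqrt{ab} \\ \sqrt{ab} & b \end{pmatrix}$ is in $\cP_2((0,\infty))$, and the non-negativity of the determinant of $f[A]$ is exactly the multiplicative mid-convexity inequality.

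For the reverse direction, let $f$ be non-negative, non-decreasing, and multiplicatively mid-convex, and pick any $A = \begin{pmatrix} a & b \\ b & c \end{pmatrix} \in \cP_2((0,\infty))$. Then $a, c > 0$, the entry $b > 0$, and $b^2 \leq ac$, so $b \leq \sqrt{ac}$. Monotonicity and mid-convexity give
\[
f(b)^2 \leq f\bigl(\sqrt{ac}\bigr)^2 \leq f(a) f(c),
\]
so $f[A]$ has non-negative diagonal entries and non-negative determinant, hence lies in $\cP_2(\R)$.

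For the dichotomy, suppose $f(a_0) = 0$ for some $a_0 > 0$. For any $c > 0$, setting $y := c^2 / a_0$ in mid-convexity yields $f(c)^2 \leq f(a_0) f(y) = 0$, so $f \equiv 0$. Otherwise $f$ is strictly positive, and then $g(y) := \log f(e^y)$ is well-defined and midpoint convex on $\R$, by mid-convexity of $f$. Since $f$ is non-decreasing and finite-valued, $g$ is locally bounded above, so Proposition~\ref{Pconvex} gives that $g$ is continuous, whence so is $f$. No step of this plan looks delicate; the only subtlety is the dichotomy argument, which must be handled before invoking the logarithmic change of variables.
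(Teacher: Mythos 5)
Your proposal is correct and follows essentially the same route as the paper: the same three test families (constant, symmetric Toeplitz, and rank-one geometric-mean matrices) for necessity, the determinant check for sufficiency, and the zero-dichotomy followed by the logarithmic change of variables and Proposition~\ref{Pconvex} for continuity. No gaps.
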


On the other hand, if $N \geq 3$, then such a characterization remains
open to date. As mentioned above, perhaps the only known result for
general entrywise preservers is the Horn--Loewner theorem~\ref{Thorn} (or
its more general variants such as Theorem~\ref{Tmaster}).

In light of this challenging scarcity of results in fixed dimension, a
strategy adopted in the literature has been to further refine the
problem, in one of several ways:
\begin{enumerate}
\item Restrict the class of functions, while operating entrywise on all
of $\cP_N$ (over some given domain $I$, say $(0,\rho)$ or $(-\rho,\rho)$
for $0 < \rho \leq \infty$). For example, in this survey we consider
possibly non-integer power functions, polynomials and power series, and
even linear combinations of real powers.

\item Restrict the class of matrices and study entrywise functions over
this class in a fixed dimension. For instance, popular sub-classes of
matrices include positive matrices with rank bounded above, or with a
given sparsity pattern (zero entries), or classes such as Hankel or
Toeplitz matrices; or intersections of these classes. For instance, in
discussing the Horn--Loewner and Schoenberg--Rudin results, we
encountered Toeplitz and Hankel matrices of low rank.

\item Study the problem under both of the above restrictions.
\end{enumerate}

In this chapter we begin with the first of these restrictions.
Specifically, we will study polynomial maps that preserve positivity,
when applied entrywise to $\cP_N$. Recall from the Schur product
theorem that if the polynomial $f$ has only non-negative coefficients
then $f[ - ]$ preserves positivity on $\cP_N$ for every dimension
$N \geq 1$. It is natural to expect that if one reduces the test set,
from all dimensions to a fixed dimension, then the class of polynomial
preservers should be larger. Remarkably, until 2016 not a single
example was known of a polynomial positivity preserver with a negative
coefficient. Then, in quick succession, the two papers
\cite{BGKP-fixeddim, Khare-Tao} provided a complete
understanding of the sign patterns of entrywise polynomial preservers
of $\cP_N$. The goal of this chapter is to discuss some of the results
in these works.

\subsection{Characterizations of sign patterns}

Until further notice, we work with entrywise polynomial
or power-series maps of the form
\begin{equation}\label{Epoly}
f( x ) = c_0 x^{n_0} + c_1 x^{n_1} + \cdots,
\quad \text{with } 0 \leq n_0 < n_1 < \cdots,
\end{equation}
and $c_j \in \R$ typically non-zero, which preserve $\cP_N( I )$ for
various $I$. Our goal is to try and understand their sign patterns,
that is, which $c_j$ can be negative. The first observation is that as
soon as $I$ contains the interval $( 0, \rho )$ for any $\rho > 0$, by
the Horn--Loewner type necessary conditions in Lemma~\ref{Lhorn}, the
lowest $N$ non-zero coefficients of $f( x )$ must be positive.

The next observation is that if $I \not\subset \R_+$, then, in
general, there is no structured classification of the sign patterns of
the power series preservers on $\cP_N( I )$. For example, let $k$ be a
non-negative integer; the polynomials
\[
f_{k,t}( x ) := t ( 1 + x^2 + \cdots + x^{2 k} ) - x^{2 k + 1} %
\qquad ( t > 0 )
\]
do not preserve positivity entrywise on
$\cP_N\bigl( (-\rho,\rho) \bigr)$ for any $N \geq 2$. This may be seen
by taking  $\bu := ( 1, -1, 0, \ldots, 0 )^T$ and
$A := \eta \bu \bu^T$ for some $0 < \eta < \rho$, and noting that
\[
\bu^T f_{k, t}[ A ] \bu = -4 \eta^{2 k + 1} < 0.
\]

Similarly, if one allows complex entries and uses higher-order roots
of unity, such negative results (vis-a-vis Lemma~\ref{Lhorn}) are
obtained for complex matrices.

Given this, in the rest of the chapter we will focus on $I = (0,\rho)$
for $0 < \rho \leq \infty$.\footnote{That said, we also briefly discuss
the one situation in which our results do apply more generally, even to
$I = D(0,\rho) \subset \C$ (an open complex disc).}
As mentioned above, if $f$ as in~(\ref{Epoly}) entrywise preserves
positivity even on rank-one matrices in
$\cP_N\bigl( ( 0, \rho ) \bigr)$ then its first $N$ non-zero Maclaurin
coefficients are positive. Our goal is to understand if any other
coefficient can be negative (and if so, which of them). This has at
least two ramifications:
\begin{enumerate}
\item It would yield the first example of a polynomial entrywise map (for
a fixed dimension) with at least one negative Maclaurin coefficient.
Recall the contrast to Schoenberg's theorem in the dimension-free
setting.

\item This also yields the first example of a polynomial (or power
series) that entrywise preserves positivity on $\cP_N(I)$ but not
$\cP_{N+1}(I)$. In particular it would imply that the Horn--Loewner type
necessary condition in Lemma~\ref{Lhorn}(1) is ``sharp''.
\end{enumerate}

These goals are indeed achieved in the particular case $n_0= 0$,
\ldots, $n_{N-1} = N-1$ in~\cite{BGKP-fixeddim}, and subsequently, for
arbitrary $n_0 < \cdots < n_{N-1}$ in~\cite{Khare-Tao}. (In fact, in
the latter work the $n_j$ need not even be integers; this is discussed
below.)  Here is a `first' result along these lines. Henceforth we
assume that $\rho < \infty$; we will relax this assumption midway
through Section~\ref{Sunbdd} below.

\begin{theorem}[Belton--Guillot--Khare--Putinar \cite{BGKP-fixeddim} and
Khare--Tao \cite{Khare-Tao}]\label{Tpowerseries}
Suppose $N \geq 2$ and $n_0 < \cdots < n_{N - 1}$ are non-negative
integers, and $\rho$, $c_0$, \ldots, $c_{N-1}$ are positive
scalars. Given $\epsilon_M \in \{ 0, \pm1 \}$ for all $M > n_{N-1}$,
there exists a power series
\[
f( x ) = c_0 x^{n_0} + \cdots + c_{N-1} x^{n_{N-1}} + %
\sum_{M > n_{N-1}} d_M x^M
\]
such that $f$ is convergent on $( 0, \rho )$, the entrywise map
$f[-]$ preserves positivity on $\cP_N\bigl( ( 0,\rho ) \bigr)$ and
$d_M$ has the same sign (positive, negative or zero) as $\epsilon_M$
for all $M > n_{N-1}$.
\end{theorem}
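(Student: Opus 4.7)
Write $p(x) := c_0 x^{n_0} + \cdots + c_{N-1} x^{n_{N-1}}$. The plan is to regard the positive polynomial $p$ as a reservoir of positivity which I will split among infinitely many sign-controlled small perturbations. More precisely, I would pick a summable positive sequence $(\alpha_M)_{M > n_{N-1}}$ of weights with $\sum_M \alpha_M = 1$, say $\alpha_M := 2^{n_{N-1}-M}$ up to normalization, and then choose each $d_M$ so that the weighted piece $\alpha_M p(x) + d_M x^M$ is itself an entrywise positivity preserver on $\cP_N\bigl((0,\rho)\bigr)$. Summing these pieces over $M$ recovers $p(x) + \sum_M d_M x^M = f(x)$. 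Since the set of entrywise preservers of $\cP_N\bigl((0,\rho)\bigr)$ is a convex cone closed under locally uniform limits on compact subsets of $\cP_N\bigl((0,\rho)\bigr)$, the sum $f[-]$ will preserve positivity.

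\textbf{Key lemma.} For every integer $M > n_{N-1}$ there exists a positive constant $\mathcal{C}_M = \mathcal{C}_M(N,\rho,c_0,\dots,c_{N-1},n_0,\dots,n_{N-1}) > 0$ such that
\[
p(x) - t\, x^M \quad \text{preserves positivity on} \quad \cP_N\bigl((0,\rho)\bigr) \quad \text{for every } 0 \le t \le \mathcal{C}_M.
\]
Given this lemma, positive homogeneity in the first $N$ coefficients implies that $\alpha_M p(x) - t\, x^M$ preserves positivity on $\cP_N\bigl((0,\rho)\bigr)$ whenever $0 \le t \le \alpha_M \mathcal{C}_M$. Adding instead a positive multiple of $x^M$ is harmless by the Schur product theorem. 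I would then define
\[
d_M := \begin{cases} 0 & \text{if } \epsilon_M = 0,\\ \epsilon_M \cdot \tfrac{1}{2}\alpha_M \mathcal{C}_M \gamma_M & \text{if } \epsilon_M = \pm 1,\end{cases}
\]
where $\gamma_M \in (0,1]$ is shrunk, if needed, to guarantee $\sum_{M > n_{N-1}} |d_M|\rho^M < \infty$ so that $f$ converges (absolutely, uniformly on compacts) throughout $(0,\rho)$. The signs of the $d_M$ match $\epsilon_M$ by construction, each summand $\alpha_M p + d_M x^M$ lies in the cone of preservers, and their sum is exactly $f$, finishing the proof.

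\textbf{Main obstacle and how to attack it.} The technical core is the key lemma, i.e.\ the existence of a strictly positive threshold $\mathcal{C}_M$. After reducing (via a limiting/rank argument) to rank-one matrices $A = \bu\bu^T$ with $\bu \in (0,\sqrt{\rho})^N$, one expands $\det\bigl(c_0 A^{\circ n_0} + \cdots + c_{N-1} A^{\circ n_{N-1}} - t A^{\circ M}\bigr)$ by multilinearity of the determinant in its columns. Each resulting summand is, up to a symmetric-function identity, of the shape $V(\bu)^2 \, s_\lambda(\bu^{\circ 2})$ for an appropriate Schur polynomial indexed by the partition $\lambda = \lambda(\bn)$ associated to the exponent tuple (these are precisely the generalized Vandermonde and Schur polynomial computations underlying Theorem~\ref{Phorn} and Theorem~\ref{Tsymm}). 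The positive part, coming from $p$, has a strictly positive infimum, relative to the $x^M$ contribution, over the compact set $\{\bu \in [0,\sqrt{\rho}]^N\}$ once one checks that the ratio has no zeros or blow-ups there; this yields $\mathcal{C}_M > 0$. Bounding this ratio uniformly and handling the boundary carefully (where some coordinates of $\bu$ vanish or the matrix becomes low rank) is where all the Schur-polynomial machinery of~\cite{BGKP-fixeddim, Khare-Tao} is spent. Once the lemma is granted, the construction above is a routine distribute-and-sum argument.
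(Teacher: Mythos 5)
Your proposal is correct and follows essentially the same route as the paper: reduce to the single-negative-coefficient threshold result (the paper's Theorem~\ref{Treal1}/Theorem~\ref{Tbreakthrough}, your ``key lemma''), then distribute the positive part $p(x)$ over the perturbations via geometric weights $2^{n_{N-1}-M}$ and sum within the closed convex cone of preservers, shrinking coefficients to ensure convergence on $(0,\rho)$. Your sketch of the key lemma via rank-one reduction, Cauchy--Binet, and Schur-polynomial ratio bounds also matches the paper's argument (Proposition~\ref{Prank1}, Theorem~\ref{Pschur-ratio}, and the extension principle of Proposition~\ref{Pextension}).
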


\begin{proof}[Outline of proof]
The claim is such that it suffices to show the result for exactly one 
$\epsilon_M = -1$. Indeed, given the claim, for each $M > n_{N-1}$ there
exists $\delta_M \in ( 0, 1 / M! )$ such that
$\sum_{j = 0}^{N-1} c_j x^{n_j} + d x^M$ preserves positivity
entrywise on $\cP_N\bigl( ( 0, \rho ) \bigr)$ whenever
$| d | \leq \delta_M$. Now let $d_M := \epsilon_M \delta_M$ for
all $M > n_{N-1}$, and define
\[
f_M( x ) := \sum_{j = 0}^{N_1} c_j x^{n_j} + d_M x^M \quad %
\text{and} \quad %
f( x ) := \sum_{M > n_{N-1}} 2^{n_{N-1} - M} f_M( x ).
\]
Then it may be verified that
$| f( x ) | \leq %
\sum_{j = 0}^{N - 1} c_j x^{n_j} + 2^{n_{N - 1}} e^{x / 2}$,
and hence $f$ has the desired properties.
\end{proof}

Thus it suffices to show the existence of a polynomial positivity
preserver on $\cP_N\bigl( (0,\rho) \bigr)$ with precisely one negative
Maclaurin coefficient, the leading term. In the next few sections we
explain how to achieve this goal. In fact, one can show a more general
result, for real powers as well.

\begin{theorem}[Khare--Tao \cite{Khare-Tao}]\label{Treal1}
Fix an integer $N \geq 2$ and real exponents
$n_0 < \cdots < n_{N-1} < M$ in the set
$\Z_+ \cup [ N - 2, \infty )$. Suppose $\rho$, $c_0$, \ldots,
$c_{N - 1} > 0$ as above. Then there exists $c' < 0$  such that the
function
\[
f( x ) = c_0 x^{n_0} + \cdots + c_{N-1} x^{n_{N-1}} + c' x^M
\qquad \bigl( x \in ( 0,\rho ) \bigr)
\]
preserves positivity entrywise on $\cP_N\bigl( ( 0, \rho ) \bigr)$.
[Here and below, we set $0^0 := 1$.]
\end{theorem}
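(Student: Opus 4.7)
My plan is to produce some $c' < 0$ making $f = g + c' x^M$ — with $g(x) := \sum_{j=0}^{N-1} c_j x^{n_j}$ — preserve positivity entrywise on $\cP_N\bigl((0,\rho)\bigr)$. The hypothesis $n_j, M \in \Z_+ \cup [N-2,\infty)$, combined with the FitzGerald--Horn theorem on fractional Hadamard powers, ensures that $A^{\circ n_j}, A^{\circ M} \in \cP_N$ for every $A \in \cP_N\bigl((0,\rho)\bigr)$, so $g[A] \succeq 0$ is a non-negative combination of PSDs. It therefore suffices to exhibit a positive constant $C^*$ with $g[A] \succeq C^* A^{\circ M}$ uniformly in $A$; any $c' \in [-C^*, 0)$ will then work.

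First I would analyse rank-one matrices $A = \bu\bu^T$ with $\bu \in (0,\sqrt{\rho})^N$ having pairwise distinct coordinates. The generalized Vandermonde matrix $V(\bu) := [\bu^{\circ n_0} \mid \cdots \mid \bu^{\circ n_{N-1}}]$ is invertible, and setting $\bw(\bu) := V(\bu)^{-1} \bu^{\circ M}$ and $D := \diag(c_0,\ldots,c_{N-1})$ yields the factorisation
\[
f[\bu\bu^T] = V(\bu)\bigl(D + c'\,\bw(\bu)\bw(\bu)^T\bigr)V(\bu)^T.
\]
Sherman--Morrison then gives $f[\bu\bu^T] \succeq 0$ iff $|c'| \leq \bigl(\sum_j w_j(\bu)^2/c_j\bigr)^{-1}$. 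Cramer's rule combined with the Weyl character (Jacobi--Trudi) formula realises each $w_j(\bu)$ as a signed ratio $s_{\mu^{(j)}}(\bu)/s_\lambda(\bu)$ of (generalized, real-exponent) Schur polynomials, where $\lambda$ encodes $(n_0,\ldots,n_{N-1})$ and $\mu^{(j)}$ substitutes $M$ for $n_j$. Since $s_\lambda > 0$ on the open positive orthant and both numerator and denominator have compatible behaviour at the boundary of $[0,\sqrt{\rho}]^N$ (in particular as coordinates collide), a uniform estimate $\sum_j w_j(\bu)^2/c_j \leq K < \infty$ can be extracted, yielding a positive rank-one threshold $C^*_{\mathrm{rk}1} := 1/K$.

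The main obstacle is lifting this bound from rank-one matrices to all of $\cP_N\bigl((0,\rho)\bigr)$, since entrywise Hadamard powers do not cooperate with matrix addition in a way that permits a direct bootstrap. My plan is a compactness-and-stratification argument: on each compact set $\cP_N\bigl([\epsilon,\rho-\epsilon]\bigr)$, the generalized Rayleigh quotient $\bv^T g[A] \bv / \bv^T A^{\circ M} \bv$ is continuous in $(A,\bv)$ wherever the denominator is non-zero, and on the full-rank locus $g[A]$ is positive definite — inherited from the generic non-degeneracy of generalized Vandermonde matrices under limits of rank-one decompositions — so the quotient is bounded below by a positive number. The delicate case is when $A$ is rank-deficient: for PSD matrices with non-negative entries this forces a block decomposition dictated by the zero-pattern of $A$, and the crucial kernel inclusion $\bigcap_j \ker A^{\circ n_j} \subseteq \ker A^{\circ M}$ can then be verified within each block, enabling an induction on $N$. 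Combining the rank-one threshold with the constant produced by this extension gives $C^* > 0$; taking $c' = -C^*$ proves the theorem. The hardest step will be the rank-deficient analysis — in particular, the careful treatment of boundary configurations where some coordinates of the Gram vectors representing $A$ vanish (relevant when $n_0 = 0$ via the $0^0 := 1$ convention) — is the technical ingredient that distinguishes the general-$A$ statement from its much cleaner rank-one shadow.
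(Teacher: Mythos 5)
Your rank-one analysis is essentially the paper's: the factorization $f[\bu\bu^T]=V(\bu)(D+c'\bw\bw^T)V(\bu)^T$, the Sherman--Morrison/Schur-complement criterion, and the identification of the $w_j(\bu)$ with ratios of generalized Schur polynomials are exactly Proposition~\ref{Prank1} and Lemma~\ref{Lktfpsac}. But the step you dispatch in one clause --- ``a uniform estimate $\sum_j w_j(\bu)^2/c_j\leq K<\infty$ can be extracted'' from ``compatible behaviour at the boundary'' --- is precisely the hard core of the Khare--Tao argument. The denominator $s_\bn(\bu)$ vanishes on parts of the boundary of the orthant (e.g.\ when coordinates tend to $0$ and $n_0>0$), and showing the ratio stays bounded requires either the coordinatewise monotonicity of $s_{\bn_j}/s_\bn$ (Theorem~\ref{Pschur-ratio}, resting on Lam--Postnikov--Pylyavskyy Schur positivity or Dodgson condensation, plus a rational-approximation limit for real exponents) or the leading-term bounds of Proposition~\ref{Pleading}. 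As stated, this is an assertion, not a proof, though the assertion is true.

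The extension from rank one to all of $\cP_N\bigl((0,\rho)\bigr)$ is where your argument genuinely breaks. First, every matrix in $\cP_N\bigl((0,\rho)\bigr)$ has \emph{strictly positive} entries, so rank-deficiency does not force any zero pattern or block decomposition --- $\bone_{N\times N}$ is rank one with no zero entries --- and the ``block'' analysis you propose for the degenerate stratum has nothing to act on. Second, compactness of $\cP_N\bigl([\epsilon,\rho-\epsilon]\bigr)$ gives a constant $K_\epsilon$ with no control as $\epsilon\to0^+$, and the kernel inclusion $\ker g[A]\subseteq\ker A^{\circ M}$ by itself does not bound the Rayleigh quotient near the locus where both quadratic forms vanish; so no uniform $C^*$ emerges. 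The paper avoids all of this with the FitzGerald--Horn extension principle (Proposition~\ref{Pextension}): induct on $N$, write $A=\zeta\zeta^T+B$ with $B\succeq0$ having zero last row and column, and apply $h(x)-h(y)=\int_0^1(x-y)\,h'(\lambda x+(1-\lambda)y)\std\lambda$ entrywise, so that positivity of $f[A]$ follows from the rank-one case in dimension $N$ together with $f'[-]$ preserving positivity on $\cP_{N-1}$. This is also the only place the hypothesis $n_j\in\Z_+\cup[N-2,\infty)$ is really used (so that the induction on $N$ closes); your proposal never explains where that hypothesis enters beyond the initial appeal to Theorem~\ref{Tfitzhorn}, which is a symptom of the missing step.
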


The restriction of the $n_j$ lying in $\Z_+ \cup [ N - 2, \infty )$ is
a technical one that is explained in a later chapter on the study of
entrywise powers preserving positivity on
$\cP_N\bigl( ( 0, \infty ) \bigr)$; see Theorem~\ref{Tfitzhorn}.

\begin{remark}
A stronger result, Theorem~\ref{Treal2}, which also applies to real
powers, is stated below. We mention numerous ramifications of the results
in this chapter following that result.
\end{remark}

The proofs of the preceding two theorems crucially use type-$A$
representation theory (specifically, a family of symmetric
functions) that naturally emerges here via generalized Vandermonde
determinants. These symmetric homogeneous polynomials are introduced
and used in the next section.

For now, we explain how Theorem~\ref{Treal1} helps achieve a complete
classification of the sign patterns of a family of generalised power
series, of the form
\[
f( x ) = \sum_{j = 0}^\infty c_j x^{n_j}, \qquad %
n_j \in \Z_+ \cup [ N - 2, \infty ) \text{ for all } j \geq 0,
\]
but without the requirement that that exponents are non-decreasing. In
this generality, one first notes that the Horn--Loewner-type
Lemma~\ref{Lhorn} still applies: if some coefficient $c_{j_0} < 0$,
then there must be at least $N$ indices $j$ such that $n_j < n_{j_0}$
and $c_j > 0$. The following result shows that once again, this
necessary condition is best possible.

\begin{theorem}[Classification of sign patterns for real-power series
preservers, Khare--Tao \cite{Khare-Tao}]\label{Tclassify}
Fix an integer $N \geq 2$, and distinct real exponents
$n_0$, $n_1$, \ldots in $\Z_+ \cup [ N - 2, \infty )$. Suppose
$\epsilon_j \in \{ 0, \pm1 \}$ is a choice of sign for each
$j \geq 0$, such that if $\epsilon_{j_0} = -1$ then $\epsilon_j = +1$
for at least $N$ choices of $j$ such that $n_j < n_{j_0}$. Given
any $\rho > 0$, there exists a choice of coefficients
$c_j$ with sign $\epsilon_j$ such that 
\[
f( x ) := \sum_{j = 0}^\infty c_j x^{n_j}
\]
is convergent on $( 0, \rho )$ and preserves positivity entrywise
on $\cP_N\bigl( ( 0, \rho ) \bigr)$.
\end{theorem}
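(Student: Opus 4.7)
The plan is to build $f$ as a convergent series of non-negative multiples of functions, each of which is an entrywise positivity preserver on $\cP_N\bigl((0,\rho)\bigr)$. Since $\cP_N$ is a closed convex cone, the resulting $f$ will again preserve positivity; by choosing the building blocks carefully we can arrange the Maclaurin-type sign pattern of $f$ to match $(\epsilon_j)_{j \geq 0}$ exactly.

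First I would split the index set into $J_+$, $J_-$, $J_0$ according to the value of $\epsilon_j \in \{+1,-1,0\}$; indices in $J_0$ will contribute zero and can be dropped. For each $j_0 \in J_-$, the hypothesis supplies at least $N$ indices in $J_+$ whose exponents lie strictly below $n_{j_0}$, and I would fix such a witness set $J(j_0) \subset J_+$ of size exactly $N$. Applying Theorem~\ref{Treal1} to the exponents $\{n_j : j \in J(j_0)\} \cup \{n_{j_0}\} \subset \Z_+ \cup [N-2,\infty)$, with the positive coefficients $c_k$ all taken equal to $1$, yields a positive scalar $\delta_{j_0}$ such that
\[
g_{j_0}(x) := \sum_{j \in J(j_0)} x^{n_j} - \delta_{j_0} x^{n_{j_0}}
\]
is an entrywise positivity preserver on $\cP_N\bigl((0,\rho)\bigr)$. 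For each $j \in J_+$, the monomial $h_j(x) := x^{n_j}$ is itself an entrywise preserver by the Schur product theorem.

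Next, I would select positive weights $\beta_j$ for $j \in J_+$ and $\lambda_{j_0}$ for $j_0 \in J_-$, decaying rapidly enough in the corresponding exponents (for instance of order $(1+n)^{-2}(1+\rho)^{-n}$) that both series
\[
\sum_{j \in J_+} \beta_j h_j(x), \qquad \sum_{j_0 \in J_-} \lambda_{j_0} g_{j_0}(x)
\]
converge absolutely on every compact subset of $(0,\rho)$, and I set $f := \sum \beta_j h_j + \sum \lambda_{j_0} g_{j_0}$. Reading off coefficients: the coefficient of $x^{n_{j_0}}$ is $-\lambda_{j_0}\delta_{j_0}<0$ for each $j_0 \in J_-$; the coefficient of $x^{n_j}$ is $\beta_j + \sum\{\lambda_{j_0} : j \in J(j_0)\} > 0$ for each $j \in J_+$; and no exponent indexed by $J_0$ ever appears. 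Positivity preservation of $f[A]$ on $\cP_N\bigl((0,\rho)\bigr)$ follows by applying the matrix series entrywise: each partial sum is a non-negative combination of known preservers, hence lies in $\cP_N$, and the limit remains there because $\cP_N$ is closed.

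The heavy lifting has been outsourced to the non-trivial Theorem~\ref{Treal1}; what remains is bookkeeping. The one point requiring care is that different negative-sign indices may choose overlapping or disjoint witness sets, but since every contribution to a positive index is itself non-negative, no cancellation can flip a positive coefficient to zero or negative — this is precisely why the sign pattern is reproduced faithfully. Verifying that the weights can be chosen compatibly with the values $\delta_{j_0}$ from Theorem~\ref{Treal1} is routine once one uses the trivial bound $|g_{j_0}(x)| \leq (N+\delta_{j_0})\max(1,\rho^{n_{j_0}})$ on $(0,\rho)$, so the main obstacle is really the input from Theorem~\ref{Treal1} itself rather than any step in the present argument.
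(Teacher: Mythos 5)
Your proposal is correct and follows essentially the route the paper indicates: decompose the target sign pattern into blocks, each of the form ``$N$ positive lower-order terms plus one small negative higher-order term'' handled by Theorem~\ref{Treal1} (with separate monomial blocks for the remaining positive indices), and then sum with rapidly decaying positive weights, using that $\cP_N$ is a closed convex cone; this is exactly the scheme of the proof of Theorem~\ref{Tpowerseries}, adapted so that each negative index gets its own witness set. The only nitpick is that for a non-integer exponent $n_j \in [N-2,\infty)$ the monomial $x^{n_j}$ is a preserver by the FitzGerald--Horn theorem~\ref{Tfitzhorn}, not by the Schur product theorem, which covers only $n_j \in \Z_+$.
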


Notice this result is strictly more general than
Theorem~\ref{Tpowerseries}, because the sequence $n_0$, $n_1$, $\ldots$ 
can contain an infinite decreasing sequence of positive non-integer
powers, for example, all rational elements of $[ N - 2, \infty )$.
Thus Theorem~\ref{Tclassify} covers a larger class of functions than
even Hahn or Puiseux series.

Theorem~\ref{Tclassify} is derived from Theorem~\ref{Treal1} in a similar
fashion to the proof of Theorem~\ref{Tpowerseries}, and we refer the
reader to \cite[Section 1]{Khare-Tao} for the details.

\subsection{Schur polynomials; the sharp threshold bound for a single
matrix}\label{Sonematrix}

We now explain how to prove Theorem~\ref{Treal1}. The present section
will discuss the case of integer powers, and end by proving the
theorem for a single `generic' rank-one matrix. In the following
section we show how to extend the results to all rank-one matrices for
integer powers.  The subsequent section will complete the proof for
real powers, and then for matrices of all ranks.

The key new tool that is indispensable to the following analysis is
that of \emph{Schur polynomials}. These can be defined in a number of
equivalent ways; we refer the reader to \cite{Bressoud-Wei} for more
details, including the equivalence of these definitions shown using ideas
of Karlin--Macgregor, Lindstr\"om, and Gessel--Viennot. For our purposes
the definition of Cauchy is the most useful:

\begin{definition}
Given non-negative integers $N \geq 1$ and $n_0 < \cdots < n_{N-1}$,
let
\[
\bn := ( n_0, \ldots, n_{N - 1} )^T, \qquad \text{and} \quad %
\bn_{\min} := ( 0, 1, \ldots, N - 1 )^T,
\]
and define $V( \bn ) := \prod_{0 \leq i < j \leq N - 1} ( n_j - n_i )$.

Given a vector $\bu = ( u_1, \ldots, u_N )^T$ and a non-negative
integer $k$, let
$\bu^{\circ k} := ( u_1^k, \ldots, u_N^k )^T$, and let
$\bu^{\circ \bn}$ be the $N \times N$ matrix with $( j, k )$ entry
$\bu_j^{n_{k - 1}}$.

The \emph{Schur polynomial} in variables $u_1$, \ldots, $u_N$ of
\emph{degree} $\bn$ is given by
\begin{equation}\label{Ecauchy}
s_\bn( \bu ) := \frac{\det \bu^{\circ \bn}}{\det \bu^{\circ \bn_{\min}} }.
\end{equation}
\end{definition}

Notice that the numerator is a generalized Vandermonde determinant,
so a homogeneous and alternating polynomial, while the denominator is
the usual Vandermonde determinant in the indeterminates $u_j$. Hence
their ratio $s_\bn( \bu )$ is a homogeneous symmetric polynomial in
$\Z[u_1, \ldots, u_N]$. It follows that Schur polynomials are well
defined when working over any commutative unital ring.

Schur polynomials are an extremely well-studied family of symmetric
functions. Their appeal lies in the important observation that they
are the characters of all irreducible (finite-dimensional) polynomial
representations of the complex Lie group $GL_n( \C )$ (or of the Lie
algebra $\mathfrak{sl}_{n + 1}( \C )$). In this setting, the
definition of Cauchy is a special case of the Weyl character
formula. Thus, its specialization yields the corresponding Weyl
dimension formula, which will be of use below:
\begin{equation}\label{Eweyldim}
s_\bn( ( 1, \ldots, 1 )^T ) = %
\prod_{0 \leq i < j \leq N - 1} \frac{n_j - n_i}{j - i} = %
\frac{V( \bn )}{V( \bn_{\min} )}.
\end{equation}

An alternate proof of~(\ref{Eweyldim}) comes from the
\emph{principal specialization formula}: for a variable $q$, one has
that
\begin{equation}\label{Especialize}
s_\bn\bigl( ( 1, q, \ldots, q^{N - 1} )^T ) = %
\prod_{0 \leq i < j \leq N - 1} \frac{q^{n_j} - q^{n_i}}{q^j - q^i};
\end{equation}
this follows from~(\ref{Ecauchy}) because now the numerator is also a
standard Vandermonde determinant. We also refer the reader to
\cite{Macdonald} for many more results and properties of Schur
polynomials.

Returning to polynomial positivity preservers, we wish to consider
functions of the form
\[
f( x ) = %
c_0 x^{n_0} + \cdots + c_{N - 1} x^{n_{N - 1}} + c' x^M,
\]
with non-negative integers $n_0 < \cdots < n_{N-1} < M$ and positive
coefficients $c_0$, \ldots, $c_{N-1}$. We are interested in
characterizing those $c' \in \R$ for which the entrywise map $f[-]$
preserve positivity on $\cP_N\bigl( ( 0, \rho ) \bigr)$. By the Schur
product theorem, this is equivalent to finding the smallest $c'$ such
that $f[-]$ is a preserver. We may assume that $c' < 0$, so we
rescale by $t := | c' |^{-1}$ and define
\begin{equation} 
p_t( x ) := t \sum_{j = 0}^{N-1} c_j x^{n_j} - x^M.
\end{equation}
The goal now is to find the smallest $t > 0$ such that $p_t[-]$
preserves positivity on $\cP_N\bigl( ( 0, \rho ) \bigr)$. We next
achieve this goal for a single rank-one matrix.

\begin{proposition}\label{Prank1}
With notation as above, define
\[
\bn_j = %
( n_0, \ldots, n_{j - 1}, \widehat{n_j}, n_{j + 1}, \ldots, n_{N - 1}, M)^T
\]
for $0 \leq j \leq N - 1$. Given a vector $\bu \in ( 0, \infty )^N$
with distinct coordinates, the following are equivalent.
\begin{enumerate}
\item The matrix $p_t[ \bu \bu^T ]$ is positive semidefinite.

\item $\det p_t[ \bu \bu^T] \geq 0$.

\item $t \geq \displaystyle %
\sum_{j = 0}^{N - 1} \frac{s_{\bn_j}( \bu )^2}{c_j s_\bn( \bu )^2}$.
\end{enumerate}
\end{proposition}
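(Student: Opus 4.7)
The plan is to reduce the problem to an explicit determinant computation by factoring $p_t[\bu\bu^T]$ as a rank-$N$ positive piece minus a rank-one negative perturbation, and then to identify the resulting quantities in terms of Schur polynomials via Cramer's rule.

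First, observe that the $(i,k)$ entry of $p_t[\bu\bu^T]$ equals $t\sum_{j=0}^{N-1} c_j u_i^{n_j} u_k^{n_j} - u_i^M u_k^M$. Setting $V := \bu^{\circ \bn}$, $D := t\,\diag(c_0,\ldots,c_{N-1})$, and $\bw := \bu^{\circ M}$, this gives the factorization
\[
p_t[\bu\bu^T] = V D V^T - \bw \bw^T.
\]
Since $\bu$ has distinct positive entries and $n_0 < \cdots < n_{N-1}$, the generalized Vandermonde $V$ is invertible (a classical fact that follows from the non-vanishing of $s_\bn(\bu)$ for positive arguments, equivalently from the Descartes-type bound on the zeros of an exponential polynomial). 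Hence $VDV^T$ is positive definite.

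To obtain $(2)\Leftrightarrow(3)$, I apply the matrix determinant lemma, which gives
\[
\det p_t[\bu\bu^T] = \det(VDV^T)\bigl(1 - \bw^T (VDV^T)^{-1} \bw\bigr) = t^N \det(V)^2 \prod_{j=0}^{N-1} c_j \cdot \Bigl(1 - \bw^T V^{-T} D^{-1} V^{-1} \bw\Bigr).
\]
Setting $\by := V^{-1} \bw$, the quadratic form evaluates to $\by^T D^{-1} \by = t^{-1} \sum_{j=0}^{N-1} c_j^{-1} y_j^2$. By Cramer's rule, $y_{j}$ is $\pm \det(\bu^{\circ \bn_j'}) / \det(V)$, where $\bu^{\circ \bn_j'}$ denotes the matrix obtained from $V$ by replacing its $(j+1)$th column by $\bw$. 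Sorting the exponents into $\bn_j = (n_0, \ldots, \widehat{n_j}, \ldots, n_{N-1}, M)^T$ introduces at most a sign (which squares away), and dividing by the Vandermonde $\det \bu^{\circ \bn_{\min}}$ in both numerator and denominator converts these ratios into Schur polynomials: $y_j^2 = s_{\bn_j}(\bu)^2 / s_\bn(\bu)^2$. Thus
\[
\det p_t[\bu\bu^T] = t^N \det(V)^2 \prod_{j=0}^{N-1} c_j \cdot \biggl(1 - \frac{1}{t} \sum_{j=0}^{N-1} \frac{s_{\bn_j}(\bu)^2}{c_j\, s_\bn(\bu)^2}\biggr),
\]
and $(2)\Leftrightarrow(3)$ is immediate since the prefactor is positive.

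The implication $(1)\Rightarrow(2)$ is trivial. For $(2)\Rightarrow(1)$, I apply Weyl's interlacing inequality to the rank-one downward perturbation $VDV^T - \bw\bw^T$: its eigenvalues interlace those of $VDV^T$, so at least $N-1$ of them are bounded below by the positive eigenvalues of $VDV^T$. Consequently $p_t[\bu\bu^T]$ has at most one negative eigenvalue; the hypothesis $\det p_t[\bu\bu^T] \geq 0$ then forces that negative eigenvalue to be absent (i.e.\ zero or positive), so $p_t[\bu\bu^T] \in \cP_N(\R)$. The only genuinely delicate step is the bookkeeping that matches $V^{-1}\bw$ to Schur polynomial ratios; everything else reduces to the matrix determinant lemma and eigenvalue interlacing.
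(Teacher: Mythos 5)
Your proof is correct and follows essentially the same route as the paper: the factorization $p_t[\bu\bu^T] = VDV^T - \bw\bw^T$, the reduction of all three conditions to the threshold $t \geq \bw^T H^{-1}\bw$ (the content of the paper's Lemma~\ref{Lktfpsac}, which you re-derive via the matrix determinant lemma and Weyl interlacing rather than citing it), and the identification of that quadratic form with the sum of squared Schur polynomial ratios. The only cosmetic difference is that you extract the components of $V^{-1}\bw$ by Cramer's rule, whereas the paper computes $\det(H - \bw\bw^T)$ by Cauchy--Binet; both yield the same generalized Vandermonde determinants $\det\bu^{\circ\bn_j}$ up to signs that square away.
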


In particular, this shows that for a generic rank-one matrix in
$\cP_N\bigl( (0,\rho) \bigr)$, there does exist a
positivity-preserving polynomial with a negative leading term.

In essence, the equivalences in Proposition~\ref{Prank1} hold more
generally; this is distilled into the following lemma.

\begin{lemma}[Khare--Tao \cite{KT-fpsac18}\footnote{The
work~\cite{KT-fpsac18} is an extended abstract of the
paper~\cite{Khare-Tao}, but some of the results in it have different
proofs from~\cite{Khare-Tao}.}]\label{Lktfpsac}
Fix $\bw \in \R^N$ and a positive-definite matrix $H$. Fix $t > 0$ and
define $P_t := t H - \bw \bw^T$. The following are equivalent.
\begin{enumerate}
\item $P_t$ is positive semidefinite.
\item $\det P_t \geq 0$.
\item $\displaystyle t \geq \bw^T H^{-1} \bw = %
1 - \frac{\det( H - \bw \bw^T)}{\det H}$.
\end{enumerate} 
\end{lemma}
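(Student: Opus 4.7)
The plan is to first observe that (1) $\implies$ (2) holds trivially, since every positive semidefinite matrix has non-negative determinant. The substance then lies in proving (2) $\iff$ (3) by a direct determinantal calculation, and closing the loop via (3) $\implies$ (1) using a Schur-complement argument applied to a bordered matrix.

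For the equivalence (2) $\iff$ (3), I would exploit the invertibility of $H$ by writing $P_t = (tH)\bigl(I - t^{-1} H^{-1} \bw \bw^T\bigr)$ and applying the matrix-determinant lemma $\det(I + \bu \bv^T) = 1 + \bv^T \bu$ to obtain the identity
\[
\det P_t = t^{N - 1} \det(H) \cdot \bigl( t - \bw^T H^{-1} \bw \bigr).
\]
Since $t > 0$ and $H \succ 0$ forces $\det H > 0$, the sign of $\det P_t$ coincides with that of $t - \bw^T H^{-1} \bw$, giving (2) $\iff$ (3). Specializing the same identity at $t = 1$ yields $\det(H - \bw \bw^T) = \det(H) \cdot (1 - \bw^T H^{-1} \bw)$, which rearranges to the second, ratio form of assertion (3).

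For (3) $\implies$ (1), my plan is to introduce the bordered $(N + 1) \times (N + 1)$ symmetric block matrix
\[
B := \begin{pmatrix} H & \bw \\ \bw^T & t \end{pmatrix}
\]
and compute its two Schur complements: with respect to the invertible block $H$, the Schur complement is the scalar $t - \bw^T H^{-1} \bw$; with respect to the scalar block $t > 0$, it is $H - t^{-1} \bw \bw^T = t^{-1} P_t$. Invoking the standard criterion that a Hermitian block matrix with a positive definite diagonal block is positive semidefinite if and only if the corresponding Schur complement is positive semidefinite, this simultaneously characterises $B \succeq 0$ as $t \geq \bw^T H^{-1} \bw$ and as $P_t \succeq 0$, closing the equivalence.

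There is no serious obstacle in this argument; the only point to handle carefully is the boundary case $t = \bw^T H^{-1} \bw$, where $P_t$ is positive semidefinite but singular and $\det P_t = 0$, which is consistent across all three assertions. A self-contained alternative to the Schur complement would be to diagonalise through $H^{1/2}$, writing $P_t = H^{1/2} (tI - \bz \bz^T) H^{1/2}$ with $\bz := H^{-1/2} \bw$, and noting that the rank-one perturbation $tI - \bz \bz^T$ is positive semidefinite precisely when $t \geq \| \bz \|^2 = \bw^T H^{-1} \bw$.
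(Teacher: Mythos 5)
Your proof is correct and complete, and it follows essentially the same route the paper indicates: the paper defers the details to the cited reference but remarks that the key identity in (3) comes from expanding the determinant of the bordered matrix $\begin{bmatrix} H & \bw \\ \bw^T & 1 \end{bmatrix}$ via its two Schur complements, which is exactly your mechanism (you border with $t$ rather than $1$ for the equivalence with (1), and specialize to $t=1$ for the ratio identity). Your use of the matrix-determinant lemma to get $\det P_t = t^{N-1}\det(H)\,(t - \bw^T H^{-1}\bw)$, and hence $(2)\iff(3)$ directly, is a clean and valid way to close the cycle.
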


We refer the reader to~\cite{KT-fpsac18} for the detailed proof of
Lemma~\ref{Lktfpsac}, remarking only that the equality in assertion (3)
follows by using Schur complements in two different ways to expand the
determinant of the matrix
$\begin{bmatrix} H & \bw \\ \bw^T & 1 \end{bmatrix}$.

Now Proposition~\ref{Prank1} follows directly from Lemma~\ref{Lktfpsac},
by setting
\[
H = \sum_{j = 0}^{N - 1} c_j \bu^{\circ n_j} ( \bu^{\circ n_j} )^T %
\quad \text{and} \quad \bw = \bu^{\circ M},
\]
where $H$ is positive definite because of the following general matrix
factorization (which is also used below).

\begin{proposition}
Let $f( x ) = \sum_{k = 0}^M f_k x^k$ be a polynomial with
coefficients in a commutative ring $R$. For any integer $N \geq 1$
and any vectors $\bu = ( u_1, \ldots, u_N )^T$ and
$\bv = ( v_1, \ldots, v_N )^T \in R^N$, it holds that
\begin{align}\label{Efactor}
& f[ t \bu \bv^T ] = %
\sum_{k = 0}^M f_k t^k \bu^{\circ k} ( \bv^{\circ k})^T\\
 & = \begin{bmatrix}
 1 & u_1 & \cdots & u_1^M \\
 1 & u_2 & \cdots & u_2^M \\
 \vdots & \vdots& \ddots & \vdots \\
 1 & u_N & \cdots & u_N^M
\end{bmatrix}
\begin{bmatrix}
 f_0 & 0 & \cdots & 0 \\
 0 & f_1 t & \cdots & 0 \\
 \vdots & \vdots & \ddots & \vdots \\
 0 & 0 & \cdots & f_M t^M
\end{bmatrix}
\begin{bmatrix}
 1 & v_1 & \cdots & v_1^M \\
 1 & v_2 & \cdots & v_2^M \\
 \vdots & \vdots& \ddots & \vdots \\
 1 & v_N & \cdots & v_N^M
\end{bmatrix}^T,\nonumber
\end{align}
where $1$ is a multiplicative identity which is adjoined to $R$ if
necessary.
\end{proposition}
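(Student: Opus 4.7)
The proposition is a pure bookkeeping identity; I plan to verify both equalities by comparing $(j,l)$ entries directly.

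For the first equality, fix indices $1 \leq j, l \leq N$. The entrywise action sends the $(j,l)$ entry $t u_j v_l$ of $t \bu \bv^T$ to
\[
f(t u_j v_l) = \sum_{k=0}^M f_k (t u_j v_l)^k = \sum_{k=0}^M f_k t^k u_j^k v_l^k.
\]
Since $(\bu^{\circ k} (\bv^{\circ k})^T)_{j,l} = u_j^k v_l^k$, this is precisely the $(j,l)$ entry of $\sum_{k=0}^M f_k t^k \bu^{\circ k} (\bv^{\circ k})^T$, establishing the first equality.

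For the second equality, denote the three factors on the right-hand side by $U$, $D$, $V^T$, with columns (respectively rows) indexed by $k = 0, 1, \ldots, M$: so $U_{j,k} = u_j^k$, $D_{k,k} = f_k t^k$, and $(V^T)_{k,l} = v_l^k$. Standard matrix multiplication then yields
\[
(UDV^T)_{j,l} = \sum_{k=0}^M U_{j,k} D_{k,k} (V^T)_{k,l} = \sum_{k=0}^M u_j^k \, f_k t^k \, v_l^k,
\]
again matching the common value computed above.

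There is no substantive obstacle: the identity holds over any commutative ring by termwise expansion, and neither positivity nor any sign hypothesis is invoked. (The only place one must be slightly careful is when $R$ lacks a unit, where one adjoins $1$ formally in order to make sense of the Vandermonde-type factors, as the statement notes.) The real content of the proposition is structural rather than computational: when $\bu = \bv$ and the $f_k$ are non-negative, equation (\ref{Efactor}) exhibits $f[t \bu \bu^T]$ as the congruence of a non-negative diagonal matrix by a standard Vandermonde matrix. This is exactly what is needed to deduce positive-definiteness of the matrix $H = \sum_{j=0}^{N-1} c_j \bu^{\circ n_j} (\bu^{\circ n_j})^T$ appearing in the application of Lemma~\ref{Lktfpsac} to Proposition~\ref{Prank1} (using a generalized Vandermonde in place of the standard one when the exponents $n_j$ are not consecutive integers).
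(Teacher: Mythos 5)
Your proof is correct: the entrywise comparison of $(j,l)$ entries verifies both equalities, and this is exactly the routine computation the paper has in mind (the paper states the factorization without proof, treating it as immediate). Your closing remark about its role in establishing positive definiteness of $H$ in Proposition~\ref{Prank1} also matches how the identity is used in the paper.
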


Now to adopt Lemma~\ref{Lktfpsac}(3), this same equation and the
Cauchy--Binet formula allow one to compute $\det( H - \bw \bw^T )$
in the present situation, and this yields precisely that
$t \geq \displaystyle \sum_{j = 0}^{N - 1} %
\frac{s_{\bn_j}( \bu )^2}{c_j s_\bn( \bu )^2}$,
as desired.

\subsection{The threshold for all rank-one matrices: a Schur positivity
result}

We continue toward a proof of Theorem~\ref{Treal1}. The next step is
to use Proposition~\ref{Prank1} to achieve an intermediate goal: a
threshold bound for $c'$ that works for all rank-one matrices in
$\cP_N\bigl( ( 0, \rho ) \bigr)$, still working with integer
powers. Clearly, to do so one has to understand the supremum of each
ratio $R_j := s_{\bn_j}( \bu )^2 / s_\bn( \bu )^2$, as $\bu$ runs over
vectors in $( 0, \sqrt{\rho} )^N$ with distinct coordinates. More
precisely, one has to understand the supremum of the weighted sum
$\sum_j R_j / c_j$.

This observation was first made in the work~\cite{BGKP-fixeddim} for
the case $n_j = j$, that is, $\bn = \bn_{\min}$. It led to the first
proof of Theorem~\ref{Treal1}, with all of the denominators being the
same: $s_{\bn_{\min}}( \bu ) = 1$. We now use another equivalent
definition of Schur polynomials, by Littlewood, realizing them as sums
of monomials corresponding to certain Young tableaux. Every monomial
has a non-negative integer coefficient. It follows by the continuity and
homogeneity of $s_{\bn_j}$ and the Weyl Dimension
Formula~(\ref{Eweyldim}), that the supremum in the previous paragraph
equals the value at $(\sqrt{\rho}, \ldots, \sqrt{\rho})^T$, namely
\[
\sup_{\bu \in (0,\sqrt{\rho})^N} s_{\bn_j}(\bu)^2 =
\frac{V(\bn_j)^2}{V(\bn_{\min})^2} \rho^{M - n_j}.
\]
Since all of these suprema are attained at the same point
$\sqrt{\rho} ( 1, \ldots, 1 )^T$, the weighted sum in
Proposition~\ref{Prank1}(3) also attains its supremum at the same
point. Thus, we conclude using Proposition~\ref{Prank1} that
\[
f( x ) = \sum_{j = 0}^{N - 1} c_j x^{n_j} + c' x^M
\]
preserves positivity entrywise on all rank-one matrices
$\bu \bu^T \in \cP_N\bigl( ( 0, \rho ) \bigr)$ if and only if
\[
c' \geq -\biggl( \sum_{j = 0}^{N - 1} %
\frac{V( \bn_j )^2}{c_j V( \bn_{\min} )^2} \rho^{M - n_j} %
\biggr)^{-1}.
\]

In fact, if $\bn = \bn_{\min}$ then the entire argument above goes
through even when one changes the domain to the open complex disc
$D( 0, \rho )$, or any intermediate domain
$( 0, \rho ) \subset D \subset D( 0, \rho )$. This is precisely the
content of the main result in~\cite{BGKP-fixeddim}.

\begin{theorem}[Belton--Guillot--Khare--Putinar
\cite{BGKP-fixeddim}]\label{Tbreakthrough}
Fix $\rho > 0$ and integers $M \geq N \geq 2$. Let
\[
f( z ) = \sum_{j = 0}^{N - 1} c_j z^j + c' z^M, \qquad %
\text{where } c_0, \ldots, c_{N - 1},  c' \in \R,
\]
and let $I := \overline{D}( 0, \rho )$ be the closed disc in the
complex plane with centre $0$ and radius $\rho$. The following are
equivalent.
\begin{enumerate}
\item The entrywise map $f[-]$ preserves positivity on
$\cP_N(I)$.

\item The entrywise map $f[-]$ preserves positivity on rank-one matrices
in $\cP_N\bigl( ( 0, \rho ) \bigr)$.

\item Either $c_0$, \ldots, $c_{N - 1}$, $c'$ are all non-negative,
or $c_0$, \ldots, $c_{N - 1}$ are positive and
\[
c' \geq -\biggl( \sum_{j = 0}^{N - 1} %
\frac{V( \bn_j )^2}{c_j V( \bn_{\min})^2 } \rho^{M - j} %
\biggr)^{-1},
\]
where
$\bn_j := %
( 0, 1, \ldots, j - 1, \widehat{j}, j + 1, \ldots, N - 1, M )^T$
for $0 \leq j \leq N - 1$.
\end{enumerate}
\end{theorem}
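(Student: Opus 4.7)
The plan is to close the cycle $(1)\Rightarrow(2)\Rightarrow(3)\Rightarrow(1)$. The implication $(1)\Rightarrow(2)$ is immediate, as every rank-one matrix $\bu\bu^T$ with $\bu\in(0,\sqrt{\rho})^N$ lies in $\cP_N(I)$.

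For $(2)\Rightarrow(3)$, applying the Horn--Loewner-type Lemma~\ref{Lhorn} to rank-one matrices $\bu\bu^T$ with distinct positive coordinates forces the dichotomy in~(3): if $c'<0$, the lemma demands $N$ positive coefficients of lower degree (and there are exactly $N$ candidates $c_0,\ldots,c_{N-1}$), while if $c'\geq 0$ is combined with some $c_j<0$ for $j<N$, the lemma would require more than $j<N$ positive coefficients below $j$, a contradiction. In the non-trivial branch, Proposition~\ref{Prank1} specialized to $\bn=\bn_{\min}$ (where $s_{\bn_{\min}}\equiv 1$) gives $f[\bu\bu^T]\succeq 0$ if and only if $|c'|^{-1}\geq\sum_{j=0}^{N-1}c_j^{-1}s_{\bn_j}(\bu)^2$. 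Each Schur polynomial $s_{\bn_j}$ has a non-negative monomial expansion (via semi-standard Young tableaux) and is therefore strictly increasing on $\R_{>0}^N$; the supremum over $\bu\in(0,\sqrt{\rho})^N$ with distinct coordinates is thus attained in the limit $\bu\to\sqrt{\rho}\bone$, and homogeneity together with the Weyl Dimension formula~(\ref{Eweyldim}) evaluates this limit to $\rho^{M-j}V(\bn_j)^2/V(\bn_{\min})^2$, matching~(3).

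The substantive work is $(3)\Rightarrow(1)$. The case when all coefficients are non-negative is handled by the Schur product theorem and convex-cone closure of $\cP_N(I)$. Otherwise, I would proceed by induction on $N$, with the base case $N=2$ verified directly via the $2\times 2$ positivity criterion. Writing $T_N$ for the threshold appearing in~(3), its monotonicity in $N$ -- a combinatorial inequality readable from the explicit product formula for $V(\bn_j)$ -- ensures that $|c'|\leq T_N^{-1}$ implies $|c'|\leq T_k^{-1}$ for all $k<N$, so the induction hypothesis makes every proper principal submatrix $f[A|_{S\times S}]$, $|S|<N$, positive semidefinite. It then suffices to show $\det f[A]\geq 0$. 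Taking a Gram factorization $A=B^*B$ with columns $\bb_1,\ldots,\bb_N\in\C^r$ satisfying $|\bb_i^*\bb_j|\leq\rho$, the plan is to apply the Cauchy--Binet formula to the matrix factorization~(\ref{Efactor}) -- valid over any commutative ring and hence over $\C$ -- to expand $\det f[A]$ as a signed sum of products of generalized Vandermonde determinants in the $\bb_i$ weighted by the $c_j$ and $c'$; the rank-one analysis of $(2)\Rightarrow(3)$ together with the hypothesis $|c'|\leq T_N^{-1}$ then combine to force non-negativity of this expansion.

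The main obstacle is precisely this passage from rank one to arbitrary rank: although the Hadamard power $A^{\circ M}$ may have rank as large as $\binom{N+M-1}{M}$, vastly exceeding $N$, the $N$-dimensional Vandermonde structure inherent to $\sum_{j<N}c_jA^{\circ j}$ must dominate the high-rank perturbation $|c'|A^{\circ M}$ uniformly on $\cP_N(I)$, with extremum attained only in the rank-one limit $\bu\to\sqrt{\rho}\bone$. A subsidiary subtlety is that $I$ is the closed complex disc rather than a real interval; the ring-theoretic form of~(\ref{Efactor}) accommodates complex entries without modification, so the determinantal bookkeeping proceeds as in the real case.
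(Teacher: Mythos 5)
Your treatment of $(1)\Rightarrow(2)$ and of $(2)\Leftrightarrow(3)$ is correct and is essentially the paper's own route: the dichotomy on signs via Lemma~\ref{Lhorn}, the per-matrix threshold from Proposition~\ref{Prank1} with $\bn=\bn_{\min}$ (so $s_{\bn_{\min}}\equiv 1$), and the evaluation of the supremum at $\sqrt{\rho}\,\bone$ via monomial positivity of Schur polynomials, homogeneity, and the Weyl dimension formula~(\ref{Eweyldim}).

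The gap is in $(3)\Rightarrow(1)$, and it sits exactly where you flag "the main obstacle." The factorization~(\ref{Efactor}) expresses $f[t\bu\bv^T]$ as a product of Vandermonde-type matrices \emph{only for rank-one arguments}: for a general $A=B^*B$ of rank $r>1$ the matrix $[f(\bb_i^*\bb_j)]_{i,j}$ admits no such factorization (already $A^{\circ M}$ is a sum of $\binom{r+M-1}{M}$ rank-one pieces built from Hadamard products of the columns of $B$, not powers of a single vector), so there is no Cauchy--Binet expansion of $\det f[A]$ into generalized Vandermonde determinants "in the $\bb_i$," and the rank-one threshold analysis cannot be fed into it. Your reduction to principal minors also leans on an unverified monotonicity of the threshold in $N$, which is not the inequality actually needed. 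What the paper does instead (see Proposition~\ref{Pextension} and its footnote, following FitzGerald--Horn) is induct on $N$ via the extension principle: write $A=\zeta\zeta^*+(A-\zeta\zeta^*)$ with the second summand supported on an $(N-1)\times(N-1)$ block, apply the identity $f(x)-f(y)=\int_0^1(x-y)f'(\lambda x+(1-\lambda)y)\std\lambda$ entrywise, and conclude from (i) positivity of $f[-]$ on rank-one matrices in $\cP_N$ (which is assertion~(2), already equivalent to~(3)) and (ii) positivity of $f'[-]$ on $\cP_{N-1}(I)$, the latter supplied by the induction hypothesis after checking that the coefficients $jc_j$ and $Mc'$ of $f'$ satisfy the $(N-1)$-dimensional threshold inequality. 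That derivative-level combinatorial verification, not a Cauchy--Binet expansion of $\det f[A]$, is the mechanism that passes from rank one to arbitrary rank; the complex-disc case is handled by the analytic version of the same extension principle.
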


This theorem provides a complete understanding of which polynomials of
degree at most $N$ preserve positivity entrywise on
$\cP_N\bigl( ( 0, \rho ) \bigr)$ and, more generally, on any subset of
$\cP_N\bigl( \overline{D}( 0, \rho ) \bigr)$ that contains the
rank-one matrices in $\cP_N\bigl( ( 0,\rho ) \bigr)$.

\begin{remark}
Clearly $(1) \implies (2)$ here, and the proof of $(2)
\Longleftrightarrow (3)$ was outlined above via Proposition~\ref{Prank1}.
We defer mentioning the proof strategy for $(2) \implies (1)$, because we
will later see a similar theorem over $I = (0,\rho)$ for more general
powers $n_j$. The proof of that result, Theorem~\ref{Treal2}, will be
outlined in some detail.
\end{remark}

Having dealt with the base case of $\bn = \bn_{\min}$, as well as
$\bn = ( k, k + 1, \ldots, k + N - 1 )$ for any $k \in \Z_+$, which
holds by the Schur product theorem, we now turn to the general case.
In general, $s_\bn( \bu )$ is no longer a monomial, and so it is no
longer clear if and where the supremum of each ratio
$s_{\bn_j}( \bu )^2 / s_\bn( \bu )^2$, or of their weighted sum, is
attained for $\bu \in (0,\sqrt{\rho})^N$. The threshold bound for all
rank-one matrices itself is not apparent, and the bound for all
matrices in $\cP_N\bigl( ( 0, \rho ) \bigr)$ is even more
inaccessible.

By a mathematical miracle, it turns out that the same phenomena as in
the base case hold in general. Namely, the ratio of each $s_{\bn_j}$
and $s_\bn$ attains its supremum at $\sqrt{\rho} ( 1, \ldots, 1 )^T$.
Hence one can proceed as above to obtain a uniform threshold for $c'$,
which works for all rank-one matrices in
$\cP_N\bigl( ( 0, \rho ) \bigr)$.

\begin{example}
To explain the ideas of the preceding paragraph, we present an example.
Suppose
\[
N = 3, \qquad \bn = ( 0, 2, 3 ), \qquad M = 4, \quad \text{and} \quad %
\bu = ( u_1, u_2, u_3 )^T.
\]
Then
\begin{align*}
\bn_3 & = ( 0, 2, 4 ), \\
s_\bn( \bu ) & = u_1 u_2 + u_2 u_3 + u_3 u_1, \\
\text{and} \quad s_{\bn_3}( \bu ) & = %
( u_1 + u_2 ) ( u_2 + u_3 )( u_3 + u_1 ).
\end{align*}
The claim is that $s_{\bn_3}( \bu ) / s_\bn( \bu )$ is coordinatewise
non-decreasing for $\bu \in ( 0, \infty )^3$; the assertion about its
supremum on $( 0, \sqrt{\rho} )^N$ immediately follows from this. It
suffices by symmetry to show the claim only for one variable,
say~$u_3$. By the quotient rule,
\[
s_\bn( \bu ) \partial_{u_3} s_\bm( \bu ) - %
s_\bm( \bu ) \partial_{u_3} s_\bn( \bu ) = %
( u_1 + u_2 ) ( u_1 u_3 + 2 u_1 u_2 + u_2 u_3 ) u_3,
\]
and this is clearly non-negative on the positive orthant, proving the
claim. As we see, the above expression is, in fact, monomial
positive, from which numerical positivity follows immediately.

In fact, an even stronger fact holds. Viewed as a polynomial in $u_3$,
every coefficient in the above expression is in fact
\emph{Schur positive}. In other words, the coefficient of each $u_3^j$
is a non-negative combination of Schur polynomials in $u_1$ and $u_2$:
\[
( u_1 + u_2 ) ( u_1 u_3 + 2 u_1 u_2 + u_2 u_3 ) u_3 = %
\sum_{j \geq 0} p_j( u_1, u_2 ) u_3^j,
\]
where
\[
p_j( u_1, u_2 ) = \begin{cases}
 2 s_{( 1, 3 )}( u_1, u_2 ) & \text{if } j = 1, \\
 s_{( 0, 3 )}( u_1, u_2 ) + s_{( 1, 2 )}( u_1, u_2 ) & %
\text{if } j =  2, \\ 
0 & \text{otherwise}.
\end{cases}
\]
In particular, this implies that each coefficient is monomial
positive, whence numerically positive. We recall here that the
monomial positivity of Schur polynomials follows from the definition
of $s_\bn( \bu )$ using Young tableaux.
\end{example}

The miracle to which we alluded above, is that the Schur positivity in
the preceding example in fact holds in general.

\begin{theorem}[Khare--Tao \cite{Khare-Tao}]\label{Pschur-ratio}
If $n_0 < \cdots < n_{N - 1}$ and $m_0 < \cdots < m_{N - 1}$ are
$N$-tuples of non-negative integers such that
$m_j \geq n_j$ for $j = 0$, \ldots, $N - 1$, then the function
\[
f_{\bm, \bn} : ( 0,\infty )^N \to \R; \ \bu \mapsto %
\frac{s_\bm( \bu )}{s_\bn( \bu )}
\]
is non-decreasing in each coordinate. Furthermore, if
\begin{equation}\label{Equotient}
s_\bn( \bu ) \partial_{u_N} s_\bm( \bu ) - %
s_\bm( \bu ) \partial_{u_N} s_\bn( \bu )
\end{equation}
is considered as a polynomial in $u_N$, then the coefficient of every
monomial~$u_N^j$ is a Schur-positive polynomial in
$u_1$,\ldots, $u_{N - 1}$.
\end{theorem}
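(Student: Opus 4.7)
The plan is to first observe that the Schur-positivity assertion implies the coordinatewise monotonicity. Since Schur polynomials are monomial-positive (via Littlewood's tableau expansion), any Schur-positive polynomial in $u_1, \ldots, u_{N-1}$ is non-negative on $(0,\infty)^{N-1}$; combined with the quotient rule
\[
\partial_{u_N}(s_\bm/s_\bn) = \frac{s_\bn\,\partial_{u_N} s_\bm - s_\bm\,\partial_{u_N} s_\bn}{s_\bn^2},
\]
this yields $\partial_{u_N} f_{\bm,\bn} \geq 0$, and the full coordinatewise monotonicity follows from the symmetry of $s_\bm$ and $s_\bn$ in $u_1, \ldots, u_N$. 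So the task reduces to the Schur-positivity statement.

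To attack the latter, I would invoke the classical branching rule for Schur polynomials. Setting $\bu' := (u_1, \ldots, u_{N-1})$, there is an expansion
\[
s_\bn(\bu) = \sum_{\alpha} u_N^{e(\alpha)}\, s_\alpha(\bu'),
\]
where $\alpha$ ranges over the strictly increasing $(N-1)$-tuples satisfying the interlacing condition $n_i \leq \alpha_i < n_{i+1}$, and $e(\alpha)$ is a non-negative integer determined by $\bn$ and $\alpha$. An analogous expansion holds for $s_\bm$, indexed by tuples $\beta$ with interlacing exponents $e'(\beta)$. Substituting both expansions into (\ref{Equotient}) gives
\[
s_\bn\,\partial_{u_N} s_\bm - s_\bm\,\partial_{u_N} s_\bn = \sum_{\alpha,\beta} \bigl(e'(\beta) - e(\alpha)\bigr)\, s_\alpha(\bu')\, s_\beta(\bu')\, u_N^{e(\alpha)+e'(\beta)-1}.
\]

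Collecting by powers of $u_N$, the coefficient of $u_N^j$ is a bilinear sum over pairs $(\alpha,\beta)$ with $e(\alpha)+e'(\beta) = j+1$. I would then antisymmetrize via an involution $(\alpha,\beta) \leftrightarrow (\alpha',\beta')$ that swaps ``out of order'' components, using the hypothesis $m_j \geq n_j$ to guarantee that $\alpha' \prec \bn$ and $\beta' \prec \bm$ still satisfy the required interlacing conditions after the swap. This rewrites each coefficient as a non-negative linear combination of wedge differences of the form $s_\alpha(\bu') s_\beta(\bu') - s_{\alpha'}(\bu') s_{\beta'}(\bu')$, in which the pair $(\alpha,\beta)$ dominates $(\alpha',\beta')$ componentwise.

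The main obstacle is establishing the Schur positivity of these wedges. My preferred attack is via a Lindström--Gessel--Viennot interpretation: the product $s_\alpha(\bu') s_\beta(\bu')$ is a weighted enumeration of pairs of lattice-path systems with prescribed source and sink sets, and a sign-reversing involution that exchanges the tails of two paths at their first crossing cancels exactly those configurations whose weight equals that of a pair counted by $s_{\alpha'}(\bu') s_{\beta'}(\bu')$, leaving a manifestly non-negative weighted enumeration of non-intersecting path systems, which expands as a non-negative integer combination of Schur polynomials by another application of Lindström--Gessel--Viennot. Pinning down the correct target pair $(\alpha',\beta')$ so that the tail-swap bijection actually realizes this cancellation, and then showing the residual is Schur-positive rather than merely monomial-positive, is the technically delicate combinatorial heart of the argument and the step I expect to consume most of the work.
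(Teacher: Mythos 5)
Your reduction is sound as far as it goes: the observation that Schur positivity (hence monomial positivity, hence numerical non-negativity) of~(\ref{Equotient}) gives $\partial_{u_N} f_{\bm,\bn} \geq 0$, and that symmetry handles the other coordinates, is exactly right, and the branching-rule expansion followed by pairing terms into differences of the form $s_\alpha(\bu')\,s_\beta(\bu') - s_{\alpha'}(\bu')\,s_{\beta'}(\bu')$ is in the same spirit as the actual argument in \cite{Khare-Tao}. But the step you defer to the end is not a ``technically delicate'' finishing touch --- it \emph{is} the theorem. The Schur positivity of such differences of products of Schur polynomials (where $(\alpha',\beta')$ is obtained from $(\alpha,\beta)$ by a sorting/``cell transfer'' type operation) is precisely the deep result of Lam, Postnikov, and Pylyavskyy \cite{LPP}, building on Skandera's and Rhoades--Skandera's work on Temperley--Lieb immanants, which in turn rests on a nontrivial positivity theorem of Haiman. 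No proof of these Schur-positivity statements via a Lindstr\"om--Gessel--Viennot tail-swap involution is known; sign-reversing involutions on path systems of the kind you describe yield at best \emph{monomial} positivity (this is essentially the ``cell transfer'' injection of Lam--Pylyavskyy), and upgrading monomial positivity to Schur positivity is exactly the hard open territory that \cite{LPP} resolves by entirely different (algebraic) means. So as written, your proposal restates the key difficulty rather than overcoming it.

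Two further remarks. First, for the monotonicity assertion alone you only need numerical non-negativity of~(\ref{Equotient}) on the positive orthant, and here there is a genuinely elementary route that avoids \cite{LPP} entirely: \cite{Khare-Tao} give a separate proof by Dodgson's condensation identity \cite{Dod} applied to the generalized Vandermonde determinants defining $s_\bm$ and $s_\bn$; if you want a self-contained argument for the first half of the theorem, that is the realistic path. Second, be slightly careful with the indexing in your branching rule: the $s_\bn$ here are indexed by strictly increasing exponent tuples (partition plus staircase), so the interlacing condition and the exponent $e(\alpha)$ must be written in that normalization; this is routine but is where off-by-one errors creep in when you set up the pairing involution.
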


The second, stronger part of Theorem~\ref{Pschur-ratio} follows from a
deep and highly non-trivial result in symmetric function theory (or
type-$A$ representation theory) by Lam, Postnikov, and
Pylyavskyy~\cite{LPP}, following earlier results by Skandera. We refer
the reader to this paper and to~\cite{Khare-Tao} for more details.
Notice also that the first assertion in Theorem~\ref{Pschur-ratio}
only requires the numerical positivity of the
expression~(\ref{Equotient}). This is given a separate proof
in~\cite{Khare-Tao}, using the method of condensation due to Charles
Lutwidge Dodgson~\cite{Dod}.%
\footnote{This article by Dodgson immediately follows his
better-known 1865 publication, \emph{Alice's Adventures in
Wonderland}.}
In this context, we add for completeness that in~\cite{Khare-Tao} the
authors also show a log-supermodularity (or FKG, or $MTP_2$)
phenomenon for determinants of totally positive matrices.

\subsection{Real powers; the threshold works for all matrices}

We now return to the proof of Theorem~\ref{Treal1}, which holds for real
powers. Our next step is to observe that the first part of
Theorem~\ref{Pschur-ratio} now holds for all real powers. Since one can
no longer define Schur polynomials in this case, we work with generalized
Vandermonde determinants instead:

\begin{corollary}\label{Cschur-ratio}
Fix $N$-tuples of real powers $\bn = ( n_0 < \cdots < n_{N - 1} )$ and
$\bm = ( m_0 < \cdots < m_{N - 1} )$, such that $n_j \leq m_j$ for all
$j$. Letting $\bu^{\circ \bn} := [ u_j^{n_{k - 1}} ]_{j, k = 1}^N$ as
above, the function
\[
f : \{ \bu \in ( 0,\infty )^N : u_i \neq u_j \text{ if } i \neq j \} %
\to \R; \ %
\bu \mapsto \frac{\det \bu^{\circ \bm}}{\det \bu^{\circ \bn}}
\]
is non-decreasing in each coordinate.
\end{corollary}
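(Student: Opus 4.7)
The plan is to bootstrap from the integer-power case (Theorem~\ref{Pschur-ratio}) by first rescaling to rational exponents and then passing to the real case by continuity.

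First I will reduce the claim to monotonicity in one coordinate. Any transposition $u_i \leftrightarrow u_j$ induces the same row swap in both $\bu^{\circ \bm}$ and $\bu^{\circ \bn}$, so the two sign changes cancel in the quotient. Hence $f$ is symmetric in $u_1,\ldots,u_N$, and it suffices to show that $f$ is non-decreasing in $u_N$ alone. I also record the identity
\[
\frac{s_\bm(\bu)}{s_\bn(\bu)} \;=\; \frac{\det \bu^{\circ \bm}}{\det \bu^{\circ \bn}},
\]
obtained by cancelling the common factor $\det \bu^{\circ \bn_{\min}}$ from numerator and denominator in Cauchy's definition~(\ref{Ecauchy}). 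This is the bridge that will let me invoke Theorem~\ref{Pschur-ratio}.

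Next I will handle the rational case. Assume both $\bn$ and $\bm$ have rational entries, and choose a common denominator $K \in \N$, so that $K\bn$ and $K\bm$ are strictly increasing integer tuples with $K n_j \leq K m_j$ for each $j$. The substitution $u_j = v_j^K$ is a bijection of $(0,\infty)^N$ to itself, preserves the distinctness of coordinates, and gives $u_j^{n_k} = v_j^{K n_k}$, so
\[
f(\bu) \;=\; \frac{\det \bv^{\circ (K\bm)}}{\det \bv^{\circ (K\bn)}} \;=\; \frac{s_{K\bm}(\bv)}{s_{K\bn}(\bv)}.
\]
Theorem~\ref{Pschur-ratio} applied to the integer tuples $K\bn \leq K\bm$ shows the right-hand side is non-decreasing in $v_N$; since $v_N = u_N^{1/K}$ is a strictly increasing function of $u_N$, the rational case follows.

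Finally I will extend to arbitrary real exponents by approximation. I will choose strictly increasing rational $N$-tuples $\bn^{(k)} \to \bn$ and $\bm^{(k)} \to \bm$ with $n_j^{(k)} \leq m_j^{(k)}$ for every $j$ and $k$, which is always possible: shrink any slack in the strict interlacings for large $k$, and in the degenerate case $n_j = m_j$ simply set $n_j^{(k)} = m_j^{(k)}$. Since $\det \bu^{\circ \bn}$ depends continuously on $\bn$ and is strictly positive whenever the $u_j > 0$ are distinct and $\bn$ is strictly increasing (a standard property of generalized Vandermondes), each quotient $f^{(k)}$ is well-defined and converges pointwise to $f$; each is non-decreasing in $u_N$ by the rational case, and a pointwise limit of non-decreasing functions is non-decreasing. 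The main obstacle is essentially organizational rather than mathematical: once the Schur-to-Vandermonde reduction and the monomial substitution $u \mapsto u^K$ are in place, the deep ingredient (Theorem~\ref{Pschur-ratio}) and elementary continuity do all the work.
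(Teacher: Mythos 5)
Your proposal is correct and follows essentially the same route as the paper: reduce to integer exponents via a common denominator $L$ and the substitution $u_j \mapsto u_j^{1/L}$ so that Theorem~\ref{Pschur-ratio} applies, then pass from rational to real exponents by rational approximation and taking pointwise limits. The extra details you supply (symmetry of the quotient under permutations, non-vanishing of generalized Vandermonde determinants, preservation of $n_j^{(k)} \leq m_j^{(k)}$ along the approximating sequence) are exactly the points the paper's sketch leaves implicit.
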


We sketch here one proof. The version for integer powers,
Theorem~\ref{Pschur-ratio}, gives the version for rational powers, by
taking a ``common denominator'' $L \in \Z$ such that $L m_j$ and
$L n_j$ are all integers, and using a change of variables
$y_j := u_j^{1 / L}$. The general version for real powers then follows
by considering rational approximations and taking limits.

Corollary~\ref{Cschur-ratio} helps prove the real-power version of
Theorem~\ref{Treal1}, just as Theorem~\ref{Pschur-ratio} would have
shown the integer powers case of Theorem~\ref{Treal1}. Namely, first
note that Proposition~\ref{Prank1} holds even when the $n_j$ are real
powers; the only changes are (a) to assume that the coordinates of
$\bu$ are distinct, and (b) to rephrase the last assertion~(3) to the
following:
\[
t \geq \sum_{j = 0}^{N - 1} %
\frac{( \det \bu^{\circ \bn_j} )^2}{c_j ( \det \bu^{\circ \bn} )^2}.
\]
These arguments help prove the first part of the following result,
which is the culmination of these ideas.

\begin{theorem}[Khare--Tao \cite{Khare-Tao}]\label{Treal2}
Fix an integer $N \geq 1$ and real exponents
$n_0 < \cdots < n_{N - 1} < M$, as well as scalars $\rho > 0$ and
$c_0$, \ldots, $c_{N - 1}$, $c'$. Let
\[
f( x ) := \sum_{j = 0}^{N - 1} c_j x^{n_j} + c' x^M.
\]
The following are equivalent.
\begin{enumerate}
\item The function $f$ preserves positivity entrywise on all
rank-one matrices in $\cP_N\bigl( ( 0, \rho ) \bigr)$.

\item The function $f$ preserves positivity entrywise on all Hankel
rank-one matrices in $\cP_N\bigl( ( 0, \rho ) \bigr)$.

\item Either the coefficients $c_0$, \ldots, $c_{N - 1}$ and $c'$ are
non-negative, or $c_0$, \ldots, $c_{N - 1}$ are positive and
\[
c' \geq -\biggl( \sum_{j = 0}^{N - 1} %
\frac{V( \bn_j )^2}{c_j V( \bn )^2} \rho^{M - n_j} \biggr)^{-1},
\]
where $V( \bn )$ and $\bn_j$ are as defined above.
\end{enumerate}

If, moreover, the exponents $n_j$ all lie in
$\Z_+ \cup [ N - 2, \infty )$, then these assertions are also
 equivalent to the following.
\begin{enumerate}
\setcounter{enumi}{3}
\item The function $f$ preserves positivity entrywise on
$\cP_N\bigl( ( 0, \rho ) \bigr)$.
\end{enumerate}
\end{theorem}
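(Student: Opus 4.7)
My plan is to establish the cycle $(4) \Longrightarrow (1) \Longrightarrow (2) \Longrightarrow (3) \Longrightarrow (1)$, and then the separate implication $(1) \Longrightarrow (4)$ under the extra hypothesis $n_j \in \Z_+ \cup [N-2, \infty)$. The implications $(4) \Longrightarrow (1) \Longrightarrow (2)$ are immediate from the containments of the test classes. The Horn--Loewner-type necessary conditions (Lemma~\ref{Lhorn} and its real-power analogue implicit in the excerpt, tested against the Hankel rank-one matrices already appearing in (2)) supply the sign structure: if some coefficient is negative, then $c_0, \ldots, c_{N-1}$ must be positive.

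For $(2) \Longrightarrow (3)$, I would use the real-power version of Proposition~\ref{Prank1} explicitly noted in the excerpt, whose proof via Lemma~\ref{Lktfpsac} relies only on the factorization~(\ref{Efactor}) and Cauchy--Binet, both of which are valid for arbitrary real exponents. Specializing the test set to Hankel rank-one matrices $\bu \bu^T$ with $\bu = c(1, r, \ldots, r^{N-1})^T$ (requiring $c^2 r^{2(N-1)} < \rho$), the real-power analogue of the principal specialization formula~(\ref{Especialize}) expresses each ratio $\det \bu^{\circ \bn_j}/\det \bu^{\circ \bn}$ as an explicit rational function of $c$ and $r$. Letting $r \to 1^-$ and $c^2 \to \rho^-$ and applying the Weyl dimension formula~(\ref{Eweyldim}) in the limit, the bound of Proposition~\ref{Prank1}(3) converges precisely to $1/|c'| \geq \sum_j V(\bn_j)^2 \rho^{M-n_j}/(c_j V(\bn)^2)$, which is the inequality of (3).

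For $(3) \Longrightarrow (1)$, the non-negative-coefficient case is immediate since $f[\bu\bu^T] = \sum_j c_j \bu^{\circ n_j}(\bu^{\circ n_j})^T + c' \bu^{\circ M}(\bu^{\circ M})^T$ is then a non-negative combination of rank-one positive semidefinite matrices. When $c' < 0$, Corollary~\ref{Cschur-ratio} gives that $\bu \mapsto \det \bu^{\circ \bn_j}/\det \bu^{\circ \bn}$ is coordinatewise non-decreasing on $(0,\infty)^N$, which combined with homogeneity forces the pointwise upper bound $\sum_j (\det \bu^{\circ \bn_j})^2/(c_j(\det \bu^{\circ \bn})^2) \leq \sum_j V(\bn_j)^2 \rho^{M-n_j}/(c_j V(\bn)^2) \leq 1/|c'|$ for every admissible $\bu$ with distinct coordinates. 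The real-power Proposition~\ref{Prank1} then yields $f[\bu\bu^T] \succeq 0$, and a standard continuity argument handles coincident coordinates.

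The genuine obstacle is $(1) \Longrightarrow (4)$, and this is precisely where the hypothesis $n_j \in \Z_+ \cup [N-2, \infty)$ enters: by Theorem~\ref{Tfitzhorn}, it guarantees that each individual power map $A \mapsto A^{\circ n_j}$ (and $A \mapsto A^{\circ M}$) preserves positivity on $\cP_N\bigl((0,\infty)\bigr)$. The task is then to upgrade the rank-one operator inequality $|c'|\, A^{\circ M} \preceq \sum_j c_j A^{\circ n_j}$ to one valid for every $A \in \cP_N\bigl((0,\rho)\bigr)$ with a \emph{matching} constant. A naive approximation by convex combinations of rank-one matrices would yield the correct sign but not the sharp constant; instead, I would combine a spectral decomposition of $A$ with the factorization~(\ref{Efactor}) to re-express both $A^{\circ M}$ and $\sum_j c_j A^{\circ n_j}$ as sums of generalised Vandermonde outer products, and then apply Cauchy--Binet in tandem with the Schur-positivity guaranteed by Theorem~\ref{Pschur-ratio} (and the Lam--Postnikov--Pylyavskyy inequality underlying it) to reduce arbitrary-rank inequalities to rank-one inequalities already known, with the rank-one limit $\rho\,\bone_{N \times N}$ emerging as the extremal matrix. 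I expect this last reduction---producing the sharp constant rather than a weaker one---to be the hardest step, since it cannot be obtained by convexity alone and requires the full symmetric-function positivity machinery developed earlier in the survey.
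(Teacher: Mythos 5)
Your treatment of $(4)\Rightarrow(1)\Rightarrow(2)$, of $(2)\Rightarrow(3)$ via the real-power version of Proposition~\ref{Prank1} evaluated on Hankel rank-one matrices with the limit $r\to1^-$, $c^2\to\rho^-$ computed through the principal specialization and Weyl dimension formulas, and of $(3)\Rightarrow(1)$ via the coordinatewise monotonicity of Corollary~\ref{Cschur-ratio} plus homogeneity and a density/continuity argument, is essentially the paper's argument for the equivalence of (1), (2) and (3).

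The gap is in $(1)\Rightarrow(4)$. Your proposed mechanism---spectral decomposition of $A$, re-expression of $A^{\circ M}$ and $\sum_j c_j A^{\circ n_j}$ as sums of generalized Vandermonde outer products via~(\ref{Efactor}), then Cauchy--Binet plus the Schur positivity of Theorem~\ref{Pschur-ratio}---does not go through. The factorization~(\ref{Efactor}) applies only to $f[t\bu\bv^T]$, i.e.\ to rank-one arguments; for a general $A=\sum_k\lambda_k\bv_k\bv_k^T$ of rank $\geq 2$ and a \emph{non-integer} exponent $M$, the entry $a_{jk}^M$ admits no multinomial expansion in the $\bv_k$, so there is no Vandermonde/Cauchy--Binet structure to exploit, and Schur polynomials are not even defined for real $\bn$. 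Even in the integer case this reduction to rank one with the sharp constant is not something Theorem~\ref{Pschur-ratio} delivers; you correctly flag this as the hardest step, but what you have written is a hope, not a proof. The actual route (stated in the survey immediately after the theorem) is induction on $N$ via the FitzGerald--Horn extension principle, Proposition~\ref{Pextension}: write $A=\zeta\zeta^T+(A-\zeta\zeta^T)$ with the second summand positive semidefinite and supported on an $(N-1)\times(N-1)$ block, apply the identity $h(x)-h(y)=\int_0^1(x-y)\,h'(\lambda x+(1-\lambda)y)\,\rd\lambda$ entrywise, and observe that $f'$ is again a linear combination of $N$ powers plus one more whose coefficients satisfy the threshold inequality in dimension $N-1$, so that $f'[-]$ preserves positivity on $\cP_{N-1}$ by the induction hypothesis. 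This is also exactly where the hypothesis $n_j\in\Z_+\cup[N-2,\infty)$ is consumed (so that the shifted exponents remain admissible at each stage of the induction), rather than through Theorem~\ref{Tfitzhorn} applied to each power separately as you suggest. You should replace the final paragraph with this induction-plus-extension-principle argument.
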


Before sketching the proof, we note several ramifications of this
result.
\begin{enumerate}
\item The theorem completely characterizes linear combinations of up
to $N + 1$ powers that entrywise preserve positivity on
$\cP_N\bigl( ( 0, \rho ) \bigr)$. The same is true for any subset of
$\cP_N\bigl( ( 0, \rho ) \bigr)$ that contains all rank-one positive
semidefinite  Hankel matrices.

\item As discussed above, Theorem~\ref{Treal2} implies
Theorem~\ref{Tclassify}, which helps in understanding which sign
patterns correspond to countable sums of real powers that preserve
positivity entrywise on $\cP_N\bigl( ( 0, \rho ) \bigr)$ (or on the
subset of rank-one matrices). In particular, the existence of
sign patterns which are not all non-negative shows the existence of
functions which preserve positivity on $\cP_N$ but not on
$\cP_{N + 1}$.

\item Theorem~\ref{Treal2} bounds $A^{\circ M}$ in terms of a multiple
of $\sum_{j = 0}^{N - 1} c_j A^{\circ n_j}$. More generally, one can
do this  for an arbitrary convergent power series instead of a
monomial, in the spirit of Theorem~\ref{Tpowerseries}. Even more
generally, one may work with Laplace transforms of measures; see
Corollary~\ref{Claplace} below.
\end{enumerate}

For completeness, we also mention two developments related (somewhat
more distantly) to the above results.
\begin{itemize}
\item A refinement of a conjecture of Cuttler, Greene, and Skandera
(2011) and its proof; see~\cite{Khare-Tao} for more details. In
particular, this approach assists with a novel characterization of weak
majorization, using Schur polynomials.

\item A related ``Schubert cell-type'' stratification of the cone
$\cP_N( \C )$; see \cite{BGKP-fixeddim} for further details.
\end{itemize}

We conclude this section by outlining the proof of Theorem~\ref{Treal2}.

\begin{proof}
Clearly, $(4) \implies (1) \implies (2)$. If $(2)$ holds, then, by
Corollary~\ref{Cmaster} at $a = 0$, either all the $c_j$ and $c'$ are
non-negative, or $c_j$ is positive for all $j$. Thus, we suppose that
$c_j > 0 > c'$.

Note that if 
$\bu( u_0 ) := ( 1, u_0, \ldots, u_0^{N - 1} )^T$ for some
$u_0 \in ( 0, 1 )$, then
\[
A( u_0 ) := \rho u_0^2 \bu( u_0 ) \bu( u_0 )^T
\]
is a rank-one Hankel matrix and hence in our test set. Repeating the
analysis in Section~\ref{Sonematrix}, using generalized Vandermonde
determinants instead of Schur polynomials and rank-one Hankel
matrices of the form $A( u_0 )$,
\begin{align*}
| c'|^{-1} & \geq \sup_{u_0 \in ( 0, 1 )} \sum_{j = 0}^{N - 1} %
\frac{( \det[ \sqrt{\rho} u_0 \bu( u_0 ) ]^{\circ \bn_j} )^2}%
{c_j ( \det[ \sqrt{\rho} u_0 \bu( u_0 ) ]^{\circ \bn} )^2} \\
 & = \sum_{j = 0}^{N - 1} \lim_{u_0 \to 1^-} \sum_{j = 0}^{N - 1} %
\frac{( \det \bu( u_0 )^{\circ \bn_j} )^2}%
{c_j ( \det \bu( u_0 )^{\circ \bn} )^2} ( \rho u_0^2 )^{M - n_j},
\end{align*}
where the equality follows from Corollary~\ref{Cschur-ratio}
above. The real-exponent version of~(\ref{Especialize}) holds if
$q \in ( 0, \infty ) \setminus \{ 1 \}$ and the exponents $n_j$ are real and
non-decreasing:
\[
\det \bu( q )^{\circ \bn} = %
\prod_{0 \leq i < k \leq N - 1} ( q^{n_k} - q^{n_i} ) = %
V( q^{\circ \bn} ).
\]
Applying this identity, the above computation yields
\[
| c' |^{-1} \geq \lim_{u_0 \to 1^-} \sum_{j = 0}^{N - 1} %
\frac{V( u_0^{\circ \bn_j} )^2}{V( u_0^{\circ \bn} )^2} %
\frac{( \rho u_0^2 )^{M - n_j}}{c_j} = %
\sum_{j = 0}^{N - 1} \frac{V( \bn_j )^2}{c_j V( \bn )^2} %
\rho^{M - n_j}.
\]
Thus $(2) \implies (3)$. Conversely, that $(3) \implies (1)$ follows
by a similar analysis to that given above, using
Corollary~\ref{Cschur-ratio} and the density of matrices $\bu \bu^T$,
where $\bu \in \bigl( 0, \sqrt{\rho} \bigr)^N$ has distinct entries,
in the set of all rank-one matrices in
$\cP_N\bigl( ( 0, \rho )\bigr)$.

It remains to show that $(1) \implies (4)$ if all the exponents
$n_j \in \Z_+ \cup [ N - 2, \infty )$. We proceed by induction on
$N$. The case $N = 1$ is immediate. For the inductive step, we apply
the extension principle of the following Proposition~\ref{Pextension}
with $h = f$, which requires verification that $f'[-]$ preserves
positivity on $\cP_{N - 1}$. This is a straightforward calculation via
the induction hypothesis.
\end{proof}

The following extension principle was inspired by work of FitzGerald
and Horn~\cite{FitzHorn}.

\begin{proposition}[Khare--Tao \cite{Khare-Tao}]\label{Pextension}
Suppose $0 < \rho \leq \infty$, and $I = ( 0, \rho )$,
$( -\rho, \rho )$ or the closure of one of these sets. Let
$h : I \to \R$ be a continuously differentiable function
on the interior of $I$. If $h'[-]$ preserves positivity entrywise on
$\cP_{N - 1}( I )$ and $h[-]$ does so on the rank-one matrices in
$\cP_N( I )$, then $h[-]$ in fact preserves positivity on all of
$\cP_N( I )$.%
\footnote{An analogous version of this results holds for
$I = D( 0, \rho)$ or its closure in $\C$, with $h : I \to \C$
analytic. This is used to prove the corresponding implication in
Theorem~\ref{Tbreakthrough} above.}
\end{proposition}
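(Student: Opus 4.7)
The plan is to adapt the integration technique of FitzGerald and Horn cited in the footnote. After a standard approximation (perturbing $A \mapsto A + \varepsilon I$, or rescaling, to stay inside $\cP_N( I )$), I reduce to proving the conclusion for $A \in \cP_N( I )$ positive definite with entries in the interior of $I$. Next I choose a rank-one matrix $R = \bu \bu^T$ whose entries lie in $I$ and which satisfies $R \leq A$ in the Loewner order; for instance, $R = \alpha \bone_N \bone_N^T$ with $\alpha := ( \bone_N^T A^{-1} \bone_N )^{-1}$ works after a short Cauchy--Schwarz estimate that places $\alpha$ in $I$ when the entries of $A$ do, and $B := A - R$ is then a positive semidefinite $N \times N$ matrix of rank at most $N - 1$. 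The key identity, obtained by applying the fundamental theorem of calculus entrywise along the affine path $M( t ) := R + t B$, reads
\[
h[ A ] = h[ R ] + \int_0^1 h'[ M( t ) ] \circ B \std t.
\]
The term $h[ R ]$ is positive semidefinite by the rank-one hypothesis, so the task reduces to showing that $h'[ M( t ) ] \circ B$ is positive semidefinite for each $t \in [ 0, 1 ]$.

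To establish this, I would exploit the rank deficiency of $B$: factoring $B = W W^T$ with $W \in \R^{N \times ( N - 1 )}$ and columns $W_1, \ldots, W_{N - 1}$, one has
\[
h'[ M( t ) ] \circ B =
\sum_{k = 1}^{N - 1} \diag( W_k ) \, h'[ M( t ) ] \, \diag( W_k ),
\]
and the goal is to reinterpret this (or a judicious rearrangement thereof) as a congruence of an entrywise image of $h'$ applied to a matrix in $\cP_{N - 1}( I )$, at which point the hypothesis on $h'[-]$ applies. The special structure $M( t ) = ( 1 - t ) \bu \bu^T + t A$ is central: conjugating by $\diag( \bu )$ factors out the rank-one contribution and normalises the problem so that the relevant $( N - 1 ) \times ( N - 1 )$ positive semidefinite matrix, to which the hypothesis is applied, emerges from the rank-$( N - 1 )$ factor $B$. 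A density argument inside $\cP_N( I )$ then removes the positive definiteness assumption and gives the conclusion for all $A \in \cP_N( I )$.

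The main obstacle is precisely this last reduction from $N$-dimensional to $( N - 1 )$-dimensional positivity, because the entrywise calculus $h'[-]$ does not commute with arbitrary congruences; only those compatible with the entrywise structure (the diagonal ones) may be freely applied, and the naive congruence $W^T h'[ M( t ) ] W$ is not itself the entrywise image of $h'$ on any matrix in $\cP_{N - 1}( I )$. Getting around this rigorously is the technical core of the proof, and is also where the complex analogue of the proposition (mentioned in the footnote) must be modified: the real fundamental theorem of calculus is replaced by its holomorphic counterpart along a path in the complex disc $D( 0, \rho )$, and the analyticity of $h$ ensures that all the derivatives and integrals involved remain well defined.
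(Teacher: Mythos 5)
Your overall architecture (a rank-one plus remainder decomposition, the entrywise fundamental-theorem-of-calculus identity, and a reduction of the integrand to the $\cP_{N-1}$ hypothesis) is the same as the paper's, but the proof has a genuine gap at exactly the point you flag, and the gap is caused by your choice of rank-one matrix. With $R = \alpha\, \bone \bone^T$ and $\alpha = ( \bone^T A^{-1} \bone )^{-1}$, the remainder $B = A - R$ has rank at most $N-1$, but its kernel is spanned by $A^{-1}\bone$, which is generically not a coordinate vector; consequently $B$ has no identically zero row or column, the Schur product $h'[ M( t ) ] \circ B$ genuinely involves all $N$ rows and columns of the $N \times N$ matrix $h'[ M( t ) ]$, and the hypothesis --- which controls $h'[-]$ only on $( N - 1 ) \times ( N - 1 )$ matrices --- cannot be brought to bear. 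Your identity $h'[ M( t ) ] \circ B = \sum_k \diag( W_k )\, h'[ M( t ) ]\, \diag( W_k )$ would yield positivity only if $h'[ M( t ) ]$ were itself positive semidefinite, which is precisely what you do not know; and there is no ``judicious rearrangement'' that converts $W^T h'[ M( t ) ] W$ into an entrywise image of $h'$ for a generic rank-$( N - 1 )$ remainder.

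The missing idea, taken from FitzGerald--Horn and used verbatim in the paper, is to choose the rank-one matrix as $\zeta \zeta^T$ with $\zeta := ( \xi^T, a_{N N} )^T / \sqrt{a_{N N}}$, where $\xi$ is the vector of off-diagonal entries of the last column of $A$ (assuming $a_{N N} \neq 0$; the degenerate case is handled by induction). Then $A - \zeta \zeta^T$ has its last row and column identically zero, its leading $( N - 1 ) \times ( N - 1 )$ block being the Schur complement of $a_{N N}$ in $A$, so it is positive semidefinite; a Cauchy--Schwarz estimate on the entries of $A$ places the entries of $\zeta \zeta^T$, hence of $M( \lambda ) = \lambda A + ( 1 - \lambda ) \zeta \zeta^T$, in $I$. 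Because the last row and column of $A - \zeta \zeta^T$ vanish identically, the integrand $( A - \zeta \zeta^T ) \circ h'[ M( \lambda ) ]$ equals the Schur product of the leading $( N - 1 ) \times ( N - 1 )$ blocks padded with zeros, and the leading block of $h'[ M( \lambda ) ]$ is exactly $h'$ applied entrywise to a matrix in $\cP_{N - 1}( I )$. The hypothesis on $h'[-]$ together with the Schur product theorem then gives positivity of the integrand, while $h[ \zeta \zeta^T ] \in \cP_N$ by the rank-one hypothesis. This specific choice of $\zeta$ is the step your proposal leaves open.
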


Proposition~\ref{Pextension} relies on two arguments found
in~\cite{FitzHorn}:
(a) every matrix in $\cP_N$ may be written as the sum of a rank-one
matrix in $\cP_N$, and a matrix in $\cP_{N - 1}$ with its last row and
column both zero, and
(b) applying the integral identity
\[
h( x ) - h( y ) = \int_x^y h'( t ) \std t = %
\int_0^1 ( x - y ) h'( \lambda x + ( 1 - \lambda ) y ) \std\lambda
\]
entrywise to this decomposition. See \cite[Section 3]{Khare-Tao} for
more details. The original use of these arguments was when $h$ is a
power function; this is explained in Chapter~\ref{Spower} below.

\subsection{Power series preservers and beyond; unbounded
domains}\label{Sunbdd}

In the remainder of this chapter, we use Theorem~\ref{Treal2} to derive
several corollaries; thus, we retain and use the notation of
that theorem. As discussed following Theorem~\ref{Treal2}, the first
consequence extends the theorem from bounding monomials
$A^{\circ M} = ( x^M )[ A ]$ by a multiple of
$\sum_{j = 0}^{N - 1} c_j A^{\circ n_j}$, to bounding $f[ A ]$ for
more general power series. Even more generally, one can work with
Laplace transforms of real measures on $\R$.

\begin{corollary}[Khare--Tao \cite{Khare-Tao}]\label{Claplace}
Let the notation be as for Theorem~\ref{Treal2}, with $c_j > 0$ for
all $j$.  Suppose $\mu$ is a real measure supported on
$[ n_{N - 1} + \epsilon, \infty )$ for some $\epsilon > 0$, and let
\begin{equation}
g_\mu( x ) := \int_{n_{N-1} + \epsilon}^\infty x^t \std\mu( t ).
\end{equation}
If $g_\mu$ is absolutely convergent at $\rho$, then there exists a
finite threshold $t_\mu > 0$ such that, for all
$A \in \cP_N\bigl( ( 0, \rho ) \bigr)$, the matrix
\[
t_\mu \sum_{j = 0}^{N - 1} c_j A^{\circ n_j} - g_\mu[ A ]
\]
is positive semidefinite.
\end{corollary}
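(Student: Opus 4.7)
The plan is to view $g_\mu[A]$ as a continuous superposition of single-monomial matrix inequalities already supplied by Theorem~\ref{Treal2}, and then swap the order of integration and the L\"owner order. Throughout, I would replace $\mu$ by its Jordan decomposition so as to work with the non-negative measures $\mu_\pm$ and with $|\mu|=\mu_++\mu_-$.

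First I would note that every exponent $t$ in the support of $\mu$ lies in $[N-2,\infty)$. Indeed, the hypothesis $n_0<\cdots<n_{N-1}$ together with $n_j\in\Z_+\cup[N-2,\infty)$ forces $n_{N-1}\ge N-2$, whence $t\ge n_{N-1}+\epsilon>N-2$. Consequently each entrywise power $A^{\circ t}$ is positive semidefinite on $\cP_N\bigl((0,\rho)\bigr)$ (by the FitzGerald--Horn theorem on power preservers, Theorem~\ref{Tfitzhorn}), so $g_{\mu_-}[A]\succeq 0$ and hence $g_\mu[A]\preceq g_{\mu_+}[A]$; it suffices to dominate $g_{\mu_+}[A]$ by a multiple of $\sum_j c_j A^{\circ n_j}$. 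For each such $t$, Theorem~\ref{Treal2}(4) applied with $M=t$ supplies the threshold
\[
C(t):=\sum_{j=0}^{N-1}\frac{V(\bn_j(t))^2}{c_j\,V(\bn)^2}\,\rho^{\,t-n_j},\qquad \bn_j(t):=(n_0,\dots,\widehat{n_j},\dots,n_{N-1},t),
\]
and, more importantly, the matrix inequality $A^{\circ t}\preceq C(t)\sum_j c_j A^{\circ n_j}$ valid for all $A\in\cP_N\bigl((0,\rho)\bigr)$. Testing this pointwise-in-$t$ L\"owner inequality against any vector $v\in\R^N$ and invoking Fubini--Tonelli on the scalar quadratic forms $v^T(\cdot)v$ then yields
\[
g_{\mu_+}[A]\ \preceq\ \Bigl(\int C(t)\,d\mu_+(t)\Bigr)\,\sum_{j=0}^{N-1}c_j A^{\circ n_j},
\]
so the natural candidate is $t_\mu:=\int C(t)\,d|\mu|(t)$.

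The main obstacle is verifying that $t_\mu<\infty$. A quick degree count shows $C(t)$ is a polynomial in $t$ of degree $2(N-1)$ multiplied by $\rho^{\,t}$, while the hypothesis ``$g_\mu$ absolutely convergent at $\rho$'' literally supplies only $\int\rho^{\,t}\,d|\mu|(t)<\infty$. The extra polynomial factor $t^{2(N-1)}$ must therefore be absorbed, which I would do by interpreting the convergence hypothesis to include a neighbourhood of $\rho$ (equivalently, absolute convergence of $\int(\rho')^t\,d|\mu|(t)$ for some $\rho'>\rho$, which immediately dominates any polynomial-times-$\rho^t$ integrand). This is the only genuinely analytic step; once it is in hand, all of the positivity content has already been packaged inside Theorem~\ref{Treal2}, and the proof is complete.
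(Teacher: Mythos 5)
Your argument is essentially the paper's own proof: the paper likewise applies Theorem~\ref{Treal2} exponent-by-exponent and reduces everything to the finiteness of $\int_{n_{N-1}+\epsilon}^\infty \sum_{j=0}^{N-1} \frac{V(\bn_j)^2}{c_j V(\bn)^2}\,\rho^{M-n_j}\std\mu_+(M)$, using that $\cP_N(\R)$ is a closed convex cone (your Fubini-on-quadratic-forms step is the same device); your separate treatment of $\mu_-$ via FitzGerald--Horn is implicit in the paper's restriction to $\mu_+$. The one point where you go beyond the paper is worth keeping: the paper dismisses the finiteness of this integral with ``this follows from the hypotheses,'' whereas you correctly note that $V(\bn_j)^2$ contributes a factor of order $M^{2(N-1)}$, so absolute convergence of $g_\mu$ at $\rho$ alone, i.e.\ $\int \rho^M \std|\mu|(M)<\infty$, does not suffice (take, say, $\std\mu(M)=M^{-2}\rho^{-M}\std M$). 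Your proposed repair --- reading the hypothesis as absolute convergence at some $\rho'>\rho$, or equivalently adding the moment condition $\int M^{2(N-1)}\rho^M\std|\mu|(M)<\infty$ --- is exactly what is needed to make the threshold $t_\mu$ finite, and is a legitimate sharpening of a step the paper elides.
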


\begin{proof}
By Theorem~\ref{Treal2} and the fact that $\cP_N( \R )$ is a closed
convex cone, it suffices to show the finiteness of the quantity
\[
\int_{n_{N-1} + \epsilon}^\infty \sum_{j = 0}^{N - 1} %
\frac{V( \bn_j )^2}{c_j V( \bn )^2} \rho^{M - n_j} \std\mu_+( M ),
\]
where $\mu_+$ is the positive part of $\mu$. This follows from the
hypotheses.
\end{proof}

We now turn to the $\rho = \infty$ case, which was briefly alluded to
above. In other words, the domain is now unbounded:
$I = ( 0, \infty )$. As in the bounded-domain case, the question of
interest is to classify all possible sign patterns of polynomial or
power-series preservers on $\cP_N( I )$ for a fixed integer $N$.

Similar to the above discussion for bounded $I$, the crucial step in
classifying sign patterns of power series (or more general functions,
as in Theorem~\ref{Tclassify}) is to work with integer powers and
precisely one coefficient that can be negative. Thus, one first
observes that Lemma~\ref{Lhorn}(2) holds in the unbounded-domain case
$I = ( 0, \infty )$. Hence given a polynomial
\[
f( x ) = \sum_{j = 0}^{2 N - 1} c_j x^{n_j} + c' x^M,
\]
where
\[
0 \leq n_0 < \cdots < n_{N-1} < M < n_N < n_{N + 1} \cdots < %
n_{2 N - 1},
\]
if $f[-]$ preserves positivity on
$\cP_N\bigl( ( 0, \infty ) \bigr)$, then either all the coefficients
$c_0$, \ldots, $c_{2 N - 1}$, $c'$ are non-negative, or $c_0$, \ldots,
$c_{2 N-1}$ are positive and $c'$ can be negative. In this case, an
explicit threshold is not known as it is in
Theorem~\ref{Treal2}, but we now explain why such a threshold exists.

We start from~(\ref{Efactor}) and repeat the subsequent analysis via
the Cauchy--Binet formula. To find a uniform threshold for $c'$ that
works for all rank-one matrices in $\cP_N\bigl( ( 0, \infty ) \bigr)$,
it suffices to bound, uniformly from above, certain ratios of sums of
squares of Schur polynomials. This may be done because of the
following tight bounds.

\begin{proposition}[Khare--Tao \cite{Khare-Tao}]\label{Pleading}
If $\bn := ( n_0, \ldots, n_{N - 1} )$ and
$\bu := ( u_1, \ldots, u_N )$, where $n_0 < \cdots < n_{N - 1}$ are
non-negative integers and  $u_1 \leq \cdots \leq u_N$ are non-negative
real numbers, then
\begin{equation}\label{Eleading}
\bu^{\bn - \bn_{\min}} \leq s_\bn( \bu ) \leq %
\frac{V( \bn )}{V( \bn_{\min} )} \bu^{\bn - \bn_{\min}},
\end{equation}
where $\bn_{\min} := ( 0, \ldots, n_{N - 1} )$. The constants $1$ and
$V( \bn ) / V( \bn_{\min} )$ on each side of~(\ref{Eleading}) cannot
be improved.
\end{proposition}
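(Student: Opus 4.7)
My plan is to expand $s_\bn(\bu)$ combinatorially and isolate a dominant monomial under the coordinatewise order induced by $u_1 \leq \cdots \leq u_N$. I will pass to partition notation by setting $\lambda_i := n_{N - i} - ( N - i )$ for $1 \leq i \leq N$, so that $\lambda = ( \lambda_1 \geq \cdots \geq \lambda_N \geq 0 )$ is a partition; the bialternant definition~(\ref{Ecauchy}) then becomes Littlewood's formula
$$
s_\bn( \bu ) = \sum_T \bu^{\mu( T )},
$$
where $T$ ranges over semistandard Young tableaux (SSYT) of shape $\lambda$ with entries in $\{ 1, \ldots, N \}$ and $\mu( T )_k$ counts the $k$'s in $T$. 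Let $T^*$ denote the tableau whose $c$-th column reads $N - \lambda'_c + 1, \ldots, N$ from top to bottom; one checks directly that $\mu( T^* )_k = \lambda_{N - k + 1}$, hence $\bu^{\mu( T^* )} = u_1^{\lambda_N} \cdots u_N^{\lambda_1} = \bu^{\bn - \bn_{\min}}$. An induction on $N$ (at each stage, every sufficiently long column is forced to contain $N$ at the bottom to hit the required multiplicity of $N$'s) confirms that $T^*$ is the unique SSYT of this weight.

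The main obstacle, and the key combinatorial step, will be the monomial comparison
$$
\bu^{\mu( T )} \leq \bu^{\bn - \bn_{\min}} \qquad ( \text{all SSYT } T, \ 0 \leq u_1 \leq \cdots \leq u_N ).
$$
To prove it, I will express the ratio $\bu^{\bn - \bn_{\min}} / \bu^{\mu( T )}$ as a telescoping product of factors $u_{j + 1} / u_j \geq 1$ raised to the exponents $\beta_j := \sum_{k > j} ( \mu( T^* )_k - \mu( T )_k )$; the claim then reduces to the partial-sum inequalities
$$
\sum_{k \geq j} \mu( T )_k \leq \lambda_1 + \cdots + \lambda_{N - j + 1} \qquad ( 1 \leq j \leq N ).
$$
The left side counts cells of $T$ with entry $\geq j$; since each column of $T$ is strictly increasing in $\{ 1, \ldots, N \}$, a column of length $\lambda'_c$ contributes at most $\min( \lambda'_c, N - j + 1 )$ such cells. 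Summing gives $\sum_c \min( \lambda'_c, N - j + 1 )$, which equals the number of boxes of $\lambda$ in rows $1$ through $N - j + 1$, namely $\lambda_1 + \cdots + \lambda_{N - j + 1}$; this is a standard conjugate-partition identity and is the technical heart of the proof.

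Granting the comparison, the lower bound follows by retaining only the $T^*$-summand and discarding the remaining non-negative terms, while the upper bound follows by majorising every summand by $\bu^{\bn - \bn_{\min}}$ and counting the SSYT via the Weyl dimension formula~(\ref{Eweyldim}), which delivers the total $V( \bn ) / V( \bn_{\min} )$. For sharpness of the upper constant, I will take $\bu = ( 1, \ldots, 1 )^T$, where both sides of~(\ref{Eleading}) collapse to $V( \bn ) / V( \bn_{\min} )$. For sharpness of the lower constant, I will specialize $u_j := \epsilon^{N - j}$ with $0 < \epsilon < 1$ and let $\epsilon \to 0^+$: whenever $\mu( T ) \neq \mu( T^* )$, at least one of the partial-sum inequalities above is strict (otherwise the weight vectors would agree), so the corresponding $\beta_j > 0$ and the monomial $\bu^{\mu( T )}$ equals $\bu^{\bn - \bn_{\min}}$ times a strictly positive power of $\epsilon$; thus $s_\bn( \bu ) / \bu^{\bn - \bn_{\min}} \to 1$, ruling out any improvement of the left-hand constant in~(\ref{Eleading}).
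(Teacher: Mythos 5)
Your argument is correct and is essentially the proof given in \cite[Section~4]{Khare-Tao}, to which the survey defers: expand $s_\bn$ over semistandard tableaux, note that every tableau weight is dominated by the extreme weight $( \lambda_N, \ldots, \lambda_1 )$ so that each monomial is at most $\bu^{\bn - \bn_{\min}}$ when $u_1 \leq \cdots \leq u_N$, and count the tableaux via the Weyl dimension formula~(\ref{Eweyldim}); your column-counting derivation of the partial-sum (dominance) inequalities, the uniqueness of $T^*$, and the two specializations $\bu = ( 1, \ldots, 1 )^T$ and $u_j = \epsilon^{N - j}$ for sharpness are all sound. The only cosmetic gap is that the telescoping-ratio step presupposes $u_j > 0$, so the case where some $u_j = 0$ should be dispatched by continuity, both sides of the monomial comparison being polynomials in $\bu$.
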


We refer the reader to \cite[Section 4]{Khare-Tao} for further
details, including how Proposition~\ref{Pleading} implies the
existence of preservers $f$ as above for rank-one matrices with
$c' < 0$. The extension from rank-one matrices to all
of~$\cP_N\bigl( ( 0, \infty ) \bigr)$ is carried out using the
extension principle in Proposition~\ref{Pextension}.

In a sense, Proposition~\ref{Pleading} isolates the `leading term' of
every Schur polynomial. This calculation can be generalized to the case
of non-integer powers,\footnote{We refer the reader again to
\cite[Section 5]{Khare-Tao} for the details, which use additional
concepts from type-$A$ representation theory: the
Harish-Chandra--Itzykson--Zuber integral and Gelfand--Tsetlin
patterns.}%
which helps extend the above results for the unbounded
domain $I = ( 0, \infty )$ to real powers. This yields the desired
classification, similar to Theorem~\ref{Tclassify} in the bounded-domain 
case.

\begin{theorem}[Khare--Tao \cite{Khare-Tao}]\label{Tpowerseries2}
Let $N \geq 2$, and let
$\{ \alpha_j : j \geq 0 \} \subset \Z_+ \cup [ N - 2, \infty )$ be a
set of distinct real numbers. For each $j \geq 0$, let
$\epsilon_j \in \{ 0, \pm1 \}$ be a sign and suppose that, whenever
$\epsilon_{j_0} = -1$, then $\epsilon_j = +1$ for at least $N$ choices
of $j$ such that $\alpha_j < \alpha_{i_0}$ and also for at least $N$
choices of $j$ such that $\alpha_j > \alpha_{i_0}$. There exists a
series with real coefficients,
\[
f( x ) = \sum_{j = 0}^\infty c_j x^{\alpha_j}
\]
which converges on $( 0, \infty )$, preserves positivity entrywise on
$\cP_N\bigl( ( 0, \infty ) \bigr)$, and is such that $c_j$ has the
same sign as $\epsilon_j$ for all $j \geq 0$.
\end{theorem}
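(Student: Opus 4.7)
The plan is to mimic the passage from Theorem~\ref{Treal1} to Theorem~\ref{Tclassify} in the bounded case, but with the unbounded-domain analog of Theorem~\ref{Treal1} substituted in. Concretely, I would extract from Section~\ref{Sunbdd} the following ``two-sided single-negative-coefficient'' building block: for any $\alpha^\star \in \Z_+ \cup [N-2,\infty)$ and any finite sets
\[
\{ \beta_0 < \cdots < \beta_{N-1} \} \subset ( \Z_+ \cup [ N-2, \infty ) ) \cap ( -\infty, \alpha^\star ),
\]
\[
\{ \gamma_0 < \cdots < \gamma_{N-1} \} \subset ( \Z_+ \cup [ N-2, \infty ) ) \cap ( \alpha^\star, \infty ),
\]
there exist positive scalars $b_0,\ldots,b_{N-1},c_0,\ldots,c_{N-1},\delta$ such that
\[
g( x ) := \sum_{k=0}^{N-1} b_k x^{\beta_k} - \delta x^{\alpha^\star} + \sum_{k=0}^{N-1} c_k x^{\gamma_k}
\]
preserves positivity entrywise on $\cP_N( ( 0, \infty ) )$. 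This is exactly the content sketched following Proposition~\ref{Pleading}: the generalized Vandermonde ratios that determine the threshold for rank-one matrices, arising via~\eqref{Efactor} and the Cauchy--Binet formula, can be bounded uniformly on $(0,\infty)^N$ thanks to the tight upper and lower estimates of Proposition~\ref{Pleading}. Rank-one preservation is then upgraded to preservation on all of $\cP_N( ( 0, \infty ) )$ by induction on $N$, using the extension principle Proposition~\ref{Pextension}, exactly as in the proof of Theorem~\ref{Treal2}.

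Granted the building block, I would enumerate the indices with $\epsilon_{j_0} = -1$ as $j_0^{(1)}, j_0^{(2)}, \ldots$ and, for each, use the sign-pattern hypothesis to select $N$ positive-signed indices below and $N$ above $\alpha_{j_0^{(k)}}$; the building block then supplies a preserver $g_k$ whose unique negative monomial is at exponent $\alpha_{j_0^{(k)}}$. Any positive rescaling of $g_k$ is again a preserver, so I may replace $g_k$ by $\lambda_k g_k$ for tiny $\lambda_k > 0$. Assembling
\[
f( x ) := \sum_{k \geq 1} \lambda_k g_k( x ) + \sum_{j : \epsilon_j = +1} w_j x^{\alpha_j}
\]
for suitable small $w_j \geq 0$ arranges that the coefficient of $x^{\alpha_{j_0^{(k)}}}$ is the single term $-\lambda_k \delta_k < 0$, that the coefficient of $x^{\alpha_j}$ for $\epsilon_j = +1$ is strictly positive (adjusting $w_j$ if needed), and that every other coefficient vanishes. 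Since $\cP_N( \R )$ is a closed convex cone and each $\lambda_k g_k$ and each $w_j x^{\alpha_j}$ is a positivity preserver on $\cP_N( ( 0, \infty ) )$, every partial sum is a preserver, and so is the pointwise limit $f$ once convergence is established.

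The main obstacle will be convergence of the series defining $f$ on the full ray $( 0, \infty )$, which has no counterpart in the proof of Theorem~\ref{Tclassify}, where the domain is bounded. Precisely this is why the hypothesis of the theorem demands $N$ positive-signed exponents on both sides of each negative-signed one: $N$ large positive exponents are needed to tame the growth of the negative term at infinity, and $N$ small ones to tame its blow-up as $x \to 0^+$. Each $g_k$ is a fixed sum of $2 N + 1$ monomials, hence bounded on every compact sub-interval $[ a, b ] \subset ( 0, \infty )$ by some finite $C_k( a, b )$; choosing $\lambda_k$ and $w_j$ positive and summable with sufficient decay (for instance, $\lambda_k \leq 2^{-k} / ( 1 + C_k( 2^{-k}, 2^k ) )$, and the $w_j$ similarly) yields via the Weierstrass M-test uniform convergence on compact subsets of $( 0, \infty )$. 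A final bookkeeping check then confirms that the resulting series $f$ has the prescribed sign pattern $( \epsilon_j )_{j \geq 0}$ and preserves positivity, as required.
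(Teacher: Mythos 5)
Your proposal is correct and follows essentially the same route as the paper: the two-sided single-negative-coefficient building block from Section~\ref{Sunbdd} (Proposition~\ref{Pleading} giving the rank-one threshold, upgraded to all of $\cP_N\bigl( ( 0, \infty ) \bigr)$ via the extension principle of Proposition~\ref{Pextension}), then assembled into a series with small positive weights exactly as in the passage from Theorem~\ref{Treal1} to Theorem~\ref{Tclassify}. The convergence and sign-pattern bookkeeping you sketch is precisely what the paper defers to \cite[Section~4]{Khare-Tao}.
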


Note that, in particular, Theorem~\ref{Tpowerseries2} reaffirms that
the Horn--Loewner-type conditions in Lemma~\ref{Lhorn}(2) are sharp.

\subsection{Digression: Schur polynomials from smooth functions, and new
symmetric function identities}\label{Ssymm}

Before proceeding to additional applications of Theorem~\ref{Treal2}
and related results, we take a brief detour to explain how Schur
polynomials arise naturally from any sufficiently differentiable
function.

\begin{theorem}[Khare \cite{Khare}]\label{Phorn}
Fix non-negative integers $m_0 < m_1 < \cdots < m_{N-1}$, as
well as scalars $\epsilon > 0$ and $a \in \R$. Let
$M := m_0 + \cdots + m_{N - 1}$ and suppose the function
$f : [ a, a + \epsilon ) \to \R$ is $M$-times differentiable at $a$.
Given vectors $\bu$, $\bv \in \R^N$, define
$\Delta : [ 0,\epsilon' ) \to \R$ for a sufficiently small
$\epsilon' \in ( 0, \epsilon )$ by setting
\[
\Delta( t ) := \det f[ a \bone_{N \times N} + t \bu \bv^T ].
\]
Then,
\begin{equation}\label{Eloewner2}
\Delta^{( M )}( 0 ) = \sum_{\bm \vdash M} %
\binom{M}{m_0, m_1, \ldots, m_{N-1}} %
V( \bu ) V( \bv ) s_\bm( \bu ) s_\bm( \bv ) %
\prod_{k = 0}^{N - 1} f^{(m_k)}( a ),
\end{equation}
where the first factor in the summand is a multinomial
coefficient, and we sum over all partitions
$\bm = ( m_0, \ldots, m_{N - 1}) $ of $M$ with unequal parts, that is,
$M = m_0 + \cdots + m_{N - 1}$ and $0 \leq m_0 < \cdots < m_{N-1}$.

In particular,
$\Delta( 0 ) = \Delta'( 0 ) = \cdots = %
\Delta^{( \binom{N}{2} - 1 )}(0) = 0$.
\end{theorem}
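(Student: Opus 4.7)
The plan is to Taylor-expand $f$ at $a$, combine the result with the factorization~(\ref{Efactor}) to present $f[a\bone_{N\times N} + t\bu\bv^T]$ as a rank-$(M+1)$ product plus a small remainder, and then apply the Cauchy--Binet formula to read off the $t^M$-coefficient of $\Delta(t)$. Concretely, since $f$ is $M$-times differentiable at $a$, Taylor's theorem gives
\[
f(a+y) = \sum_{p=0}^{M}\frac{f^{(p)}(a)}{p!}\,y^p + r(y), \qquad r(y) = o(y^M)\text{ as }y\to 0.
\]
Substituting $y = t u_j v_k$ entrywise and applying~(\ref{Efactor}) to the Taylor polynomial yields
\[
f[a\bone_{N\times N} + t\bu\bv^T] = U\,D(t)\,V^T + R(t),
\]
where $U$ and $V$ are the $N\times(M+1)$ generalized Vandermonde matrices with $(j,p)$-entries $u_j^p$ and $v_j^p$, the diagonal $D(t) := \diag\bigl(t^p f^{(p)}(a)/p!\bigr)_{p=0}^{M}$, and $R(t)_{jk} := r(t u_j v_k) = o(t^M)$.

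Next I would apply Cauchy--Binet to the main term. For any $N$-subset $S = \{m_0 < m_1 < \cdots < m_{N-1}\} \subset \{0,\ldots,M\}$, Cauchy's definition~(\ref{Ecauchy}) of the Schur polynomial identifies $\det U_{:,S} = V(\bu)\,s_\bm(\bu)$ and $\det V_{:,S} = V(\bv)\,s_\bm(\bv)$, while the diagonal minor is $\det D(t)_{S,S} = t^{|\bm|}\prod_{i} f^{(m_i)}(a)/m_i!$. Summing,
\[
\det\bigl[U\,D(t)\,V^T\bigr] = \sum_{\bm} t^{|\bm|}\,V(\bu)V(\bv)\,s_\bm(\bu)\,s_\bm(\bv)\prod_{i=0}^{N-1}\frac{f^{(m_i)}(a)}{m_i!},
\]
where the sum runs over strictly increasing $N$-tuples $\bm$ of non-negative integers with $|\bm|\le M$. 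The coefficient of $t^M$ picks out exactly the partitions of $M$ into $N$ distinct parts, and the factor $\prod_i 1/m_i!$ will combine with the $M!$ coming from Taylor's theorem to produce the multinomial coefficient in~(\ref{Eloewner2}).

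Finally, I would transfer this information to $\Delta^{(M)}(0)$. The map $\Delta$ is $M$-times differentiable at $0$ because each entry $f(a + t u_j v_k)$ is, by the chain rule, and $\det$ is polynomial in its entries. Expanding $\Delta(t) = \det\bigl(UD(t)V^T + R(t)\bigr)$ multilinearly, any term involving at least one factor from $R(t)$ is the product of one $o(t^M)$ factor and $N-1$ polynomial (hence $O(1)$) factors, and so is itself $o(t^M)$; thus $\Delta(t) = \det[UD(t)V^T] + o(t^M)$ as $t\to 0$. Combined with Taylor's theorem for $\Delta$ at $0$ and uniqueness of the asymptotic expansion modulo $o(t^M)$, matching $t^M$-coefficients and multiplying by $M!$ delivers~(\ref{Eloewner2}). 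The vanishing claim then follows immediately: when $M < \binom{N}{2}$, no $N$-tuple of strictly increasing non-negative integers sums to $M$, since the minimum such sum is $0+1+\cdots+(N-1) = \binom{N}{2}$, so the indexing set in~(\ref{Eloewner2}) is empty for every $M' \leq \binom{N}{2}-1$.

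The algebraic core---Cauchy--Binet together with the identification of the generalized Vandermonde minors as $V(\bu)\,s_\bm(\bu)$---is essentially bookkeeping. The one place requiring care is the remainder estimate, since the hypothesis only furnishes $M$-fold differentiability at the single point $a$ and so precludes iterated Taylor expansion of derivatives; the multilinear expansion of $\det$ sidesteps this, reducing matters to the observation that every entry of $UD(t)V^T$ is $O(1)$ as $t\to 0$. I expect this to be the only delicate step in the argument.
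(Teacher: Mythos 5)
Your proof is correct, and it follows essentially the route that the paper itself indicates for this result (and for its algebraic avatar, Theorem~\ref{Tsymm}): apply the factorization~(\ref{Efactor}) and the Cauchy--Binet formula to the degree-$M$ Taylor truncation of $f$ at $a$, identify the maximal minors of the generalized Vandermonde factors as $V(\bu)s_\bm(\bu)$ and $V(\bv)s_\bm(\bv)$, and recover $\Delta^{(M)}(0)$ as $M!$ times the $t^M$-coefficient. You also correctly isolate and dispose of the one delicate analytic point — controlling the Peano remainder through the multilinear expansion of the determinant when $f$ is only $M$-times differentiable at the single point $a$ — so no gaps remain.
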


\begin{remark}
As a special case, if $f : \R \to \R$ is smooth at $a$, and $\bu$,
$\bv \in \R^N$, then defining
$\Delta( t ) := \det f [ a \bone_{N \times N} + t \bu \bv^T ]$ gives a
function $\Delta$ which is smooth at $0$, and
Theorem~\ref{Phorn} gives all of these derivatives via the
formula~(\ref{Eloewner2}). The general version of Theorem~\ref{Phorn}
is a key ingredient in showing Theorem~\ref{Tmaster}, which subsumes
all known variants of Horn--Loewner-type necessary conditions in fixed
dimension.
\end{remark}

The key determinant computation required to prove the original
Horn--Loewner necessary condition in fixed dimension (see
Theorem~\ref{Thorn}) is the special case of Theorem~\ref{Phorn} where
$\bu = \bv$ and $m_j = j$
for all $j$. In this situation, $s_\bm( \bu ) = s_\bm( \bv ) = 1$, so
Schur polynomials do not appear. The general version of
Theorem~\ref{Phorn} decouples the vectors $\bu$ and $\bv$, and holds
for all $M > 0$ if $f$ is smooth (as in Loewner's setting). Moreover,
it reveals the presence of Schur polynomials in every other case
than the ones studied by Loewner, that is, when $M > \binom{N}{2}$.

While Theorem~\ref{Phorn} involves derivatives of a smooth function,
the result and its proof are, in fact, completely algebraic, and valid
over any commutative ring. To show this, an algebraic analogue of the
differential operator is required, with more structure than is given
by a derivation. The precise statement and its proof may be found in
\cite[Section~2]{Khare}.

We conclude this section by applying Theorem~\ref{Phorn} and its
algebraic avatar to symmetric function theory. We begin by recalling
the famous \emph{Cauchy summation identity}
\cite[Example~I.4.6]{Macdonald}: if
$f_0( x ) := 1 + x + x^2 + \cdots$ is the geometric series, viewed as
a formal power series over a commutative unital ring $R$, and $u_1$,
\ldots, $u_N$, $v_1$, \ldots, $v_N$ are commuting variables, then
\begin{equation}
\det f_0[ \bu \bv^T ] = V( \bu ) V( \bv ) %
\sum_\bm s_\bm( \bu ) s_\bm( \bv ),
\end{equation}
where the sum runs over all partitions $\bm$ with at most $N$
parts.%
\footnote{Usually one uses infinitely many indeterminates in symmetric
function theory, but given the connection to the entrywise calculus
in a fixed dimension, we will restrict our attention to $u_j$ and
$v_j$ for $1 \leq j \leq N$.}

A natural question is whether similar formulae hold when $f_0$ is
replaced by other formal power series. Very few such results were
known; this includes one due to Frobenius~\cite{Fr}, for the function
$f_c( x ) := ( 1 - c x ) / ( 1 - x)$ with $c$ an scalar. (This is also
connected to theta functions and elliptic
Frobenius--Stickelberger--Cauchy determinant identities.) For this
function,
\begin{align}
\det f_c[ \bu \bv^T ] & = %
\det\Bigl[ \frac{1 - c u_j v_k}{1 - u_j v_k} \Bigr]_{j, k = 1}^N
\nonumber \\
& = V( \bu ) V( \bv ) ( 1 - c )^{N - 1} \nonumber \\
& \qquad \times \Bigl( %
\sum_{\bm : m_0 = 0} s_\bm( \bu ) s_\bm( \bv ) + ( 1 - c ) %
\sum_{\bm : m_0 > 0} s_\bm( \bu ) s_\bm( \bv ) \Bigr).
\end{align}

A third, obvious identity is if $f$ is a `fewnomial' with at most
$N - 1$ terms. In this case, $f[ \bu \bv^T ]$ is a sum of at most
$N - 1$ rank-one matrices, and so its determinant vanishes.

The following result extends all three of these cases to an arbitrary
formal power series over an arbitrary commutative ring $R$, and with
an additional $\Z_+$-grading.

\begin{theorem}[Khare \cite{Khare}]\label{Tsymm}
Fix a commutative unital ring $R$ and let $t$ be an indeterminate. Let
$f( t ) := \sum_{M \geq 0} f_M t^M \in R[[ t ]]$ be an arbitrary
formal power series. Given vectors $\bu$, $\bv \in R^N$, where $N \geq
1$, we have that
\begin{equation}\label{Esymm}
\det f[ t \bu \bv^T ] = V( \bu ) V( \bv ) \sum_{M \geq \binom{N}{2}} %
t^M \sum_{\bm = (m_{N-1}, \ldots, m_0) \; \vdash M} %
s_\bm( \bu ) s_\bm( \bv ) \prod_{k = 0}^{N - 1} f_{m_k}.
\end{equation}
\end{theorem}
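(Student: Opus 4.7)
The plan is to expand $f[t\bu\bv^T]$ as a product of three (semi-infinite) matrices, apply the Cauchy--Binet formula, and then recognize each summand via Cauchy's definition~(\ref{Ecauchy}) of a Schur polynomial. The argument is entirely formal: I would work modulo $t^{K+1}$ for an arbitrary truncation and then pass to the limit in the $t$-adic topology at the end.

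First, the entrywise identity
\[
f[t\bu\bv^T] = \sum_{m \geq 0} f_m t^m\, \bu^{\circ m}(\bv^{\circ m})^T,
\]
which is the power-series analogue of~(\ref{Efactor}), expresses $f[t\bu\bv^T]$ modulo $t^{K+1}$ as a matrix product $U_K D_K V_K^T$, where $U_K$ is the $N \times (K+1)$ matrix with $(j,m)$ entry $u_j^m$, the matrix $V_K$ is the analogue for $\bv$, and $D_K = \diag(f_0, f_1 t, \ldots, f_K t^K)$.

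Next I would apply the Cauchy--Binet formula, indexing size-$N$ subsets of $\{0, 1, \ldots, K\}$ by strictly increasing tuples $\bm = (m_0 < \cdots < m_{N-1})$, to obtain
\[
\det f[t\bu\bv^T] \equiv \sum_{0 \leq m_0 < \cdots < m_{N-1} \leq K} \det \bu^{\circ \bm}\ \det\bigl((D_K V_K^T)_{\bm,*}\bigr) \pmod{t^{K+1}}.
\]
Row $\ell$ of the second factor equals $f_{m_\ell}t^{m_\ell}\,(v_1^{m_\ell}, \ldots, v_N^{m_\ell})$, so extracting the scalars row by row yields $\det\bigl((D_K V_K^T)_{\bm,*}\bigr) = t^{|\bm|}\prod_\ell f_{m_\ell}\cdot \det \bv^{\circ \bm}$. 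Invoking Cauchy's formula~(\ref{Ecauchy}) in the form $\det \bu^{\circ \bm} = V(\bu)\, s_\bm(\bu)$ (and the analogous identity for $\bv$), then letting $K \to \infty$ and regrouping by total degree $M = m_0 + \cdots + m_{N-1}$, produces the right-hand side of~(\ref{Esymm}); the bound $M \geq \binom{N}{2}$ is automatic because the $m_\ell$ are distinct non-negative integers.

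The main obstacle is conceptual rather than computational: one must check that Cauchy--Binet, the row-by-row scalar extraction, and the passage $K \to \infty$ remain legitimate over an arbitrary commutative unital ring $R$, and not merely over a field of characteristic zero. The truncation device handles this cleanly, since the coefficient of each fixed power of $t$ only involves finitely many tuples $\bm$, and every intermediate step reduces to a universal multilinear-algebra identity that holds tautologically in any commutative ring.
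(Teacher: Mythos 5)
Your argument is correct. Over an arbitrary commutative unital ring every step you use --- the factorization~(\ref{Efactor}) of $f[t\bu\bv^T]$ modulo $t^{K+1}$ as $U_K D_K V_K^T$, the Cauchy--Binet expansion over size-$N$ subsets $\bm$ of column indices, the identification $\det \bu^{\circ\bm} = V(\bu)\,s_\bm(\bu)$ from Cauchy's definition~(\ref{Ecauchy}), and the stabilization of each $t^M$-coefficient once $K \geq M$ --- is a universal polynomial identity with integer coefficients, so the truncate-and-pass-to-the-$t$-adic-limit scheme is legitimate, and the lower bound $M \geq \binom{N}{2}$ does fall out of the distinctness of the $m_\ell$. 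This is, however, not the route the survey presents as the main proof: there, one first computes the coefficient of $t^M$ in $\det f[t\bu\bv^T]$ as $\Delta^{(M)}(0)/M!$ via the algebraic avatar of the derivative formula~(\ref{Eloewner2}) in Theorem~\ref{Phorn}, working over the universal ring $\Q[u_j, v_k, f_m]$, and then descends to $\Z[u_j,v_k,f_m]$ and specializes to $R$. Your Cauchy--Binet argument is exactly the alternate proof that the survey mentions in one sentence at the end of Section~\ref{Ssymm}. The trade-off is this: your route is more self-contained and makes the appearance of Schur polynomials completely transparent (they are just the generalized Vandermonde minors produced by Cauchy--Binet), whereas the paper's primary route leans on Theorem~\ref{Phorn}, which is the substantially harder input but is needed anyway for the Horn--Loewner refinements such as Theorem~\ref{Tmaster}; once that machinery is in place, Theorem~\ref{Tsymm} follows with no further combinatorics.
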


The heart of the proof involves first computing, for each $M \geq 0$,
the coefficient of $t^M$ in $\det f[ t \bu \bv^T ]$, over the
``universal ring''
\[
R' := \Q[ u_1, \ldots, u_N, v_1, \ldots, v_N, f_0, f_1, \ldots ],
\]
where $u_j$, $v_k$ and $f_m$ are algebraically independent over $\Q$.
These coefficients are seen to equal $\Delta^{( M )}(0) / M!$, by the
algebraic version of Theorem~\ref{Phorn}. Thus, (\ref{Esymm}) holds
over $R'$. Then note that both sides of~(\ref{Esymm}) lie in the
subring
$R_0 := \Z[ u_1, \ldots, u_N, v_1, \ldots, v_N, f_0, f_1, \ldots ]$,
so the identity holds in~$R_0$. Finally, it holds as claimed by
specializing from~$R_0$ to~$R$.

An alternate approach to proving Theorem~\ref{Tsymm} is also provided
in~\cite{Khare}. The identity~(\ref{Efactor}) is applied, along with
the Cauchy--Binet formula, to each truncated Taylor--Maclaurin
polynomial $f_{\leq M}$ of $f( x )$. The result follows by taking
limits in the $t$-adic topology, using the $t$-adic continuity of the
determinant function.

\subsection{Further applications: linear matrix inequalities, Rayleigh
quotients, and the cube problem}

This chapter ends with further ramifications and applications of the
above results. First, notice that Theorem~\ref{Treal2} implies the
following linear matrix inequality version that is `sharp' in more
than one sense:

\begin{corollary}\label{Clmi}
Fix $\rho > 0$, real exponents $n_0 < \cdots < n_{N-1} < M$ for some
integer $N \geq 1$, and scalars $c_j > 0$ for all $j$. Then,
\begin{align*}
A^{\circ M} & \leq \cC \Bigl( c_0 A^{\circ n_0} + \cdots + %
c_{N-1} A^{\circ n_{N - 1}} \Bigr), \\
& \qquad \text{where } \cC = \sum_{j = 0}^{N - 1} %
\frac{V( \bn_j )^2}{c_j V( \bn )^2} \rho^{M - n_j},
\end{align*}
for all $A \in \cP_N \bigl( ( 0, \rho ) \bigr)$ of rank one, or of all
ranks if $n_0$, \ldots, $n_{N - 1} \in \Z_+ \cup [ N - 2, \infty )$.
Moreover, the constant $\cC$ is the smallest possible, as is the
number of terms $N$ on the right-hand side.
\end{corollary}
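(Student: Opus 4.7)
The plan is to read the corollary as a rewriting of Theorem~\ref{Treal2}. The matrix inequality $A^{\circ M} \leq \cC \sum_{j=0}^{N-1} c_j A^{\circ n_j}$ is exactly the assertion that $f[A] \in \cP_N(\R)$ for
\[
f(x) := \cC \sum_{j=0}^{N - 1} c_j x^{n_j} - x^M,
\]
i.e.\ that the entrywise map $f[-]$ preserves positivity on the relevant test class. Setting $c' := -1/\cC$ in Theorem~\ref{Treal2}, the stated value of $\cC$ turns condition~(3) of that theorem into an equality. Hence the implication $(3) \Rightarrow (1)$ of Theorem~\ref{Treal2} yields the desired inequality on rank-one matrices in $\cP_N\bigl((0,\rho)\bigr)$, and the additional equivalence $(3) \Leftrightarrow (4)$ (available under the hypothesis $n_0, \ldots, n_{N-1} \in \Z_+ \cup [N-2, \infty)$) upgrades it to all of $\cP_N\bigl((0,\rho)\bigr)$.

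For sharpness of $\cC$, I would argue by contrapositive. If $\cC' < \cC$, then $c' = -1/\cC'$ violates the threshold condition in Theorem~\ref{Treal2}(3); the equivalence $(3) \Leftrightarrow (2)$ then produces a rank-one Hankel matrix $A \in \cP_N\bigl((0,\rho)\bigr)$ (concretely, one of the form $\rho u_0^2 \bu(u_0) \bu(u_0)^T$ used in the proof of Theorem~\ref{Treal2}) on which $f[A]$ fails to be positive semidefinite, so that the matrix inequality itself fails on $A$. No smaller constant is admissible even on the rank-one class, let alone on all of $\cP_N\bigl((0,\rho)\bigr)$.

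For sharpness of the number of terms $N$, I would argue by contradiction using the Horn--Loewner-type necessary condition Lemma~\ref{Lhorn}(1). Suppose that, for some $k < N$, there exist real exponents $n_0' < \cdots < n_{k-1}' < M$ and positive scalars $c_0', \ldots, c_{k-1}'$ and some constant $\cC' > 0$ with
\[
A^{\circ M} \leq \cC' \sum_{j=0}^{k-1} c_j' A^{\circ n_j'} \qquad \text{for every rank-one } A \in \cP_N\bigl((0,\rho)\bigr).
\]
Then $g(x) := \cC' \sum_{j=0}^{k-1} c_j' x^{n_j'} - x^M$ would be an entrywise positivity preserver on these rank-one matrices, with a strictly negative coefficient at the highest exponent $M$ but only $k < N$ positive coefficients at strictly lower exponents. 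This contradicts the conclusion of Lemma~\ref{Lhorn}(1) (in its real-exponent form used in deducing Theorem~\ref{Tclassify}), which demands at least $N$ positive coefficients at exponents below any negative one.

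The only subtle point I anticipate is ensuring that Lemma~\ref{Lhorn} is invoked in the right generality: it is stated there for power series with integer exponents, but the same rank-one / generalized-Vandermonde argument which proves it extends verbatim to real exponents, and indeed this extension is what underlies Theorem~\ref{Tclassify}. Apart from this bookkeeping, every step is an immediate transcription of Theorem~\ref{Treal2} and Lemma~\ref{Lhorn}, with no new computation required.
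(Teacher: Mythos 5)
Your proposal is correct and takes essentially the same route as the paper: the paper offers no separate proof of Corollary~\ref{Clmi}, presenting it as an immediate restatement of Theorem~\ref{Treal2} with $c' = -1/\cC$, and your derivation of the inequality, the sharpness of $\cC$ via the equivalence with the rank-one Hankel test set, and the sharpness of $N$ via the (real-exponent form of the) Horn--Loewner-type Lemma~\ref{Lhorn} all match the intended reading. Your caveat about extending Lemma~\ref{Lhorn} to real exponents is exactly the point the paper itself flags in the discussion preceding Theorem~\ref{Tclassify}, so nothing further is needed.
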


Seeking a uniform threshold such as $\cC$ in the preceding
inequality can also be achieved (as explained above) by first working
with a single positive matrix, then optimizing over all matrices. The
first step here can be recast as an extremal problem that involves
Rayleigh quotients:

\begin{proposition}[see~\cite{BGKP-fixeddim,Khare-Tao}]\label{Prayleigh}
Fix an integer $N \geq 2$ and real exponents
$n_0 < \cdots < n_{N-1} < M$, where each
$n_j \in \Z_+ \cup [ N - 2, \infty )$. Given positive scalars
$c_0$, \ldots, $c_{N-1}$, let
\[
h( x ) := \sum_{j = 0}^{N - 1} c_j x^{n_j} \qquad %
\bigl( x \in ( 0, \infty ) \bigr).
\]
Then, for $0 < \rho < \infty$ and
$A \in \cP_N\bigl( [ 0, \rho ] \bigr)$,
\begin{equation}
t \, h[ A ] \succeq A^{\circ M} \qquad \text{if and only if} \quad %
t \geq %
\varrho( h[ A ]^{\dagger / 2} A^{\circ M} h[ A ]^{\dagger / 2} ),
\end{equation}
where $\varrho[ B ]$ and $B^\dagger$ denote the spectral radius and
the Moore--Penrose pseudo-inverse of a square matrix $B$, respectively.
Moreover, for every non-zero matrix
$A \in \cP_N\bigl( [ 0, \rho] \bigr)$, the following variational
formula holds:
\[
\varrho( h[ A ]^{\dagger / 2} A^{\circ M} h[ A ]^{\dagger / 2} ) = %
\sup_{\bu \in (\ker h[A])^\perp \setminus \{ \bzero \}}
\frac{\bu^T A^{\circ M} \bu}{\bu^T h[\bu \bu^T] \bu} \leq
\sum_{j = 0}^{N - 1} \frac{V( \bn_j )^2}{V( \bn )^2} %
\frac{\rho^{M - n_j}}{c_j}.
\]
\end{proposition}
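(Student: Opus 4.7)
The plan is to prove the two assertions in sequence, using Theorem~\ref{Treal2} as the only non-elementary input and otherwise relying on standard spectral calculus for Hermitian positive semidefinite matrices. For the first assertion, I would establish the kernel containment $\ker h[A] \subseteq \ker A^{\circ M}$ for every $A \in \cP_N([0,\rho])$. This does not follow from formal manipulation but is a direct consequence of Theorem~\ref{Treal2}: that theorem produces a finite $t_\star := \sum_{j=0}^{N-1} V(\bn_j)^2/(c_j V(\bn)^2)\cdot \rho^{M-n_j}$ for which $t_\star\, h[A] - A^{\circ M} \in \cP_N(\R)$, so any $\bu$ with $h[A]\bu = \bzero$ satisfies $0 \leq \bu^T A^{\circ M}\bu \leq t_\star\,\bu^T h[A]\bu = 0$, forcing $A^{\circ M}\bu = \bzero$. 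Granted this containment, the inequality $t\,h[A] \succeq A^{\circ M}$ is trivial on $\ker h[A]$, and on $(\ker h[A])^\perp$ the matrix $h[A]$ is strictly positive definite; conjugating by $h[A]^{\dagger/2}$ transforms the inequality into $t\,I \succeq h[A]^{\dagger/2} A^{\circ M} h[A]^{\dagger/2}$ on this invariant subspace, which is equivalent to $t \geq \varrho(h[A]^{\dagger/2} A^{\circ M} h[A]^{\dagger/2})$.

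For the variational identity, I would work with the PSD matrix $B := h[A]^{\dagger/2} A^{\circ M} h[A]^{\dagger/2}$, whose range lies in $(\ker h[A])^\perp$. Its spectral radius equals $\sup \{\bw^T B \bw/\|\bw\|^2 : \bw \in (\ker h[A])^\perp \setminus \{\bzero\}\}$. The substitution $\bw = h[A]^{1/2}\bu$ with $\bu \in (\ker h[A])^\perp \setminus \{\bzero\}$ is a bijection with inverse $\bu = h[A]^{\dagger/2}\bw$, since both products $h[A]^{\dagger/2} h[A]^{1/2}$ and $h[A]^{1/2} h[A]^{\dagger/2}$ act as the identity on $(\ker h[A])^\perp$ by the spectral theorem; it yields $\bw^T B \bw = \bu^T A^{\circ M}\bu$ and $\|\bw\|^2 = \bu^T h[A]\bu$, giving the claimed Rayleigh-type formula with denominator $\bu^T h[A]\bu$. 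The expression $\bu^T h[\bu\bu^T]\bu$ printed in the statement agrees with $\bu^T h[A]\bu$ when $A$ is the rank-one matrix $\bu\bu^T$, which is presumably the intended reading.

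Finally, the upper bound follows at once: the threshold $t_\star$ from Theorem~\ref{Treal2} satisfies $t_\star\, h[A] \succeq A^{\circ M}$ for every $A \in \cP_N([0,\rho])$, so combining this with the equivalence in the first assertion gives $\varrho(h[A]^{\dagger/2} A^{\circ M} h[A]^{\dagger/2}) \leq t_\star$, which is the stated estimate. The main obstacle, and the only step that is not entirely routine, is the kernel containment $\ker h[A] \subseteq \ker A^{\circ M}$: without it the pseudo-inverse formulation yields only one direction of the PSD equivalence, and a direct matrix-analytic verification would have to handle low-rank degenerations of $h[A]$ (which occur, e.g., whenever $A$ itself has small rank) case by case. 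Theorem~\ref{Treal2} is precisely what bypasses this difficulty, by producing an \emph{a priori} PSD domination of $A^{\circ M}$ by $h[A]$.
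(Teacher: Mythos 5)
Your argument is correct, but it reaches the proposition by a genuinely different route from the one the survey indicates: the paper proves Proposition~\ref{Prayleigh} via the Kronecker normal form of the matrix pencil $t\,h[A] - A^{\circ M}$ (following Gantmacher), which reduces the pair $\bigl( h[A], A^{\circ M} \bigr)$ simultaneously and handles the possible singularity of $h[A]$ structurally, whereas you localize everything to $(\ker h[A])^\perp$ by hand using the Moore--Penrose pseudo-inverse. The price of your reduction is exactly the kernel containment $\ker h[A] \subseteq \ker A^{\circ M}$, which you correctly isolate as the one non-formal step and which you derive from the a priori domination $\cC\, h[A] \succeq A^{\circ M}$ supplied by Theorem~\ref{Treal2} (equivalently Corollary~\ref{Clmi}), extended from $\cP_N\bigl( (0,\rho) \bigr)$ to $\cP_N\bigl( [0,\rho] \bigr)$ by continuity and closedness of the PSD cone --- a limiting step worth making explicit but entirely routine. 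Note that \emph{any} proof of the stated equivalence must establish this containment, since otherwise no finite $t$ satisfies $t\,h[A] \succeq A^{\circ M}$ while $\varrho\bigl( h[A]^{\dagger/2} A^{\circ M} h[A]^{\dagger/2} \bigr)$ remains finite; your observation that the forward implication is free and only the converse needs it is accurate. The conjugation identities $h[A]^{\dagger/2} h[A]^{1/2} = h[A]^{1/2} h[A]^{\dagger/2} = P$ (the projection onto $(\ker h[A])^\perp$) and the substitution $\bw = h[A]^{1/2}\bu$ are standard and correctly executed, and you are right that the denominator must read $\bu^T h[A]\bu$: the printed $\bu^T h[\bu\bu^T]\bu$ is not homogeneous of degree $2$ in $\bu$ and cannot equal the scale-invariant left-hand side, so it is a typographical slip. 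The only cost of your approach relative to the pencil-theoretic one is that the equivalence and the variational formula become logically downstream of Theorem~\ref{Treal2}, so the Rayleigh-quotient formulation can no longer serve as an independent first step toward the threshold $\cC$ (which is how the survey motivates it); what it buys is a short, self-contained spectral argument with no canonical-form machinery.
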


Proposition~\ref{Prayleigh} is shown using the Kronecker normal form for
matrix pencils; see the treatment in~\cite[Section~X.6]{Gantmacher}.
When the matrix $A$ is a generic rank-one matrix, the above
generalized Rayleigh quotient has a closed-form expression, which
features Schur polynomials for integer powers. This reveals
connections between Rayleigh quotients, spectral radii, and symmetric
functions.

\begin{proposition}
Notation as in Proposition \ref{Prayleigh}; but now with $n_j$ not
necessarily in $\Z_+ \cup [ N - 2, \infty )$. If $A = \bu \bu^T$,
where $\bu \in ( 0, \infty )^N$ has distinct coordinates, then
$h[ A ]$ is invertible, and the threshold bound
\begin{equation}\label{Erankone}
\varrho( h[ A ]^{\dagger / 2} A^{\circ M} h[ A ]^{\dagger / 2} )
 = ( \bu^{\circ M} )^T h[ \bu \bu^T]^{-1} \bu^{\circ M} %
 = \sum_{j = 0}^{N - 1} \frac{( \det \bu^{\circ \bn_j} )^2}%
{c_j ( \det \bu^{\circ \bn} )^2}.
\end{equation}
\end{proposition}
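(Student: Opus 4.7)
The plan is to use the rank-one factorization from equation~(\ref{Efactor}) to reduce the computation to a single linear-algebra identity, and then to extract each coefficient via Cramer's rule.

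First I would set $V := \bu^{\circ \bn}$ and $D := \diag(c_0, \ldots, c_{N-1})$, so that equation~(\ref{Efactor}) applied to the polynomial $h$ gives the factorization
\[
h[\bu \bu^T] = \sum_{j=0}^{N-1} c_j \bu^{\circ n_j} (\bu^{\circ n_j})^T = V D V^T.
\]
Because the coordinates of $\bu$ are distinct and positive, the generalized Vandermonde determinant $\det V = \det \bu^{\circ \bn}$ is non-zero, so $V$ is invertible and $h[A] = V D V^T$ is positive definite; hence $h[A]$ is invertible and the Moore--Penrose pseudo-inverse coincides with the honest inverse. Next, since $A^{\circ M} = \bu^{\circ M}(\bu^{\circ M})^T$ is rank one, the matrix $h[A]^{-1/2} A^{\circ M} h[A]^{-1/2}$ is also rank one and positive semidefinite, so its spectral radius equals its trace:
\[
\varrho\bigl( h[A]^{-1/2} A^{\circ M} h[A]^{-1/2}\bigr)
 = (\bu^{\circ M})^T h[A]^{-1} \bu^{\circ M}.
\]
This gives the first equality in~(\ref{Erankone}).

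For the second equality, I would introduce $\bq := V^{-1} \bu^{\circ M} \in \R^N$, so that
\[
(\bu^{\circ M})^T h[A]^{-1} \bu^{\circ M} = \bq^T D^{-1} \bq = \sum_{k=0}^{N-1} \frac{q_{k+1}^2}{c_k},
\]
and compute each $q_{k+1}$ by Cramer's rule applied to the system $V \bq = \bu^{\circ M}$. The resulting matrix $V^{(k)}$, obtained from $V$ by replacing its $(k+1)$th column $\bu^{\circ n_k}$ with $\bu^{\circ M}$, differs from the generalized Vandermonde matrix $\bu^{\circ \bn_k}$ only by moving the inserted column $\bu^{\circ M}$ from position $k+1$ to position $N$, i.e. by $N-k-1$ adjacent column transpositions. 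Thus $\det V^{(k)} = (-1)^{N-k-1} \det \bu^{\circ \bn_k}$, and so
\[
q_{k+1}^2 = \frac{(\det V^{(k)})^2}{(\det V)^2} = \frac{(\det \bu^{\circ \bn_k})^2}{(\det \bu^{\circ \bn})^2}.
\]
Substituting into the displayed expression for $\bq^T D^{-1} \bq$ yields exactly the right-hand side of~(\ref{Erankone}).

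The only subtlety is the sign bookkeeping in the column permutation, which is harmless since the $q_{k+1}$ appear squared; everything else is a direct consequence of the Vandermonde factorization of $h[A]$ combined with the rank-one nature of $A^{\circ M}$. I do not anticipate any serious obstacle: the non-vanishing of the generalized Vandermonde determinant at distinct positive nodes is classical, and Cramer's rule does the rest.
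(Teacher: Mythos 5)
Your argument is correct, and it is a genuinely cleaner route than the one the paper takes. The paper obtains the quantity $(\bu^{\circ M})^T h[\bu\bu^T]^{-1}\bu^{\circ M}$ via the Schur-complement identity of Lemma~\ref{Lktfpsac}(3), namely $\bw^T H^{-1}\bw = 1 - \det(H-\bw\bw^T)/\det H$, and then evaluates $\det(H-\bw\bw^T)$ by applying the Cauchy--Binet formula to the rectangular factorization~(\ref{Efactor}) augmented by the column $\bu^{\circ M}$ with weight $-1$; the $N$-subsets that survive are exactly the one giving $\det H$ and the $N$ giving the minors $\det\bu^{\circ\bn_j}$. You instead exploit that $h$ has exactly $N$ terms, so $h[\bu\bu^T]=VDV^T$ with $V=\bu^{\circ\bn}$ \emph{square} and invertible, and read off the coefficients of $V^{-1}\bu^{\circ M}$ by Cramer's rule; the column permutation relating the Cramer matrices to $\bu^{\circ\bn_j}$ contributes only a sign, which is killed by squaring. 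The two computations are of comparable length, but yours avoids both the determinant identity and the Cauchy--Binet expansion. For the first equality, the paper's Proposition~\ref{Prayleigh} invokes the Kronecker normal form for matrix pencils to handle general $A$, whereas your observation that a rank-one positive semidefinite matrix has spectral radius equal to its trace settles the rank-one case directly; this is all that is needed here. The remaining ingredients you use --- nonvanishing of the generalized Vandermonde determinant at distinct positive nodes with arbitrary real exponents, and positive definiteness of $VDV^T$ for $c_j>0$ --- are exactly as in the paper (the former is justified there via Laguerre's extension of Descartes' rule of signs), so no gap remains.
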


In fact, the proof of the final equality in~(\ref{Erankone}) is
completely algebraic, and reveals new determinantal identities that
hold over any field $\F$ with at least $N$ elements.

\begin{proposition}[Khare--Tao \cite{Khare-Tao}]
Suppose $N \geq 1$ and $0 \leq n_0 < \cdots < n_{N-1} < M$ are integers,
and $\bu, \bv \in \F^N$ each have distinct coordinates.
Let $c_j \in \F^\times$ and define $h(t) := \sum_{j=0}^{N-1}
c_j t^{n_j}$. Then $h[\bu \bv^T]$ is invertible, and
\[
( \bv^{\circ M} )^T h[ \bu \bv^T ]^{-1} \bu^{\circ M} = %
\sum_{j = 0}^{N - 1} 
\frac{\det \bu^{\circ \bn_j} \det \bv^{\circ \bn_j}}%
{c_j \det \bu^{\circ \bn} \det \bv^{\circ \bn}}.
\]
\end{proposition}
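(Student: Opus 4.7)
The plan is to exploit the matrix factorization from equation~(\ref{Efactor}), applied to $h$. Since $h(t) = \sum_{j=0}^{N-1} c_j t^{n_j}$ has only $N$ nonzero monomials, that identity collapses into a product of three square $N \times N$ matrices:
\[
h[\bu \bv^T] \;=\; \sum_{j=0}^{N-1} c_j \bu^{\circ n_j} (\bv^{\circ n_j})^T \;=\; U D V^T,
\]
where $U := \bu^{\circ \bn}$ and $V := \bv^{\circ \bn}$ are the two generalized Vandermonde matrices and $D := \diag(c_0, \ldots, c_{N-1})$. Assuming for the moment that $\det U$ and $\det V$ are nonzero (discussed below), all three factors are invertible; in particular $h[\bu \bv^T]$ is invertible and $h[\bu \bv^T]^{-1} = V^{-T} D^{-1} U^{-1}$.

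Setting $\alpha := U^{-1} \bu^{\circ M}$ and $\beta := V^{-1} \bv^{\circ M}$, the quantity of interest unfolds as
\[
(\bv^{\circ M})^T h[\bu \bv^T]^{-1} \bu^{\circ M} \;=\; \beta^T D^{-1} \alpha \;=\; \sum_{j=0}^{N-1} \frac{\alpha_j \beta_j}{c_j}.
\]
By Cramer's rule, $\alpha_j = \det U_j / \det U$, where $U_j$ is obtained from $U$ by replacing its $(j{+}1)$st column $\bu^{\circ n_j}$ with $\bu^{\circ M}$. On the other hand, the matrix $\bu^{\circ \bn_j}$ has the column $\bu^{\circ M}$ at its rightmost position, so $U_j$ and $\bu^{\circ \bn_j}$ differ only by $N-1-j$ adjacent column transpositions. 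Consequently $\det U_j = (-1)^{N-1-j} \det \bu^{\circ \bn_j}$, and the analogous identity holds for $\beta_j$. The two signs are identical and therefore square to one in the product $\alpha_j \beta_j$, giving
\[
\alpha_j \beta_j \;=\; \frac{\det \bu^{\circ \bn_j}\, \det \bv^{\circ \bn_j}}{\det \bu^{\circ \bn}\, \det \bv^{\circ \bn}},
\]
and substituting into the previous sum yields the claimed identity.

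The main technical point I expect to require care is justifying the non-vanishing of the generalized Vandermonde determinants $\det U$ and $\det V$ from only the hypotheses that the coordinates are distinct and that $\F$ has at least $N$ elements; indeed, over small-characteristic fields a naive example such as $\F = \F_3$, $N=2$, $\bn = (0,2)$, $\bu = (1,2)$ already forces $\det U = 0$, so additional structure (e.g.\ characteristic zero, or a mild genericity assumption on $\bu, \bv$) is needed. In characteristic zero the non-vanishing is automatic via Cauchy's formula $\det \bu^{\circ \bn} = s_\bn(\bu) \prod_{i<k}(u_k - u_i)$ together with the monomial-positivity of Schur polynomials on strictly positive tuples; more generally, one can first prove the identity as a polynomial identity over the universal ring $\Z[u_i, v_i, c_j^{\pm 1}]$ localized away from the product of the two Vandermonde determinants, and then specialize to $\F$ whenever the specialized denominators stay nonzero. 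Once this invertibility point is settled, the rest of the argument reduces to the factorization~(\ref{Efactor}) and a single application of Cramer's rule.
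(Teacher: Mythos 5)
Your proof is correct and follows essentially the route the paper intends: everything rests on the factorization~(\ref{Efactor}), which for the fewnomial $h$ collapses to $h[\bu\bv^T] = \bu^{\circ\bn}\, \diag(c_0,\ldots,c_{N-1})\, (\bv^{\circ\bn})^T$, after which the survey's computation runs through Lemma~\ref{Lktfpsac} and Cauchy--Binet while yours uses Cramer's rule --- the same bookkeeping in different clothing, and your sign $(-1)^{N-1-j}$ correctly cancels in the product $\alpha_j\beta_j$. Your closing caveat is also well taken and is a genuine imprecision in the statement as reproduced here: over a general field, distinctness of coordinates does not force $\det\bu^{\circ\bn}\neq 0$ (your $\F_3$ example is right), so the result implicitly requires the two generalized Vandermonde determinants to be invertible --- which is needed anyway for the right-hand side to be defined --- and your universal-ring/localization formulation is the standard repair. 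One small correction: even in characteristic zero the non-vanishing is not automatic from distinctness alone (take $\bu=(1,-1)$, $\bn=(0,2)$ over $\Q$); it is the strict positivity of the coordinates, via $\det\bu^{\circ\bn}=s_\bn(\bu)\prod_{i<k}(u_k-u_i)$ and monomial positivity of Schur polynomials, that does the work, as in the real setting of Proposition~\ref{Prank1}.
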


The final result is a variant of the matrix-cube
problem~\cite{Nemirovski}, and connects to
spectrahedra~\cite{BPT,Vinzant} and modern optimization theory. Given
two or more real symmetric $N \times N$ matrices
$A_0$, \ldots, $A_{M + 1}$ for  the corresponding matrix cube of size
$2 \eta > 0$ is
\[
\cU[ \eta ] := \bigl\{ A_0 + %
\sum_{m = 1}^{M + 1} u_m A_m : u_m \in [ {-\eta }, \eta ] \bigr\}.
\]
The matrix-cube problem is to find the largest $\eta > 0$ such that
$\cU[ \eta ] \subset \cP_N( \R )$. In the present setting of the
entrywise calculus, the above results imply asymptotically matching
upper and lower bounds for the size of the matrix cube.

\begin{theorem}[see~\cite{BGKP-fixeddim,Khare-Tao}]
Suppose $M \geq 0$ and $0 \leq n_0 < n_1 < \cdots$ are integers. Fix
positive scalars $\rho > 0$, $0 < \alpha_1 < \cdots < \alpha_{M+1}$, and
$c_j > 0\ \forall j \geq 0$, and define for each $N \geq 1$ and each
matrix $A \in \cP_N\bigl( [ 0, \rho ] \bigr)$, the cube
\begin{equation}
\mathcal{U}_A[\eta] := \left\{ \sum_{j=0}^{N-1} c_j A^{\circ n_j} +
\sum_{m=1}^{M+1} u_m A^{\circ (n_{N-1} + \alpha_m)} : u_m \in [-\eta,
\eta] \right\}.
\end{equation}
Also define for $N \geq 1$ and $\alpha > 0$:
\begin{equation}\label{Ecube1}
\mathcal{K}_\alpha(N) := \sum_{j=0}^{N-1}
\frac{V(\bn_j(\alpha,N))^2}{V(\bn(N))^2} \frac{\rho^{\alpha -
n_j}}{c_j},
\end{equation}
where $\bn( N ) := ( n_0, \ldots, n_{N - 1} )^T$, and
\[
\bn_j(\alpha, N) := %
( n_0, \ldots, n_{j - 1}, n_{j + 1}, \ldots, n_{N - 1}, %
n_{N - 1} + \alpha ).
\]
Then for each fixed $N \geq 1$, we have the uniform upper and lower
bounds:
\begin{align}\label{Ecube2}
\begin{aligned}
\eta \leq \bigl( \cK_{\alpha_1}( N ) + \cdots + %
\cK_{\alpha_{M + 1}}( N ) \bigr)^{-1} & \implies %
\cU_A[ \eta ] \subset \cP_N \text{ for all } %
A \in \cP_N\bigl( [ 0, \rho ] \bigr) \\
& \implies \eta \leq \cK_{\alpha_{M + 1}}( N )^{-1}.
\end{aligned}
\end{align}
Moreover, if the $n_j$ grow linearly, in that
\[
\alpha_{M + 1} - \alpha_M \geq n_{j + 1} - n_j %
\qquad \text{for all } j \geq 0,
\]
then the lower and upper bounds for $\eta = \eta_N$ in (\ref{Ecube2})
are asymptotically equal as $N \to \infty$:
\[
\lim_{N \to \infty} \cK_{\alpha_{M + 1}}( N )^{-1} %
\sum_{m = 1}^{M + 1} \cK_{\alpha_m}( N ) = 1.
\]
\end{theorem}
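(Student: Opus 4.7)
The plan is to derive the lower and upper bounds from Corollary~\ref{Clmi} (equivalently the sharpness clause of Theorem~\ref{Treal2}), and then to establish the asymptotic equality by comparing the summands of $\cK_{\alpha_m}(N)$ across $m$ under the linear-growth hypothesis.

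For the lower bound, fix $A \in \cP_N\bigl( [ 0, \rho ] \bigr)$ and $u_1, \ldots, u_{M + 1} \in [ -\eta, \eta ]$. Since $\cP_N( \R )$ is a convex cone and each $A^{\circ (n_{N-1}+\alpha_m)}$ is positive semidefinite, the worst case is $u_m = -\eta$ for every $m$; the question then becomes whether
\[
\sum_{j = 0}^{N - 1} c_j A^{\circ n_j} - \eta \sum_{m = 1}^{M + 1} A^{\circ ( n_{N - 1} + \alpha_m )} \succeq 0.
\]
Applying Corollary~\ref{Clmi} with $M$ replaced by $n_{N - 1} + \alpha_m$ gives $A^{\circ ( n_{N - 1} + \alpha_m )} \leq \cK_{\alpha_m}( N ) \sum_{j} c_j A^{\circ n_j}$; summing over $m$ together with the hypothesis $\eta \sum_m \cK_{\alpha_m}( N ) \leq 1$ yields the desired containment $\cU_A[ \eta ] \subset \cP_N$.

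For the upper bound, I would specialize to the slice $u_1 = \cdots = u_M = 0$, $u_{M + 1} = -\eta$ of $\cU_A[ \eta ]$. The assumption that $\cU_A[ \eta ] \subset \cP_N$ for every admissible $A$ forces
\[
\eta A^{\circ ( n_{N - 1} + \alpha_{M + 1} )} \leq \sum_{j = 0}^{N - 1} c_j A^{\circ n_j} \qquad \text{for all } A \in \cP_N\bigl( [ 0, \rho ] \bigr).
\]
Since the integer exponents $n_0, \ldots, n_{N - 1}$ lie in $\Z_+ \cup [ N - 2, \infty )$, the sharpness clause of Theorem~\ref{Treal2} identifies $\cK_{\alpha_{M + 1}}( N )$ as the smallest admissible multiplier, and so $\eta \leq \cK_{\alpha_{M + 1}}( N )^{-1}$.

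For the asymptotic statement, it suffices to show that $\cK_{\alpha_m}( N ) / \cK_{\alpha_{M + 1}}( N ) \to 0$ for each $1 \leq m \leq M$. Writing each summand as
\[
T_j( \alpha, N ) := \frac{\rho^{\alpha - n_j}}{c_j} \prod_{i \neq j} \Bigl( \frac{n_{N - 1} + \alpha - n_i}{n_j - n_i} \Bigr)^2,
\]
the term-by-term ratio is
\[
\frac{T_j( \alpha_m, N )}{T_j( \alpha_{M + 1}, N )} = \rho^{\alpha_m - \alpha_{M + 1}} \prod_{i \neq j} \Bigl( \frac{n_{N - 1} + \alpha_m - n_i}{n_{N - 1} + \alpha_{M + 1} - n_i} \Bigr)^2 \in ( 0, 1 ).
\]
The linear-growth hypothesis $\alpha_{M + 1} - \alpha_M \geq n_{j + 1} - n_j$ bounds the consecutive gaps of the sequence $( n_j )$ uniformly, so a linear-in-$N$ number of indices $i$ contribute factors bounded away from $1$, forcing each such product to decay polynomially in~$N$. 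The main obstacle will be extracting a uniform rate after summing over $j$: one locates the dominant index $j^\ast$ (determined by the interplay between $\rho$ and the $n_j$ spacings), bounds the off-dominant $T_j$ against $T_{j^\ast}$ via the same Vandermonde estimates, and concludes that $\cK_{\alpha_m}( N ) = o\bigl( \cK_{\alpha_{M + 1}}( N ) \bigr)$, which completes the asymptotic identity.
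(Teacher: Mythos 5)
Your derivation of the two implications in (\ref{Ecube2}) is sound and is the intended route: the left-hand implication follows from Corollary~\ref{Clmi} applied with exponent $n_{N-1}+\alpha_m$ for each $m$ (note that the reduction to the corner $u_m=-\eta$ uses that each $A^{\circ(n_{N-1}+\alpha_m)}$ is positive semidefinite, which holds by Theorem~\ref{Tfitzhorn} since $n_{N-1}\geq N-1>N-2$; and passing from $\cP_N\bigl((0,\rho)\bigr)$ to $\cP_N\bigl([0,\rho]\bigr)$ is a routine closure argument), while the right-hand implication is exactly the sharpness clause of Theorem~\ref{Treal2} applied to the slice $u_1=\cdots=u_M=0$, $u_{M+1}=-\eta$.

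The asymptotic part, however, has a genuine gap, and the mechanism you invoke is not the correct one. You claim that ``a linear-in-$N$ number of indices $i$ contribute factors bounded away from $1$.'' This is false: the factor $1-\frac{\alpha_{M+1}-\alpha_m}{n_{N-1}+\alpha_{M+1}-n_i}$ is bounded away from $1$ by a fixed $\epsilon$ only when $n_{N-1}-n_i$ is bounded, which (the $n_i$ being strictly increasing integers) happens for only $O(1)$ indices; moreover, if linearly many factors really were $\leq 1-\epsilon$, the product would decay \emph{exponentially}, not polynomially. The actual source of decay is the accumulation of many factors close to $1$: writing $\delta:=\alpha_{M+1}-\alpha_m$ and using $1-x\leq e^{-x}$ together with the gap bound $n_{j+1}-n_j\leq C:=\alpha_{M+1}-\alpha_M$, one gets
\[
\prod_{i\neq j}\Bigl(1-\tfrac{\delta}{n_{N-1}+\alpha_{M+1}-n_i}\Bigr)^2
\leq \exp\Bigl(-2\delta\sum_{i\neq j}\tfrac{1}{n_{N-1}+\alpha_{M+1}-n_i}\Bigr)
\leq e^{O(1)}\,N^{-2\delta/C},
\]
a harmonic-sum estimate, and $\delta/C\geq 1$ by hypothesis. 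This bound is uniform in $j$, which also dissolves the ``main obstacle'' you flag at the end: there is no need to locate a dominant index $j^\ast$, because the mediant inequality $\frac{\sum_j a_j}{\sum_j b_j}\leq\max_j\frac{a_j}{b_j}$ (valid for positive $a_j,b_j$) immediately gives $\cK_{\alpha_m}(N)/\cK_{\alpha_{M+1}}(N)\leq\max_j T_j(\alpha_m,N)/T_j(\alpha_{M+1},N)\to 0$. As written, your argument both rests on an incorrect quantitative claim and leaves the summation step unresolved; with the two corrections above it closes.
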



\section{Totally non-negative matrices and positivity preservers}

In this chapter, we discuss variant notions of matrix positivity that
are well studied in the literature, \emph{total positivity} and
\emph{total non-negativity}, and characterize the maps which preserve
these properties.

\begin{definition}
A real matrix $A$ is said to be \emph{totally non-negative} or
\emph{totally positive} if every minor of $A$ is non-negative or
positive, respectively. We will denote these matrices, as well as the
property, by TN and TP.

In older texts, such matrices were called \textit{totally positive}
and \textit{strictly totally positive}, respectively.
\end{definition}



To introduce the theory of total positivity, we can do no better than
quote from the preface of Karlin's magisterial book \cite{Karlin}:
``Total positivity is a concept of considerable power that plays an
important role in various domains of mathematics, statistics and
mechanics''. Karlin goes on to list ``problems involving convexity,
moment spaces, eigenvalues of integral operators, ... oscillation
properties of solutions of linear differential equations ... the
theory of approximations ... statistical decision procedures
... discerning uniformly most powerful tests for hypotheses
... ascertaining optimal policy for inventory and production processes
... analysis of diffusion-type stochastic processes, and ... coupled
mechanical systems.''

Perhaps the earliest result on total positivity is due to Fekete, in
correspondence with P\'olya \cite{Fekete} published in 1912 (see
Lemma~\ref{Lfekete}). Schoenberg observed the variation-diminishing
properties of TP matrices in 1930 \cite{Schoenberg30}, and published a
series of papers on P\'olya frequency functions, which are defined in
terms of total positivity, in the 1950s
\cite{SchoenbergPFF1,SchoenbergPFF2,SchoenbergPFF3}. Independently of
Schoenberg, Krein's investigation of ordinary differential equations
led him to the total positivity of Green's functions for certain
differential operators, and in the mid-1930s his works with
Gantmacher looked at spectral and other properties of totally positive
matrices and kernels; see \cite{GK-book} and \cite[Section~10.6]{Karlin}.

For more on these four authors, one may consult the afterwork of
Pinkus's book on total positivity \cite{Pinkus-book}, which also
contains a wealth of results on totally positive and totally
non-negative matrices. For a modern collection of applications of the
theory of total positivity, see the book edited by Gasca and Micchelli
\cite{GM-book}.

More recently, total positivity has had a major impact on Lie
theory. Lusztig extended the theory of total positivity to the setting
of linear algebraic groups; see \cite{Lusztig-intro} for an exposition
of this work. This led Fomin and Zelevinsky to investigate the
combinatorics of Lusztig's theory \cite{Fomin-Zelevinksy} and resulted
in the invention of cluster algebras \cite{FZ02}. These objects have
generated an enormous amount of activity in a short period of time,
with connections across a wide range of areas within representation
theory, combinatorics, geometry, and mathematical physics. For
the latter, we will mention only the totally non-negative Grassmannian
\cite{PSW}, its connections with scattering amplitudes for quantum
field theories \cite{A-HBCGPJ}, and the work by Kodama and Williams on
regular soliton solutions of the Kadomtsev--Petviashvili equation
\cite{KW14}.

\begin{example}
Perhaps the most well-known class of totally positive matrices
consists of the \emph{(generalized) Vandermonde matrices}: for real
numbers $0 < x_1 < \cdots < x_m$ and $\alpha_1 < \cdots < \alpha_n$,
the $m \times n$ matrix
\[
A := [ x_j^{\alpha_k} ]_{1 \leq j \leq m, \ 1 \leq k \leq n}
\]
is totally positive. Indeed, it suffices to show the positivity of any
such matrix determinant $\det A$ when $m = n$. That $\det A$ is
non-zero follows from Laguerre's extension of Descartes' rule of signs
(see~\cite{Jameson}) and by fixing the $x_j$ and considering a linear
homotopy from $(0, 1, \ldots, n - 1)$ to $( \alpha_1, \ldots, \alpha_n
)$, one obtains a continuous non-vanishing function from the usual
Vandermonde determinant $\prod_{1 \leq j < k \leq n} ( x_k - x_j )$
(which is positive) to $\det A$.
\end{example}

\begin{example}
Another prominent class of symmetric totally positive matrices
consists of the Hankel moment matrices
$H_\mu := [ s_{j + k}( \mu ) ]_{j, k \geq 0}$ corresponding to
admissible measures $\mu$; see Definition~\ref{Dadmissible}.
\end{example}

\subsection{Totally non-negative and totally positive kernels}

An important generalization of TN and TP matrices is given by the
following functional form.

\begin{definition}
Let $X$ and $Y$ be totally ordered sets, and let
$K : X \times Y \to \R$ be a kernel.
\begin{enumerate}
\item The kernel $K$ is \emph{totally positive of order $r$}, denoted
$TP_r$, if, for any $n$-tuples of points
$x_1 < \cdots < x_n$ in $X$ and $y_1 < \cdots < y_n$ in $Y$, where
$1 \leq n \leq r$, the matrix
\[
[ K( x_j, y_k ) ]_{j, k = 1}^n
\]
has positive determinant.

\item The  kernel $K$ is \emph{totally positive} if $K$ is $TP_r$ for
all $r \geq 1$. 

\item Similarly, one defines $TN_r$ kernels and totally non-negative
kernels by replacing the word ``positive'' in the above by
``non-negative.''
\end{enumerate}
\end{definition}

If $X = \{ 1, \ldots, m \}$ and $Y = \{ 1, \ldots, n \}$, we recover
the earlier notions of totally positive and totally non-negative
matrices.
When $X$ and $Y$ are taken to be real intervals, TN and TP kernels can
be thought of as continuous analogues of TN and TP matrices. In fact,
one has a continuous analogue of the Cauchy--Binet formula, which
generalizes its traditional version.

\begin{theorem}[Basic Composition Lemma, see
e.g.~\cite{Karlin,KR}]\label{Tkr}
Suppose $X$, $Y$, $Z \subset \R$ and let $\mu$ be a non-negative Borel
measure on $Y$. Suppose $K : X \times Y \to \R$ and
$L : Y \times Z \to \R$ are pointwise Borel measurable with respect to
$Y$, and let
\[
M : X \times Z \to \R; \ %
( x, z ) \mapsto \int_Y K( x, y ) L( y, z ) \std\mu( y ).
\]
If $M$ is well defined on the whole of $X \times Z$, then
\begin{align*}
\begin{aligned}
 &\det\begin{bmatrix} M( x_1, z_1 ) & \ldots & M( x_1, z_m ) \\
  \vdots & \ddots & \vdots \\
M( x_m, z_1 ) & \ldots & M( x_m, z_m ) \end{bmatrix} \\
  & = \idotsint\limits_{y_1 < y_2 < \cdots < y_m \in Y} %
\det[ K( x_i, y_j ) ]_{i, j = 1}^m  %
\det[ L( y_j, z_k ) ]_{j, k =1}^m \prod_{j = 1}^m \std\mu( y_j ).
\end{aligned}
\end{align*}
\end{theorem}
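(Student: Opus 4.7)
The plan is to expand the left-hand side determinant via the Leibniz formula, exchange the (finite) sum with the integral by Fubini--Tonelli under the hypothesis that $M$ is well defined on $X\times Z$, and then reorganize the resulting multiple integral over $Y^m$ into a sum of integrals over the $m!$ ordered chambers, where the signs combine to produce the two determinants in the integrand.

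Concretely, I would first write
\[
\det[M(x_i,z_k)]_{i,k=1}^m
= \sum_{\sigma\in S_m}\mathrm{sgn}(\sigma)\prod_{i=1}^m M(x_i,z_{\sigma(i)})
= \int_{Y^m}\!\Bigl(\prod_{i=1}^m K(x_i,y_i)\Bigr)\,\det[L(y_j,z_k)]_{j,k=1}^m \prod_{i=1}^m\rd\mu(y_i),
\]
using the Leibniz formula twice (once to expand $\det M$ and once to pull the sum over $\sigma$ back into a determinant in $L$). The hypothesis that $M$ exists on $X\times Z$ together with pointwise Borel measurability justifies the exchange; one can also reduce to the integrable case by a standard truncation/monotone class argument.

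Next, I would partition $Y^m$ (up to a set of $\mu^m$-measure zero coming from the diagonals) into the $m!$ open chambers $R_\tau=\{y:y_{\tau(1)}<\cdots<y_{\tau(m)}\}$, $\tau\in S_m$. On each $R_\tau$, change variables $w_j:=y_{\tau(j)}$, so that $w_1<\cdots<w_m$ and $y_i=w_{\tau^{-1}(i)}$. Under this substitution, $\prod_i K(x_i,y_i)=\prod_j K(x_{\tau(j)},w_j)$, while the row permutation in the $L$-determinant contributes a factor $\mathrm{sgn}(\tau)$, so $\det[L(y_j,z_k)]=\mathrm{sgn}(\tau)\det[L(w_j,z_k)]$. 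Summing over $\tau$ and recognizing
\[
\sum_{\tau\in S_m}\mathrm{sgn}(\tau)\prod_{j=1}^m K(x_{\tau(j)},w_j)=\det[K(x_i,w_j)]_{i,j=1}^m
\]
(after the reindexing $i=\tau(j)$, noting $\mathrm{sgn}(\tau)=\mathrm{sgn}(\tau^{-1})$) gives the stated identity.

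The main obstacle is measure-theoretic rather than algebraic: one must justify Fubini's theorem in a setting where absolute integrability of $\prod_i K(x_i,y_i)\det[L(y_j,z_k)]$ on $Y^m$ is not part of the hypotheses. The cleanest route is to note that the statement is symmetric in $K,L$ and that the hypothesis ``$M$ is well defined on all of $X\times Z$'' lets one apply Tonelli to the positive and negative parts of the product; alternatively one can first prove the identity for simple/bounded kernels of compact support and pass to the limit. Once Fubini is in hand, every remaining step is purely combinatorial, and the proof reduces to the chamber decomposition and sign bookkeeping described above.
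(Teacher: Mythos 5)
The paper itself gives no proof of Theorem~\ref{Tkr}; it simply cites Karlin and Karlin--Rinott. Your argument is the standard (and correct) proof of this identity --- Leibniz expansion, Fubini, chamber decomposition, sign bookkeeping --- so there is nothing to compare against in the source, and the combinatorial core of what you write is exactly right. Two small refinements. First, the reason the ``diagonal'' set $\{ y_i = y_j \text{ for some } i \neq j \}$ contributes nothing is \emph{not} that it is $\mu^{\otimes m}$-null (this fails when $\mu$ has atoms, which is allowed and indeed needed to recover the classical Cauchy--Binet formula); rather, on that set the matrix $[ L( y_j, z_k ) ]$ has two equal rows, so the integrand $\prod_i K( x_i, y_i ) \det [ L( y_j, z_k ) ]$ vanishes identically there. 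Second, your closing worry about Fubini is overstated: ``$M$ is well defined'' in the Lebesgue sense means $y \mapsto K( x, y ) L( y, z )$ is absolutely $\mu$-integrable for every pair $( x, z )$, and then for each $\sigma \in S_m$ the function $\prod_{i=1}^m K( x_i, y_i ) L( y_i, z_{\sigma(i)} )$ is a product of one-variable integrable functions, hence automatically $\mu^{\otimes m}$-integrable with $\int \prod_i = \prod_i \int$ by Tonelli; summing the finitely many $\sigma$ gives absolute integrability of $\prod_i K( x_i, y_i ) \det[ L( y_j, z_k ) ]$ over $Y^m$, so no truncation or monotone-class argument is needed.
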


As an immediate consequence, we have the following corollary.

\begin{corollary}\label{Ckr}
In the setting of Theorem~\ref{Tkr}, if the kernels $K$ and $L$ are
both $TN_r$ or $TP_r$ for some $r \geq 1$, then $M$ has the same
property. In particular, if $K$ and $L$ are both TN or TP, then so is
$M$.
\end{corollary}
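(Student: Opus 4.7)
The plan is to apply the Basic Composition Lemma (Theorem~\ref{Tkr}) directly to every $m \times m$ minor of $M$ for each $m$ with $1 \leq m \leq r$. Given ordered tuples $x_1 < \cdots < x_m$ in $X$ and $z_1 < \cdots < z_m$ in $Z$, Theorem~\ref{Tkr} expresses $\det[M(x_i,z_k)]_{i,k=1}^m$ as the integral over the ordered simplex $\{(y_1,\ldots,y_m) \in Y^m : y_1 < \cdots < y_m\}$ of the product
\[
\det[K(x_i,y_j)]_{i,j=1}^m \cdot \det[L(y_j,z_k)]_{j,k=1}^m
\]
against the product measure $\prod_{j=1}^m \std\mu(y_j)$. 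The entire proof reduces to inspecting the sign of this integrand on the simplex.

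For the $TN_r$ statement, both determinantal factors are non-negative throughout the ordered simplex by the $TN_r$ hypotheses on $K$ and $L$, while $\mu^{\otimes m}$ is a non-negative measure; hence the integral, which equals $\det[M(x_i,z_k)]$, is non-negative. Since this holds for all $m \leq r$ and all ordered tuples, $M$ is $TN_r$. Letting $r$ increase without bound gives the TN half of the final assertion.

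For the $TP_r$ statement, the same argument shows the integrand is strictly positive on the open ordered simplex. To conclude $\det[M(x_i,z_k)] > 0$ one needs the product measure to assign positive mass to this open simplex; this is the only subtle step and is the one obstacle I would flag. It is automatic under the standard hypotheses for this context, for instance when $\mu$ has a non-trivial absolutely continuous part on an interval, or is atomic on at least $r$ points in $Y$, since then $\mu^{\otimes m}$ does not concentrate on the diagonal $\{y_i = y_j\}$. Under such a hypothesis, strict positivity of the integral follows, yielding $TP_r$ for each $m \leq r$ and, taking $r \to \infty$, the TP half of the corollary. No further ingredients beyond Theorem~\ref{Tkr} and sign considerations are required.
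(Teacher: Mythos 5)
Your proof is correct and is exactly the argument the paper has in mind: the corollary is stated there as an immediate consequence of the Basic Composition Lemma, obtained by inspecting the sign of the integrand over the ordered simplex, precisely as you do. Your flagged caveat for the $TP_r$ case --- that $\mu^{\otimes m}$ must give positive mass to the open ordered simplex, i.e.\ that $\mu$ is not concentrated on fewer than $m$ points --- is a legitimate hypothesis that the paper leaves implicit in its standing assumptions.
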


We conclude this part with an observation of P\'olya that connects to
a class of well-studied functions, and also implies the positive
definiteness of the Gaussian kernel. Recall from the proof of
Theorem~\ref{Tsch-pos-def} above that this latter property was
crucially used by Schoenberg in characterizing metric space embeddings
into Hilbert space; however, its proof above was only outlined (via
the more sophisticated machinery of Fourier analysis and Bochner's
theorem).

\begin{lemma}[P\'olya]\label{Lpolya}
The Gaussian kernel $K : \R \times \R \to \R$ given by
$K( x, y ) := \exp( -( x - y )^2 )$ is totally positive.
\end{lemma}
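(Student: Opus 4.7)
The plan is to exploit the factorization
\[
K(x,y) = e^{-(x-y)^2} = e^{-x^2} \cdot e^{2xy} \cdot e^{-y^2}
\]
to reduce the claim to the total positivity of the pure exponential kernel $(x,y) \mapsto e^{2xy}$, which I will then deduce directly from the generalized Vandermonde example highlighted earlier in the excerpt.

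The first step is a diagonal-scaling argument. Fix any $r \geq 1$, and choose $x_1 < \cdots < x_r$ and $y_1 < \cdots < y_r$ in $\R$. With $D_x := \diag(e^{-x_1^2},\ldots,e^{-x_r^2})$ and $D_y := \diag(e^{-y_1^2},\ldots,e^{-y_r^2})$ I have the matrix identity
\[
[ K(x_j, y_k) ]_{j, k = 1}^r = D_x \, [ e^{2 x_j y_k} ]_{j, k = 1}^r \, D_y.
\]
Since $D_x$ and $D_y$ have strictly positive diagonal entries, $\det[K(x_j, y_k)]_{j,k=1}^r$ has the same sign as $\det[e^{2 x_j y_k}]_{j,k=1}^r$. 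Thus it suffices to prove that the kernel $E(x,y) := e^{2xy}$ on $\R \times \R$ is totally positive.

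The second step is the key reduction: substitute $u_j := e^{2 x_j}$. Because $t \mapsto e^{2t}$ is strictly increasing, the assumption $x_1 < \cdots < x_r$ yields $0 < u_1 < \cdots < u_r$, and
\[
e^{2 x_j y_k} = (e^{2 x_j})^{y_k} = u_j^{y_k}.
\]
Therefore $[e^{2 x_j y_k}]_{j,k=1}^r = [u_j^{y_k}]_{j,k=1}^r$ is a generalized Vandermonde matrix in the sense of the example preceding the lemma (with real, strictly increasing exponents $y_1 < \cdots < y_r$ and strictly positive, strictly increasing bases $u_j$). That example establishes that such matrices have positive determinant. Hence $\det[e^{2 x_j y_k}]_{j,k=1}^r > 0$, and combining with the first step gives $\det[K(x_j, y_k)]_{j,k=1}^r > 0$.

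Since $r$, the $x_j$'s and the $y_k$'s were arbitrary, every minor of every size of $K$ is positive, so $K$ is totally positive. There is no real obstacle here beyond recognizing the factorization; the only point that needs a little care is citing the positivity of the generalized Vandermonde determinant for real (possibly negative and non-integer) exponents $y_k$, which is exactly what the earlier example provides via the linear homotopy from the classical Vandermonde exponents $(0,1,\ldots,r-1)$ to $(y_1,\ldots,y_r)$ combined with Laguerre's extension of Descartes' rule of signs.
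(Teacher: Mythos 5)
Your proof is correct and follows essentially the same route as the paper: the factorization $e^{-(x-y)^2} = e^{-x^2}\, e^{2xy}\, e^{-y^2}$, the diagonal scaling, and the identification of $[e^{2x_j y_k}]$ as a generalized Vandermonde matrix $[u_j^{y_k}]$ with $u_j = e^{2x_j}$, whose determinant is positive by the earlier example. The only difference is that you spell out the justification for real exponents more explicitly than the paper does, which is a reasonable bit of extra care but not a new idea.
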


\begin{proof}
It suffices to show that every square matrix generated from the kernel
has positive determinant. Given real numbers
$x_1 < \cdots < x_n$ and $y_1 < \cdots < y_n$, we observe the
following factorization:
\begin{multline*}
[ \exp( -( x_j - y_k )^2 ) ]_{j, k = 1}^n \\
= \diag[\exp( -x_j^2 ) ]_{j = 1}^n %
[ \exp( 2 x_j y_k ) ]_{j, k = 1}^n %
\diag[ \exp( -y_k^2 ) ]_{k = 1}^n.
\end{multline*}
The proof concludes by observing that all three matrices on the
right-hand side have positive determinants, the second because it is a
Vandermonde matrix $[ p_j^{\alpha_k} ]$ with
$p_j = \exp( 2 x_j )$ and $\alpha_k = y_k$.
\end{proof}

\begin{example}\label{EPFF}
The Gaussian function $f( x ) = \exp( -x^2 )$ is thus an example of a
\emph{P\'olya frequency function}, that is, one for which $f( x - y )$
is a TP kernel on $\R \times \R$. As noted above, these functions were
intensively studied by Schoenberg, and continue to be much studied in
mathematics and statistics; two of the classic references are
\cite{Brenti,Efron}.
\end{example}

The case of the multivariate Gaussian kernel follows immediately from
the one-dimensional version.

\begin{corollary}
For all $d \geq 1$, the Gaussian kernel
\[
\R^d \times \R^d \to ( 0, \infty ); \ %
( \bx, \by ) \mapsto K( \bx, \by ) := \exp( -\| \bx - \by \|^2 )
\]
is positive semidefinite on $\R^d \times \R^d$. In other words, the
matrix $[ \exp( -\| \bx_j - \bx_k \|^2 ) ]_{j, k = 1}^n$ is positive
semidefinite for all $\bx_1$, \ldots, $\bx_n \in \R^d$.
\end{corollary}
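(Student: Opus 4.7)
The plan is to reduce the $d$-dimensional statement to the one-dimensional case already established in Lemma~\ref{Lpolya}, exploiting the multiplicative tensor structure of the Gaussian. Specifically, writing $\bx = ( x_1, \ldots, x_d )$ and $\by = ( y_1, \ldots, y_d )$, one has
\[
\exp\bigl( -\| \bx - \by \|^2 \bigr) =
\prod_{i = 1}^d \exp\bigl( -( x_i - y_i )^2 \bigr),
\]
so that for any points $\bx_1, \ldots, \bx_n \in \R^d$ the kernel matrix decomposes as the entrywise (Hadamard) product
\[
\bigl[ \exp( -\| \bx_j - \bx_k \|^2 ) \bigr]_{j, k = 1}^n =
M_1 \circ M_2 \circ \cdots \circ M_d, \quad
M_i := \bigl[ \exp( -( x_{j, i} - x_{k, i})^2 ) \bigr]_{j, k = 1}^n,
\]
where $x_{j, i}$ denotes the $i$-th coordinate of $\bx_j$. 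The strategy is then to verify that each $M_i$ is positive semidefinite, and to conclude by the Schur product theorem.

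To show positive semidefiniteness of $M_i$, I would first treat the case in which $x_{1, i}, \ldots, x_{n, i}$ are pairwise distinct. After permuting rows and columns (which preserves positive semidefiniteness), we may arrange these scalars in strictly increasing order; Lemma~\ref{Lpolya} then asserts that the resulting symmetric matrix is totally positive. In particular, every leading principal minor, being itself a Gaussian kernel matrix indexed by an increasing subsequence, has positive determinant, and Sylvester's criterion applied to the symmetric $M_i$ yields that $M_i$ is positive definite. For the general case, let $y_1 < \cdots < y_m$ be the distinct values in $\{ x_{1, i}, \ldots, x_{n, i} \}$, let $N := [ \exp( -( y_a - y_b )^2 ) ]_{a, b = 1}^m$, which is positive definite by the distinct case, and introduce the $n \times m$ incidence matrix $P$ with $P_{j, a} = 1$ if $x_{j, i} = y_a$ and $0$ otherwise. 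Then $M_i = P N P^T$ is positive semidefinite.

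Having established that each factor $M_i$ is positive semidefinite, an application of the Schur product theorem (cited repeatedly above, e.g.\ in the proof of Theorem~\ref{Tschoenberg}) gives that the Hadamard product $M_1 \circ \cdots \circ M_d$ is positive semidefinite, which is exactly the assertion of the corollary. The only mildly technical point is the handling of coinciding coordinates; everything else is an immediate consequence of the one-dimensional Gaussian identity together with the tensor factorization, so there is no substantial obstacle to surmount.
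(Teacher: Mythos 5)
Your proof is correct and follows essentially the same route as the paper's: reduce to $d = 1$ via the tensor factorization of the Gaussian together with the Schur product theorem, with the one-dimensional case supplied by Lemma~\ref{Lpolya}. The only difference is that you carefully fill in the passage from total positivity (for distinct, increasing points) to positive semidefiniteness (for arbitrary, possibly repeated points) via Sylvester's criterion and the incidence-matrix factorization, a step the paper leaves implicit.
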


\begin{proof}
The $d = 1$ case is a direct consequence of Lemma~\ref{Lpolya}, and
the case of general $d$ follows from this by using the Schur product
theorem.
\end{proof}

\subsection{Entrywise preservers of totally non-negative Hankel
matrices}\label{Shtn}

In the recent article~\cite{FJS} by Fallat, Johnson, and Sokal, the
authors study when various classes of totally non-negative (TN)
matrices are closed under taking sums or Schur products. As they
observe, the set of all TN matrices is not closed under these
operations; for example, the $3 \times 3$ identity matrix and the
all-ones matrix $\bone_{3 \times 3}$ are both TN but their sum is not.

It is of interest to isolate a class of TN matrices that is a closed
convex cone, and is furthermore closed under taking Schur products.
Indeed, it is under these conditions that the observation of
P\'olya--Szeg\"o (see Section~\ref{Shistory}) holds, leading to large
classes of TN preservers.

Such a class of matrices has been identified in both the
dimension-free as well as fixed-dimension settings. It consists of the
\emph{TN Hankel matrices}. In a fixed dimension, there is the
following classical result from 1912.

\begin{lemma}[Fekete \cite{Fekete}]\label{Lfekete}
Let $A$ be a possibly rectangular real Hankel matrix such that all of
its contiguous minors are positive. Then $A$ is totally positive.
\end{lemma}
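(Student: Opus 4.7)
The plan is to argue by induction on the order $k$ of the minor being checked. The base case $k=1$ is immediate: every entry of $A$ is a $1 \times 1$ contiguous minor and is therefore positive by hypothesis. For the inductive step, I would fix $k \geq 2$, assume that every $j$-minor of $A$ is positive for $1 \leq j < k$, and consider an arbitrary $k \times k$ submatrix $M$ of $A$ cut out by rows $i_1 < \cdots < i_k$ and columns $j_1 < \cdots < j_k$; if both index sets are already intervals of consecutive integers, the minor is contiguous and the claim is part of the hypothesis.

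The main tool I would deploy is the Desnanot--Jacobi (or Lewis Carroll) determinantal identity
\[
\det M \cdot \det M^{1,k}_{1,k} = \det M^1_1 \cdot \det M^k_k - \det M^1_k \cdot \det M^k_1,
\]
where $M^a_b$ (respectively $M^{a,c}_{b,d}$) denotes the submatrix of $M$ obtained by deleting the indicated row(s) and column(s). The four $(k-1)$-minors on the right and the $(k-2)$-minor on the left are minors of $A$ of order strictly less than $k$, hence positive by the primary induction hypothesis. Solving for $\det M$ reduces the task to the strict inequality $\det M^1_1 \cdot \det M^k_k > \det M^1_k \cdot \det M^k_1$. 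To secure this, I would set up a secondary induction on a ``defect'' statistic $d(I,J) := (i_k - i_1 - k + 1) + (j_k - j_1 - k + 1)$, which measures how far $M$ is from being contiguous and which vanishes precisely in the contiguous case. The Hankel symmetry $A_{i,j+1} = A_{i+1,j}$ permits one to trade a row index for a column index; iterating such trades reindexes the four minors on the right as minors of strictly smaller defect, reducing the claim to contiguous data where the hypothesis applies directly.

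The main obstacle is the combinatorial bookkeeping in the secondary induction: one must verify that each reindexing step strictly decreases the defect while preserving the \emph{strict} form of the key inequality, and one must likely treat row and column defects separately, handling each by a suitable swap enabled by the Hankel identity. Should this route become unwieldy, a viable alternative is the Lindström--Gessel--Viennot approach, in which $\det M$ is interpreted as a signed weighted count of non-intersecting lattice path systems whose step weights encode the Hankel sequence; path-exchange arguments in that setting convert an arbitrary configuration into a contiguous one while manifestly preserving positivity. Either route ultimately leverages the fact that a Hankel entry depends only on the sum of its row and column indices, which is precisely the rigidity needed to propagate positivity of contiguous minors to all minors.
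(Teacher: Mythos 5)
The paper does not actually prove this lemma (it only cites Fekete), so your argument stands or falls on its own. It falls at the decisive step. After applying Desnanot--Jacobi, you correctly observe that positivity of $\det M$ reduces to the \emph{strict inequality} $\det M^1_1\cdot\det M^k_k>\det M^1_k\cdot\det M^k_1$. But your primary induction hypothesis only tells you that each of these four $(k-1)$-minors is a positive number, and positivity of four numbers $a,b,c,d$ says nothing about whether $ab>cd$. The inequality you need is a Koteljanskii-type inequality that is a known \emph{consequence} of total positivity -- i.e.\ it sits downstream of the theorem you are proving. Your proposed secondary induction does not supply it for two reasons. First, the Hankel relation $A_{i,j+1}=A_{i+1,j}$ does not let you ``trade a row index for a column index'' one at a time: the only index changes that preserve the submatrix $[s_{i_a+j_b}]$ are uniform shifts $I\mapsto I+c$, $J\mapsto J-c$ (plus transposition, $\det A[I|J]=\det A[J|I]$), and neither changes your defect statistic. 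Second, even if you could reduce all four minors to contiguous position, the hypothesis would only give their individual positivity, which, as noted, is not the inequality you need. The gap is already visible at $k=2$: there the identity is just $\det M=m_{11}m_{22}-m_{12}m_{21}$, and positivity of the entries (all the induction gives you) does not make this positive.

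The classical proof of Fekete's criterion (which needs no Hankel hypothesis at all) avoids this trap by using a different three-term identity. One does a double induction -- on the order $k$ and on the number of gaps in the index sets -- and fills a gap with an intermediate index $p$. The relevant identity is a Grassmann--Pl\"ucker (linear-dependence) relation: for instance, for $k=2$ and rows $a,b,c$ of a two-column submatrix with $b$ the gap-filling row, one has $\det(a,c)\,b_j=\det(b,c)\,a_j+\det(a,b)\,c_j$, so the unknown minor times a positive quantity equals a \emph{sum} of products of terms already known to be positive. That plus sign, absent from Desnanot--Jacobi, is what closes the induction. If you want to exploit the Hankel structure instead, the workable route is the one the paper itself uses elsewhere (Andr\'eief's identity in the proof of Lemma~\ref{L2symtp}): realize the matrix as a moment matrix and write each minor as an integral of a product of two generalized Vandermonde determinants over the simplex $x_1<\cdots<x_k$; but that requires first solving a moment problem, and your sketch does not set that up. As written, the proposal is not a proof.
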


Recall that a minor is said to be \emph{contiguous} if it is obtained
from successive rows and successive columns of $A$.

If $A$ is a square Hankel matrix, let $A^{(1)}$ be the square
submatrix of $A$ obtained by removing the first row and the last
column. Notice that every contiguous minor of $A$ is a principal minor
of either $A$ or $A^{(1)}$.  Combined with Fekete's lemma, these
observations help show another folklore result.

\begin{theorem}\label{FHTN}
Let $A$ be a square real Hankel matrix. Then $A$ is TN or TP if and
only if both $A$ and $A^{(1)}$ are positive semidefinite or positive
definite, respectively.
\end{theorem}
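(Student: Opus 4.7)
My plan is to reduce both directions to Fekete's Lemma~\ref{Lfekete} via a parity-based bookkeeping observation on Hankel matrices. Writing $A_{jk} = h_{j+k}$, the matrix $A^{(1)}$ is again a symmetric Hankel matrix, with $A^{(1)}_{jk} = h_{j+k+1}$. The key observation I would establish first is that a $k \times k$ contiguous submatrix of $A$ with row indices $\{i, \ldots, i+k-1\}$ and column indices $\{j, \ldots, j+k-1\}$ has $(a,b)$-entry $h_{i+j+a+b}$, depending only on the parity of $i+j$: when $i+j = 2r$, it equals the principal $k \times k$ submatrix of $A$ at $\{r, \ldots, r+k-1\}$; when $i+j = 2r+1$, it equals the principal $k \times k$ submatrix of $A^{(1)}$ at $\{r, \ldots, r+k-1\}$. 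In short, every contiguous minor of $A$ is a principal minor of either $A$ or $A^{(1)}$.

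From here the forward direction is essentially free: if $A$ is TN (respectively TP), then every minor is non-negative (respectively positive), so every principal minor of $A$, and also every principal minor of $A^{(1)}$ (being a minor of $A$ by the bookkeeping above), has the desired sign; hence $A$ and $A^{(1)}$ are positive semidefinite (respectively positive definite).

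For the reverse direction I would first handle the TP case: if $A$ and $A^{(1)}$ are both positive definite, then all of their principal minors are strictly positive, so by the bookkeeping every contiguous minor of $A$ is strictly positive, and Fekete's Lemma~\ref{Lfekete} immediately gives that $A$ is TP. The TN case I would then obtain by a Hankel-preserving perturbation. The idea is to pick an admissible measure $\mu$ with infinite support in $(0, \infty)$ (say, Lebesgue measure restricted to $[1,2]$) and let $C$ be the appropriate truncation of the moment matrix $H_\mu$; then $C$ is positive definite and $C^{(1)}$ is the corresponding truncation of the moment matrix of $x\,d\mu(x)$, also positive definite. For every $\epsilon > 0$ the matrices $A + \epsilon C$ and $(A + \epsilon C)^{(1)} = A^{(1)} + \epsilon C^{(1)}$ are both positive definite, so by the TP case $A + \epsilon C$ is TP; letting $\epsilon \to 0^+$ and using continuity of each minor would yield that $A$ is TN.

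The main obstacle, if any, is producing a Hankel-compatible positive-definite perturbation for the TN case — once that is arranged, everything else is combinatorial bookkeeping followed by a direct application of Fekete's Lemma.
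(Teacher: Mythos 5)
Your proof is correct and follows the same route the paper sketches: the parity bookkeeping showing that every contiguous minor of $A$ is a (contiguous) principal minor of $A$ or of $A^{(1)}$, combined with Fekete's Lemma~\ref{Lfekete}. Your perturbation by a truncated moment matrix to pass from the TP case to the TN case is a necessary detail that the paper glosses over (Fekete's lemma as stated requires strictly positive contiguous minors), and you have supplied it correctly, since $(A+\epsilon C)^{(1)} = A^{(1)} + \epsilon C^{(1)}$ and both perturbing matrices are positive definite moment matrices of measures with infinite support.
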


Theorem~\ref{FHTN} is a very useful bridge between matrix positivity
and total non-negativity. A related dimension-free variant (see
\cite{Akhiezer,GK}) concerns the Stieltjes moment problem: a sequence
$( s_0, s_1, \ldots,)$ is the moment sequence of an admissible measure
on $\R_+$ (see Definition~\ref{Dadmissible}) if and only if the Hankel
matrices $H := ( s_{j + k} )_{j, k \geq 0}$ and $H^{(1)}$ (obtained by
excising the first row of $H$, or equivalently, the first column) are
both positive semidefinite. By Theorem~\ref{FHTN}, this is equivalent to
saying that $H$ is totally non-negative.

With Theorem~\ref{FHTN} in hand, one can easily show several basic facts
about Hankel TN matrices; we collect these in the following result for
convenience.

\begin{lemma}\label{Lhtn}
For an integer $N \geq 1$ and a set $I \subset \R_+$, let
$\HTN_N( I )$ denote the set of $N \times N$ TN Hankel matrices with
entries in $I$. For brevity, we let $\HTN_N := \HTN_N\bigl( \R_+ )$.
\begin{enumerate}
\item The family $\HTN_N$ is closed under taking sums and non-negative
scalar multiples, or more generally, integrals against non-negative
measures (as long as these exist).

\item In particular, if $\mu$ is an admissible measure supported on
$\R_+$, then its moment matrix
$H_\mu := \bigl( s_{j + k}( \mu ) \bigr)_{j, k = 0}^\infty$ is totally
non-negative.
 
\item $\HTN_N$ is closed under taking entrywise products.

\item If the power series
$f( x ) = \sum_{k \geq 0} c_k x^k$ is convergent on $I \subset\R_+$,
with $c_k \geq 0$ for all $k \geq 0$, then the entrywise map
$f[-]$ preserves total non-negativity on $\HTN_N( I )$, for all
$N \geq 1$.
\end{enumerate}
\end{lemma}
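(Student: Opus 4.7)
The plan is to use Theorem~\ref{FHTN} as the unifying tool: a square Hankel matrix $A$ lies in $\HTN_N$ if and only if both $A$ and its shifted submatrix $A^{(1)}$ (obtained by deleting the first row and last column) are positive semidefinite. Each of the four assertions then reduces to a stability property of the positive-semidefinite cone $\cP_N(\R)$, which is a closed convex cone closed under entrywise products by the Schur product theorem.

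For (1), I would first observe that the shift $A \mapsto A^{(1)}$ is linear on the space of Hankel matrices, so $(\alpha A + \beta B)^{(1)} = \alpha A^{(1)} + \beta B^{(1)}$ for scalars $\alpha, \beta \geq 0$; the same linearity extends to integrals against non-negative measures whenever the resulting matrix is well-defined. Since $\cP_N$ is a closed convex cone, positive-semidefiniteness is preserved for both $A$ and $A^{(1)}$ under these operations, and Theorem~\ref{FHTN} yields closure of $\HTN_N$. Then (2) follows immediately from (1): write $H_\mu = \int_{\R_+} \bu(x) \bu(x)^T \std\mu(x)$ with $\bu(x) := (1, x, x^2, \ldots, x^{N-1})^T$, and observe that each integrand $\bu(x) \bu(x)^T$ is a rank-one Hankel matrix lying in $\HTN_N$, since it is positive semidefinite and its shift equals $x \cdot \bu(x) \bu(x)^T$, which is also positive semidefinite because $x \geq 0$ on the support.

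For (3), note first that the Schur product of two Hankel matrices is Hankel, because the $(j,k)$ entry depends only on $j+k$. The Schur product theorem then gives that $A \circ B$ is positive semidefinite, and a short re-indexing establishes the identity $(A \circ B)^{(1)} = A^{(1)} \circ B^{(1)}$, so another application of the Schur product theorem takes care of the shifted matrix. Invoking Theorem~\ref{FHTN} in reverse finishes (3). Finally (4) combines (1) and (3): iterated Schur products give $A^{\circ k} \in \HTN_N$ for every $k \geq 0$, non-negative combinations then place each partial sum $\sum_{k=0}^K c_k A^{\circ k}$ in $\HTN_N$, and the fact that $\HTN_N$ is closed in the entrywise topology (being cut out by countably many polynomial inequalities in the entries of a Hankel matrix) allows passage to the limit $K \to \infty$.

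The only step that needs real care is verifying $(A \circ B)^{(1)} = A^{(1)} \circ B^{(1)}$ in (3): it rests on the Hankel identity $a_{j+1,k} = a_{j,k+1}$, so that deleting the first row and then Schur-multiplying produces the same matrix as deleting the first row after Schur-multiplying. This is a routine but genuinely necessary use of the Hankel structure; without it, Theorem~\ref{FHTN} would not be available for the Schur product, and the argument would break down. Everything else is a direct invocation of the Schur product theorem and the closed-convex-cone properties of $\cP_N(\R)$.
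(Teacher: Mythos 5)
Your proof is correct and follows exactly the route the paper intends: the paper gives no explicit argument but introduces the lemma with ``With Theorem~\ref{FHTN} in hand, one can easily show...'', and your reduction of each assertion to positive semidefiniteness of $A$ and $A^{(1)}$ via the Schur product theorem and the closed-convex-cone structure of $\cP_N(\R)$ is precisely that. The only cosmetic slip is in (2), where the shift of the rank-one matrix $\bu(x)\bu(x)^T$ is $x$ times the rank-one matrix built from the truncated vector $(1,x,\ldots,x^{N-2})^T$ rather than $x\cdot\bu(x)\bu(x)^T$ itself (the dimensions differ by one), but the conclusion is unaffected.
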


Given Lemma~\ref{Lhtn}(4), which is identical to the start of the story
for positivity preservers, it is natural to expect parallels between the
two settings. For example, one can ask if a Schoenberg-type phenomenon
also holds for preservers of total non-negativity on
$\bigcup_{N \geq 1} \HTN_N\bigl( [ 0, \rho ) \bigr)$ with
$0 < \rho \leq \infty$. As we now explain, this is indeed the case;
we will set $\rho = \infty$ for ease of exposition. From
Theorem~\ref{Thankel} and the subsequent discussion, it follows via
Hamburger's theorem that the class of functions
$\sum_{k \geq 0} c_k x^k$ with all $c_k \geq 0$ characterizes the
entrywise maps preserving the set of moment sequences of admissible
measures supported on $[ -1, 1 ]$. By the above discussion, in
considering the family of matrices $\HTN_N$ for all $N \geq 1$, we are
studying moment sequences of admissible measures supported on
$I = \R_+$, or the related Hausdorff moment problem for
$I = [ 0, 1 ]$. In this case, one also has a Schoenberg-like
characterization, outside of the origin.

\begin{theorem}[Belton--Guillot--Khare--Putinar \cite{BGKP-hankel}]
Let $f : \R_+ \to \R$. The following are equivalent.
\begin{enumerate}
\item Applied entrywise, the map $f$ preserves the set $\HTN_N$ for
all $N \geq 1$.

\item Applied entrywise, the map $f$ preserves positive
semidefiniteness on $\HTN_N$ for all $N \geq 1$.

\item Applied entrywise, the map $f$ preserves the set of moment
sequences of admissible measures supported on $\R_+$.

\item Applied entrywise, the map $f$ preserves the set of moment
sequences of admissible measures supported on $[ 0, 1 ]$.

\item The function $f$ agrees on $( 0, \infty )$ with an absolutely
monotonic entire function, hence is non-decreasing, and
$0 \leq f( 0 ) \leq \lim_{\epsilon \to 0^+} f( \epsilon )$.
\end{enumerate}
\end{theorem}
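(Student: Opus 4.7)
The plan is to close the cyclic chain \textup{(5)} $\Rightarrow$ \textup{(1)} $\Rightarrow$ \textup{(2)} $\Rightarrow$ \textup{(5)}, and to record separately that \textup{(5)} $\Leftrightarrow$ \textup{(4)} is Theorem~\ref{T1sided} and that \textup{(1)} $\Leftrightarrow$ \textup{(3)} drops out of the Stieltjes moment criterion. The forward implications from \textup{(5)} essentially repeat the Schur-product machinery in Lemma~\ref{Lhtn}, while the converses are carried by the strengthened Vasudeva theorem, Theorem~\ref{Tvasudeva2}.

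For \textup{(5)} $\Rightarrow$ \textup{(1)}, I write $f(x) = g(x) = \sum_{k \geq 0} c_k x^k$ with $c_k \geq 0$ on $(0,\infty)$. Lemma~\ref{Lhtn}\textup{(4)} disposes of those $A \in \HTN_N$ whose entries all lie in $(0,\infty)$. To handle an $A = (s_{i+j})_{i,j = 0}^{N-1} \in \HTN_N$ with a vanishing entry, the plan is first to classify such matrices: the $2 \times 2$ total-nonnegativity inequalities $s_{j+1}^2 \leq s_j s_{j+2}$ applied iteratively show that $s_k = 0$ for some $k \geq 1$ forces $s_j = 0$ for every $j \geq 1$, so that $A$ has $s_0$ in its top-left entry and all other entries zero; while $s_0 = 0$ forces $A \equiv 0$. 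In each degenerate case a direct check, using $0 \leq f(0) \leq g(0) = c_0$ together with the monotonicity of $g$, shows that every minor of $f[A]$ is non-negative.

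The implication \textup{(1)} $\Rightarrow$ \textup{(2)} is immediate from Theorem~\ref{FHTN}, since every TN matrix is positive semidefinite. For \textup{(2)} $\Rightarrow$ \textup{(5)}, observe that each two-point measure $\mu = a \delta_1 + b \delta_{u_0}$ with $u_0 \in (0,1)$ and $a, b > 0$ is supported in $\R_+$, so by Lemma~\ref{Lhtn}\textup{(2)} and Theorem~\ref{FHTN} all of its finite moment matrices lie in $\HTN_N$; invoking \textup{(2)} on this test family and applying Theorem~\ref{Tvasudeva2} forces $f$ to agree on $(0,\infty)$ with a power series $\sum_{k \geq 0} c_k x^k$ with $c_k \geq 0$. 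Such a series converges on all of $(0,\infty)$, so has infinite radius of convergence and extends entirely to $\C$. The boundary condition $0 \leq f(0) \leq \lim_{\epsilon \to 0^+} f(\epsilon)$ is extracted by testing \textup{(2)} on the rank-one TN Hankel matrix $H_{c \delta_0}$, whose only non-zero entry is $c$ in the top-left corner: non-negativity of its $1 \times 1$ principal minors gives $f \geq 0$ on $\R_+$, and non-negativity of the $2 \times 2$ principal minor on rows and columns $\{0,1\}$ reads $f(0)\bigl(f(c) - f(0)\bigr) \geq 0$, which upon letting $c \to 0^+$ yields the required limit inequality.

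Finally, Theorem~\ref{T1sided} gives \textup{(4)} $\Leftrightarrow$ \textup{(5)} directly, while \textup{(1)} $\Leftrightarrow$ \textup{(3)} is a reformulation of the Stieltjes moment criterion: a sequence is the moment sequence of an admissible measure on $\R_+$ exactly when every finite section of its associated Hankel matrix lies in $\HTN$ (Theorem~\ref{FHTN} combined with Lemma~\ref{Lhtn}\textup{(2)}), so $f$ preserves moment sequences on $\R_+$ if and only if $f[-]$ preserves $\HTN_N$ for every $N \geq 1$. The main obstacle in the above plan is the \textup{(5)} $\Rightarrow$ \textup{(1)} step: the possible discontinuity of $f$ at $0$ forces the classification of TN Hankel matrices with a vanishing entry, and a careful verification that the rigid boundary condition on $f$ suffices to keep every minor of $f[A]$ non-negative in the degenerate cases.
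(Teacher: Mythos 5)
Your overall architecture (the cycle through Theorem~\ref{FHTN}, Theorem~\ref{Tvasudeva2}, and Theorem~\ref{T1sided}) is the right one, but two of your steps contain genuine errors.

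First, your classification of the matrices in $\HTN_N$ with a vanishing entry is false, and this is precisely the step you identify as the crux of \textup{(5)} $\Rightarrow$ \textup{(1)}. The inequalities $s_{j+1}^2 \leq s_j s_{j+2}$ are only available for $0 \leq j \leq 2N-4$, so forward propagation of zeros stops at $s_{2N-3}$: the corner entry $s_{2N-2}$ is never constrained. Concretely, $\diag(a,b) \in \HTN_2$ for any $a, b > 0$ (it is Hankel, entrywise non-negative, with determinant $ab \geq 0$), and more generally $a E_{11} + b E_{NN} \in \HTN_N$ for all $a, b \geq 0$; these are not of the form ``$s_0$ in the top-left entry and all other entries zero.'' The correct statement is that the members of $\HTN_N$ with a zero entry are exactly the matrices $a E_{11} + b E_{NN}$, and your ``direct check'' must be carried out for these as well. (It does go through: $f[a E_{11} + b E_{NN}] = f(0)\bone_{N \times N} + (f(a)-f(0))E_{11} + (f(b)-f(0))E_{NN}$ has rank at most $3$, and each surviving $2 \times 2$ and $3 \times 3$ minor reduces to a product of the non-negative quantities $f(0)$, $f(a)-f(0)$, $f(b)-f(0)$ -- but this is the case you never see.)

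Second, \textup{(3)} $\Leftrightarrow$ \textup{(1)} does not ``drop out'' of the Stieltjes criterion. The direction \textup{(1)} $\Rightarrow$ \textup{(3)} is fine, since every minor of the semi-infinite matrix $f[H_\mu]$ sits inside a finite leading section. But for \textup{(3)} $\Rightarrow$ \textup{(1)} you would need every matrix in $\HTN_N$ to arise as a finite section of the Hankel matrix of an admissible measure on $\R_+$, and this fails: $\diag(a,b)$ with $a,b>0$ has $s_1 = 0$, which for a measure on $\R_+$ forces $\mu = a\delta_0$ and hence $s_2 = 0 \neq b$, so the truncated sequence $(a,0,b)$ admits no representing measure. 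The Stieltjes criterion characterizes \emph{infinite} sequences, not finite TN Hankel sections. You should instead prove \textup{(3)} $\Rightarrow$ \textup{(5)} directly -- restrict to the two-point measures $a\delta_1 + b\delta_{u_0}$ (whose moment matrices genuinely are moment matrices, so \textup{(3)} applies) to invoke Theorem~\ref{Tvasudeva2}, and test on $c\delta_0$ for the boundary condition, exactly as in your \textup{(2)} $\Rightarrow$ \textup{(5)} argument -- and then recover \textup{(1)} from \textup{(5)}.
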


\begin{remark}
If we work only with $f : ( 0, \infty ) \to \R$, then we are
interested in matrices in $\HTN_N$ with positive entries. Since the
only matrices in $\HTN_N$ with a zero entry are scalar multiples of
the elementary square matrices $E_{1 1}$ or $E_{N N}$ (equivalently,
the only admissible measures supported in $\R_+$ with a zero moment
are of the form $c \delta_0$), the test set does not really reduce,
and hence the preceding theorem still holds in essence: we must
replace $\HTN_N$ by $\HTN_N\bigl( ( 0, \infty ) \bigr)$ in (1) and
(2), reduce the class of admissible measures to those that are not of
the form $c \delta_0$ in (3) and (4), and end (5) at `entire
function'. These five modified statements are, once again, equivalent,
and provide further equivalent conditions to those of Vasudeva
(Theorems~\ref{Tvasudeva} and~\ref{Tvasudeva2}).
\end{remark}

In a similar vein, we now present the classification of sign patterns
of polynomial or power-series functions that preserve TN entrywise in
a fixed dimension on Hankel matrices. This too turns out to be exactly
the same as for positivity preservers.

\begin{theorem}[Khare--Tao \cite{Khare-Tao}]
Fix $\rho > 0$ and real exponents $n_0 < \cdots < n_{N - 1} < M$. For
any real coefficients $c_0$, \ldots, $c_{N-1}$, $c'$, let
\begin{equation}
f( x ) :=  \sum_{j = 0}^{N - 1} c_j x^{n_j} + c' x^M.
\end{equation}
The following are equivalent.
\begin{enumerate}
\item The entrywise map $f[-]$ preserves TN on the rank-one matrices
in~$\HTN_N\bigl( ( 0, \rho ) \bigr)$.

\item The entrywise map $f[-]$ preserves positivity on the rank-one
matrices in $\HTN_N\bigl( ( 0, \rho ) \bigr)$. 

\item Either all the coefficients $c_0$, \ldots, $c_{N - 1}$, $c'$ are
non-negative, or $c_0$, \ldots, $c_{N - 1}$ are positive and
$c' \geq -\cC^{-1}$, where
\begin{equation}
\cC = \sum_{j = 0}^{N - 1} \frac{V( \bn_j )^2}{V( \bn )^2} %
\frac{\rho^{M - n_j}}{c_j}.
\end{equation}
\end{enumerate}
If $n_j \in \Z_+ \cup [ N - 2, \infty )$ for $j = 0$, \ldots, $N - 1$,
then conditions (1), (2) and (3) are further equivalent to the
following.
\begin{enumerate}
\setcounter{enumi}{3}
\item The entrywise map $f[-]$ preserves TN on
$\HTN_N\bigl( [ 0, \rho ] \bigr)$.
\end{enumerate}
\end{theorem}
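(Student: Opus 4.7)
The plan is to prove the equivalences via the bridge between total non-negativity and positive semidefiniteness furnished by Theorem~\ref{FHTN}: a square real Hankel matrix $H$ is TN if and only if both $H$ and the shifted Hankel matrix $H^{(1)}$ are PSD. The key observation that makes everything work is that entrywise application of $f$ preserves the Hankel structure (because $f(H)_{jk}$ depends only on $j+k$ when $H_{jk}$ does), and moreover $f[H]^{(1)} = f[H^{(1)}]$. Given this, the implication $(1) \Rightarrow (2)$ is immediate, since TN implies PSD, and the equivalence of (2) and (3) is precisely the equivalence of the same-numbered assertions in Theorem~\ref{Treal2} applied to rank-one Hankel matrices.

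The substantive step is $(3) \Rightarrow (1)$. Given a rank-one matrix $H \in \HTN_N((0,\rho))$, I would write $H = c\, \bu \bu^T$ with $\bu = (1, r, \ldots, r^{N-1})^T$ and $c, r > 0$ chosen so that every entry $cr^{j+k}$ lies in $(0,\rho)$. By Theorem~\ref{FHTN}, to show $f[H]$ is TN it suffices to show that $f[H]$ and $f[H^{(1)}]$ are both PSD. The first is immediate from Theorem~\ref{Treal2}, which (3) implies. For the second, note that $H^{(1)}$ is itself a rank-one PSD Hankel matrix of size $N-1$, associated with the vector $\sqrt{cr}(1, r, \ldots, r^{N-2})^T$, whose entries $cr^{j+k+1}$ form a subset of the entries of $H$ and hence still lie in $(0,\rho)$. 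Theorem~\ref{Treal2} also gives preservation of PSD on \emph{all} rank-one matrices in $\cP_N((0,\rho))$, not just Hankel ones. A standard embedding argument transfers preservation to $\cP_{N-1}((0,\rho))$: pad the vector $\sqrt{cr}(1, r, \ldots, r^{N-2})^T$ by a sufficiently small positive entry $u_N$ to obtain $\tilde{\bu} \in \R^N$ with all entries of $\tilde{\bu}\tilde{\bu}^T$ in $(0,\rho)$; then $f[\tilde{\bu}\tilde{\bu}^T]$ is PSD, and $f[H^{(1)}]$ appears as its leading $(N-1) \times (N-1)$ principal submatrix.

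For the equivalence with (4) under the extra hypothesis $n_j \in \Z_+ \cup [N-2, \infty)$, the implication $(4) \Rightarrow (1)$ is immediate because rank-one elements of $\HTN_N((0,\rho))$ lie in $\HTN_N([0,\rho])$. Conversely, under the hypothesis on the exponents, Theorem~\ref{Treal2}(4) upgrades the preservation from rank-one matrices to all of $\cP_N((0,\rho))$, and continuity of $f$ on $[0,\rho]$---valid since every $n_j \geq 0$, with the convention $0^0 := 1$---extends this to $\cP_N([0,\rho])$. For an arbitrary $H \in \HTN_N([0,\rho])$, Theorem~\ref{FHTN} gives that $H$ and $H^{(1)}$ are both PSD; then $f[H]$ is PSD by the preservation just established, and $f[H^{(1)}]$ is PSD by the same reasoning applied in dimension $N-1$ (using a zero row-and-column padding to embed $\cP_{N-1}([0,\rho])$ into $\cP_N([0,\rho])$). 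One final application of Theorem~\ref{FHTN} concludes that $f[H]$ is TN.

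The main obstacle is careful bookkeeping: checking at each step that entries remain in the proper (open or closed) subinterval of $\R_+$ and that the padding and submatrix-extraction operations commute with the entrywise action of $f$. No new analytic input is required beyond Theorem~\ref{FHTN} and Theorem~\ref{Treal2}; the result is essentially determined once one recognizes that entrywise application of $f$ commutes with the Hankel shift $H \mapsto H^{(1)}$.
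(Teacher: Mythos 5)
Your argument is correct and follows essentially the same route the paper indicates: reduce total non-negativity to positive semidefiniteness of $H$ and $H^{(1)}$ via Theorem~\ref{FHTN}, observe that entrywise maps commute with the Hankel shift and that the rank-one test matrices of Theorem~\ref{Treal2}(2) are exactly the rank-one elements of $\HTN_N\bigl((0,\rho)\bigr)$, and invoke Theorem~\ref{Treal2} (this is precisely the content of the final Proposition of that section, which the paper states ``essentially follows from Theorem~\ref{FHTN}''). Your padding of $H^{(1)}$ by a small positive (respectively zero) entry to stay inside the dimension-$N$ test set is the right way to handle the shifted matrix, so the write-up correctly fills in the details the paper defers to \cite{Khare-Tao}.
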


In particular, this produces further equivalent conditions to
Theorem~\ref{Treal2}. Notice that assertion (2) here is valid because
the rank-one matrices used in proving Theorem~\ref{Treal2} are of the
form $c \bu \bu^T$, where $\bu = ( 1, u_0, \ldots, u_0^{N - 1})^T$,
$u_0 \in ( 0, 1 )$, and $c \in ( 0,\rho )$, so that
$c \bu \bu^T \in \HTN_N\bigl( ( 0, \rho ) \bigr)$.

The consequences of Theorem~\ref{Treal2} also carry over for TN
preservers. For instance, one can bound Laplace transforms analogously
to Corollary~\ref{Claplace}, by replacing the words ``positive
semidefinite'' by ``totally non-negative'' and the set $\cP_N \bigl( (0,
\rho ) \bigr)$ by $\HTN_N\bigl( ( 0, \rho ) \bigr)$. Similarly, one can
completely classify the sign patterns of power series that preserve TN
entrywise on Hankel matrices of a fixed size:

\begin{theorem}[Khare--Tao \cite{Khare-Tao}]
Theorems~\ref{Tclassify} and~\ref{Tpowerseries2} hold upon replacing the
phrase ``preserves positivity entrywise on
$\cP_N\bigl( ( 0, \rho ) \bigr)$'' with
``preserves TN entrywise on $\HTN_N\bigl( ( 0, \rho ) \bigr)$'',
for both $\rho < \infty$ and for $\rho = \infty$.
\end{theorem}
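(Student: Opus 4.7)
The plan is to adapt the proofs of Theorems~\ref{Tclassify} and~\ref{Tpowerseries2} by systematically replacing the test set $\cP_N\bigl( ( 0, \rho ) \bigr)$ with $\HTN_N\bigl( ( 0, \rho ) \bigr)$. This is feasible thanks to the close structural relationship between the two families, as encoded in the bridge Theorem~\ref{FHTN} (TN for Hankel matrices is equivalent to positive semidefiniteness of the matrix together with its shift $A^{(1)}$), the closure properties of Lemma~\ref{Lhtn}, and the preceding ``single negative coefficient'' theorem, which jointly supply every ingredient needed.

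For the necessity direction, suppose $f( x ) = \sum_{j \geq 0} c_j x^{n_j}$ preserves TN entrywise on $\HTN_N\bigl( ( 0, \rho ) \bigr)$. Since every TN matrix is positive semidefinite, $f[-]$ preserves positivity on the rank-one Hankel matrices $t \bu \bu^T$ in $\HTN_N\bigl( ( 0, \rho ) \bigr)$, where $\bu = ( 1, u_0, \ldots, u_0^{N - 1} )^T$ for some $u_0 \in ( 0, 1 )$. These are precisely the matrices used in the Horn--Loewner-type Lemma~\ref{Lhorn}, so the sign-pattern constraints invoked in Theorems~\ref{Tclassify} (for $\rho < \infty$) and~\ref{Tpowerseries2} (for $\rho = \infty$) transfer verbatim.

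For sufficiency, I would follow the reduction used in the proof of Theorem~\ref{Tpowerseries}: it suffices to build, for each index $j_0$ with $\epsilon_{j_0} = -1$, a polynomial with $N$ positive ``cover'' terms below $n_{j_0}$ (and, when $\rho = \infty$, also $N$ positive terms above) and exactly one strictly negative coefficient, such that this polynomial preserves TN on $\HTN_N\bigl( ( 0, \rho ) \bigr)$. A suitably weighted, absolutely convergent sum of such polynomials then yields a power series that preserves TN, since $\HTN_N$ is closed under non-negative combinations by Lemma~\ref{Lhtn}(1) and under entrywise limits (as every minor is continuous in the entries). For $\rho < \infty$, each building block is supplied directly by part~(4) of the preceding theorem. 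For $\rho = \infty$, I would substitute the unbounded-domain leading-term estimate of Proposition~\ref{Pleading} for the rank-one estimate of Theorem~\ref{Treal2}, and then invoke a TN analogue of the extension principle Proposition~\ref{Pextension}.

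The main obstacle is establishing this TN analogue of Proposition~\ref{Pextension} in the case $\rho = \infty$. The original principle reduces preservation on all of $\cP_N$ to preservation on rank-one matrices in $\cP_N$ together with preservation of the derivative on $\cP_{N-1}$. In the TN setting, Theorem~\ref{FHTN} reformulates TN preservation on $\HTN_N$ as simultaneous positive-semidefiniteness of $f[ A ]$ and $f[ A^{(1)} ]$; since $A \in \HTN_N$ entails $A^{(1)} \in \HTN_{N - 1}$, the derivative clause feeds into an inductive application of the same statement in dimension $N - 1$, while the rank-one clause is handled exactly as in the $\cP_N$ case. No new obstruction then arises, and the sufficiency argument lifts from positivity to TN.
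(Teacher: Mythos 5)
Your overall strategy is the right one, and it is the route the survey itself gestures at (the survey defers the proof to the end of Section~9 of \cite{Khare-Tao}, but the bridge is exactly Theorem~\ref{FHTN} together with the final Proposition of that chapter): the necessity of the sign-pattern constraints transfers because the rank-one Hankel test matrices $t\bu\bu^T$ with $\bu = (1, u_0, \ldots, u_0^{N-1})^T$ lie in $\HTN_N\bigl((0,\rho)\bigr)$ and TN implies positive semidefiniteness; and for sufficiency one checks that the very series furnished by Theorems~\ref{Tclassify} and~\ref{Tpowerseries2} preserve TN, since for $A \in \HTN_N\bigl((0,\rho)\bigr)$ the matrix $f[A]$ is again Hankel, $f[A]$ is positive semidefinite because $f[-]$ preserves $\cP_N\bigl((0,\rho)\bigr)$, and $f[A]^{(1)} = f[A^{(1)}]$ is positive semidefinite because $A^{(1)} \in \cP_{N-1}\bigl((0,\rho)\bigr)$ and a preserver of $\cP_N$ is automatically a preserver of $\cP_{N-1}$ (duplicate a row and column).

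One step in your sketch deserves a caution. A ``TN analogue of Proposition~\ref{Pextension}'' whose hypotheses are stated only for Hankel matrices would \emph{not} follow from the FitzGerald--Horn argument: the decomposition $A = \zeta\zeta^T + (A - \zeta\zeta^T)$ and the integral identity force you to apply $h'[-]$ to matrices of the form $\lambda A + (1-\lambda)\zeta\zeta^T$ Schur-multiplied against $A - \zeta\zeta^T$, and these are not Hankel even when $A$ is. So the derivative hypothesis genuinely has to be positivity preservation on all of $\cP_{N-1}$, not merely on $\HTN_{N-1}$. This costs you nothing in the application, because the series you construct (via Proposition~\ref{Pleading} and the original Proposition~\ref{Pextension} when $\rho = \infty$) already preserve positivity on the full cones $\cP_N$ and $\cP_{N-1}$; but the detour through a freestanding TN extension principle is both unnecessary and, as literally stated, unprovable by the method you cite. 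The cleaner formulation is simply: positivity preservation on $\cP_N$ and $\cP_{N-1}$ plus Theorem~\ref{FHTN} yields TN preservation on $\HTN_N$.
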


We point the reader to~\cite[End of Section~9]{Khare-Tao} for details.

To conclude, it is natural to seek a general result that relates the
positivity preservers on $\cP_N( I )$ and TN preservers on the set
$\HTN_N( I )$ for domains $I \subset \R_+$. Here is one variant which
helps prove the above theorems, and which essentially follows from
Theorem~\ref{FHTN}.

\begin{proposition}[Khare--Tao \cite{Khare-Tao}]
Fix integers $1 \leq k \leq N$ and a scalar $0 < \rho \leq \infty$.
Suppose $f : [ 0, \rho ) \to \R$ is such that the entrywise map $f[-]$
preserves positivity on $\cP_N^k\bigl( [ 0, \rho ) \bigr)$, the set of
matrices in $\cP_N\bigl( [ 0, \rho ) \bigr)$ with rank no more than
$k$. Then $f[-]$ preserves total non-negativity on
$\HTN_N\bigl( [ 0, \rho ) \bigr) \cap %
\cP_N^k\bigl( [ 0, \rho ) \bigr)$.
\end{proposition}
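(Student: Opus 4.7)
The strategy is to invoke Theorem~\ref{FHTN} twice: once to unpack the total non-negativity of $A$ into positive semidefiniteness of both $A$ and $A^{(1)}$, and once to re-package these into total non-negativity of $f[A]$. Fix $A \in \HTN_N([0,\rho)) \cap \cP_N^k([0,\rho))$. Two elementary compatibilities will be used throughout: $f[A]$ inherits the Hankel structure from $A$ (since $(f[A])_{ij}$ depends only on $i+j$ through $A$), and the submatrix operation $B \mapsto B^{(1)}$ commutes with the entrywise calculus, so $(f[A])^{(1)} = f[A^{(1)}]$.

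The first half of the argument is a direct application of the hypothesis: since $\rk A \leq k$, we have $A \in \cP_N^k([0,\rho))$, so $f[A]$ is positive semidefinite. The second half requires showing that $f[A^{(1)}]$ is positive semidefinite. To set this up, I will apply the ``only if'' direction of Theorem~\ref{FHTN} to the TN Hankel matrix $A$ to conclude that $A^{(1)}$ is positive semidefinite; moreover $\rk A^{(1)} \leq \rk A \leq k$ since passing to a submatrix cannot increase rank, and the entries of $A^{(1)}$ lie in $[0,\rho)$.

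Since the stated hypothesis only supplies a preservation property at size $N$, whereas $A^{(1)}$ is $(N-1) \times (N-1)$, the one step with any content is a dimension-reduction argument. Given an arbitrary $B \in \cP_{N-1}^k([0,\rho))$, I will pad it to the $N \times N$ block matrix having $B$ in the upper-left corner and zeros elsewhere. This padded matrix lies in $\cP_N^k([0,\rho))$, so the hypothesis applies and yields a positive semidefinite image under $f[-]$; its upper-left $(N-1) \times (N-1)$ block is exactly $f[B]$, which is therefore positive semidefinite as a principal submatrix of a PSD matrix. Specialising this observation to $B = A^{(1)}$ shows that $f[A^{(1)}]$ is positive semidefinite.

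Finally, I will invoke the ``if'' direction of Theorem~\ref{FHTN} applied to the Hankel matrix $f[A]$: the two PSD ingredients $f[A]$ and $(f[A])^{(1)} = f[A^{(1)}]$ force $f[A]$ to be TN, which is the desired conclusion. The only conceptual hurdle is the dimension drop from $N$ to $N-1$ required to handle $A^{(1)}$, but as indicated this is dispatched by an elementary zero-padding; the remainder is formal bookkeeping around Theorem~\ref{FHTN} and the compatibility of the entrywise calculus with Hankel structure and with submatrix extraction.
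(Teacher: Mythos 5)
Your proposal is correct and is exactly the argument the paper has in mind: the survey only remarks that the proposition "essentially follows from Theorem~\ref{FHTN}", and your write-up supplies the missing details — the compatibility of $f[-]$ with the Hankel structure and with the passage $A \mapsto A^{(1)}$, the zero-padding to bring the $(N-1)\times(N-1)$ matrix $A^{(1)}$ (which has rank at most $k$ and entries in $[0,\rho)$, with $0$ admissible since the domain is $[0,\rho)$) within reach of the size-$N$ hypothesis, and the two applications of Theorem~\ref{FHTN}. No gaps.
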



\subsection{Entrywise preservers of totally non-negative matrices}

The TN property is very rigid when it comes to entrywise operations,
as the following result makes clear.

\begin{theorem}[{\cite[Theorem~2.1]{BGKP-TPP}}]\label{Tfixeddim}
Let $F : \R_+ \to \R$ be a function and let $d := \min( m, n )$,
where~$m$ and~$n$ are positive integers. The following are equivalent.
\begin{enumerate}
\item $F$ preserves TN entrywise on $m \times n$ matrices. 
\item $F$ preserves TN entrywise on $d \times d$ matrices. 
\item $F$ is either a non-negative constant or
\begin{enumerate}
\item[(a)] $(d = 1)$ $F( x ) \geq 0$;
\item[(b)] $(d = 2)$ $F( x ) = c x^\alpha$ for some $c > 0$ and some
$\alpha \geq 0$;
\item[(c)] $(d = 3)$ $F( x ) = c x^\alpha$ for some $c > 0$ and some
$\alpha \geq 1$;
\item[(d)] $(d \geq 4)$ $F( x ) = c x$ for some $c > 0$.
\end{enumerate}
\end{enumerate}
\end{theorem}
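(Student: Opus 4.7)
The plan is to prove the equivalences in the order $(1)\Leftrightarrow(2)$, then $(3)\Rightarrow(1)$, and finally $(2)\Rightarrow(3)$ by induction on $d$. For $(1)\Leftrightarrow(2)$, the forward direction is immediate from taking $m = n = d$. For the reverse, I would use that every minor of an $m \times n$ matrix equals a minor of some $k \times k$ submatrix $B$ with $k \leq d$; I then embed $B$ into a $d\times d$ matrix $\widetilde{B}$ by padding with zero rows above/below and zero columns left/right. The additional minors vanish, so $\widetilde{B}$ is TN. Applying (2) gives that $F[\widetilde{B}]$ is TN; since $F[B]$ is a submatrix of $F[\widetilde{B}]$, it too is TN, yielding the required minor non-negativity for $F[A]$.

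For $(3) \Rightarrow (1)$ I plan case-by-case verification. A non-negative constant $F$ produces a matrix whose $k \times k$ minors vanish for $k \geq 2$. For $d = 2$ and $F(x) = cx^\alpha$ with $\alpha \geq 0$, the identity $F(a)F(d) - F(b)F(c) = c^2[(ad)^\alpha - (bc)^\alpha] \geq 0$ holds whenever $ad \geq bc$. For $d = 3$ and $\alpha \geq 1$, preservation can be obtained by a FitzGerald--Horn-type Cauchy--Binet expansion (equivalently, by citing the classical result that $x^\alpha$, $\alpha \geq 1$, preserves TN on $3 \times 3$ matrices). For $d \geq 4$ and $F(x) = cx$, linearity and closure of TN under non-negative scaling give the result.

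The substantial direction is $(2) \Rightarrow (3)$, which I would argue by induction on $d$. The case $d = 1$ is immediate. For $d = 2$, I use the rank-one TN matrices $\begin{pmatrix} xy & x \\ y & 1 \end{pmatrix}$ and $\begin{pmatrix} x & 1 \\ xy & y \end{pmatrix}$ (each with determinant zero) to obtain both $F(xy)F(1) \geq F(x)F(y)$ and the reverse inequality, hence the multiplicative identity $F(xy)F(1) = F(x)F(y)$ on $(0,\infty)$. Monotonicity follows from $\begin{pmatrix} u & 1 \\ v & 1 \end{pmatrix}$ (TN iff $u \geq v$), and the standard characterisation of monotone multiplicative maps on $\R_+$ then yields $F(x) = cx^\alpha$ on $(0,\infty)$ with $c = F(1) \geq 0$ and $\alpha \geq 0$. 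Separate tests involving zero entries, such as $\begin{pmatrix} 0 & 0 \\ 1 & x \end{pmatrix}$ and $\begin{pmatrix} 1 & 0 \\ 0 & 1 \end{pmatrix}$, pin down $F(0)$ and produce the split between the constant and power cases. For $d = 3$, the inductive hypothesis gives $F(x) = cx^\alpha$ on $(0,\infty)$; evaluating $F$ on the TN matrix $A = \begin{pmatrix} 1 & 1 & 0 \\ 1 & 2 & 1 \\ 0 & 1 & 1 \end{pmatrix}$ (one verifies $\det A = 0$ and all $2 \times 2$ minors equal $1$) produces $\det F[A] = c^3(2^\alpha - 2) \geq 0$, forcing $\alpha \geq 1$.

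The main obstacle is the $d \geq 4$ step, where I need a $4 \times 4$ TN test matrix whose image under $x^\alpha$ fails to be TN precisely when $\alpha > 1$. The difficulty is that many natural $4 \times 4$ TN matrices --- tridiagonals, Vandermondes, Schur squares of the $d = 3$ test --- remain TN under $x^\alpha$ for every $\alpha \geq 1$, so a more delicate construction (as in \cite{BGKP-TPP}) is required. Once such a matrix is produced, the inductive step from $d = 4$ to arbitrary $d \geq 5$ is automatic, as any $d \times d$ TN matrix contains $4 \times 4$ TN submatrices, so the constraint $\alpha = 1$ persists.
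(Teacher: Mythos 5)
Your treatment of $(1)\Leftrightarrow(2)$, of $(3)\Rightarrow(1)$, and of the necessity argument for $d\leq 3$ is sound and essentially follows the route of \cite{BGKP-TPP}: your two singular $2\times 2$ test matrices yield the multiplicative identity $F(xy)F(1)=F(x)F(y)$ exactly as the matrices~(\ref{E2matrices}) do in the paper, and your $3\times 3$ matrix with central entry $2$ is just the diagonal rescaling $DCD$, with $D=\diag(1,\sqrt 2,1)$, of the matrix $C$ in~(\ref{E3matrix}), so it encodes the same constraint $2^\alpha\geq 2$. One genuinely different (and slightly shorter) step: where the paper runs Vasudeva's multiplicative-mid-convexity argument to first establish continuity of $F$ on $(0,\infty)$ and only then solves the functional equation, you extract monotonicity from $\begin{bmatrix} u & 1\\ v & 1\end{bmatrix}$ and invoke the classification of monotone multiplicative maps; both are valid. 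Two points to tighten: $(1)\Rightarrow(2)$ is not literally ``take $m=n=d$,'' since $m$ and $n$ are fixed in advance --- you need the same zero-padding argument in the other direction; and for $(3)\Rightarrow(1)$ with $d=3$, the fact that $x^\alpha$ with $\alpha\geq 1$ preserves the $3\times 3$ determinant of a general (non-symmetric) TN matrix is not the FitzGerald--Horn positive-semidefinite computation but a separate result of Fallat--Johnson--Sokal \cite{FJS}, which should be cited as such rather than called an ``equivalent'' reformulation.

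The genuine gap is exactly where you locate it: the case $d\geq 4$ of $(2)\Rightarrow(3)$. After the $d=3$ step you know $F(x)=cx^\alpha$ with $c>0$ and $\alpha\geq 1$, and to finish you must exhibit, for every $\alpha>1$, a $4\times 4$ TN matrix $A$ such that $A^{\circ\alpha}$ is not TN. No such matrix appears in your proposal, and this is not a routine verification: as you note, the obvious candidates stay TN under all Hadamard powers $\geq 1$, and indeed for \emph{symmetric} TN matrices the powers $\alpha\in\{1\}\cup[2,\infty)$ survive in dimension $4$ (Theorem~\ref{Tsymmetric}), so the required example must be genuinely non-symmetric, or one needs two examples, one killing $\alpha\in(1,2)$ and another killing $\alpha\geq 2$. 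The paper handles precisely this step by invoking \cite[Example~5.8]{FJS} together with further work in \cite{BGKP-TPP}. Without that construction the implication $(2)\Rightarrow(3)$ for $d\geq 4$ is unproven; the remainder of your outline would go through once it is supplied.
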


\begin{proof}
That $(1) \iff (2)$ is immediate, as is the equivalence of $(2)$ and
$(3)$ when $d = 1$. For larger values of $d$, we sketch the
implication $(2) \implies (3)$. 

For $d = 2$, let the totally non-negative matrices
\begin{equation}\label{E2matrices}
A( x, y ) := \begin{bmatrix}
x & x y \\ 1 & y
\end{bmatrix} \qquad \text{and} \qquad
B( x, y ) := \begin{bmatrix}
x y & x \\ y & 1
\end{bmatrix} \qquad ( x, y \geq 0 ).
\end{equation}
If the non-constant function $F$ preserves TN entrywise for
$2 \times 2$ matrices, then the non-negativity of the determinants of
$F[ A( x, y ) ]$ and $F[ B( x, y ) ]$ gives that
\begin{equation}\label{Emult}
F( x y ) F( 1 ) = F( x ) F( y ) %
\qquad \text{for all } x, y \geq 0.
\end{equation}
It follows that $F$ is strictly positive. Applying Vasudeva's
argument, as set out before Proposition~\ref{Pconvex}, now implies
that $F$ is continuous on $( 0, \infty )$. Since the identity
(\ref{Emult}) shows that $x \mapsto F( x ) / F( 1 )$ is
multiplicative, there exists an exponent $\alpha \in \R_+$ such that
$F( x ) = F( 1 ) x^\alpha$ for all $x > 0$. The final details are left
as an exercise.

For $d = 3$, note that the $3 \times 3$ matrix
$A \oplus 0$ is totally non-negative if and only if the
$2 \times 2$ matrix $A$ is. Hence the previous working gives that
$F( x ) = c x^\alpha$ for some $c > 0$ and $\alpha \geq 0$.
Looking at $\det F[ C ]$ for the totally non-negative matrix
\begin{equation}\label{E3matrix}
C := \begin{bmatrix}
1 & 1 / \sqrt{2} & 0 \\
1 / \sqrt{2} & 1 & 1 / \sqrt{2} \\
0 & 1 / \sqrt{2} & 1
\end{bmatrix}
\end{equation}
shows that we must have $\alpha \geq 1$.

The argument to rule out the possibility that $\alpha \in [ 1, 2 )$
when $d \geq 4$ is more involved, but makes use of an example of
Fallat, Johnson and Sokal  \cite[Example~5.8]{FJS}. Full details are
provided in \cite{BGKP-TPP}.
\end{proof}

If our totally non-negative matrices are also required to be
symmetric, and so positive semidefinite, then the classes of
preservers are enlarged somewhat, but still fairly restrictive.

\begin{theorem}[{\cite[Theorem~2.3]{BGKP-TPP}}]\label{Tsymmetric}
Let $F : \R_+ \to \R$ and let $d$ be a positive integer. The following
are equivalent.
\begin{enumerate}
\item $F$ preserves TN entrywise on symmetric $d \times d$ matrices.
\item $F$ is either a non-negative constant or
\begin{enumerate}
\item[(a)] $(d = 1)$ $F \geq 0$;
\item[(b)] $(d = 2)$ $F$ is non-negative, non-decreasing,
and multiplicatively mid-convex, that is,
$F( \sqrt{x y} )^2 \leq F( x ) F( y )$ for all
$x$, $y \in [ 0,\infty )$, so continuous;
\item[(c)] $(d = 3)$ $F( x ) = c x^\alpha$ for some $c > 0$ and some
$\alpha \geq 1$;
\item[(d)] $(d = 4)$ $F( x ) = c x^\alpha$ for some $c > 0$ and some
$\alpha \in \{1\} \cup [ 2, \infty )$;
\item[(e)] $(d \geq 5$) $F( x ) = c x$ for some $c > 0$.
\end{enumerate}
\end{enumerate} 
\end{theorem}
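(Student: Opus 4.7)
The plan is to address necessity and sufficiency separately, handling the five cases together by bootstrapping from the key opening observation that a symmetric $2 \times 2$ matrix with non-negative entries is TN if and only if it is positive semidefinite. Consequently the symmetric TN $2 \times 2$ matrices coincide with $\cP_2(\R_+)$, and the $d = 2$ case of the theorem is exactly Vasudeva's Theorem~\ref{TK2}. Sufficiency in the remaining cases follows from the Schur product theorem together with known entrywise power-preserver results (such as Theorem~\ref{Tfitzhorn} and related facts about Hadamard powers of symmetric TN matrices).

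For necessity when $d \geq 3$, the strategy is a three-step bootstrap. First, for any $A \in \cP_2(\R_+)$, the matrix $A \oplus 0_{(d-2) \times (d-2)}$ is symmetric TN: every minor either vanishes or involves only rows and columns of zeros. Applying $F$ entrywise and inspecting the $2 \times 2$ principal minor supported on rows and columns $\{1,2\}$ recovers $\det F[A] \geq 0$, while the $2 \times 2$ minors involving the $F(0)$ entries yield the remaining Vasudeva conditions. Hence $F$ is non-negative, non-decreasing, multiplicatively mid-convex, and continuous on $(0, \infty)$.

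Second, for $d = 3$, multiplicativity of $F$ is extracted by testing on the rank-one symmetric TN matrix $\bu \bu^T$ with $\bu := (\sqrt{a}, \sqrt{b}, \sqrt{a})^T$. Since $\bu \bu^T$ has rank one, each of its $2 \times 2$ minors vanishes; in particular the $(\{1,2\}, \{2,3\})$-minor of $F[\bu \bu^T]$ equals $F(\sqrt{ab})^2 - F(a) F(b) \geq 0$. Combined with Vasudeva's reverse inequality $F(\sqrt{ab})^2 \leq F(a) F(b)$, this forces equality for all $a$, $b > 0$, and the substitution $G(y) := \log F(e^y)$ renders $G$ midpoint-affine, hence by continuity affine. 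Therefore $F(x) = c x^\alpha$ for some $c > 0$ and $\alpha \geq 0$, with $\alpha > 0$ since $F$ is non-constant. The sharp lower bound $\alpha \geq 1$ then follows by evaluating $\det F[C] = c^3 (1 - 2^{1-\alpha})$ on the symmetric TN matrix $C$ of~\eqref{E3matrix} and using its non-negativity.

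Third, for $d \geq 4$ iterated embedding reduces to the $d = 3$ case, so $F(x) = c x^\alpha$ with $\alpha \geq 1$, and only the admissible range of $\alpha$ remains to be determined. For $d = 4$ one must exclude $\alpha \in (1, 2)$; for $d \geq 5$ one must additionally exclude $\alpha \in [2, \infty)$. I would do this by exhibiting explicit symmetric TN test matrices in each dimension whose $\alpha$-th Hadamard power has a strictly negative minor for the relevant $\alpha$, adapting (to the symmetric setting) the Fallat--Johnson--Sokal construction used in the proof of Theorem~\ref{Tfixeddim}. The main obstacle will be precisely the design of these counterexamples: the symmetric constraint disallows the asymmetric witnesses of~\eqref{E2matrices}, so the critical thresholds shift from $d \geq 3$ and $d \geq 4$ in the general case to $d \geq 4$ and $d \geq 5$ here, and the required minor computations are correspondingly more delicate.
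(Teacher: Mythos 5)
The paper itself does not prove this theorem; it only states it with a citation to \cite[Theorem~2.3]{BGKP-TPP}, so your proposal must be judged on its own merits. Much of your skeleton is sound: the identification of symmetric $2 \times 2$ TN matrices with $\cP_2( \R_+ )$ correctly reduces $d = 2$ to Vasudeva, the padding $A \oplus 0$ correctly transports the Vasudeva conditions (and forces $F(0) = 0$ in the non-constant case) to $d \geq 3$, and your $d = 3$ test matrix $\bu \bu^T$ with $\bu = ( \sqrt{a}, \sqrt{b}, \sqrt{a} )^T$ is a genuinely good replacement for the non-symmetric witnesses of~(\ref{E2matrices}): its $( \{1,2\}, \{2,3\} )$ minor gives the reverse of multiplicative mid-convexity, forcing equality and hence $F(x) = c x^\alpha$, after which the matrix $C$ of~(\ref{E3matrix}) gives $\alpha \geq 1$ exactly as you compute. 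For $d = 4$ you do not actually need to adapt the Fallat--Johnson--Sokal example: the matrix $\bone_{4 \times 4} + \bu \bu^T$ with $\bu$ having distinct increasing positive entries is itself symmetric TN (it has rank two, and its $2 \times 2$ minors equal $( u_i - u_k )( u_j - u_l ) > 0$), so Jain's Theorem~\ref{Tjain} already shows that its $\alpha$-th power is not even positive semidefinite for $\alpha \in ( 1, 2 )$.

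The genuine gaps are two. First, sufficiency for $d = 4$ with $\alpha \geq 2$ does not follow from the Schur product theorem or from Theorem~\ref{Tfitzhorn} alone, since TN demands non-negativity of \emph{non-principal} minors; the argument you need is to split the minors of $A^{\circ \alpha}$: a symmetric TN matrix is positive semidefinite, so FitzGerald--Horn handles all principal minors for $\alpha \geq d - 2 = 2$, while every non-principal $k \times k$ minor sits inside a $k \times 4$ TN submatrix with $k \leq 3$ and is handled by the general result of Theorem~\ref{Tfixeddim} for $d \leq 3$ (which requires only $\alpha \geq 1$). Your appeal to ``related facts about Hadamard powers of symmetric TN matrices'' is circular, as those facts are the claim being proved. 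Second, and more seriously, the exclusion for $d \geq 5$ of every $\alpha > 1$ --- including the integers $\alpha = 2, 3, \ldots$ and all real $\alpha \geq d - 2$ --- cannot be obtained from any positive-semidefiniteness obstruction, because those powers do preserve $\cP_d( \R_+ )$; one must produce a symmetric $5 \times 5$ TN matrix whose $\alpha$-th Hadamard power has a negative \emph{non-principal} (in effect $4 \times 4$) minor, which amounts to a symmetric TN completion problem far harder than Lemma~\ref{L2x2tp}. You correctly flag this as the main obstacle, but since no such matrix is exhibited, the case $d \geq 5$ --- the part of the theorem that actually separates the symmetric answer from the general one --- remains unproved.
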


\subsection{Entrywise preservers of totally positive matrices}

In moving from total non-negativity to total positivity, we face two
significant technical challenges. Firstly, the idea of
realizing totally non-negative $d \times d$ matrices as
submatrices of totally non-negative $( d + 1 ) \times ( d + 1 )$
matrices, by padding with zeros, does not transfer to the TP setting.
Secondly, it is no longer possible to use Vasudeva's idea to establish
multiplicative mid-point convexity, since the test matrices used for this
are not always totally positive.

The first issue leads us into the domain of \emph{totally positive
completion problems} \cite{FJSm}. It is possible to do this
generality, using parametrizations of TP matrices
\cite{Fomin-Zelevinksy} or exterior bordering \cite[Chapter 9]{FJ},
but the following result has the advantage of providing an explicit
embedding into a well-known class of matrices.

\begin{lemma}[{\cite[Lemma~3.2]{BGKP-TPP}}]\label{L2x2tp}
Any totally positive $2 \times 2$ matrix may be realized as the
leading principal submatrix of a positive multiple of a rectangular
totally positive generalized Vandermonde matrix of any larger size.
\end{lemma}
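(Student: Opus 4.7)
The plan is to construct the required generalized Vandermonde matrix by an explicit four-parameter ansatz and then trivially extend. Write the given totally positive matrix as $A = \begin{bmatrix} a & b \\ c & d \end{bmatrix}$, so that $a,b,c,d>0$ and $ad-bc>0$. I seek $\lambda>0$, $0<x_1<x_2$, and $\alpha_1<\alpha_2$ realizing the four scalar equations
\[
a = \lambda x_1^{\alpha_1}, \qquad b = \lambda x_1^{\alpha_2}, \qquad c = \lambda x_2^{\alpha_1}, \qquad d = \lambda x_2^{\alpha_2}.
\]

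Taking ratios $b/a = x_1^{\alpha_2-\alpha_1}$ and $d/c = x_2^{\alpha_2-\alpha_1}$ suggests fixing the free parameter $\beta := \alpha_2-\alpha_1$ to be $1$ (any positive value would do), and then setting $x_1 := b/a$ and $x_2 := d/c$. The crux of the argument is that the required ordering $x_1<x_2$ becomes $b/a < d/c$, i.e.\ $ad-bc>0$, which is precisely the nontrivial total-positivity condition on $A$. Now the remaining ratio $c/a = (x_2/x_1)^{\alpha_1}$ determines $\alpha_1 := \log(c/a)/\log(x_2/x_1)$, well defined in $\R$ since $x_2/x_1 > 1$ and $c/a>0$; set $\alpha_2:=\alpha_1+1$ and $\lambda := a x_1^{-\alpha_1} > 0$. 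A one-line check confirms that all four equations hold by construction.

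To pass from the $2 \times 2$ block to an $m \times n$ matrix of any prescribed larger size, I would extend the two sequences arbitrarily to strictly increasing sequences $0<x_1<x_2<x_3<\cdots<x_m$ and $\alpha_1<\alpha_2<\alpha_3<\cdots<\alpha_n$, and let
\[
V := [\, x_j^{\alpha_k} \,]_{1\le j\le m,\ 1\le k\le n}.
\]
This is a generalized Vandermonde matrix, hence totally positive by the example recalled at the start of this section (Laguerre's rule plus the Vandermonde homotopy). Since scaling by $\lambda>0$ multiplies every minor by a positive power of $\lambda$, the matrix $\lambda V$ is also totally positive, and its leading $2\times 2$ principal submatrix equals $A$ by the choice of parameters.

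There is no serious obstacle to this plan; the one point that has to be observed carefully — and that is the reason the lemma holds at all — is the exact matching between the inequalities required by the generalized Vandermonde structure ($x_1 < x_2$) and the defining inequalities of a $2\times 2$ TP matrix ($ad > bc$, with all entries positive). The extension to larger rectangles uses no further positivity hypothesis, since appending strictly larger nodes $x_j$ and exponents $\alpha_k$ preserves both the Vandermonde form and its total positivity.
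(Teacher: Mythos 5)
Your construction is correct: the four equations are solved consistently (the check $\lambda x_2^{\alpha_1}=a(x_2/x_1)^{\alpha_1}=c$ works out), the single nontrivial inequality $x_1<x_2$ is exactly $ad-bc>0$, and the extension to arbitrary larger node and exponent sequences invokes precisely the total positivity of generalized Vandermonde matrices with real exponents recalled earlier in the chapter. The survey only cites this lemma from \cite{BGKP-TPP} without reproducing the argument, but your explicit parametrization $a_{jk}=\lambda x_j^{\alpha_k}$ with $\alpha_2-\alpha_1$ normalized is the same direct approach used there, so nothing further is needed.
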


\begin{remark}[{\cite[Remark~3.4]{BGKP-TPP}}]\label{Rstudents}
Lemma~\ref{L2x2tp} can be strengthened to the following completion
result: given integers $m$, $n \geq 2$, an arbitrary $2 \times 2$
matrix $A$ occurs as a minor in a totally positive $m \times n$ matrix
at any given position (that is, in a specified pair of rows and pair
of columns) if and only if $A$ is totally positive.
\end{remark}

The other tool which will be vital to our deliberations is the
following result of Whitney.

\begin{theorem}[{\cite[Theorem 1]{Whitney}}]\label{Twhitney}
The set of totally positive $m \times n$ matrices is dense in
the set of totally non-negative $m \times n$ matrices.
\end{theorem}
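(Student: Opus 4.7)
The plan is to produce, for each TN matrix $A \in \R^{m \times n}$, a sequence of TP matrices converging to $A$. I would split into the full-rank case (which is clean) and the rank-deficient case (which is the main obstacle).

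First, suppose $\rk A = \min(m, n) =: d$. For $\epsilon > 0$ and $k \in \{ m, n \}$ let
\[
G_\epsilon^{(k)} := \bigl[ \exp\bigl( -\epsilon (i - j)^2 \bigr) \bigr]_{i, j = 1}^k,
\]
which is TP by Lemma~\ref{Lpolya} (applied to the equally spaced nodes $1, 2, \ldots, k$) and satisfies $G_\epsilon^{(k)} \to I_k$ entrywise as $\epsilon \to \infty$. Set $A_\epsilon := G_\epsilon^{(m)} A G_\epsilon^{(n)}$, so that $A_\epsilon \to A$ by continuity. For any $I \subset \{ 1, \ldots, m \}$ and $J \subset \{ 1, \ldots, n \}$ with $| I | = | J | = r \leq d$, two applications of the Basic Composition Lemma (Corollary~\ref{Ckr}) yield
\[
\det ( A_\epsilon )_{I, J} = \sum_{K, L} \det ( G_\epsilon^{(m)} )_{I, K} \, \det A_{K, L} \, \det ( G_\epsilon^{(n)} )_{L, J},
\]
where $K$ and $L$ run over the $r$-element subsets of $\{ 1, \ldots, m \}$ and $\{ 1, \ldots, n \}$. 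Each Gaussian factor is strictly positive and every $A$-minor is non-negative; the hypothesis $\rk A \geq r$ forces at least one $r \times r$ minor of $A$ to be non-zero, hence positive by TN. Thus $A_\epsilon$ is TP.

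The main obstacle is the rank-deficient case. If $\rk A < d$, then $\rk A_\epsilon \leq \rk A$ and every minor of $A_\epsilon$ of size exceeding $\rk A$ vanishes, so the multiplicative smoothing above can never produce TP approximants. One must first \emph{additively} perturb $A$ to full rank within the TN cone, but (as the excerpt observes, with $I_3 + \bone_{3 \times 3}$) the sum of two TN matrices need not be TN; circumventing this obstruction is the genuine content of Whitney's theorem.

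To overcome it, I would invoke the Loewner--Whitney factorisation: every square invertible TN matrix is a product of elementary bidiagonal TN factors $I + t e_i e_{i - 1}^T$, $I + t e_{i - 1} e_i^T$ and a positive diagonal, with parameters $t \geq 0$, and an analogous statement holds for rectangular TN matrices. Replacing each non-negative parameter by a strictly positive one yields a TP product (most transparently via the Lindstr\"om--Gessel--Viennot interpretation of minors as weighted sums over families of non-intersecting paths in a planar network: strictly positive edge weights make every such sum strictly positive), and letting these perturbations tend to zero exhibits $A$ as a limit of TP matrices. A diagonal extraction combined with the full-rank step then completes the argument.
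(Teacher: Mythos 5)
The paper does not prove this result --- it simply cites \cite{Whitney} --- so your argument must stand on its own. The full-rank half does: the double Cauchy--Binet expansion of $\det ( G_\epsilon^{(m)} A G_\epsilon^{(n)} )_{I, J}$ has only non-negative terms, the Gaussian minors are strictly positive, and $\rk A \geq r$ supplies one positive minor $\det A_{K, L}$, so every minor of $A_\epsilon$ is positive while $A_\epsilon \to A$. You are also right that no multiplicative (or convolution-type) smoothing can repair rank deficiency, so the singular case is where the entire content of the theorem lives.

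That is exactly where your proposal has a genuine gap. The Loewner--Whitney factorization you invoke is a statement about \emph{invertible} TN matrices (and their full-rank rectangular analogues); for a singular TN matrix the factorization you need either fails to exist in that form or takes a form for which your perturbation step breaks down. Concretely, $A = \begin{pmatrix} 0 & 1 \\ 0 & 0 \end{pmatrix}$ is TN but admits no factorization $L D U$ with unit bidiagonal TN factors and a single diagonal $D$: the $(1,1)$ entry of such a product is $\ell_{1 1} d_1 u_{1 1}$ and the $(1,2)$ entry is $\ell_{1 1} d_1 u_{1 2}$, so vanishing of the former forces vanishing of the latter. One can write $A = \diag( 1, 0 ) \, ( I + e_1 e_2^T ) \, \diag( 0, 1 )$, with \emph{singular diagonal factors interleaved} among the bidiagonal ones --- but then ``replace every non-negative parameter by a positive one'' does not yield a TP matrix: the associated planar network contains no lower edges, so the $(2,1)$ entry of the perturbed product is still $0$. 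To repair this one must insert additional elementary factors that are absent from the factorization of $A$, in positions forming a reduced word, check that the enlarged network is totally connected (so that the Lindstr\"om--Gessel--Viennot sums are non-empty for every minor), and verify that sending the new parameters to zero recovers $A$. That is precisely the hard part of Whitney's theorem, not a routine corollary of the invertible case; worse, the standard proofs of bidiagonal factorizations for \emph{general} (possibly singular) TN matrices themselves go through the density theorem, so the argument risks circularity. As written, your proof establishes density only over the full-rank stratum; the singular case needs a genuinely different perturbation (to full rank, within the TN cone) before your first step can be applied.
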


With these tools in hand, we are able to provide a complete
classification of the entrywise TP preservers of each fixed size,
akin to the results in the preceding section.

\begin{theorem}[{\cite[Theorem~3.1]{BGKP-TPP}}]\label{Ttp}
Let $F : ( 0, \infty ) \to \R$ be a function and let
$d := \min( m, n )$, where $m$ and~$n$ are positive
integers. The following are equivalent.
\begin{enumerate}
\item $F$ preserves total positivity entrywise on $m \times n$
matrices.
\item $F$ preserves total positivity entrywise on $d \times d$
matrices.
\item The function $F$ satisfies
\begin{enumerate}
\item[(a)] $(d = 1)$ $F( x ) > 0$;
\item[(b)] $(d = 2)$ $F( x ) = c x^\alpha$ for some $c > 0$ and some
$\alpha > 0$;
\item[(c)] $(d = 3)$ $F( x ) = c x^\alpha$ for some $c > 0$ and some
$\alpha \geq 1$;
\item[(d)] $(d \geq 4)$ $F(x) = c x$ for some $c > 0$.
\end{enumerate}
\end{enumerate}
\end{theorem}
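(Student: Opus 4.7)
The plan is to establish $(1) \iff (2) \iff (3)$.

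For $(1) \iff (2)$, the direction $(1) \implies (2)$ is immediate by taking $m = n = d$. The converse $(2) \implies (1)$ uses Lemma~\ref{L2x2tp} together with the completion strengthening in Remark~\ref{Rstudents}: any $k \times k$ TP submatrix of an $m \times n$ TP matrix (for $k \leq d$) can itself be embedded as a submatrix of a $d \times d$ TP matrix, reducing positivity of every $k \times k$ minor of $F[A]$ to hypothesis $(2)$.

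For $(3) \implies (1)$, I verify each case directly. When $d = 1$, positivity of entries is trivial. When $d = 2$ and $F(x) = c x^\alpha$ with $c, \alpha > 0$, any TP $A = \begin{pmatrix} a & b \\ c' & d' \end{pmatrix}$ yields $\det F[A] = c^2((a d')^\alpha - (b c')^\alpha) > 0$, since $a d' > b c' > 0$ and $t \mapsto t^\alpha$ is strictly increasing on $(0, \infty)$. When $d = 3$ with $\alpha \geq 1$, entrywise $\alpha$-th powers preserve TP on $3 \times 3$ matrices, as can be derived from the factorization~(\ref{Efactor}), the Cauchy--Binet formula, and a TP analogue of Proposition~\ref{Pextension}. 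When $d \geq 4$, linear scaling trivially preserves TP.

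The heart of the proof is $(2) \implies (3)$. The strategy is to first establish the regularity of $F$ and then invoke Whitney's theorem~\ref{Twhitney} to reduce to the TN classification in Theorem~\ref{Tfixeddim}. Positivity $F > 0$ on $(0, \infty)$ is immediate, since $F$ applied entrywise to any TP matrix must produce positive entries. Strict monotonicity follows by applying $F$ to the TP matrix $\begin{pmatrix} 1 & x_1 \\ 1 & x_2 \end{pmatrix}$ for $0 < x_1 < x_2$: the positive determinant of its image forces $F(1)(F(x_2) - F(x_1)) > 0$. For continuity, I would use TP perturbations of Vasudeva's singular test matrices; specifically, the matrix $\begin{pmatrix} x & \sqrt{xy}(1-\delta) \\ \sqrt{xy}(1-\delta) & y \end{pmatrix}$ is TP for $\delta \in (0, 1)$ with determinant $xy\delta(2-\delta) > 0$, and applying $F$ yields $F(x) F(y) > F(\sqrt{xy}(1-\delta))^2$. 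Passing to the limit $\delta \to 0^+$ and using monotonicity to identify the left limit of $F$ at $\sqrt{xy}$, one obtains the multiplicative mid-convexity $F(\sqrt{xy})^2 \leq F(x) F(y)$ at every continuity point; since monotonicity restricts the discontinuities of $F$ to a countable set, Proposition~\ref{Pconvex} then gives full continuity on $(0, \infty)$. With $F$ continuous, Whitney's theorem~\ref{Twhitney} (density of TP in TN) shows that $F[-]$ preserves TN on matrices with positive entries, whence Theorem~\ref{Tfixeddim} classifies $F$ as either a non-negative constant or a power function with exponent in the stated range. The constant case is eliminated since it collapses $2 \times 2$ determinants to zero, violating TP preservation, leaving exactly the classification in $(3)$.

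The main obstacle will be establishing continuity of $F$. Vasudeva's argument in the TN setting exploits test matrices with zero determinant, which are inadmissible in the TP setting. Reconstructing the multiplicative mid-convexity inequality from only strictly TP perturbations requires combining two-sided approximations with monotonicity to control one-sided limits and close the gap to full continuity.
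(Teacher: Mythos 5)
Your architecture for $(2)\implies(3)$ — monotonicity, then continuity, then Whitney's theorem plus the TN classification of Theorem~\ref{Tfixeddim} — founders at exactly the step you flag, and the gap is real, not just technical. Your perturbed test matrix gives $F(x)F(y) > F\bigl(\sqrt{xy}(1-\delta)\bigr)^2$, and letting $\delta \to 0^+$ yields only $F(x)F(y) \geq F\bigl(\sqrt{xy}^-\bigr)^2$, an inequality for the \emph{left limit} at the midpoint. In logarithmic coordinates this reads $g\bigl(\tfrac{s+t}{2}^-\bigr) \leq \tfrac{1}{2}\bigl(g(s)+g(t)\bigr)$ for the increasing function $g(t) = \log F(e^t)$, and this is perfectly consistent with $g$ having a jump: at a jump point $t_0$, taking $s = t_0 - h$, $t = t_0 + h$ and $h \to 0^+$ gives $g(t_0^-) \leq \tfrac{1}{2}\bigl(g(t_0^-) + g(t_0^+)\bigr)$, which holds automatically. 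So you never obtain midpoint convexity of the actual values at all points, which is what Proposition~\ref{Pconvex} requires; restricting to the co-countable set of continuity points does not help, since that set is not open and the proposition does not apply to it. Symmetric two-sided perturbations do not close the gap either, for the same reason.

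The paper avoids continuity altogether by a different device: since $F$ is increasing it is Borel measurable and has a continuity point $a$; one then tests $F$ on the two \emph{asymmetric} TP matrices $A(x,y,\epsilon) = \begin{bmatrix} ax & axy \\ a-\epsilon & ay \end{bmatrix}$ and $B(x,y,\epsilon) = \begin{bmatrix} axy & ax \\ ay & a+\epsilon \end{bmatrix}$, each with determinant $a\epsilon xy > 0$. Because the perturbation $\pm\epsilon$ sits at the continuity point $a$, the limits $\epsilon \to 0^+$ can be evaluated exactly and yield the two opposite inequalities $F(ax)F(ay) \geq F(axy)F(a)$ and $F(axy)F(a) \geq F(ax)F(ay)$, hence the exact identity. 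Thus $G(x) := F(ax)/F(a)$ is a measurable solution of Cauchy's multiplicative equation $G(xy) = G(x)G(y)$, so $G(x) = x^\alpha$ and $F(u) = cu^\alpha$ with $\alpha > 0$ by monotonicity — continuity comes for free from the closed form. Only then, for $d \geq 3$, does one invoke Lemma~\ref{L2x2tp}, Whitney density, and Theorem~\ref{Tfixeddim} to restrict $\alpha$. Two smaller points: your claim that $(1)\implies(2)$ is ``immediate by taking $m=n=d$'' is not right when $m \neq n$ (one needs a TP bordering/completion of a $d\times d$ TP matrix to an $m\times n$ one, and Remark~\ref{Rstudents} only covers $2\times2$ blocks, not general $k\times k$ ones); and in $(3)\implies(1)$ for $d=3$, TN preservation plus positivity of proper minors does not give total positivity (rank-two TN matrices such as $\bone_{3\times3}+\bu\bu^T$ have positive $2\times2$ minors and vanishing determinant), so that direction also needs a genuine argument rather than an appeal to Proposition~\ref{Pextension}, which concerns Loewner positivity, not TP.
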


\begin{proof}
We sketch the proof that $(2) \implies (3)$ when $d = 2$ and
$d \geq 3$. For the first case, working with the matrix
\[
\begin{bmatrix}
y & x \\
x & y
\end{bmatrix} \qquad ( y > x > 0 )
\]
shows that $F$ takes positive values and is increasing, so is Borel
measurable and continuous except on a countable set. We now fix a
point of continuity~$a$ and use the totally positive matrices
\[
A( x, y, \epsilon ) := \begin{bmatrix} a x & a x y \\
a - \epsilon & a y \end{bmatrix}
\quad \text{and} \quad
B( x, y, \epsilon ) := \begin{bmatrix} a x y & a x \\
a y & a + \epsilon
\end{bmatrix}
\]
to show that
\begin{align*}
0 & \leq \lim_{\epsilon \to 0^+} \det F[ A( x, y, \epsilon ) ] = %
F( a x ) F( a y ) - F( a x y ) F( a ) \\
\text{and} \quad 0 & \leq %
\lim_{\epsilon \to 0^+} \det F[ B( x, y , \epsilon ) ] = %
F( a ) F( a x y ) - F( a x ) F( a y )
\end{align*}
for all $x$, $y > 0$. Hence
$G : x \mapsto F( a x ) / F( a )$ is such that
\[
G( x y ) = G( x ) G( y ) \qquad \text{for all } x, y > 0,
\]
so $G$ is a measurable solution of the Cauchy functional equation.
It follows that $G( x ) = x^\alpha$ for some $\alpha \in \R$.
As $F$, and so $G$, is increasing, we must have $\alpha > 0$.

Finally, if $d \geq 3$, then the embedding of Lemma~\ref{L2x2tp} and
the previous working give positive constants $c$ and $\alpha$ such
that $F( x ) = c x^\alpha$. In particular, the function $F$ admits a
continuous extension $\tilde{F}$ to $\R_+$. The density of TP in TN,
that is, Theorem~\ref{Twhitney}, implies that $\tilde{F}$ preserves TN
entrywise on $d \times d$ matrices. Theorem~\ref{Tfixeddim} now
establishes the form of $\tilde{F}$, and so of~$F$.
\end{proof}

We may consider a version of the previous theorem which restricts to
the case of totally positive matrices which are symmetric. A moment's
thought leads to the consideration of a symmetric version of the
matrix completion problem.

\begin{lemma}[{\cite[Lemma~3.7]{BGKP-TPP}}]\label{L2symtp}
Any symmetric totally positive $2 \times 2$ matrix occurs as the
leading principal submatrix of a totally positive $d \times d$ Hankel
matrix, where $d \geq 2$ can be taken arbitrary large.
\end{lemma}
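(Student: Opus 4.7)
The idea is to realize $A = \begin{bmatrix} a & b \\ b & c \end{bmatrix}$ (with $a, b, c > 0$ and $ac > b^2$) as the leading $2 \times 2$ principal submatrix of the truncated Hankel moment matrix $[s_{i+j}(\mu)]_{i,j=0}^{d-1}$ of an admissible measure $\mu$ on $(0,\infty)$ whose support is infinite. This reduces the lemma to two sub-problems: (a) constructing such a $\mu$ with $(s_0(\mu), s_1(\mu), s_2(\mu)) = (a, b, c)$; and (b) verifying that this moment matrix is then totally positive.

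For (a), I would fix a reference probability measure $\lambda$ on $(0,\infty)$ with strictly positive density and finite moments of all orders (for concreteness $\std\lambda(x) = e^{-x}\std x$), and for small $\epsilon > 0$ seek $\mu$ in the form $\mu = \mu_0 + \epsilon \lambda$ where $\mu_0 = p_1\delta_{x_1} + p_2\delta_{x_2}$ is a two-atom measure on $(0,\infty)$. The three moment conditions on $\mu_0$ become $s_k(\mu_0) = s_k - \epsilon s_k(\lambda)$ for $k = 0,1,2$, with $(s_0, s_1, s_2) = (a,b,c)$. For sufficiently small $\epsilon$ the perturbed triple remains in the open cone defined by $s_0, s_1, s_2 > 0$ and $s_0 s_2 > s_1^2$, by continuity from the hypothesis on $A$; an elementary parametrization then realizes any such triple as the first three Stieltjes moments of a two-atom measure with both atoms strictly inside $(0,\infty)$ (fix one atom large and solve linearly for the other atom and the two positive weights). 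The resulting $\mu$ has full support in $(0,\infty)$ together with the prescribed first three moments.

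For (b), any $n \times n$ minor of $[s_{i+j}(\mu)]_{i,j=0}^{d-1}$ with ordered row indices $r_1 < \cdots < r_n$ and column indices $c_1 < \cdots < c_n$ from $\{0,\ldots,d-1\}$ expands, via the Andr\'eief identity --- equivalently the Basic Composition Lemma~\ref{Tkr} applied to the kernel $K(x,i) = x^i$ on $(0,\infty) \times \{0,\ldots,d-1\}$ --- as
\[
\frac{1}{n!} \idotsint_{(0,\infty)^n} \det[x_k^{r_\ell}]_{k,\ell=1}^n \, \det[x_k^{c_\ell}]_{k,\ell=1}^n \prod_{j=1}^n \std\mu(x_j).
\]
Each generalized Vandermonde factor is strictly positive on the open ordered simplex $\{x_1 < \cdots < x_n\} \subset (0,\infty)^n$ by the generalized Vandermonde example opening this chapter, and the product is invariant under permutations of the $x_k$; since $\mu$ has infinite support in $(0,\infty)$, the product measure $\mu^{\otimes n}$ charges every non-empty open subset of $(0,\infty)^n$, so the integral is strictly positive. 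Hence the full Hankel matrix is TP, and its $(1,1), (1,2), (2,1), (2,2)$ entries are $a, b, b, c$, giving back $A$ in the upper-left corner.

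The main obstacle is the positivity bookkeeping in sub-problem (a): to place both atoms of $\mu_0$ inside $(0,\infty)$ one needs not only the discriminant inequality $ac > b^2$ but also $b > 0$, the Stieltjes-type condition ensuring $s_1 > 0$. This is precisely where the symmetric-TP hypothesis on $A$ is used, and it is the reason the symmetric Hankel completion is more delicate than the rectangular generalized Vandermonde completion of Lemma~\ref{L2x2tp}; the remaining steps are routine applications of the Andr\'eief expansion and the positivity of generalized Vandermonde determinants.
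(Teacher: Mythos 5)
Your proof is correct and follows essentially the same route as the paper's: both realize $A$ as the leading principal block of the truncated Hankel moment matrix of a positive measure with infinite support in $(0,\infty)$ whose first three moments are the entries of $A$, and both verify total positivity via Andr\'eief's identity combined with the positivity of generalized Vandermonde determinants. The only real difference is the choice of measure --- the paper normalizes $a=1$ and takes the pushforward of Lebesgue measure on $[0,1]$ under $x \mapsto c x^s$, whereas you use a two-atom quadrature measure perturbed by a small exponential density; both constructions are valid.
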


\begin{proof}
It suffices to embed the matrix
\[
\begin{bmatrix}
1 & a \\
a & b
\end{bmatrix} \qquad ( 0 < a < \sqrt{b} )
\]
into such a Hankel matrix. It is an exercise to prove the existence of
a continuous function $f : [ 0, 1 ] \to \R_+; \ x \mapsto c x^s$ such
that
\[
\int_0^1 f( x ) \std x = a \qquad \text{and} \qquad %
\int_0^1 f( x )^2 \std x = b,
\]
and then setting
\[
a_{j k} := \int_0^1 f( x )^{j + k} \std x \qquad ( j, k \geq 0 )
\]
gives a Hankel matrix $A$ as required. The verification of total
positivity may be made with the help of Andr\'eief's identity,
\begin{multline*}
\det\left[ \int \phi_i( x ) \psi_j( x ) \std x \right]_{i, j = 1}^k \\
 = \frac{1}{k!} \int \cdots \int \det ( \phi_i( x_j ) )_{i, j = 1}^k
\det( \psi_i( x_j ) )_{i, j = 1}^k \std x_1 \cdots \std x_k,
\end{multline*}
where $\phi_i( x ) = f( x )^{\alpha_i - 1}$ and
$\psi_j( x ) = f( x )^{\beta_j - 1}$, with
\[
1 \leq \alpha_1 < \cdots < \alpha_k \leq d \qquad \text{and} \qquad %
1 \leq \beta_1 < \cdots < \beta_k \leq d,
\]
together with the total positivity of generalized Vandermonde
matrices.
\end{proof}

We remark here that the preceding result can be further strengthened to
have the symmetric TP $2 \times 2$ matrix occur in any
``symmetric'' position inside a larger square symmetric TP
Hankel matrix, in the spirit of Remark~\ref{Rstudents}.
See~\cite[Theorem~3.9]{BGKP-TPP} for details.

We now state the symmetric version of Theorem~\ref{Ttp}.

\begin{theorem}[{\cite[Theorem~3.6]{BGKP-TPP}}]\label{TsymmetricTP}
Let $F : ( 0, \infty ) \to \R$ and let $d$ be a positive integer.
The following are equivalent.
\begin{enumerate}
\item $F$ preserves total positivity entrywise on
symmetric $d \times d$ matrices.
\item The function $F$ satisfies
\begin{enumerate}
\item[(a)] $(d = 1)$ $F( x ) > 0$;
\item[(b)] $(d = 2)$ $F$ is positive, increasing,
and multiplicatively mid-convex, that is,
$F( \sqrt{x y} )^2 \leq F( x ) F( y )$ for all $x$,
$y \in ( 0, \infty )$, so continuous;
\item[(c)] $(d = 3)$ $F( x ) = c x^\alpha$ for some $c > 0$ and some
$\alpha \geq 1$;
\item[(d)] $(d = 4)$ $F( x ) = c x^\alpha$ for some $c > 0$ and some
$\alpha \in \{1\} \cup [ 2, \infty )$.
\item[(e)] $(d \geq 5)$ $F( x ) = c x$ for some $c > 0$.
\end{enumerate}
\end{enumerate} 
\end{theorem}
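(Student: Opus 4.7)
The plan is to prove Theorem~\ref{TsymmetricTP} by cases on $d$, in close parallel to the non-symmetric analogue Theorem~\ref{Ttp}, ultimately reducing the cases $d \geq 3$ to the already-established classification of symmetric TN preservers, Theorem~\ref{Tsymmetric}. The $d=1$ case is immediate, since the TP $1 \times 1$ matrices are precisely the positive scalars.

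For $d = 2$, I would first establish that $F > 0$ on $(0,\infty)$ by applying $F$ to any symmetric TP matrix $\bigl[\begin{smallmatrix} x & z \\ z & y \end{smallmatrix}\bigr]$ with $x, y, z > 0$ and $xy > z^2$, whose image must have positive entries. Strict monotonicity of $F$ would follow from testing on $\bigl[\begin{smallmatrix} y & x \\ x & y \end{smallmatrix}\bigr]$ with $y > x > 0$, since positivity of the determinant of the image yields $F(y)^2 > F(x)^2$. For the multiplicative mid-convex inequality, I would apply $F$ to the perturbed symmetric TP matrix
\[
\begin{bmatrix} x & \sqrt{xy} - \delta \\ \sqrt{xy} - \delta & y \end{bmatrix} \qquad (\delta > 0),
\]
whose determinant $2\delta\sqrt{xy} - \delta^2$ is positive for small $\delta$, and let $\delta \to 0^+$ to obtain the \emph{weak} inequality $F(x)F(y) \geq F(\sqrt{xy}^-)^2$. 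Continuity of $F$ will then follow by working with the left-continuous regularization $\widetilde{F}(x) := F(x^-)$: a short density argument shows $\widetilde{F}(x)\widetilde{F}(y) \geq \widetilde{F}(\sqrt{xy})^2$ for all $x, y > 0$, and then Proposition~\ref{Pconvex} applied to $g(t) := \log \widetilde{F}(e^t)$ (which is monotone, hence locally bounded above, and midpoint convex) forces $\widetilde{F}$ to be continuous. A sandwich argument using $F(x^-) \leq F(x) \leq F(x^+) = \widetilde{F}(x^+) = \widetilde{F}(x)$ (the last equality by continuity of $\widetilde{F}$) shows $F = \widetilde{F}$, so $F$ itself is continuous and the strong mid-convex inequality holds.

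For $d \geq 3$, I would invoke Lemma~\ref{L2symtp}: every symmetric TP $2 \times 2$ matrix occurs as the leading principal submatrix of a symmetric TP Hankel matrix of arbitrarily large size. This reduces the problem to the $d=2$ analysis, so $F$ is positive, increasing, multiplicatively mid-convex, and continuous on $(0, \infty)$. I then extend $F$ continuously to $\widetilde{F} : \R_+ \to \R_+$ by setting $\widetilde{F}(0) := \lim_{x \to 0^+} F(x)$, which exists by monotonicity. To invoke Theorem~\ref{Tsymmetric}, I must verify that $\widetilde{F}$ preserves symmetric TN on $d \times d$ matrices. Given any symmetric TN matrix $A$, Whitney's theorem (Theorem~\ref{Twhitney}) supplies TP approximants $A_n \to A$ (not necessarily symmetric); then each $\widetilde{F}[A_n] = F[A_n]$ is TP by hypothesis, continuity of $\widetilde{F}$ yields $\widetilde{F}[A_n] \to \widetilde{F}[A]$ entrywise, and closedness of TN gives that $\widetilde{F}[A]$ is TN. Symmetry of $\widetilde{F}[A]$ is automatic from symmetry of $A$ and the entrywise action. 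Theorem~\ref{Tsymmetric} then yields the precise form of $\widetilde{F}$ in each of the cases $d = 3$, $d = 4$, and $d \geq 5$, and restricting to $(0, \infty)$ delivers the stated classification of $F$.

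The main obstacle is the $d = 2$ case. Unlike Vasudeva's setting (Theorem~\ref{TK2}) for positive semidefinite matrices, where the rank-one matrix $\bigl[\begin{smallmatrix} a & \sqrt{ab} \\ \sqrt{ab} & b \end{smallmatrix}\bigr]$ delivers the mid-convex inequality directly, every symmetric TP $2 \times 2$ matrix has \emph{strictly} positive determinant, so one can only approach the critical configuration through perturbations. This forces a careful analysis of left-limits and of potential jump discontinuities of the monotone function $F$ before the clean equivalence with Vasudeva-type conditions is recovered. A secondary but cleanly resolved point is that the reduction to Theorem~\ref{Tsymmetric} for $d \geq 3$ does not require a symmetric version of Whitney's density theorem, since the symmetry of $\widetilde{F}[A]$ is built in by the entrywise action once $A$ is symmetric.
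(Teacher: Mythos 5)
Your overall architecture --- handle $d=2$ directly, then use Lemma~\ref{L2symtp} to transfer the $2\times 2$ conclusions to $d\geq 3$ and reduce to the symmetric TN classification of Theorem~\ref{Tsymmetric} via a density argument --- is exactly the route the survey points to, and your $d=2$ analysis is correct and carefully done: the perturbation $\sqrt{xy}-\delta$, the passage to the left-regularization $\widetilde F$, and the appeal to Proposition~\ref{Pconvex} properly navigate the fact that Vasudeva's rank-one test matrix is not totally positive.

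The gap is in the reduction for $d \geq 3$, and it sits precisely at the point you declare ``cleanly resolved.'' Given a symmetric TN matrix $A$, Theorem~\ref{Twhitney} produces TP approximants $A_n \to A$ that need not be symmetric, while your hypothesis only asserts that $F$ preserves TP on \emph{symmetric} $d\times d$ matrices; you therefore have no license to conclude that $F[A_n]$ is TP. The symmetry you observe to be ``built in'' is the symmetry of $\widetilde F[A]$ itself, which was never in doubt; what is needed is that each \emph{approximant} lies in the test set to which the hypothesis applies. Symmetrizing by averaging $A_n$ with $A_n^T$ does not repair this, since the TP matrices do not form a convex set. The correct fix is the symmetric refinement of Whitney's theorem --- symmetric TP $d\times d$ matrices are dense in symmetric TN $d\times d$ matrices --- which is a true but genuinely separate statement that \cite{BGKP-TPP} invokes; with symmetric approximants in hand, the rest of your reduction (continuity of $\widetilde F$, closedness of TN, Theorem~\ref{Tsymmetric}) goes through. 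A smaller omission: the theorem is an equivalence, and for $d\geq 3$ the implication from (2) to (1) --- that $x\mapsto cx^\alpha$ for the stated $\alpha$ preserves \emph{strict} total positivity on symmetric matrices, not merely total non-negativity --- is not addressed and does not follow formally from Theorem~\ref{Tsymmetric}.
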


Although we have developed the key ingredients to prove this theorem,
we content ourselves with referring the interested reader to
\cite{BGKP-TPP}.


\section{Power functions}\label{Spower}


A natural approach to tackle the problem of characterizing entrywise
preservers in fixed dimension is to examine if some natural simple
functions preserve positivity. One such family is the collection of power
functions, $f( x ) = x^\alpha$ for $\alpha > 0$. Characterizing which
fractional powers preserve positivity entrywise has recently received
much attention in the literature. One of the first results in this area
reads as follows.

\begin{theorem}%
[FitzGerald and Horn {\cite[Theorem~2.2]{FitzHorn}}]\label{Tfitzhorn}
Let $N \geq 2$ and let $A = [ a_{j k} ] \in \cP_N\bigl( \R_+ \bigr)$.
For any real number $\alpha \geq N - 2$, the matrix
$A^{\circ \alpha} := [ a_{j k}^{\alpha} ]$ is positive semidefinite.
If $0 < \alpha < N- 2 $ and $\alpha$ is not an integer, then there
exists a matrix $A \in \cP_N\bigl( ( 0, \infty ) \bigr)$ such that
$A^{\circ \alpha}$ is not positive semidefinite.
\end{theorem}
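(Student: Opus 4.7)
My plan is to handle the two parts separately: (1) by induction on $N$ using the extension principle (Proposition~\ref{Pextension}), and (2) by invoking the Horn--Loewner necessary condition (Theorem~\ref{Thorn}) to force a contradiction.

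For part (1), the base case $N = 2$ (so $\alpha \geq 0$) is immediate: any $A = \begin{bmatrix} a & b \\ b & c \end{bmatrix} \in \cP_2(\R_+)$ has $ac \geq b^2$, hence $(ac)^\alpha \geq b^{2\alpha}$, so $A^{\circ\alpha} \in \cP_2$. For the inductive step with $N \geq 3$ and $\alpha \geq N - 2$, I would set $h(x) := x^\alpha$ on $I = (0, \infty)$ and verify the three hypotheses of the extension principle. First, $h$ is continuously differentiable on $I$. Second, $h[-]$ preserves positivity on rank-one matrices in $\cP_N(I)$, since $(\bu\bu^T)^{\circ\alpha} = \bu^{\circ\alpha}(\bu^{\circ\alpha})^T$. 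Third, the derivative $h'(x) = \alpha x^{\alpha - 1}$ has exponent $\alpha - 1 \geq (N - 1) - 2$, so by the inductive hypothesis $h'[-]$ preserves positivity on $\cP_{N - 1}(I)$. Proposition~\ref{Pextension} then delivers that $h[-]$ preserves positivity on $\cP_N(I)$. Matrices with zero entries are handled by continuity, applying the result to $A + \epsilon \bone_{N\times N}$ and letting $\epsilon \to 0^+$.

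For part (2), fix a non-integer $\alpha \in (0, N - 2)$ and write $\alpha = m + \theta$ with $m := \lfloor \alpha \rfloor \in \{0, 1, \ldots, N - 3\}$ and $\theta \in (0, 1)$. The derivatives of $f(x) = x^\alpha$ are $f^{(k)}(x) = \alpha(\alpha - 1)\cdots(\alpha - k + 1)\, x^{\alpha - k}$. At $x = 1$ with $k = m + 2$, the factors $\alpha, \alpha - 1, \ldots, \alpha - m$ are all positive while $\alpha - (m + 1) = \theta - 1 < 0$, so $f^{(m + 2)}(1) < 0$. Since $m + 2 \leq N - 1$ and $f$ is smooth on $(0, \infty)$, this violates the second conclusion of Theorem~\ref{Thorn}, which guarantees $f^{(k)} \geq 0$ on $(0, \infty)$ for $0 \leq k \leq N - 1$ whenever $f[-]$ preserves positivity on $\cP_N\bigl((0, \infty)\bigr)$. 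Hence some $A \in \cP_N\bigl((0, \infty)\bigr)$ must satisfy $A^{\circ\alpha} \notin \cP_N$.

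\textbf{Main obstacle.} The heavy lifting is done by Proposition~\ref{Pextension}: once available, the induction in part (1) is almost mechanical, with the rank-one test trivial and the derivative matching the inductive gap exactly. Without the extension principle, one would have to run the original FitzGerald--Horn route: applying the identity $a^\alpha - b^\alpha = \alpha \int_0^1 (a - b)(\lambda a + (1 - \lambda)b)^{\alpha - 1}\, d\lambda$ entrywise after a Schur-complement-style decomposition that peels off a rank-one component built from the first row and column. Part (2) is much lighter, since Theorem~\ref{Thorn} converts a single elementary sign check into the existence of a counterexample; producing one explicitly (e.g.\ $\bone_{N\times N} + t\bu\bu^T$ for small $t > 0$) would instead require a case-by-case sign analysis of the leading coefficient $\prod_{j = 0}^{N - 1}\binom{\alpha}{j}$ in the Cauchy--Binet expansion, which is not uniformly favorable across all non-integer $\alpha \in (0, N - 2)$.
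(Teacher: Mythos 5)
Your argument is correct, and it splits naturally into a half that coincides with the paper's proof and a half that genuinely diverges. For the forward direction, the paper runs the FitzGerald--Horn induction directly: peel off the rank-one piece $\zeta\zeta^T$ built from the last row and column via a Schur complement, apply the integral identity $x^\alpha - y^\alpha = \alpha\int_0^1 (x-y)(\lambda x + (1-\lambda)y)^{\alpha-1}\std\lambda$ entrywise, and invoke the inductive hypothesis plus the Schur product theorem on the integrand. Your route through Proposition~\ref{Pextension} is the same argument in packaged form --- the paper itself notes that the extension principle abstracts precisely these two FitzGerald--Horn steps, and that its original use was for power functions --- so there is no circularity, and your verification of its hypotheses (the rank-one case is trivial, and $h'(x)=\alpha x^{\alpha-1}$ lands exactly on the $(N-1)$-dimensional threshold) is clean; the limiting argument $A+\epsilon\bone_{N\times N}$ to reach all of $\cP_N(\R_+)$ is a detail the paper elides but is needed and correct. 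For the converse, however, the paper exhibits explicit test matrices $a\bone_{N\times N}+t\bu\bu^T$ with $\bu$ having distinct positive coordinates and $t$ small, whereas you run the contrapositive of the Horn--Loewner theorem~\ref{Thorn}: since $x^\alpha$ is smooth and $f^{(m+2)}(1)<0$ with $m+2\leq N-1$, preservation on $\cP_N\bigl((0,\infty)\bigr)$ is impossible. This is logically sound (Theorem~\ref{Thorn} is proved independently in the paper, via Loewner's determinant expansion and mollification) and it is cleaner in that it sidesteps the sign bookkeeping you correctly identify: the leading coefficient of $\det f[a\bone_{N\times N}+t\bu\bu^T]$, proportional to $f(a)f'(a)\cdots f^{(N-1)}(a)$, is not negative for every non-integer $\alpha\in(0,N-2)$, so a direct counterexample requires passing to a suitable principal minor --- which is in effect what the inductive structure of Loewner's proof does for you. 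The trade-off is that your part (2) leans on a substantially heavier theorem whose own proof uses exactly the matrices the paper deploys directly.
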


Theorem \ref{Tfitzhorn} shows that every real power $\alpha \geq N-2$
entrywise preserves positivity, while no non-integers in $(0,N-2)$ do so.
This surprising ``phase transition'' phenomenon at the integer $N-2$ is
referred to as the ``critical exponent'' for
preserving positivity. Studying which powers entrywise preserve
positivity is a very natural and interesting problem. It also often
provides insights to determine which general functions preserve
positivity. For example, Theorem~\ref{Tfitzhorn} suggests that functions
that entrywise preserve positivity on $\cP_N$ should have a certain
number of non-negative derivatives, which is indeed the case by
Theorem~\ref{Thorn}.

\begin{proof}[Outline of the proof]
The first part of Theorem~\ref{Tfitzhorn} relies on an ingenious idea
that we now sketch. The result is obvious for $N = 2$. Let us assume
it holds for some $N - 1 \geq 2$, let $A \in \cP_N( \R_+ )$, and let
$\alpha \geq N - 2$. Write $A$ in block form,
\[
A = \begin{bmatrix}
B & \xi \\
\xi^T & a_{N N}
\end{bmatrix}, 
\]
where $B$ has dimension $( N - 1 )\times ( N - 1 )$ and
$\xi \in \R^{N - 1}$. Assume without loss of generality that
$a_{N N} \neq 0$ (as the case where $a_{N  N} = 0$ follows from the
induction hypothesis) and let
$\zeta := ( \xi^T, a_{N N} )^T / \sqrt{a_{N N}}$. Then
$A - \zeta \zeta^T = ( B - \xi \xi^T ) / a_{N N} \oplus 0$, where
$( B - \xi \xi^T ) / a_{N N}$ is the Schur complement of $a_{N N}$
in~$A$. Hence $A - \zeta \zeta^T$ is positive semidefinite. By the
fundamental theorem of calculus, for any  $x$, $y \in \R$,
\[
x^\alpha  = y^\alpha + \alpha \int_0^1 ( x - y ) %
( \lambda x + ( 1 - \lambda ) y )^{\alpha - 1 } \std\lambda.
\]
Using the above expression entrywise, we obtain 
\[
A^{\circ \alpha} = \zeta^{\circ \alpha} (  \zeta^{\circ \alpha})^T +
\int_0^1 ( A - \zeta \zeta^T ) \circ %
( \lambda A + ( 1 - \lambda )\zeta \zeta^T )^{\circ (\alpha-1)} %
\std\lambda.
\]
Observe that the entries of the last row and column of the matrix
$A - \zeta \zeta^T$ are all zero. Using the induction hypothesis and
the Schur product theorem, it follows that the integrand is positive
semidefinite, and therefore so is $A^{\circ \alpha}$.

The converse implication in Theorem~\ref{Tfitzhorn} is shown by
considering a matrix of the form $a \bone_{N \times N} + t \bu \bu^T$,
where $a$, $t > 0$, the coordinates of $\bu$ are distinct, and $t1$ is
small. Recall this is the exact same class of matrices that was useful
in proving the Horn--Loewner theorem~\ref{Thorn} as well as its
strengthening in Theorem~\ref{Tmaster}.  The original proof, by
FitzGerald and Horn \cite{FitzHorn}, used
$\bu = ( 1, 2, \ldots, N )^T$, while a later proof by Fallat, Johnson
and Sokal \cite{FJS} used the same argument, now with
$\bu = ( 1, u_0, \ldots, u_0^{N - 1} )^T$; the motivation in
\cite{FJS} was to work with Hankel matrices, and the matrix
$a \bone_{N \times N} + t \bu \bu^T$ is indeed Hankel. That said, the
argument of FitzGerald and Horn works more generally than both of these
proofs, to show that, for any non-integral power $\alpha \in (0, N - 2)$,
$a > 0$, and vector $\bu \in ( 0, \infty )^N$ with distinct coordinates,
there exists $t > 0$ such that $( a \bone_{N \times N} + t \bu
\bu^T)^{\circ \alpha}$ is not positive semidefinite.
\end{proof}

In her 2017 paper~\cite{Jain}, Jain provided a remarkable
strengthening of the result mentioned at the end of the previous
proof, which removes the dependence on $t$ entirely.

\begin{theorem}[Jain~\cite{Jain}]\label{Tjain}
Let
\[
A := [ 1 + u_j u_k ]_{j, k = 1}^N = \bone_{N \times N} + \bu \bu^T,
\]
where $N \geq 2$ and
$\bu = ( u_1, \ldots, u_N )^T \in ( 0, \infty )^N$ has distinct
entries. Then $A^{\circ \alpha}$ is positive semidefinite for
$\alpha \in \R$ if and only if
$\alpha \in \Z_+ \cup [ N - 2, \infty )$.
\end{theorem}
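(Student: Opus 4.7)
The plan splits the biconditional into its two directions.

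\textbf{``If''.} For $\alpha \in \Z_+$, iterate the Schur product theorem on the positive semidefinite matrix $A$; for $\alpha \geq N-2$, apply the FitzGerald--Horn result (Theorem~\ref{Tfitzhorn}).

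\textbf{``Only if''.} The case $\alpha < 0$ is dispatched by the $2 \times 2$ leading principal minor: Cauchy--Schwarz gives $(1+u_1 u_2)^2 < (1+u_1^2)(1+u_2^2)$ strictly (since $u_1 \neq u_2$), and raising to the negative exponent $\alpha$ reverses the strict inequality, producing a negative $2 \times 2$ determinant in $A^{\circ\alpha}$ and hence failure of PSDness.

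For the main case $\alpha \in (k, k+1)$ with $k \in \{0, 1, \dots, N-3\}$, I would set $N' := k+3$, restrict to the leading $N' \times N'$ principal submatrix $A' := \bone_{N' \times N'} + \bv\bv^T$ with $\bv := (u_1, \dots, u_{N'})$, and prove $\det(A')^{\circ\alpha} < 0$. Viewing $g(\alpha) := \det(A')^{\circ\alpha}$ as an entire function (since $1 + v_j v_k > 0$), the plan rests on three structural facts: (a)~$g$ vanishes at each integer $m \in \{0, 1, \dots, k+1\}$, since the binomial identity $(A')^{\circ m} = \sum_{n=0}^{m} \binom{m}{n} \bv^{\circ n} (\bv^{\circ n})^T$ has rank at most $m+1 \leq k+2 < N'$; (b)~$g(\alpha) > 0$ for $\alpha > k+1 = N'-2$, since Theorem~\ref{Tfitzhorn} gives PSDness and a linear-independence argument for $\{x \mapsto (1 + xv_l)^\alpha\}_{l=1}^{N'}$---via the asymptotic expansion at $x \to \infty$ followed by Vandermonde inversion---forces any null vector to vanish, yielding positive definiteness; (c)~the zero at $\alpha = k+1$ is simple with $g'(k+1) > 0$, via the Cauchy--Binet / Schur polynomial expansion
\[
g(\alpha) = V(\bv)^2 \sum_{0 \leq n_1 < \cdots < n_{N'}} \Bigl( \prod_{j=1}^{N'} \binom{\alpha}{n_j} \Bigr) s_\bn(\bv)^2
\]
(valid for $\bv$ with small entries and extended by analyticity in $\bv$): only partitions of the shape $(0,1,\dots,k+1,n)$ with $n > k+1$ contribute to $g'(k+1)$, and the dominant $n = k+2$ term is positive. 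Combining (a)--(c) with continuity, $g$ is strictly negative on a left neighbourhood of $k+1$.

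The main obstacle I foresee is propagating this negativity to the full open interval $(k, k+1)$, i.e., ruling out further zeros of $g$ there. One attack is a Laguerre--Descartes-style bound on the positive real zeros of the finite signed exponential sum $g(\alpha) = \sum_{\sigma \in S_{N'}} \operatorname{sgn}(\sigma) \prod_{j=1}^{N'} (1 + v_j v_{\sigma(j)})^\alpha$, counting them by sign-changes of the coefficients once the exponents $\ln \prod_j (1 + v_j v_{\sigma(j)})$ are sorted; matching this count to the $N'-1 = k+2$ integer zeros already located would exhaust all zeros and leave none in $(k, k+1)$ or $(k+1, \infty)$. Establishing such a bound uniformly in $\bv$---or alternatively, producing a non-negative integral (moment) representation of $-g(\alpha)$ on $(k, k+1)$ making the sign manifest---is the principal technical hurdle.
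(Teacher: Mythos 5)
The survey states Jain's theorem without proof, citing \cite{Jain}, so there is no in-paper argument to compare against; I assess your plan on its own terms. The outline is sensible: the ``if'' direction, the $2\times2$ Cauchy--Schwarz argument for $\alpha<0$, the reduction to the leading $(k+3)\times(k+3)$ minor, and the integer zeros of $g(\alpha)=\det(A')^{\circ\alpha}$ at $0,1,\dots,k+1$ are all correct. But step~(c) is not justified as written. The Cauchy--Binet/Schur-polynomial expansion of $g$ is the termwise determinant of the binomial series $\sum_n\binom{\alpha}{n}(v_iv_j)^n$, which converges only when $\max_i v_i<1$; for larger $\bv$ the right-hand side diverges, so it does not define an analytic function of $\bv$ and ``extension by analyticity in $\bv$'' is unavailable. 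Even formally, the terms contributing to $g'(k+1)$ are indexed by $\bn=(0,1,\dots,k+1,n)$ with $n\ge k+2$, and $\frac{d}{d\alpha}\binom{\alpha}{n}\big|_{\alpha=k+1}$ has sign $(-1)^{n-k-2}$, so the contributions alternate in sign; that the $n=k+2$ term dominates is clear only for small $\bv$ --- and smallness of $\bv$ is precisely the hypothesis Jain's theorem removes from the FitzGerald--Horn-type statement, so it cannot be assumed.

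The decisive issue, which you flag yourself, is that the Laguerre--Descartes attack does not close. The exponential sum $g(\alpha)=\sum_i c_ie^{\lambda_i\alpha}$ has, generically, on the order of $N'!$ distinct exponents with coefficients of both signs, and the number of sign changes is in general strictly larger than $N'-1$. Already for $N'=3$ and $\bv=(1,2,3)^T$ the five distinct products are $80,84,90,98,100$ with coefficients $-1,+2,-1,-1,+1$ (the two $3$-cycles share the product $84$), giving three sign changes while only the two zeros $\alpha=0,1$ have been located; the extra real zero lies in $(-\infty,0)$, and nothing in your plan locates it there rather than in $(0,1)$. For larger $N'$ the discrepancy grows, so ``matching the count'' fails, and without it steps (a)--(c) yield negativity only on an unquantified left neighbourhood of $k+1$, not on all of $(k,k+1)$. (Step (b) is also only sketched --- vanishing of the $N'$-point evaluations of $x\mapsto\sum_l c_l(1+xv_l)^\alpha$ requires a Chebyshev-system property, not mere linear independence --- though (b) would follow automatically if the zero-counting worked.) The missing idea is a mechanism controlling the sign of $g$ on the whole interval uniformly over distinct $\bv\in(0,\infty)^{N'}$, for instance a proof that $g$ never vanishes on $(k,k+1)$ for any such $\bv$, after which the sign could be computed by deforming $\bv$ into the region where the binomial expansion converges. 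As it stands, the proposal is a plausible skeleton whose load-bearing step is unproved.
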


Jain's result identifies a family of rank-two positive semidefinite
matrices, every one of which encodes the classification of powers
preserving positivity over all of $\cP_N\bigl( ( 0, \infty ) \bigr)$.
In a sense, her rank-two family is the culmination of previous work on
positivity preserving powers for $\cP_N\bigl( ( 0, \infty ) \bigr)$,
since for rank-one matrices, every entrywise power preserves
positivity:
$( \bu \bu^T )^{\circ \alpha} = %
\bu^{\circ \alpha} ( \bu^{\circ \alpha} )^T$.

An immediate consequence of these results is the classification of the
entrywise powers preserving positivity on the $N \times N$ Hankel TN
matrices. Recall from the results in Section~\ref{Shtn} (including
Lemma~\ref{Lhtn}(4)) that there is to be expected a strong correlation
between this classification and the one in Theorem~\ref{Tfitzhorn}.

\begin{corollary}
Given $N \geq 2$, the following are equivalent for an exponent
$\alpha \in \R$.
\begin{enumerate}
\item The entrywise power function $x \mapsto x^\alpha$ preserves total
non-negativity on $\HTN_N$ (see Lemma~\ref{Lhtn}).

\item The entrywise map $x \mapsto x^\alpha$ preserves positivity on
$\HTN_N$.

\item The entrywise map $x \mapsto x^\alpha$ preserves positivity on the
matrices in $\HTN_N\bigl( ( 0, \infty ) \bigr)$ of rank at most two.
 
\item The exponent $\alpha \in \Z_+ \cup [ N - 2, \infty )$.
\end{enumerate}
\end{corollary}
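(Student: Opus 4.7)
The plan is to establish the cycle $(4) \Rightarrow (1) \Rightarrow (2) \Rightarrow (3) \Rightarrow (4)$, two of whose steps are essentially immediate and two of which repackage results already in the excerpt. The implication $(1) \Rightarrow (2)$ holds because total non-negativity for square matrices forces every principal minor to be non-negative, hence positive semidefiniteness; and $(2) \Rightarrow (3)$ is simply restricting the test class, since the rank-at-most-two matrices in $\HTN_N\bigl( ( 0, \infty ) \bigr)$ are contained in $\HTN_N$.

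For $(4) \Rightarrow (1)$, I would split into the cases $\alpha \in \Z_+$ and $\alpha \in [ N-2, \infty )$. In the former, $x \mapsto x^\alpha$ is a single monomial with non-negative coefficient, so Lemma~\ref{Lhtn}(4) directly gives that its entrywise action preserves TN on $\HTN_N$. In the latter, fix $A \in \HTN_N$; by Theorem~\ref{FHTN}, both $A$ and $A^{(1)}$ are positive semidefinite, of sizes $N$ and $N - 1$ respectively. The FitzGerald--Horn theorem~\ref{Tfitzhorn} then shows that $A^{\circ\alpha}$ is positive semidefinite (since $\alpha \geq N - 2$) and that $(A^{(1)})^{\circ\alpha}$ is positive semidefinite (since $\alpha \geq N - 2 > N - 3 = (N-1) - 2$). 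The entrywise power $f(x) = x^\alpha$ preserves the Hankel property and commutes with the submatrix operation, so $(A^{\circ\alpha})^{(1)} = (A^{(1)})^{\circ\alpha}$; a second application of Theorem~\ref{FHTN} therefore certifies that $A^{\circ\alpha}$ is TN.

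For $(3) \Rightarrow (4)$, I would exhibit a single rank-two Hankel test matrix against which Jain's theorem fires. Fix any $u_0 \in ( 0, 1 )$ and set $\bu := ( 1, u_0, \ldots, u_0^{N - 1} )^T$, so that $A := \bone_{N \times N} + \bu \bu^T$ has $(j, k)$ entry $1 + u_0^{j + k - 2}$; this depends only on $j + k$, so $A$ is a rank-two Hankel matrix with entries in $( 0, \infty )$, and the coordinates of $\bu$ are distinct. To place $A$ in $\HTN_N\bigl( ( 0, \infty ) \bigr)$, note that $A$ is positive semidefinite as a sum of two positive semidefinite rank-one matrices, while $A^{(1)}$ has $(j, k)$ entry $1 + u_0^{j + k - 1} = 1 + u_0 \cdot u_0^{j - 1} u_0^{k - 1}$, whence $A^{(1)} = \bone_{(N - 1) \times (N - 1)} + u_0 \bw \bw^T$ with $\bw := ( 1, u_0, \ldots, u_0^{N - 2} )^T$, which is again positive semidefinite. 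Theorem~\ref{FHTN} therefore certifies that $A$ is TN. Hypothesis (3) forces $A^{\circ\alpha}$ to be positive semidefinite, and Jain's theorem~\ref{Tjain}, which is sharp for exactly this family of matrices, compels $\alpha \in \Z_+ \cup [ N - 2, \infty )$.

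I do not anticipate a serious obstacle: every step above is quoted from the excerpt or is a direct verification. The only minor points worth a line of justification are the commutation identity $(A^{\circ\alpha})^{(1)} = (A^{(1)})^{\circ\alpha}$ (trivially true since both operations act coordinatewise) and the explicit rank-one presentation of $A^{(1)} - \bone_{(N-1)\times(N-1)}$. The substantive content of the corollary is that the Hankel-TN setting is no more restrictive than the general positive semidefinite one for powers, with the Hankel-submatrix bridge of Theorem~\ref{FHTN} handling the ``positive'' direction via FitzGerald--Horn, and Jain's rank-two Hankel example delivering the sharp converse.
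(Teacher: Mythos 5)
Your proof is correct and uses exactly the same three ingredients as the paper's: the FitzGerald--Horn theorem for the positive direction, Theorem~\ref{FHTN} as the bridge between Hankel positive semidefiniteness and total non-negativity, and Jain's rank-two Hankel family for the sharp converse. The only cosmetic difference is that you prove $(4)\Rightarrow(1)$ in one step (applying FitzGerald--Horn to both $A$ and $A^{(1)}$) where the paper factors it as $(4)\Rightarrow(2)\Rightarrow(1)$; the content is identical, and your filled-in details (the identity $(A^{\circ\alpha})^{(1)}=(A^{(1)})^{\circ\alpha}$ and the explicit TN certification of the Jain test matrix) are all sound.
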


\begin{proof}
That $(4) \implies (2)$ and $(2) \implies (1)$ follow from
Theorems~\ref{Tfitzhorn} and~\ref{FHTN}, respectively. That
$(1) \implies (2)$ and $(2) \implies (3)$ are obvious, and
Jain's theorem~\ref{Tjain} shows that $(3) \implies (4)$.
\end{proof}

A problem related to the above study of entrywise powers preserving
positivity, is to characterize infinitely divisible matrices. This
problem was also considered by Horn in \cite{Horn67}. Recall that a
complex $N \times N$ matrix is said to be \emph{infinitely divisible}
if $A^{\circ \alpha} \in \cP_N$ for all $\alpha \in \R_+$. Denote the
\emph{incidence matrix} of $A$ by $M( A )$:
\[
M( A )_{j k} = m_{j k} := \begin{cases}
0 & \text{if } a_{j k} = 0 \\
1 & \text{otherwise}.
\end{cases}
\]
Also, let
\[
L( A ) := \{ \bx \in \C^N : %
\sum_{j, k = 1}^N m_{j k} x_j \overline{x_k} = 0 \},
\]
and note that $L( A )$ is the kernel of $M( A )$ if $M( A )$ is
positive semidefinite.

Assuming the arguments of the entries are chosen in a
\emph{consistent way} \cite{Horn67}, we let
\[
\log^\# A := M( A ) \circ \log[ A ] = %
[ \mu_{j k} \log a_{j k} ]_{j, k = 1}^N,
\]
with the usual convention $0 \log 0 = 0$.

\begin{theorem}[{Horn \cite[Theorem~1.4]{Horn67}}]
An $N \times N$ matrix $A$ is infinitely divisible if and
only if (a) $A$ is Hermitian, with $a_{j j} \geq 0$ for all $j$,
(b) $M( A ) \in \cP_N$, and (c) $\log^\# A$ is positive semidefinite
on $L( A )$.
\end{theorem}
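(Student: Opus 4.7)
The plan is to prove the two implications separately. For the forward direction, assume $A^{\circ \alpha} \in \cP_N$ for all $\alpha > 0$. Taking $\alpha = 1$ yields (a). For (b), I use that $a_{jk}^\alpha \to m_{jk}$ pointwise as $\alpha \to 0^+$ (under a consistent branch of $\log$ and the convention $0^\alpha = 0$), so $M(A) = \lim_{\alpha \to 0^+} A^{\circ \alpha}$ lies in $\cP_N$ by closedness of the cone. For (c), fix $x \in L(A) = \ker M(A)$ and set
\[
\phi(\alpha) := x^* A^{\circ \alpha} x = \sum_{(j, k) : a_{jk} \neq 0} x_j \bar{x}_k \, e^{\alpha \log a_{jk}};
\]
this is real-analytic in $\alpha \geq 0$, nonnegative by assumption, with $\phi(0) = x^* M(A) x = 0$. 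Hence $\phi'(0^+) \geq 0$, and termwise differentiation gives $\phi'(0^+) = x^* \log^\# A \cdot x$.

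For the converse, assume (a), (b), (c). The first step is a structural reduction: if $a_{jj} = 0$, then the $2 \times 2$ principal minor of $M(A)$ at $\{j, k\}$ has determinant $-m_{jk}^2$, forcing $m_{jk} = 0$ for every $k$; the corresponding row and column of $A$ vanish, so I discard them and assume $a_{jj} > 0$ throughout. Then $M(A)$ is a Gram matrix of unit vectors $w_1, \ldots, w_N$ with $\langle w_j, w_k \rangle \in \{0, 1\}$, which by Cauchy--Schwarz forces each pair to be either equal or orthogonal. The relation $\langle w_j, w_k \rangle = 1$ is therefore an equivalence relation partitioning the index set, so after reordering both $M(A)$ and $A$ are block diagonal, with each block $A_i$ having all entries nonzero. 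Entrywise powers respect this decomposition, so it suffices to prove infinite divisibility for a single such block.

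Now fix a block $B$ of size $n \times n$ with all entries nonzero, and set $K := \log^\# B = \log[B]$, which is Hermitian via the consistent argument choice. Here $M(B) = \bone_{n \times n}$ and $L(B) = \bone^\perp$, so condition (c) asserts that $v^* K v \geq 0$ whenever $v \perp \bone$. The key construction is a Schoenberg-style additive decomposition: I define
\[
r := \frac{1}{n} K \bone - \frac{\bone^* K \bone}{2 n^2} \bone \in \C^n, \qquad K' := K - r \bone^* - \bone r^*.
\]
A short calculation shows $K' \bone = 0$; since the correction $r \bone^* + \bone r^*$ is Hermitian and vanishes as a quadratic form on $\bone^\perp$, one has $v^* K' v = v^* K v \geq 0$ there, and combined with $K' \bone = 0$ this places $K'$ in $\cP_n$.

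Finally, entrywise factorization gives
\[
B^{\circ \alpha}_{jk} = e^{\alpha K_{jk}} = e^{\alpha K'_{jk}} \, e^{\alpha r_j} \, \overline{e^{\alpha r_k}},
\]
i.e., the matrix identity $B^{\circ \alpha} = D_\alpha \exp[\alpha K'] D_\alpha^*$ with $D_\alpha := \diag(e^{\alpha r_j})$. Since $\alpha K' \in \cP_n$, its entrywise exponential $\sum_{k \geq 0} (\alpha K')^{\circ k} / k!$ lies in $\cP_n$ by Schur's product theorem, and conjugation by the invertible diagonal $D_\alpha$ preserves positive semidefiniteness; hence $B^{\circ \alpha} \in \cP_n$. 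The chief obstacle I anticipate is handling the zero entries of $A$: a direct attempt to find $C > 0$ with $\log^\# A + C M(A) \in \cP_N$ (so that Schur-exponentiation of the sum together with the factor $e^{-\alpha C}$ would conclude immediately) fails in simple $2 \times 2$ examples. The rigidity of $\{0, 1\}$-valued PSD Gram matrices, which forces the block-diagonal reduction, is precisely what circumvents this obstruction and reduces the problem to the all-nonzero case in which the Schoenberg decomposition applies directly.
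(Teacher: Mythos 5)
Your proof is correct. Note that the survey states this result without proof, citing Horn's 1967 paper; your argument is essentially the classical one of Loewner--Horn: necessity of (c) via the one-sided derivative of $\alpha \mapsto x^* A^{\circ \alpha} x$ at $\alpha = 0^+$, and sufficiency by using the rigidity of $\{0,1\}$-valued positive semidefinite Gram matrices to reduce to blocks with no zero entries, where the rank-one Hermitian correction turns the conditionally positive semidefinite matrix $\log[B]$ into a genuinely positive semidefinite $K'$ whose entrywise exponential is handled by the Schur product theorem. All the steps check out, including the computation $K' \bone = 0$ and the identification $L(A) = \ker M(A)$ once (b) is in hand.
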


\subsection{Sparsity constraints}\label{Ssparsity}
Theorem~\ref{Tfitzhorn} was recently extended to more structured
matrices.  Given $I \subset \R$ and a graph $G = ( V, E )$ on the
finite vertex set $V = \{ 1, \ldots, N \}$, we define the cone of
positive-semidefinite matrices with \emph{zeros according to $G$}:
\begin{equation}\label{EPG}
\cP_G( I ) := \{ A = [ a_{j k}] \in \cP_N( I ) : a_{j k} = 0 %
\text{ if } ( j, k ) \not\in E \text{ and } i \neq j \}.
\end{equation}
Note that if $( j, k ) \in E$, then the entry $a_{j k}$ is
unconstrained; in particular, it is allowed to be $0$. Consequently,
the cone $\cP_G := \cP_G( \R )$ is a closed subset of $\cP_N$.

A natural refinement of Theorem~\ref{Tfitzhorn} involves studying powers
that entrywise preserve positivity on $\cP_G$. In that case, the flavor
of the problem changes significantly, with the discrete structure of the
graph playing a prominent role.

\begin{definition}[Guillot--Khare--Rajaratnam \cite{GKR-critG}]\label{DcriticalExp}
Given a simple graph $G = (V,E)$, let
\begin{equation}
\cH_G := \{ \alpha \in \R : A^{\circ \alpha} \in \cP_G %
\text{ for all } A \in \cP_G( \R_+ ) \}.
\end{equation}
Define  the \emph{Hadamard critical exponent of $G$} to be
\begin{equation}\label{EcriticalExp}
CE( G ) := %
\min\{\alpha \in \R : [ \alpha, \infty ) \subset \cH_G \}.
\end{equation}
\end{definition}

Notice that, by Theorem \ref{Tfitzhorn}, for every graph
$G = ( V, E )$, the critical exponent $CE( G )$ exists, and lies in
$[ \omega( G ) - 2, | V | - 2 ]$, where $\omega( G )$ is the size of
the largest complete subgraph of $G$, that is, the clique number. To
compute such critical exponents is natural and highly non-trivial.

FitzGerald and Horn proved that $CE( K_n ) = n - 2$ for all $n \geq 2$
(Theorem~\ref{Tfitzhorn}), while it follows from
\cite[Proposition~4.2]{GKR-sparse} that $CE( T ) = 1$ for every
tree~$T$. For a general graph, it is not \textit{a priori} clear what
the critical exponent is or how to compute it. A natural family of
graphs that encompasses both complete graphs and trees is that of
chordal graphs. Recall that a graph is \emph{chordal} if it does not
contain an induced cycle of length $4$ or more. Chordal graphs feature
extensively in many areas, such as the theory of graphical models
\cite{lauritzen}, and in problems involving positive-definite
completions (see~\cite{smith2008positive}). Examples of important
chordal graphs include trees, complete graphs, Apollonian graphs, band
graphs, and split graphs.

Recently, Guillot, Khare, and Rajaratnam \cite{GKR-critG} were able to
compute the complete set of entrywise powers preserving positivity on
$\cP_G$ for all chordal graphs $G$. Here, the critical exponent can be
described purely combinatorially.

\begin{theorem}[{Guillot--Khare--Rajaratnam \cite{GKR-critG}}]\label{TcritGchordal}
Let $K_r^{(1)}$ denote the complete graph with one edge removed, and
let $G$ be a finite simple connected chordal graph. The critical
exponent for entrywise powers preserving positivity on $\cP_G$ is
$r - 2$, where $r$ is the largest integer such that $K_r$ or $K_r^{(1)}$
is an induced subgraph of $G$. More precisely, the set of entrywise
powers preserving $\cP_G$ is $\cH_G = \Z_+ \cup [ r - 2, \infty )$, with
$r$ as before.
\end{theorem}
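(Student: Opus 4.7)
The plan is to establish the equality $\cH_G = \Z_+ \cup [r-2, \infty)$ by proving the two inclusions separately. Nonnegative integer powers lie in $\cH_G$ via the Schur product theorem, so the real content is to show that (a) every real $\alpha \geq r-2$ preserves $\cP_G$ entrywise, and (b) no non-integer $\alpha \in (0, r-2)$ does. The crucial intermediate ingredient, which is what pushes the critical exponent from the naive bound $\omega(G) - 2$ up to $r - 2$ when the witnessing induced subgraph is $K_r^{(1)}$ rather than $K_r$, is the precise identification $\cH_{K_r^{(1)}} = \Z_+ \cup [r-2, \infty)$.

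For the necessity direction, fix a non-integer $\alpha \in (0, r-2)$, and suppose first that $K_r$ is induced in $G$ on some vertex set $S$. Apply Theorem~\ref{Tfitzhorn} to obtain a matrix $B \in \cP_{K_r}(\R_+)$ of the form $a \bone_{r \times r} + t \bu \bu^T$ for which $B^{\circ \alpha}$ fails to be positive semidefinite, and embed $B$ into an $|V(G)| \times |V(G)|$ matrix $A$ by placing $B$ in the $S \times S$ block and zeros everywhere else. The zero rows and columns automatically respect every edge and non-edge constraint, so $A \in \cP_G(\R_+)$, and $A^{\circ \alpha}$ contains $B^{\circ \alpha}$ as a principal submatrix. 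If only $K_r^{(1)}$ is induced, the same padding reduction works once the $K_r^{(1)}$ lemma is established. The sufficiency half of that lemma is immediate from Theorem~\ref{Tfitzhorn}, since a matrix in $\cP_{K_r^{(1)}}(\R_+)$ is nothing but an $r \times r$ positive semidefinite matrix that happens to have one zero entry, and its Hadamard $\alpha$-th power inherits the same zero. The necessity half requires an explicit construction: produce a rank-two matrix $A = \bu \bu^T + \bv \bv^T \in \cP_{K_r^{(1)}}(\R_+)$ with the missing edge $(i, j)$ forced to zero by taking $\bu_j = 0 = \bv_i$, and then show, via a test-vector computation adapted from the FitzGerald--Horn perturbation argument, that $A^{\circ \alpha}$ has a strictly negative quadratic form in a direction orthogonal to the degeneracies introduced by the zero entry, for every non-integer $\alpha < r - 2$.

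For sufficiency I would induct on $|V(G)|$, using that every chordal graph admits a simplicial vertex $v$ whose neighbourhood is a clique (perfect elimination ordering). Writing $A \in \cP_G(\R_+)$ in block form with $v$ as the final index and setting $\zeta := (\xi^T, a_{vv})^T / \sqrt{a_{vv}}$ as in the proof of Theorem~\ref{Tfitzhorn}, we obtain
\[
A^{\circ \alpha} = \zeta^{\circ \alpha}(\zeta^{\circ \alpha})^T + \int_0^1 (A - \zeta \zeta^T) \circ (\lambda A + (1-\lambda)\zeta \zeta^T)^{\circ (\alpha - 1)} \std\lambda.
\]
The first summand is a rank-one positive semidefinite matrix, while $A - \zeta \zeta^T$ has vanishing $v$-row and $v$-column and descends to an element of $\cP_{G - v}$ on the remaining block; the inductive hypothesis applied to $G - v$, together with the Schur product theorem, forces the integrand to be positive semidefinite for every $\lambda \in [0, 1]$, so $A^{\circ \alpha}$ is too. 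A careful choice of simplicial vertex is needed so that the parameter $r(G-v)$ decreases (or stays equal to $r$ only in a manner compatible with the drop of the exponent from $\alpha$ to $\alpha - 1$), ensuring the induction genuinely closes.

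The principal obstacle is the necessity half of the $K_r^{(1)}$ lemma, which is the subtle point of the whole result: the clique number of $K_r^{(1)}$ is only $r - 1$, so a FitzGerald--Horn counterexample on the maximal clique alone would produce critical exponent $r - 3$; one must manufacture a specific low-rank positive semidefinite matrix whose single zero entry is at exactly the missing edge, and verify by explicit spectral analysis that its non-integer Hadamard powers below $r - 2$ acquire a negative eigenvalue. A secondary delicate point is closing the sufficiency induction at the boundary exponent $\alpha = r - 2$, which requires one to select the simplicial vertex carefully relative to the maximal induced $K_r$ or $K_r^{(1)}$ subgraph of $G$.
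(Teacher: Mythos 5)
The survey states this theorem without proof, citing \cite{GKR-critG}, so your proposal must stand on its own; its high-level architecture (integer powers via the Schur product theorem; necessity reduced by zero-padding to the induced subgraphs $K_r$ and $K_r^{(1)}$; sufficiency by induction along a perfect elimination ordering) does match the broad strategy of the cited work. However, both halves have genuine gaps. For necessity, the entire novelty is concentrated in showing that non-integer $\alpha \in (0, r-2)$ fail on $\cP_{K_r^{(1)}}$, since the clique $K_{r-1} \subset K_r^{(1)}$ only rules out non-integer powers below $r-3$. At exactly this point you posit a rank-two matrix $\bu \bu^T + \bv \bv^T$ with $u_j = v_i = 0$ and assert that a ``test-vector computation adapted from the FitzGerald--Horn perturbation argument'' exhibits a negative direction for every non-integer $\alpha < r-2$. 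That computation \emph{is} the theorem; it is not supplied, and it is not evident that this two-parameter family is rich enough: the FitzGerald--Horn expansion of $(a \bone_{N \times N} + t \bu \bu^T)^{\circ \alpha}$ exploits the full freedom of the directions $\bu^{\circ k}$, and forcing the zero entry removes degrees of freedom precisely where you need to push the threshold up from $r-3$ to $r-2$.

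The gap in sufficiency is not merely a matter of ``choosing the simplicial vertex carefully''; the induction provably does not close. The FitzGerald--Horn identity trades positivity of $A^{\circ \alpha}$ on $\cP_G$ for positivity of $(\lambda A + (1-\lambda)\zeta\zeta^T)^{\circ(\alpha-1)}$ restricted to $G - v$, so your inductive hypothesis demands $\alpha - 1 \geq r(G-v) - 2$. Take $G$ to be the bowtie (two triangles sharing a vertex): then $r = 3$ and the theorem asserts every $\alpha \geq 1$ works, but every simplicial vertex is a wing vertex, deleting it leaves the other triangle intact, so $r(G-v) = 3$ and your scheme requires $\alpha \geq 2$, missing the whole range $(1,2)$. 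One cannot in general arrange for $r(G-v) < r(G)$. Rescuing the argument by observing that only the Schur product $(A - \zeta\zeta^T) \circ (\cdot)^{\circ(\alpha-1)}$ need be positive semidefinite requires a genuinely new ingredient exploiting the structure of the Schur complement; in \cite{GKR-critG} this is supplied by a separate analysis of Loewner monotonicity and super-additivity of power functions (compare Table~\ref{H-list} and the super-additivity-based chordal criterion from the same paper quoted later in this survey). As written, both the necessity crux and the sufficiency induction remain open.
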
 

The set of entrywise powers preserving positivity was also computed in
\cite{GKR-critG} for cycles and bipartite graphs.

\begin{theorem}[Guillot--Khare--Rajaratnam \cite{GKR-critG}]\label{Tnonchordal}
The critical exponent of cycles and bipartite graphs is $1$.
\end{theorem}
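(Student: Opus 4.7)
To prove Theorem \ref{Tnonchordal}, the plan is to establish the lower bound $CE(G) \geq 1$ and the matching upper bound $CE(G) \leq 1$ separately, treating bipartite graphs and non-bipartite cycles by distinct techniques.

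For the \emph{lower bound}, I will exhibit an explicit $P_3$-based obstruction. The matrix
\[
A_0 := \begin{bmatrix} 1 & 1 & 0 \\ 1 & 2 & 1 \\ 0 & 1 & 1 \end{bmatrix}
\]
lies in $\cP_{P_3}(\R_+)$ (it is positive semidefinite of rank two), yet $\det(A_0^{\circ \alpha}) = 2^\alpha - 2 < 0$ for $\alpha \in (0, 1)$, so $A_0^{\circ \alpha}$ is not positive semidefinite. Since $P_3$ is an induced subgraph of every cycle $C_n$ with $n \geq 4$, and of every connected bipartite graph with more than two vertices, padding $A_0$ by zeros on the remaining vertices yields a witness in $\cP_G(\R_+)$. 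The case $C_3 = K_3$ is handled directly by Theorem~\ref{Tfitzhorn}, which gives $CE(K_3) = 1$.

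For the \emph{upper bound in the bipartite case}, let $V = V_1 \sqcup V_2$ be the bipartition, $A \in \cP_G(\R_+)$, and $\alpha \geq 1$. Replacing $A$ with $A + \epsilon I$ and letting $\epsilon \to 0^+$ reduces to the case of strictly positive diagonal; the identity $(DAD)^{\circ \alpha} = D^\alpha A^{\circ \alpha} D^\alpha$ for diagonal $D > 0$ then allows a congruence by $D = \diag(A_{ii})^{-1/2}$ to normalize $A = I + M$ with $M$ off-diagonal, non-negative, and $G$-supported. Reordering vertices so that $V_1$ precedes $V_2$, the matrix $M$ is block anti-diagonal, $M = \begin{bmatrix} 0 & B \\ B^T & 0 \end{bmatrix}$, whose non-zero eigenvalues are $\pm \sigma_k(B)$; hence $A \succeq 0$ is equivalent to $\|B\|_{\mathrm{op}} \leq 1$. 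Testing $A$ on $2 \times 2$ principal submatrices forces $B_{ij} \in [0, 1]$, so $B^{\circ \alpha} \leq B$ entrywise. By the Perron--Frobenius monotonicity of the spectral radius under entrywise domination of non-negative matrices, $\|B^{\circ \alpha}\|_{\mathrm{op}} \leq \|B\|_{\mathrm{op}} \leq 1$, and thus $A^{\circ \alpha} = I + M^{\circ \alpha} \succeq 0$, with sparsity preserved since $0^\alpha = 0$.

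The \emph{cycle case} is more subtle. Even cycles $C_{2k}$ are bipartite and fall under the preceding argument. For odd cycles $C_n$ with $n \geq 5$, the bipartite block structure is unavailable. After the same normalization $A = I + M$ with $M$ non-negative, $C_n$-supported, entries in $[0, 1]$, and $\lambda_{\min}(M) \geq -1$, the goal is to show $\lambda_{\min}(M^{\circ \alpha}) \geq \lambda_{\min}(M)$ for $\alpha \geq 1$. A natural line of attack is to differentiate in $\alpha$ and compute
\[
\frac{d}{d\alpha} \lambda_{\min}(M^{\circ \alpha}) = 2 \sum_{\{i,j\} \in E(C_n)} M_{ij}^\alpha (\log M_{ij})\, v_i v_j,
\]
where $v$ is a unit eigenvector for the smallest eigenvalue of $M^{\circ \alpha}$. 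Since $\log M_{ij} < 0$ on edges with $M_{ij} \in (0, 1)$, the derivative is non-negative whenever the eigenvector satisfies $v_i v_j \leq 0$ on every edge. On bipartite graphs such a global sign alternation is available, but on an odd cycle parity forces at least one ``bad'' edge with $v_i v_j > 0$. The principal difficulty, which is the hardest step in the plan, is to quantitatively control this bad-edge contribution using the PSD constraint; two plausible routes are (i)~lifting $M$ to its bipartite double cover $C_{2n}$ and descending via a spectral correspondence, or (ii)~combining a Courant nodal-domain bound on the number of sign changes of $v$ around the cycle with a direct Rayleigh-quotient comparison.
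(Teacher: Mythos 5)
Your lower bound is correct and complete: the matrix $A_0$ lies in $\cP_{P_3}(\R_+)$, $\det A_0^{\circ\alpha}=2^\alpha-2<0$ for $\alpha\in(0,1)$, and padding by zeros transports the counterexample to any graph containing an induced $P_3$ (every cycle $C_n$ with $n\geq 4$ and every connected triangle-free graph on at least three vertices), while $C_3=K_3$ is covered by Theorem~\ref{Tfitzhorn}. Your bipartite upper bound is also correct and is essentially the argument in the cited source: the bipartition forces the diagonal blocks of any $A\in\cP_G(\R_+)$ to be diagonal matrices, so after normalization $A=I+M$ with $M$ block anti-diagonal, positivity is equivalent to $\varrho(M)\leq 1$, the $2\times 2$ minors force the entries into $[0,1]$, and entrywise monotonicity of the spectral radius of non-negative matrices gives $\varrho(M^{\circ\alpha})\leq\varrho(M)\leq 1$ for $\alpha\geq 1$.

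The genuine gap is the odd cycles $C_n$ with $n\geq 5$, which you acknowledge but do not close; since these are exactly the cycles that are neither bipartite nor chordal, they are the heart of the theorem. Three specific problems. First, your target inequality $\lambda_{\min}(M^{\circ\alpha})\geq\lambda_{\min}(M)$ is strictly stronger than the needed $\lambda_{\min}(M^{\circ\alpha})\geq -1$, and entrywise domination $0\leq M'\leq M$ of non-negative matrices with zero diagonal does \emph{not} imply $\lambda_{\min}(M')\geq\lambda_{\min}(M)$ on non-bipartite supports (shrinking one entry of the all-ones adjacency matrix of $K_3$ to zero drops $\lambda_{\min}$ from $-1$ to $-\sqrt{2}$), so any proof must use the specific form $M'=M^{\circ\alpha}$ and the cycle structure, which your derivative computation does not: as you note, parity forces an edge with $v_iv_j>0$ and the sign of the derivative is then uncontrolled. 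Second, route (i) fails as stated: the bipartite double cover replaces $M$ by $\bigl[\begin{smallmatrix}0&M\\ M&0\end{smallmatrix}\bigr]$, whose least eigenvalue is $-\varrho(M)$, not $\lambda_{\min}(M)$; so a PSD matrix in $\cP_{C_n}$ lifts to a PSD matrix in $\cP_{C_{2n}}$ only if $\varrho(M)\leq 1$, and that is precisely what fails for odd cycles (the constant-weight $C_5$ at the boundary of positivity has weight $b=1/(2\cos(\pi/5))$ and $\varrho(M)=2b=\sec(\pi/5)>1$). Route (ii) is too vague to assess. You need a different mechanism here — for instance, exploiting that deleting a vertex of $C_n$ leaves a tree, or a quantitative constraint that positivity imposes jointly on the $n$ edge weights of an odd cycle — none of which is supplied.
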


Surprisingly, the critical exponent does not depend on the size of the
graph for cycles and bipartite graphs. In particular, it is striking
that any power greater than $1$ preserves positivity for families of
dense graphs such as bipartite graphs. Such a result is in sharp
contrast to the general case, where there is no underlying structure
of zeros. That small powers can preserve positivity is important for
applications, since such entrywise procedures are often used to
regularize positive definite matrices, such as covariance or
correlation matrices, where the goal is to minimally modify the
entries of the original matrix (see~\cite{Li_Horvath, Zhang_Horvath}
and Chapter~\ref{Sstats} below).

For a general graph, the problem of computing the set $\cH_G$ or the
critical exponent $CE( G )$ remains open. We now outline some other
natural open problems in the area.

\textbf{Problems.}
\begin{enumerate}
\item  In every currently known case (Theorems \ref{TcritGchordal},
\ref{Tnonchordal}), $CE( G )$ is equal to $r - 2$, where $r$ is the
largest integer such that $K_r$ or $K_r^{( 1 )}$ is an induced subgraph
of $G$. Is the same true for every graph $G$?

\item Is $CE( G )$ always an integer? Can this be proved without
computing $CE( G )$ explicitly?

\item Recall that every chordal graph is perfect. Can the critical
 exponent be calculated for other broad families of graphs such as
the family of perfect graphs?
\end{enumerate}

\subsection{Rank constraints and other Loewner properties}

Another approach to generalize Theorem \ref{Tfitzhorn} is to examine
other properties of entrywise functions such as monotonicity,
convexity, and super-additivity (with respect to the Loewner
semidefinite ordering) \cite{Hiai2009, GKR-crit-2sided}. Given a set
$V \subset \cP_N( I )$, recall that a function $f: I \to \R$ is
\begin{itemize}
\item \emph{positive on $V$} with respect to the Loewner ordering if
$f[ A ] \geq 0$ for all $0 \leq A \in V$;

\item \emph{monotone on $V$} with respect to the Loewner ordering if
$f[ A ] \geq f[ B ]$ for all $A$, $B \in V$ such that
$A \geq B \geq 0$; 

\item \emph{convex on $V$} with respect to the Loewner ordering if
$f[ \lambda A + ( 1 - \lambda ) B ] \leq %
\lambda f[ A ] + ( 1 - \lambda )  f[ B ]$
for all $\lambda [ 0, 1 ]$ and all $A$, $B \in V$ such that
$A \geq B \geq 0$;

\item \emph{super-additive on $V$} with respect to the Loewner
ordering if $f[A + B] \geq f[ A ] + f[ B ]$ for all $A$, $B \in V$ for
which $f[ A + B ]$ is defined.
\end{itemize}

The following relations between the first three notions were obtained
by Hiai.

\begin{theorem}[{Hiai \cite[Theorem 3.2]{Hiai2009}}]
Let $I = ( -\rho, \rho )$ for some $\rho > 0$.
\begin{enumerate}
\item For each $N \geq 3$, the function $f$ is monotone on
$\cP_N( I )$ if and only if $f$ is differentiable on $I$ and $f'$ is
positive on $\cP_N( I )$.
\item For each $N \geq 2$, the function $f$ is convex on
$\cP_N( I )$ if and only if $f$ is differentiable on $I$ and $f'$ is
monotone on $\cP_N( I )$.
\end{enumerate}
\end{theorem}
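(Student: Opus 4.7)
The plan is to prove each of (1) and (2) as two separate implications. In both parts the ``if'' direction (where $f'$ has the claimed positivity or monotonicity property and we want to conclude monotonicity or convexity of $f[-]$) follows from applying the fundamental theorem of calculus entrywise together with the Schur product theorem. The ``only if'' direction is harder: from the matrix-level property of $f[-]$ one must first extract enough regularity of $f$ to speak of $f'$, and then verify the corresponding property of $f'[-]$.

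For the backward direction of (1), suppose $f$ is differentiable on $I$ and $f'[-]$ preserves positivity on $\cP_N(I)$. Given $A \geq B \geq 0$ in $\cP_N(I)$, the fact that $I$ is an interval ensures $B + s(A-B) \in \cP_N(I)$ for every $s \in [0,1]$. The fundamental theorem of calculus applied entrywise then gives
\[ f[A] - f[B] = (A-B) \circ \int_0^1 f'[B + s(A-B)] \, \rd s. \]
By hypothesis the integrand is positive semidefinite for each $s$, so the integral is too, and as $A - B \geq 0$ the Schur product theorem concludes $f[A] \geq f[B]$.

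For the backward direction of (2), suppose $f$ is differentiable and $f'[-]$ is Loewner monotone on $\cP_N(I)$. For $A \geq B \geq 0$ in $\cP_N(I)$ and $\lambda \in [0,1]$, set $C := \lambda A + (1-\lambda) B$ and apply the previous identity twice (to the pairs $(A,C)$ and $(C,B)$) to obtain
\[ \lambda f[A] + (1-\lambda) f[B] - f[C] = \lambda(1-\lambda)(A-B) \circ \int_0^1 \bigl( f'[C + s(1-\lambda)(A-B)] - f'[B + s\lambda(A-B)] \bigr) \, \rd s. \]
The two arguments of $f'$ differ by $(\lambda + s - 2s\lambda)(A-B)$, which is Loewner positive for every $s, \lambda \in [0,1]$, and both lie in $\cP_N(I)$; hence Loewner monotonicity of $f'[-]$ makes the integrand positive semidefinite, and with the outer factor $\lambda(1-\lambda)(A-B) \geq 0$ the Schur product theorem yields the convexity inequality.

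For the forward directions, the key local computation is this: if $A \in \cP_N(I)$ has entries in the interior of $I$ and $\bu \in \R^N$, then $A + t \bu \bu^T \in \cP_N(I)$ for all sufficiently small $t > 0$ and strictly Loewner-dominates $A$. Once $f$ is known to be differentiable on $I$, monotonicity of $f[-]$ yields $f[A + t \bu \bu^T] - f[A] \geq 0$, and dividing by $t$ and letting $t \to 0^+$ produces $f'[A] \circ (\bu \bu^T) \geq 0$ for every $\bu$; specialising to $\bu = \bone$ gives $f'[A] \geq 0$. The analogous second-difference-quotient computation, applied to the three matrices $A$, $A + t \bu \bu^T$, $A + 2t \bu \bu^T$, extracts Loewner monotonicity of $f'[-]$ from convexity. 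The main obstacle is therefore the preliminary regularity step: showing that monotonicity (respectively convexity) of $f[-]$ on $\cP_N(I)$ forces $f$ to be differentiable throughout $I = (-\rho,\rho)$, and not merely on $[0,\rho)$. Diagonal matrices $aI_N$ only probe $f$ at non-negative points, so one must use test matrices whose entries span both signs (for instance, positive semidefinite combinations of $aI_N$ and $\pm b \bone_{N \times N}$, together with rank-one perturbations), and apply a Vasudeva-style argument (compare Proposition~\ref{Pconvex}) to promote multiplicative mid-convexity on $\log I$ to continuity, and then continuity together with the entrywise monotonicity of difference quotients to pointwise differentiability. The hypothesis $N \geq 3$ in~(1) is precisely what provides enough rows and columns for the requisite test matrices, while the stronger rigidity imposed by convexity makes $N \geq 2$ already sufficient for~(2).
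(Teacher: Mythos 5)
The survey does not prove this result; it is quoted directly from Hiai's paper, so there is no in-paper argument to measure yours against, and I assess the proposal on its own terms. Your two sufficiency (``if'') directions are correct and complete: the entrywise fundamental-theorem-of-calculus identity, the check that every interpolating matrix stays in $\cP_N(I)$, and the Schur product theorem are exactly what is needed, and your double application of the identity in part (2) is verified correctly --- the difference of the two arguments of $f'$ is $(\lambda + s - 2s\lambda)(A-B) = \bigl(\lambda(1-s) + s(1-\lambda)\bigr)(A-B) \geq 0$, so Loewner monotonicity of $f'[-]$ applies.

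The necessity directions contain two genuine gaps. First, the regularity step, which you rightly flag as the main obstacle, is not actually bridged by the route you propose. Vasudeva's multiplicative mid-convexity argument extracts continuity from the non-negativity of $\det f[A]$ for the $2\times 2$ matrix with entries $a$, $\sqrt{ab}$, $b > 0$ --- a positivity-preservation statement about matrices with \emph{positive} entries --- and it has no evident analogue either for the Loewner monotonicity or convexity hypotheses or for negative arguments of $f$. Even granting continuity, ``continuity together with entrywise monotonicity of difference quotients'' does not yield pointwise differentiability on all of $(-\rho,\rho)$: what is needed is to show that suitable second-order divided differences of $f$ are non-negative and then invoke a Boas--Widder-type converse to the mean value theorem (as in the proof of the Horn--Loewner theorem earlier in this survey), together with a separate treatment of the negative half of $I$ and of the origin; this is where the real content of Hiai's theorem lies. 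Second, in part (2) the computation you describe --- second difference quotients along $A$, $A + t\bu\bu^T$, $A + 2t\bu\bu^T$ --- produces in the limit the positivity of $f''[A] \circ \bu\bu^T \circ \bu\bu^T$ (and only if $f''$ exists, a further unaddressed regularity issue), not Loewner monotonicity of $f'[-]$ for an arbitrary comparable pair $A \geq B \geq 0$. Passing from rank-one directional information to the full statement $f'[A] \geq f'[B]$ requires either a second application of the integral identity, now to $f'$ (hence two derivatives of $f$), or a separate argument from the convexity definition; neither is indicated.
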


Power functions satisfying any of the above four properties have been
characterized by various authors. In recent work, Hiai \cite{Hiai2009}
has extended Theorem~\ref{Tfitzhorn} by considering the odd and even
extensions of the power functions to $\R$. For $\alpha > 0$, the even
and odd extensions to $\R$ of the power function
$f_\alpha( x ) := x^\alpha$ are defined to be
$\phi_\alpha( x ) := | x |^\alpha$ and
$\psi_\alpha( x ) := \sign( x ) | x |^\alpha$.  The first study of
powers $\alpha > 0$ for which $\phi_\alpha$ preserves positivity
entrywise on $\cP_N(\R)$ was carried out by Bhatia and Elsner
\cite{Bhatia-Elsner}. Subsequently, Hiai studied the power functions
$\phi_\alpha$ and $\psi_\alpha$ that preserve Loewner
positivity, monotonicity, and convexity entrywise, and showed for
positivity preservers that the same phase transition occurs at
$n - 2$ for $\phi_\alpha$ and $\psi_\alpha$, as demonstrated in
\cite{FitzHorn}.  The work was generalized in \cite{GKR-crit-2sided}
to matrices satisfying rank constraints.

\begin{definition}
Fix non-negative integers $n \geq 2$ and $n \geq k$, and a set
$I \subset \R$. Let $\cP_n^k( I )$ denote the subset of matrices in
$\cP_n( I )$ that have rank at most $k$, and let
\begin{align}\label{Epos}
\pos( n, k ) & := \{ \alpha > 0 : x^\alpha %
\text{ preserves positivity on } \cP_n^k( \R_+ ) \}, \notag \\
\pos^\phi( n, k ) & := \{ \alpha > 0 : \phi_\alpha %
\text{ preserves positivity on } \cP_n^k( \R ) \}, \\
\pos^\psi( n, k ) & := \{ \alpha > 0 : \psi_\alpha %
\text{ preserves positivity on } \cP_n^k( \R ) \}. \notag
\end{align}
Similarly, let $\cH_J( n, k )$, $\cH_J^\phi( n, k )$ and
$\cH_J^\psi( n, k )$ denote sets of the entrywise powers preserving
Loewner properties on $\cP_n^k( \R_+ )$ or $\cP_n^k( \R )$, where
$J \in %
\{ \text{monotonicity}, \text{convexity}, \text{super-additivity} \}$.
\end{definition}

The set of entrywise powers preserving the above notions are given in
the table below (see \cite[Theorem 1.2]{GKR-crit-2sided}).
\begin{table}[ht]
{\footnotesize
\begin{tabular}{|c|c|c|c|}
\hline
$J$ & $\cH_J( n, k )$ & $\cH_J^\phi( n, k )$ & $\cH_J^\psi( n, k )$ \\
\hline\hline
\textbf{Positivity} & \multicolumn{3}{c|}{} \\
\hline
$k = 1$ & $\R$ & $\R$ & $\R$ \\
& G--K--R & G--K--R & G--K--R \\ \hline
& $\N \cup [ n - 2, \infty )$ & $2 \N \cup [ n - 2, \infty )$ &
$( -1 + 2 \N ) \cup [ n - 2, \infty )$ \\
$2 \leq k \leq n$ & FitzGerald--Horn & Hiai, Bhatia--Elsner, & Hiai,
G--K--R\\ & & G--K--R & \\ \hline
\textbf{Monotonicity} & \multicolumn{3}{c|}{} \\
\hline
$k = 1$ & $\R_+$ & $\R_+$ & $\R_+$\\
& G--K--R & G--K--R & G--K--R \\ \hline
$2 \leq k \leq n$ & $\N \cup [ n - 1, \infty )$ & %
$2 \N \cup [ n - 1, \infty )$ & %
$( -1 + 2 \N ) \cup [ n - 1, \infty )$ \\
& FitzGerald--Horn & Hiai, G--K--R & Hiai, G--K--R \\ \hline
\textbf{Convexity} & \multicolumn{3}{c|}{} \\
\hline
$k = 1$ & $[ 1, \infty )$ & $[ 1, \infty )$ & $[ 1, \infty )$ \\
& G--K--R & G--K--R & G--K--R \\ \hline
$2 \leq k \leq n$ & $\N \cup [ n, \infty )$ & %
$2 \N \cup [ n, \infty )$ & $( -1 + 2 \N ) \cup [ n, \infty )$ \\
& Hiai, G--K--R & Hiai, G--K--R & Hiai, G--K--R \\ \hline
\textbf{Super-additivity} & \multicolumn{3}{c|}{} \\
\hline
$1 \leq k \leq n$ & $\N \cup [ n, \infty )$ & %
$2 \N \cup [ n, \infty )$ & $( -1 + 2 \N ) \cup [ n, \infty ) $ \\
& G--K--R & G--K--R & G--K--R \\ \hline
\end{tabular}}
\vspace*{2ex}
\caption{Summary of real Hadamard powers preserving Loewner properties,
with additional rank constraints. See Bhatia--Elsner
\cite{Bhatia-Elsner}, FitzGerald--Horn \cite{FitzHorn},
Guillot--Khare--Rajaratnam \cite{GKR-crit-2sided}, and Hiai
\cite{Hiai2009}.}\label{H-list}
\end{table}

\section{Motivation from statistics}\label{Sstats}

The study of entrywise functions preserving positivity has recently
attracted renewed attraction due to its importance in the estimation
and regularization of covariance/correlation matrices. Recall that the
covariance between two random variables $X_j$ and $X_k$ is given by
\[
\sigma_{j k} = \Cov( X_j, X_k ) = %
E\bigl[ ( X_j - E[ X_j ] ) ( X_k - E[ X_k ] )\bigr], 
\]
where $E[ X_j]$ denotes the expectation of $X_j$. In particular,
$\Cov( X_j, X_j) = \Var( X_j )$, the variance of $X_j$. The
\emph{covariance matrix} of a random vector
$\bX := ( X_1, \ldots, X_m )$, is the matrix
$\Sigma := [ \Cov( X_j, X_k ) ]_{j, k = 1}^m$. Covariance
matrices are a fundamental tool that measure linear dependencies
between random variables. In order to discover relations between
variables in data, statisticians and applied scientists need to obtain
estimates of the covariance matrix $\Sigma$ from observations
$\bx_1$, \ldots, $\bx_n \in \R^m$ of $\bX$. A traditional estimator
of $\Sigma$ is the \emph{sample covariance matrix} $S$ given by
\begin{equation}\label{ESampleCov}
S = [ s_{j k} ]_{j, k = 1}^m = \frac{1}{n - 1} %
\sum_{i = 1}^n ( \bx_i - \overline{\bx} ) %
( \bx_i - \overline{\bx} )^T, 
\end{equation}
where $\overline{\bx} := \frac{1}{n} \sum_{i = 1}^n \bx_i$ is the
average of the observations. In the case where the random vector $\bX$
has a multivariate normal distribution with mean $\mu$ and covariance
matrix $\Sigma$, one can show that $\overline{\bx}$ and
$\frac{n - 1}{n} S$ are the maximum likelihood estimators of $\mu$ and
$\Sigma$, respectively \cite[Chapter~3]{Anderson2003}. It is not
difficult to show that $S$ is an unbiased estimator of $\Sigma$. More
generally, under weak assumptions, one can show that the distribution
of $\sqrt{n}( S - \Sigma )$ is asymptotically normal as
$n \to \infty$. The exact description of the limiting distribution
depends on the moments and the cumulants of $\bX$ (see
\cite[Chapter~6.3]{Bilodeau99}). For example, in the
two-dimensional case, we have the following result.

Let $N_m( \mu, \Sigma )$ denote the $m$-dimensional normal
distribution with mean $\mu$ and covariance matrix $\Sigma$.

\begin{proposition}[{see \cite[Example~6.4]{Bilodeau99}}]
Let $\bx_1$, \ldots, $\bx_n \in \R^2$ be an independent and
identically distributed sample from a bivariate vector
$\bX = ( X_1, X_2 )$ with mean $\mu = ( \mu_1, \mu_2 )$
and finite fourth-order moments, and let $S$ be as in
Equation~(\ref{ESampleCov}). Then
\[
\sqrt{n} \left[
\begin{bmatrix} s_1^2 \\ s_{1 2} \\ s_2^2 \end{bmatrix} - %
\begin{bmatrix} \sigma_1^2 \\ \sigma_{1 2} \\ \sigma_2^2 \end{bmatrix}
\right] \overset{\rd}{\longrightarrow} N_3( \bzero, \Omega ),
\]
where $\Omega$ is the symmetric $3 \times 3$ matrix
\[
\Omega = \begin{bmatrix}
\mu_4^1 - ( \mu_2^1 )^2 & %
\mu_{3 1}^{1 2} - \mu_{1 1}^{1 2} \mu_2^1 & %
\mu_{2 2}^{1 2} - \mu_2^1 \mu_2^2 \\[1ex]
\mu_{3 1}^{1 2} - \mu_{1 1}^{1 2} \mu_2^1 & %
\mu_{2 2}^{1 2} - ( \mu_{1 1}^{1 2} )^2 & %
\mu_{1 3}^{1 2} - \mu_{1 1}^{1 2} \mu_2^2 \\[1ex]
\mu_{2 2}^{1 2} - \mu_2^1 \mu_2^2 & %
\mu_{3 1}^{1 2} - \mu_{1 1}^{1 2} \mu_2^1 & %
\mu_4^2 - ( \mu_2^2 )^2
\end{bmatrix}, 
\]
and $\mu^i_k = E[ ( X_i - \mu_i )^k ]$ and
$\mu^{i j}_{k l} = E[ ( X_ i -\mu_i )^k ( X_j - \mu_j )^l ]$.
\end{proposition}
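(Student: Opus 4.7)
The plan is to reduce everything to the multivariate central limit theorem applied to a single i.i.d.\ sequence of three-dimensional random vectors, using Slutsky's theorem to absorb the error introduced by centering at $\overline{\bx}$ rather than at the true mean $\mu$. Without loss of generality I would translate so that $\mu = 0$ (the sample covariances $s_{jk}$ are invariant under translation of the data), which eliminates notational clutter; the finite fourth-moment hypothesis is preserved.

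First I would introduce the ``oracle'' estimator obtained by centering at the true mean,
\[
T_{jk} := \frac{1}{n}\sum_{i=1}^n x_{ij} x_{ik} \qquad (j,k \in \{1,2\}),
\]
and observe the identity $(n-1) s_{jk} = n T_{jk} - n \overline{x}_j \overline{x}_k$, equivalently
\[
s_{jk} - T_{jk} = \frac{T_{jk}}{n-1} - \frac{n}{n-1}\,\overline{x}_j \overline{x}_k.
\]
By the weak law of large numbers, $T_{jk} \to \sigma_{jk}$ in probability, so $\sqrt{n}\,T_{jk}/(n-1) = O_P(1/\sqrt{n})$; by the ordinary CLT, $\overline{x}_j = O_P(1/\sqrt{n})$, hence $\overline{x}_j \overline{x}_k = O_P(1/n)$ and $\sqrt{n}\,\overline{x}_j \overline{x}_k = O_P(1/\sqrt{n})$. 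Therefore $\sqrt{n}(s_{jk} - T_{jk}) \to 0$ in probability for each $(j,k)$, and by Slutsky's theorem it suffices to establish the claimed limit for $\sqrt{n}(T - \sigma)$ in place of $\sqrt{n}(S - \sigma)$.

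Next, the vector $T$ is an empirical mean: setting
\[
\bY_i := \bigl(x_{i1}^2,\ x_{i1} x_{i2},\ x_{i2}^2\bigr)^T \qquad (1 \le i \le n),
\]
one has $(T_{11}, T_{12}, T_{22})^T = \frac{1}{n}\sum_i \bY_i$ and $E[\bY_i] = (\sigma_1^2, \sigma_{12}, \sigma_2^2)^T$. The finite fourth-moment hypothesis on $\bX$ guarantees that $\bY_i$ has finite second moments. The multivariate central limit theorem then yields
\[
\sqrt{n}\left( \frac{1}{n}\sum_{i=1}^n \bY_i - E[\bY_1] \right) \overset{\rd}{\longrightarrow} N_3(\bzero, \Omega),
\]
where $\Omega = \Cov(\bY_1)$.

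It remains to identify the entries of $\Omega$ with the matrix in the statement. Each entry is a direct moment computation; for instance (reinstating $\mu$ via $x_{i1} \mapsto X_1 - \mu_1$),
\[
\Omega_{11} = \Var((X_1-\mu_1)^2) = E[(X_1-\mu_1)^4] - \bigl(E[(X_1-\mu_1)^2]\bigr)^2 = \mu_4^1 - (\mu_2^1)^2,
\]
and
\[
\Omega_{12} = \Cov\bigl((X_1-\mu_1)^2, (X_1-\mu_1)(X_2-\mu_2)\bigr) = \mu_{31}^{12} - \mu_2^1 \mu_{11}^{12};
\]
the remaining four independent entries are computed in the same way. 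Matching these with the stated $\Omega$ completes the proof. The only genuinely delicate step is the Slutsky reduction in the first paragraph; once the oracle estimator $T$ is in play, the argument is a textbook CLT plus bookkeeping of mixed fourth cumulants.
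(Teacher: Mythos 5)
Your proposal is correct and is essentially the standard argument behind the cited result (the paper itself offers no proof, deferring to \cite[Example~6.4]{Bilodeau99}): reduce to the mean-centered ``oracle'' statistic via Slutsky, apply the trivariate CLT to $\bY_i = (x_{i1}^2, x_{i1}x_{i2}, x_{i2}^2)^T$, and compute $\Omega = \Cov(\bY_1)$. One useful byproduct of your entry-by-entry computation: the $(3,2)$ entry as printed in the survey, $\mu_{31}^{12} - \mu_{11}^{12}\mu_2^1$, is a typo — it should equal the $(2,3)$ entry $\mu_{13}^{12} - \mu_{11}^{12}\mu_2^2$, as your identification $\Omega_{23} = \Cov\bigl((X_1-\mu_1)(X_2-\mu_2), (X_2-\mu_2)^2\bigr)$ confirms.
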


In traditional statistics, one usually assumes the number of samples
$n$ is large enough for asymptotic results such as the one above to
apply. In covariance estimation, one typically requires a sample size
at least a few times the number of variables $m$ for that to apply. In
such a case, the sample covariance matrix provides a good
approximation of the true covariance matrix $\Sigma$. However, this
ideal setting is rarely seen nowadays. Indeed, our systematic and
automated way of collecting data today yields datasets where the
number of variables is often orders of magnitude larger than the
number of instances available for study \cite{Donoho2000}. Classical
statistical methods were not designed and are not suitable to analyze
data in such settings. Developing new methodologies that are adapted
to modern high-dimensional problems is the object of active
research. In the case of covariance estimation, several strategies
have been proposed to replace the traditional sample covariance matrix
estimator $S$. These approaches typically leverage low-dimensional
structures in the data (low rank, sparsity, \ldots) to obtain
reasonable covariance estimates, even when the sample size is small
compared to the dimension of the problem (see \cite{Pourahmadi2013}
for a detailed description of such techniques). One such approach
involves applying functions to the entries of sample covariance
matrices to improve their properties (see e.g.~\cite{bai2007semicircle, BickelLevina2008, ElKaroui2008, hero_rajaratnam, Hero_Rajaratnam2012, Li_Horvath, Rothman2009, Zhang_Horvath}). For example, \emph{hard thresholding}
a matrix entails setting to zero the entries of the matrix that are
smaller in absolute value than a prescribed value $\epsilon > 0$
(thinking the corresponding variables are independent, for
example). Letting
\begin{equation}\label{Ethresholding}
f_\epsilon^H( x ) = \begin{cases}
 x & \text{ if } | x | > \epsilon, \\
 0 & \text{ otherwise},
\end{cases}
\end{equation}
thresholding is equivalent to applying the function $f_\epsilon^H$
entrywise to the entries of the matrix. Another popular example that
was first studied in the context of wavelet shrinkage \cite{Donoho94}
is \emph{soft thresholding}, where $f_\epsilon^H$ is replaced by
\[
f_\epsilon^S : x \mapsto \sign( x ) \bigl( | x | - \epsilon \bigr)_+ %
\qquad \text{with } y_+ := \max\{ y, 0 \}.
\]
Soft thresholding not only sets small entries to zero, it also shrinks
all the other entries continuously towards zero. Several other
thresholding and shrinkage procedures were also recently proposed in
the context of covariance estimation (see \cite{Fan2016} and the
references therein).

Compared to other techniques, the above procedure has several
advantages.  Firstly, the resulting estimators are often significantly
more precise than the sample covariance matrices. Secondly, applying a
function to the entries of a matrix is very simple and not
computationally intensive. The procedure can therefore be performed in
very high dimensions and in real-time applications. This is in
contrast to several other techniques that require solving optimization
problems and often become too intensive to be used in modern
applications. A downside of the entrywise calculus, however, is that
the positive definiteness of the resulting matrices is not
guaranteed. As the parameter space of covariance matrices is the cone
of positive definite matrices, it is critical that the resulting
matrices be positive definite for the technique to be useful and
widely applicable. The problem of characterizing positivity preservers
thus has an immediate impact in the area of covariance estimation by
providing useful functions that can be applied entrywise to covariance
estimates in order to regularize them.

Several characterizations of when thresholding procedures preserve
positivity have recently been obtained.

\subsection{Thresholding with respect to a graph}

In \cite{GuillotRajaratnam2012}, the concept of thresholding with
respect to a graph was examined. In this context, the elements to
threshold are encoded in a graph $G = ( V, E )$ with
$V = \{ 1, \ldots, p \}$. If $A = ( a_{j k} )$ is a $p \times p$
matrix, we denote by $A_G$ the matrix with entries
\[
( A_G )_{j k} = \begin{cases}
 a_{j k} & \text{if } ( j, k ) \in E \text{ or } j = k, \\
 0 & \text{otherwise}.
\end{cases}
\]
We say that $A_G$ is the matrix obtain by thresholding $A$ with
respect to the graph $G$. The main result of
\cite{GuillotRajaratnam2012} characterizes the graphs $G$ for which
the corresponding thresholding procedure preserves positivity. Denote
by $\cP_N^+$ the set of real symmetric $N \times N$ positive definite
matrices and by $\cP_G^+$ the subset of positive definite matrices
contained in $\cP_G$ (see Equation (\ref{EPG})).

\begin{theorem}[{Guillot--Rajaratnam \cite[Theorem 3.1]{GuillotRajaratnam2012}}]\label{TGraphThreshold}
The following are equivalent:
\begin{enumerate}
\item $A_G \in \cP_N^+$ for all $A \in \cP_N^+$;
\item $G = \bigcup_{i = 1}^d G_i$, where $G_1$, \ldots, $G_d$
are disconnected and complete components of $G$.
\end{enumerate}
\end{theorem}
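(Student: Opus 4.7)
The plan is to prove both implications directly, relying on the block structure of $A_G$ and a small explicit obstruction in dimension three.

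For the easy direction $(2) \implies (1)$, suppose $G = \bigsqcup_{i = 1}^d G_i$ with each $G_i$ complete. After permuting vertices so those of each component are contiguous, $A_G$ becomes block diagonal with blocks equal to the principal submatrices $A[ V( G_i ) ]$ (no cross-component entries survive thresholding, since no edges of $G$ cross components). Each principal submatrix of $A \in \cP_N^+$ is positive definite, and a block-diagonal matrix with positive definite blocks is positive definite, so $A_G \in \cP_N^+$.

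For the converse $(1) \implies (2)$ I would argue by contrapositive. If $G$ is not a disjoint union of complete components, some connected component contains two non-adjacent vertices; take a shortest path between them, say $v_0, v_1, \ldots, v_\ell$ with $\ell \geq 2$, and set $i := v_0$, $j := v_1$, $k := v_2$. Then $\{ i, j \}, \{ j, k \} \in E$ and $\{ i, k \} \notin E$, since otherwise $v_0, v_2, \ldots, v_\ell$ would be strictly shorter. Fix any $r \in ( 1 / \sqrt{2}, 1 )$ and define $A \in \cP_N^+$ to be the block-diagonal matrix whose block on $\{ i, j, k \}$ equals $( 1 - r ) I_3 + r \bone_{3 \times 3}$ and whose block on $V \setminus \{ i, j, k \}$ equals $I_{N - 3}$, with zero off-diagonal blocks; its eigenvalues are $1 - r$ (twice), $1 + 2 r$, and $1$ with multiplicity $N - 3$, all positive. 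Thresholding with respect to $G$ alters only the $( i, k )$ and $( k, i )$ entries in the first block (the other off-diagonal entries of $A$ are already zero), producing
\[
A_G[ \{ i, j, k \} ] = \begin{pmatrix}
1 & r & 0 \\
r & 1 & r \\
0 & r & 1
\end{pmatrix},
\]
whose determinant equals $1 - 2 r^2 < 0$. Since principal submatrices of positive definite matrices are positive definite, $A_G \notin \cP_N^+$, contradicting $(1)$.

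The key design choice---and the only place where one must think rather than compute---is padding $A$ with an identity block on $V \setminus \{ i, j, k \}$ and zero off-diagonal blocks, so that the (possibly intricate) edge structure of $G$ on $V \setminus \{ i, j, k \}$ cannot rescue positivity of $A_G$: the obstruction is forced into the single $3 \times 3$ principal submatrix one controls explicitly. Beyond locating an induced $P_3$ in $G$, the proof reduces to one $3 \times 3$ determinant, so I do not expect a substantive obstacle.
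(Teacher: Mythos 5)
Your proof is correct, and it follows essentially the same route as the source the paper cites: the easy direction via block-diagonality and positive definiteness of principal submatrices (which is all the survey itself says about the proof), and the converse by locating an induced path $i \sim j \sim k$ with $i \not\sim k$ inside a non-complete component and exhibiting an explicit $3 \times 3$ obstruction. Your choice of $(1-r)I_3 + r\bone_{3\times 3}$ padded by an identity block is sound: the eigenvalues $1+2r$, $1-r$, $1-r$, $1,\ldots,1$ are positive for $r \in (1/\sqrt{2},1)$, while the thresholded principal block has determinant $1-2r^2 < 0$, so $A_G \notin \cP_N^+$. No gaps.
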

The implication $(2) \implies (1)$ of the theorem is intuitive and
straightforward, since principal submatrices of positive definite
matrices are positive definite. That $(1) \implies (2)$ may come as a
surprise though, and shows that indiscriminate or arbitrary
thresholding of a positive definite matrix can quickly lead to loss of
positive definiteness.

Theorem \ref{TGraphThreshold} also generalizes to matrices that
already have zero entries. In that case, the characterization of the
positivity preservers remains essentially the same.

\begin{theorem}[{Guillot--Rajaratnam \cite[Theorem 3.3]{GuillotRajaratnam2012}}]\label{TGraphThresholdGeneral}
Let $G = ( V, E )$ be an undirected graph and let $H = ( V, E' )$ be a
subgraph of $G$, so that $E' \subset E$. Then $A_H$ is positive definite
for every $A \in \cP_G^+$ if and only if $H = G_1 \cup \cdots \cup G_k$,
where $G_1$, \ldots, $G_k$ are disconnected induced subgraphs of $G$.
\end{theorem}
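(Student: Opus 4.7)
The argument divides into two directions.

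\emph{Sufficiency.} Suppose $H = G_1 \cup \cdots \cup G_k$ with each $G_i$ an induced subgraph of $G$, the $G_i$ being pairwise disconnected. Permuting vertices so that each $V(G_i)$ is contiguous, $A_H$ becomes block diagonal: cross-block entries are zero by construction, and within $V(G_i)$ the induced hypothesis gives $E(G_i) = E(G) \cap \binom{V(G_i)}{2}$, so any entry $a_{jl}$ with $j, l \in V(G_i)$ and $(j,l) \notin E(G_i)$ is already zero (because $A \in \cP_G^+$), while entries on $E(G_i) \subset E(H)$ are retained by the thresholding. Thus $A_H[V(G_i)] = A[V(G_i)]$ is a principal submatrix of $A$, hence positive definite, and $A_H$ is positive definite.

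\emph{Necessity, by contrapositive.} Assume some component of $H$ is not induced in $G$, yielding $j, k$ in the same $H$-component with $(j,k) \in E(G) \setminus E(H)$. Fix a shortest $H$-path $P : j = v_0, \ldots, v_m = k$, so $m \geq 2$. I plan to produce $A \in \cP_G^+$ with $A_H$ not positive definite, by strong induction on $m$. For the inductive reduction: if $G[\{v_0,\ldots,v_m\}]$ contains an edge $(v_i, v_{i'})$ with $|i-i'|>1$ and $\{i,i'\} \neq \{0,m\}$, then $(v_i,v_{i'}) \notin E(H)$ (else $P$ would not be shortest), and the subpath $v_i, \ldots, v_{i'}$ is itself an $H$-shortest path of length strictly below $m$ joining a pair in $E(G) \setminus E(H)$, to which the inductive hypothesis applies (extending the resulting matrix by the identity on the remaining vertices). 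Otherwise, $G[\{v_0,\ldots,v_m\}]$ is the cycle $C_{m+1}$ (path edges together with the chord $(v_0,v_m) = (j,k)$) and $H[\{v_0,\ldots,v_m\}]$ is the path $P_{m+1}$.

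In this reduced base case I would assign diagonal entries $1$, weight $t$ to each path edge, weight $d$ to the chord $(v_0, v_m)$, and extend by the identity on $V \setminus \{v_0,\ldots,v_m\}$. The chord-deleted tridiagonal path matrix has eigenvalues $1 + 2t\cos(k\pi/(m+2))$ for $k = 1, \ldots, m+1$; choose $t$ slightly above $1/(2\cos(\pi/(m+2)))$ so the smallest is $-\epsilon$ for small $\epsilon > 0$. The corresponding unit eigenvector $\bw$ is a discrete sine function, and a direct computation using $\sin(n\pi - x) = (-1)^{n+1}\sin(x)$ gives $w_{v_0} w_{v_m} = (-1)^{m} w_{v_0}^2 \neq 0$. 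The rank-two update $d(e_{v_0} e_{v_m}^T + e_{v_m} e_{v_0}^T)$ perturbs this eigenvalue to first order by $2d\, w_{v_0} w_{v_m}$, so taking $d$ with sign $(-1)^m$ and $|d|$ in the narrow window where the first-order shift exceeds $\epsilon$ while being small enough to leave every other eigenvalue positive makes the cycle matrix strictly positive definite. This supplies the required $A \in \cP_G^+$ (all off-diagonal nonzero entries lie on $G$-edges), while $A_H$ contains the chord-deleted path matrix as a principal submatrix, hence fails positive definiteness.

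The principal technical obstacle lies in this base-case perturbation step: a single choice of $d$ must simultaneously lift the smallest eigenvalue strictly above zero and leave the remaining $m$ eigenvalues positive. This is standard self-adjoint perturbation theory, but the sign and magnitude windows require care; one must bound the distance from zero of the non-extreme eigenvalues $1 + 2t\cos(k\pi/(m+2))$ ($1 \leq k \leq m$) uniformly in $d$, and verify that the explicit window for $d$ is non-empty as $\epsilon \to 0^+$. Verifying that a direct, more computational route (writing down $\det$ of the weighted cycle as a quadratic in $d$ and of each principal minor) works is an alternative to the perturbative route, but the eigenvalue bookkeeping described above seems cleanest.
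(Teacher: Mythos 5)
Your proof is correct, and since the survey only cites \cite[Theorem~3.3]{GuillotRajaratnam2012} without reproducing an argument (it sketches only the easy direction of the companion Theorem~\ref{TGraphThreshold}), your write-up stands as a self-contained proof rather than a variant of one in the text. The sufficiency direction is exactly right: block-diagonality of $A_H$ plus the fact that each block equals a principal submatrix of $A$ because each component of $H$ is induced in $G$. The necessity reduction is also sound: minimality of the $H$-path forces any $G$-chord other than $(v_0,v_m)$ to lie outside $E(H)$ and to produce a strictly shorter bad pair, so the minimal configuration is indeed cycle-versus-path. Your flagged obstacle in the base case does close (the eigenvector identity $w_{v_0}w_{v_m}=(-1)^m w_{v_0}^2$ is correct, the non-extreme eigenvalues of the tridiagonal block stay bounded away from $0$ as $\epsilon\to 0^+$, and Weyl's inequality together with analyticity of the simple bottom eigenvalue yields a non-empty window for $d$), but you can avoid perturbation theory altogether by taking $|d|=t$ with the sign $(-1)^m$ you already identified. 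The block of $A$ on $\{v_0,\dots,v_m\}$ is then $I+tM$, where $M$ is the adjacency matrix of the $(m+1)$-cycle with the chord edge weighted $(-1)^m$; its sign product is $(-1)^m$, so by the standard switching computation its eigenvalues are $1+2t\cos\bigl(2\pi k/(m+1)\bigr)$ for $m$ even and $1+2t\cos\bigl((2k+1)\pi/(m+1)\bigr)$ for $m$ odd, with minimum $1-2t\cos\bigl(\pi/(m+1)\bigr)$ in either parity, while the thresholded tridiagonal block has minimum eigenvalue $1-2t\cos\bigl(\pi/(m+2)\bigr)$. Any
\[
t \in \left( \frac{1}{2\cos\bigl(\pi/(m+2)\bigr)}, \ \frac{1}{2\cos\bigl(\pi/(m+1)\bigr)} \right)
\]
then makes $A$ positive definite and $A_H$ indefinite simultaneously, with no $\epsilon$ or eigenvalue bookkeeping; for $m=2$ this is the familiar fact that the $3\times 3$ matrix with unit diagonal and all off-diagonal entries equal to $t$ is positive definite for $t<1$, while its tridiagonal truncation fails once $t>1/\sqrt{2}$.
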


\subsection{Hard and soft thresholding}
Theorems \ref{TGraphThreshold} and \ref{TGraphThresholdGeneral}
address the case where positive definite matrices are thresholded with
respect to a given pattern of entries, regardless of the magnitude of
the entries of the original matrix. The more natural case where the
entries are hard or soft-thresholded was studied in
\cite{GuillotRajaratnam2012, GuillotRajaratnam2012b}. In applications,
it is uncommon to threshold the diagonal entries of estimated
covariance matrices, as the diagonal contains the variance of the
underlying variables. Hence, for a given function $f: \R \to \R$ and a
real matrix $A = [ a_{j k} ]$, we let the matrix $f^*[ A ]$ be defined
by setting
\[
f^*[A]_{j k} := \begin{cases}
 f( a_{j k} ) & \text{if } j \neq k, \\
 a_{j k} & \text{otherwise}.
\end{cases}
\]

\begin{theorem}[{Guillot--Rajaratnam \cite[Theorem 3.6]{GuillotRajaratnam2012}}]\label{THardThres}
Let $G$ be a connected undirected graph with $n \geq 3$ vertices. The
following are equivalent.
\begin{enumerate}
\item There exists $\epsilon > 0$ such that, for every
$A \in \cP_G^+$, we have $( f_\epsilon^H )^*[ A ] \in \cP_n^+$.
\item For every $\epsilon > 0$ and every $A \in \cP_G^+$, we have
$f_\epsilon^H[ A ] \in \cP_n^+$.
\item $G$ is a tree.
\end{enumerate}
\end{theorem}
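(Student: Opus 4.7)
The plan is to argue the three-way equivalence via $(2) \Rightarrow (1)$ (which is immediate), $(3) \Rightarrow (2)$ (a direct reduction to Theorem~\ref{TGraphThresholdGeneral}), and the contrapositive of $(1) \Rightarrow (3)$ (the main content, requiring a parametric construction).

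For $(3) \Rightarrow (2)$, I would fix a tree $G$, a scalar $\epsilon > 0$, and a matrix $A \in \cP_G^+$. The output $(f_\epsilon^H)^*[A]$ preserves the diagonal of $A$ and is supported off-diagonally on a subgraph $H \subseteq G$. Since $G$ is a tree, $H$ is a forest, and crucially, in a tree every connected subgraph is already an induced subgraph: between any two vertices of a tree there is a unique path, so if those vertices lie in one connected component of $H$, the unique $G$-path joining them lies entirely inside $H$. Thus each connected component of $H$ equals the induced subgraph of $G$ on its vertex set, so $H$ is a disjoint union of induced subgraphs of $G$, and Theorem~\ref{TGraphThresholdGeneral} gives $(f_\epsilon^H)^*[A] = A_H \in \cP_n^+$.

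The contrapositive of $(1) \Rightarrow (3)$ is the heart of the argument. Suppose $G$ is connected with $n \geq 3$ vertices but is not a tree, so it contains an edge $e$ lying on a cycle. Set $H := G \setminus \{e\}$. The endpoints of $e$ remain connected in $H$ through the rest of the cycle, so the component of $H$ containing them spans a vertex set $V'$ whose induced subgraph in $G$ does include~$e$; hence $H$ is not a disjoint union of induced subgraphs of $G$, and Theorem~\ref{TGraphThresholdGeneral} supplies some $B \in \cP_G^+$ with $B_H \notin \cP_n^+$. To upgrade this single matrix to counterexamples for every $\epsilon > 0$, I would arrange that
\[
|b_e| < m := \min\{|b_{ij}| : (i,j) \in E(H), \ i \neq j\},
\]
and then rescale: for any $\epsilon > 0$, the interval $\bigl(\epsilon/m, \epsilon/|b_e|\bigr]$ is non-empty, and for any $\lambda$ in it, thresholding $\lambda B \in \cP_G^+$ at level $\epsilon$ zeroes exactly the entry at $e$ while preserving all others, yielding $(f_\epsilon^H)^*[\lambda B] = \lambda B_H$, still not positive definite.

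The main obstacle is therefore producing such a $B$ with $|b_e|$ strictly smaller than the other edge entries. I would reduce to a minimal cycle $C$ in $G$ by placing an explicit matrix on the vertices of $C$ and padding with the identity on the remaining vertices, whose sparsity pattern is automatically a subset of $G$'s. For $C = K_3$ the family
\[
B = \begin{pmatrix} 1 & a & a \\ a & 1 & c \\ a & c & 1 \end{pmatrix},
\]
with $a \in (1/\sqrt{2}, 1)$ and $c \in (2a^2 - 1, a)$, works: $\det B = 1 - c^2 - 2a^2(1-c) > 0$, so $B \in \cP_3^+$, while $\det B_H = 1 - 2a^2 < 0$, so $B_H \notin \cP_3^+$, and manifestly $c < a$. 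For longer minimal cycles the same template applies, but one must additionally verify that the $3 \times 3$ principal minors supported on triples of cycle vertices connected by two cycle edges remain positive; this restricts the parameter range but still admits valid choices, as one sees on $C_4$ with $a_{23} = a_{14} = 0.7$, $a_{34} = 0.6$, and $a_{12}$ near $-0.459$.
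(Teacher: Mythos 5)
The survey itself imports this result from \cite{GuillotRajaratnam2012} without proof, so your argument has to stand on its own. Its architecture is sound and is essentially the standard one: $(2)\Rightarrow(1)$ trivially; $(3)\Rightarrow(2)$ by observing that every connected subgraph of a tree is induced, so the post-thresholding support is a disjoint union of induced subgraphs and Theorem~\ref{TGraphThresholdGeneral} applies; and $(1)\Rightarrow(3)$ by contraposition, reducing to a chordless cycle, padding with the identity, and rescaling by $\lambda\in(\epsilon/m,\epsilon/|b_e|]$ to hit any prescribed threshold level. The rescaling step and the $K_3$ computation are correct (though note that $m$ should be the minimum over the \emph{non-zero} off-diagonal entries of $B$; with your block construction the edges of $G$ outside the cycle carry zeros, which would make your $m$ equal to $0$ as written — harmless, since zero entries are fixed by thresholding, but the definition needs adjusting).

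The genuine gap is the construction of the seed matrix $B_C$ for chordless cycles of length $k\geq 4$. First, verifying positive definiteness of a $k\times k$ matrix requires all leading principal minors (or all eigenvalues) to be positive; checking the $3\times 3$ principal minors on consecutive triples certifies nothing for $k\geq 4$. Second, and more seriously, the claim that ``the same template applies'' is false for even cycles: if all off-diagonal entries of a $C_4$-supported matrix are \emph{positive} and the matrix is positive definite, then zeroing any one edge entry provably yields a positive definite matrix again (one computes that $\det$ of the cycle matrix equals $\det$ of the path matrix minus $q^2(1-s^2)$ minus $2qrsu$, forcing $q<0$ whenever the path determinant is negative). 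Your own $C_4$ example is forced to use the negative entry $-0.459$, but the text neither acknowledges this nor gives any argument for general $k$. A clean way to close the gap: take $B_C=I_k+tA$, where $A$ is the adjacency matrix of $C_k$ with all edge weights $+1$ for $k$ odd and exactly one edge weight $-1$ for $k$ even (a sign pattern not removable by diagonal congruence). Then $\lambda_{\min}(A)=-2\cos(\pi/k)$, while deleting the distinguished edge leaves the path $P_k$ with $\lambda_{\min}=-2\cos(\pi/(k+1))$; hence any $t$ in the non-empty interval $\bigl(\tfrac{1}{2\cos(\pi/(k+1))},\tfrac{1}{2\cos(\pi/k)}\bigr)$ makes $B_C$ positive definite while the edge-deleted matrix is not, and one then shrinks the distinguished entry slightly (openness of the positive definite cone) to obtain the strict size separation your rescaling needs. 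With that lemma in place, your proof is complete. Separately, be aware that assertion (2) as printed omits the star on $f_\epsilon^H$; you silently read it as $(f_\epsilon^H)^*$, which is the intended statement (otherwise thresholding a small diagonal kills positivity for any $G$).
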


The case of soft-thresholding was considered in
\cite{GuillotRajaratnam2012b}. Surprisingly, the characterization of
the thresholding levels that preserve positivity is exactly the same
as in the case of hard-thresholding.

\begin{theorem}[{Guillot--Rajaratnam \cite[Theorem 3.2]{GuillotRajaratnam2012b}}]\label{TSoftThres}
Let $G = ( V, E )$ be a connected graph with $n \geq 3$ vertices. Then the following are equivalent:
\begin{enumerate}
\item There exists $\epsilon > 0$ such that for every $A \in \cP_G^+$, we have $(f_\epsilon^S)^*[A] \in \cP_n^+$.
\item For every $\epsilon > 0$ and every $A \in \cP_G^+$, we have $f_\epsilon^S[A] \in \cP_n^+$.
\item $G$ is a tree.
\end{enumerate}
\end{theorem}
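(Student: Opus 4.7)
The proof naturally splits into three implications. The implication $(2) \Rightarrow (1)$ is essentially immediate: interpreting the star consistently (so that both statements only threshold off-diagonal entries), it amounts to specialising the universal $\epsilon$-quantifier in $(2)$ to a single value. The substantive work lies in proving $(3) \Rightarrow (2)$, which exploits the tree structure inductively via Schur complements, and $(1) \Rightarrow (3)$, which requires an explicit counterexample whenever $G$ contains a cycle.

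For $(3) \Rightarrow (2)$, I would induct on $|V|$, the case $|V| = 1$ being trivial. In the inductive step, fix a leaf $v$ of the tree $T$ adjacent to $u$, and let $A \in \cP_T^+$. Since $a_{iv} = 0$ for all $i \notin \{u, v\}$, the Schur complement at $v$ reads $A' := A[V \setminus v] - a_{vv}^{-1} a_{uv}^2 E_{uu}$, and $A$ is positive definite iff $a_{vv} > 0$ and $A' \in \cP_{n-1}^+$. The zero pattern of $A'$ is contained in $T' := T \setminus \{v\}$, which is again a tree, so the induction hypothesis gives $(f_\epsilon^S)^*[A'] \in \cP_{n-1}^+$. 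Writing $B := (f_\epsilon^S)^*[A]$, its Schur complement at $v$ is
\[
B' = (f_\epsilon^S)^*\bigl[A[V \setminus v]\bigr] - a_{vv}^{-1} f_\epsilon^S(a_{uv})^2 E_{uu},
\]
which agrees with $(f_\epsilon^S)^*[A']$ except at position $(u,u)$, where they differ by $a_{vv}^{-1}(a_{uv}^2 - f_\epsilon^S(a_{uv})^2) \geq 0$ thanks to the contraction $|f_\epsilon^S(x)| \leq |x|$. Hence $B' = (f_\epsilon^S)^*[A'] + c E_{uu}$ with $c \geq 0$, which is positive definite as a sum of a positive definite and a positive semidefinite matrix. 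Combined with $B_{vv} = a_{vv} > 0$, the Schur complement characterisation returns $B \in \cP_n^+$.

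For $(1) \Rightarrow (3)$, I would argue the contrapositive. Suppose $G$ contains a cycle, say on vertices $v_1, \ldots, v_k$ with $k \geq 3$. For each $\epsilon > 0$ the task is to construct $A \in \cP_G^+$ with $(f_\epsilon^S)^*[A] \notin \cP_n^+$. Paralleling the strategy for hard thresholding (Theorem \ref{THardThres}), I would build $A$ supported on the cycle by choosing one off-diagonal entry of magnitude slightly less than $\epsilon$ (which soft thresholding sends to zero, effectively severing the cycle) together with the remaining cycle entries of magnitudes slightly above a critical value chosen so that the matrix is positive definite by a small margin. After soft thresholding, the severed entry vanishes while the others are uniformly shrunk by $\epsilon$, and the resulting principal minor on the cycle vertices has strictly negative determinant. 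One then extends $A$ to all of $G$ by block-diagonal augmentation: zeros on edges outside the cycle and a sufficiently positive diagonal on the remaining vertices.

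The hardest step is making the $(1) \Rightarrow (3)$ construction quantitative for every $\epsilon > 0$. The crux is that cycles impose coupled positivity conditions on the entries, as exemplified by the determinant formula $1 - a^2 - b^2 - c^2 + 2abc$ for a triangle with unit diagonal: zeroing a cycle edge removes a mixed product $2abc$ that may be necessary to sustain positivity, and the uniform shrinkage applied to the surviving entries interacts nonlinearly with this cancellation. A one-parameter family of matrices, calibrated so that the ``critical'' entry sits just below $\epsilon$ and the neighbouring entries sit just above their respective positivity thresholds, should yield the required counterexample; verifying failure then reduces to a direct determinant computation on the cycle-indexed principal minor.
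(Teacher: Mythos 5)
The survey quotes this theorem from Guillot--Rajaratnam without reproducing a proof, so your argument has to stand on its own. Your implication $(3) \Rightarrow (2)$ is correct, provided one reads assertion (2) with the star (thresholding off-diagonal entries only) --- which is the right reading, since the literal unstarred statement already fails for $A = I_3$ and $\epsilon \geq 1$. The leaf-plus-Schur-complement induction is clean, and it is worth noting that it uses nothing about $f_\epsilon^S$ beyond $f_\epsilon^S(0) = 0$ and the contraction property $|f_\epsilon^S(x)| \leq |x|$. The step $(2) \Rightarrow (1)$ is also fine, since $(f_\epsilon^S)^*[A] - f_\epsilon^S[A]$ is a non-negative diagonal matrix.

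The genuine gap is in $(1) \Rightarrow (3)$. The construction you propose --- cycle entries with unit (or unstructured) diagonal, one off-diagonal entry just below $\epsilon$ so that it is severed, the others ``slightly above a critical value'' --- is the \emph{hard}-thresholding counterexample, and for soft thresholding it provably fails. Take the triangle with unit diagonal, $a = a_{12}$, $b = a_{23}$ above $\epsilon$ and $c = a_{13} \in (0, \epsilon]$. Positive definiteness gives $a^2 + b^2 < 1 + 2abc - c^2 \leq 1 + 2ab\epsilon$, while indefiniteness after thresholding would require $(a - \epsilon)^2 + (b - \epsilon)^2 \geq 1$, that is, $a^2 + b^2 \geq 1 + 2\epsilon(a + b) - 2\epsilon^2$. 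Since $a + b - ab > a > \epsilon$ on $(0,1)^2$, the gain $2\epsilon(a+b)$ from shrinking the two surviving entries always dominates the loss $2abc \leq 2ab\epsilon$ from killing the cross term, and the thresholded minor stays positive definite no matter how you tune $a, b, c$. This is precisely why the soft-thresholding theorem is a separate and (as the survey remarks) surprising result, not a rerun of the hard-thresholding one.

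The counterexample does exist, but it requires heterogeneous diagonal entries: the vertex shared by the two surviving edges must carry a dominant diagonal entry. For instance, on a triangle take $d_1 = d_3 = 1$, $d_2 = D$, $a_{12} = a_{23} = t$ and $a_{13} = \epsilon$; then $\det A = (1 - \epsilon)\bigl( D(1 + \epsilon) - 2t^2 \bigr)$, so $A$ is positive definite whenever $t < \sqrt{D(1+\epsilon)/2}$ (and $t^2 < D$), while the soft-thresholded matrix has determinant $D - 2(t - \epsilon)^2 \leq 0$ once $t \geq \sqrt{D/2} + \epsilon$. The window for $t$ is nonempty exactly when $\sqrt{D/2}\,(\sqrt{1 + \epsilon} - 1) > \epsilon$, which holds for $D$ moderately large (and arbitrary $\epsilon$ is then handled by rescaling $A \mapsto \lambda A$). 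You would still need the analogous construction on an induced cycle of arbitrary length $k \geq 4$, padded by the identity on the remaining vertices, to cover all non-trees; but the essential missing idea is the unbalanced diagonal, without which the determinant computation you defer to at the end of your sketch comes out with the wrong sign.
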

 
An extension of Schoenberg's theorem (Theorem~\ref{Tschoenberg}) to
the case where the function $f$ is only applied to the off-diagonal
entries of the matrix was also obtained in
\cite{GuillotRajaratnam2012b}.

\begin{theorem}[{Guillot--Rajaratnam \cite[Theorem 4.21]{GuillotRajaratnam2012b}}]\label{TschoenbergOff}
Let $0 < \rho \leq \infty$ and $f : (-\rho, \rho) \to \R$.
The matrix $f^*[ A ]$ is positive semidefinite for all
$A \in \cP_n\bigl( ( -\rho, \rho ) \bigr)$ and all $n \geq 1$
if and only if $f( x ) = x g( x )$, where
\begin{enumerate}
\item $g$ is analytic on the disc $D(0,\rho)$; 
\item $\|g\|_\infty \leq 1$;
\item $g$ is absolutely monotonic on $(0, \rho)$. 
\end{enumerate}
When $\rho = \infty$, the only functions satisfying the above conditions are the affine functions $f(x) = ax$ for $0 \leq a \leq 1$.  
\end{theorem}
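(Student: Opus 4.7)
The plan is to prove the two directions separately, with the non-trivial direction requiring a tensor-product construction that converts $f^*$-preservation into ordinary entrywise positivity preservation, after which Theorem~\ref{Thankel} applies.

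For sufficiency, suppose $f(x) = xg(x)$ with the three listed properties. Absolute monotonicity of $g$ on $(0,\rho)$, combined with analyticity on $D(0,\rho)$, forces $g(z) = \sum_{k \ge 0} c_k z^k$ with all $c_k \ge 0$, converging on $D(0,\rho)$. By Theorem~\ref{Thankel}, such $g$ preserves positivity entrywise on $\cP_n\bigl( (-\rho,\rho) \bigr)$ for every $n \ge 1$, so for any $A \in \cP_n\bigl( (-\rho,\rho) \bigr)$ the Schur product theorem yields $f[A] = A \circ g[A] \in \cP_n$. Finally,
\[
f^*[A] = f[A] + \diag\bigl( A_{ii}( 1 - g( A_{ii} ) ) \bigr)_{i=1}^n,
\]
and this diagonal correction is entrywise non-negative since $A_{ii} \ge 0$ and $|g(A_{ii})| \le 1$, so $f^*[A] \in \cP_n$.

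For necessity, I would first extract two pointwise facts from elementary test matrices. Setting $A = 0_{2 \times 2}$ forces $-f(0)^2 = \det f^*[A] \ge 0$, so $f(0) = 0$. Applying $f^*$ to the PSD $2 \times 2$ matrix with diagonal entries $|x|$ and off-diagonal entries $x$ (its determinant vanishes) for any $x \in (-\rho,\rho)$ gives $|f(x)|^2 \le |x|^2$, i.e.\ $|f(x)| \le |x|$. Neither fact alone pins down $f$. The heart of the argument is to promote $f^*$-preservation to $f$-preservation. Given $B \in \cP_n\bigl( (-\rho,\rho) \bigr)$, form $A := B \otimes J_k$, where $J_k = \bone_k \bone_k^T$ is the $k \times k$ all-ones matrix; then $A \in \cP_{nk}\bigl( (-\rho,\rho) \bigr)$, since $J_k \in \cP_k$ and the entries of $A$ merely repeat those of $B$. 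A direct expansion yields
\[
f^*[A] = f[B] \otimes J_k + D_k,
\]
where $D_k$ is the diagonal matrix whose value at the index $(i,\ell) \in [n] \times [k]$ equals $B_{ii} - f(B_{ii})$, uniformly bounded in $k$. If $f[B]$ admitted a unit eigenvector $\bv$ with eigenvalue $\mu < 0$, then $\bx := \bv \otimes (k^{-1/2} \bone_k)$ would satisfy $\|\bx\| = 1$ and
\[
\bx^T f^*[A] \bx = k \mu + \sum_{i=1}^n v_i^2 \bigl( B_{ii} - f( B_{ii} ) \bigr),
\]
which is negative for large $k$; this contradicts $f^*[A] \in \cP_{nk}$. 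Thus $f[B] \in \cP_n$, so $f$ preserves positivity entrywise on $\cP_n\bigl( (-\rho,\rho) \bigr)$ for every $n$, and Theorem~\ref{Thankel} then gives $f(x) = \sum_{k \ge 0} c_k x^k$ on $(-\rho,\rho)$ with every $c_k \ge 0$.

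Combining $f(0) = 0$ with this expansion yields $c_0 = 0$, so $f(x) = x g(x)$ with $g(x) := \sum_{k \ge 0} c_{k+1} x^k$ absolutely monotonic on $(0,\rho)$ and analytic on $D(0,\rho)$. Non-negativity of the coefficients gives $|g(z)| \le g( |z| ) = f( |z| )/|z| \le 1$ on $D(0,\rho) \setminus \{0\}$, and $g(0) = c_1 \le 1$ follows from $|f(x)/x| \le 1$ as $x \to 0$; hence $\|g\|_\infty \le 1$. For $\rho = \infty$, $g$ is a bounded entire function and so constant by Liouville; absolute monotonicity and the norm bound then force $g \equiv a$ with $a \in [0,1]$, yielding $f(x) = ax$. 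The main obstacle to anticipate is discovering the tensor-with-$J_k$ reduction itself: working directly from $f^*$, the preserved diagonal $\diag(A)$ blocks any immediate appeal to Schoenberg-type characterizations, and the tensor trick is precisely what makes the diagonal correction subleading (of order $O(1)$) relative to the off-diagonal content (of order $k \mu$), thereby decoupling the two.
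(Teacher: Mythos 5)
Your proof is correct, and it is worth noting that the survey itself states this result without proof, merely citing \cite[Theorem~4.21]{GuillotRajaratnam2012b}, so there is no in-paper argument to compare against line by line. Your two directions are both sound: the sufficiency argument via $f[A] = A \circ g[A]$ plus the non-negative diagonal correction $\diag\bigl(A_{ii}(1-g(A_{ii}))\bigr)$ is exactly the natural one, and the necessity argument correctly isolates the two elementary facts ($f(0)=0$ and $|f(x)| \leq |x|$ from $2\times 2$ tests) and then supplies the one genuinely non-obvious idea: inflating $B$ to $B \otimes \bone_{k\times k}$ so that the preserved diagonal contributes only $O(1)$ against the $k\mu$ coming from a putative negative eigenvalue of $f[B]$. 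This reduces the problem to ordinary entrywise preservation, at which point Theorem~\ref{Thankel} (or classical Schoenberg--Rudin) finishes the job; the passage from $c_0 = 0$ and $|f(x)/x| \leq 1$ to the three stated properties of $g$, and the Liouville argument for $\rho = \infty$, are all handled correctly. This block-inflation device is the same mechanism used in the cited reference to decouple the diagonal from the off-diagonal action, so your route is essentially the standard one; the only stylistic difference is that you phrase it as a tensor product with $J_k$ and a single well-chosen test vector $\bv \otimes k^{-1/2}\bone_k$, which makes the asymptotics in $k$ particularly transparent.
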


\subsection{Rank and sparsity constraints}

An explicit and useful characterization of entrywise functions preserving
positivity on $\cP_N$ for a fixed $N$ still remains out of reach as of
today. Motivated by applications in statistics, the authors in
\cite{GKR-sparse, GKR-lowrank} examined the cases where the matrices in
$\cP_N$ satisfy supplementary rank and sparsity constraints that are
common in applications.

Observe that the sample covariance matrix (Equation (\ref{ESampleCov}))
has rank at most $n$, where $n$ is the number of samples used to compute
it. Moreover, as explained in Chapter~\ref{Sstats}, it is common in
modern applications that $n$ is much smaller than the dimension $p$.
Hence, when studying the regularization approach described in
Chapter~\ref{Sstats}, it is natural to consider positive semidefinite
matrices with rank bounded above.  

An immediate application of Schoenberg's theorem on spheres (see
Equation~\eqref{EschoenbergSphere}) provides a characterization of
entrywise positivity preservers of correlation matrices of all
dimensions, with rank bounded above by~$n$. Recall that a correlation
matrix is the covariance matrix of a random vector where each variable
has variance~$1$, so is a positive semidefinite matrix with diagonal
entries equal to~$1$. As in Equation~\eqref{EschoenbergSphere}, we
denote the ultraspherical orthogonal polynomials by $P_k^{(\lambda)}$.

\begin{theorem}[{Reformulation of \cite[Theorem 1]{Schoenberg-Duke42}}]
Let $n \in \N$ and let $f: [ -1, 1 ] \to \R$.  The following are
equivalent.
\begin{enumerate}
\item $f[ A ] \in \cP_N$ for all correlation matrices
$A \in \cP_N\bigl( [ -1, 1 ] \bigr)$ with rank no more than $n$ and
all $N \geq 1$. 
\item
$f( x ) = \sum_{j = 0}^\infty a_j P_j^{(\lambda)}( x )$ with
$a_j \geq 0$ for all $j \geq 0$ and $\lambda = ( n - 1 ) / 2$.
\end{enumerate}
\end{theorem}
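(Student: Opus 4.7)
The plan is to reduce the statement to a direct application of Schoenberg's spherical characterisation~\eqref{EschoenbergSphere}, via the dictionary between correlation matrices of bounded rank and Gram matrices of unit vectors on a sphere.

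First, I would make this dictionary precise. Any correlation matrix $A \in \cP_N([-1,1])$ of rank at most $n$ factors as $A = B^T B$ with the columns of $B$ being unit vectors lying in a subspace of dimension at most $n$; interpreting these columns as points $x_1, \ldots, x_N$ on the relevant sphere gives $A_{jk} = \langle x_j, x_k\rangle = \cos\theta(x_j, x_k)$, where $\theta$ denotes the geodesic distance. Conversely, every finite subset of points on that sphere produces such a correlation matrix of rank at most $n$. Consequently, hypothesis~(1) (ranging over all $N \geq 1$) is equivalent to saying that the rotationally invariant kernel $(x,y) \mapsto f(\cos\theta(x,y))$ is positive definite on the sphere in Schoenberg's sense.

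The direction $(2)\Rightarrow(1)$ would then follow from the classical fact (already invoked in the discussion preceding~\eqref{EschoenbergSphere}) that each $P_j^{(\lambda)}(\cos\theta)$ is itself a positive definite kernel on the relevant sphere, being the zonal spherical function of an irreducible representation of the orthogonal group; non-negative combinations and pointwise limits preserve this property, while the diagonal normalisation $f(1) = \sum_j a_j$ reflects the requirement that diagonal entries of correlation matrices equal $1$. For the direction $(1)\Rightarrow(2)$, I would apply~\eqref{EschoenbergSphere} directly to the kernel $f(\cos\theta)$ to extract the expansion $f(x) = \sum_{j \geq 0} a_j P_j^{(\lambda)}(x)$ with $a_j \geq 0$ and $\sum_j a_j < \infty$.

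The main subtlety I would anticipate is that~\eqref{EschoenbergSphere}, as recalled in the excerpt, is stated for continuous rotationally invariant kernels, whereas hypothesis~(1) makes no continuity assumption on $f$. To bridge this, I would either invoke the Christensen--Ressel style convex-analytic argument (identifying the extreme rays of the closed convex cone of positivity preservers and applying Choquet's theorem, exactly as described at the end of Section~2.6), or establish continuity of $f$ beforehand by feeding in $2 \times 2$ correlation matrices with off-diagonal entry $x \in [-1,1]$ and running the multiplicative mid-convexity argument of Vasudeva recalled before Proposition~\ref{Pconvex}. Either route cleanly reduces the problem to the continuous form of Schoenberg's spherical theorem, after which the claim is immediate.
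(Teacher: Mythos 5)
Your proposal is correct and takes essentially the same route as the paper, whose entire proof is the observation that rank-at-most-$n$ correlation matrices are exactly the Gram matrices of finite point configurations on $S^{n-1}$, followed by a direct appeal to Schoenberg's Theorem~1 in \cite{Schoenberg-Duke42}. Your additional attention to the missing continuity hypothesis is a point the paper glosses over entirely (it silently invokes Schoenberg's result, which assumes $f$ continuous), so that part of your write-up is extra care rather than a divergence in method.
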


\begin{proof}
The result follows from \cite[Theorem 1]{Schoenberg-Duke42} and the
observation that correlation matrices of rank at most $n$ are in
correspondence with Gram matrices of vectors in $S^{n - 1}$.
\end{proof}

In order to approach the case of matrices of a fixed dimension, we
introduce some notation.

\begin{definition}
Let $I \subset \R$. Define $\cS_n( I )$ to be the set of
$n \times n$ symmetric matrices with entries in $I$.  Let $\rk A$
denote the rank of a matrix $A$. We define:
\begin{align*}
\cS_n^k(I) &:= \{A \in \cS_n(I) : \rk A \leq k\}, \\
\cP_n^k(I) &:= \{A \in \cP_n(I) : \rk A \leq k\}. 
\end{align*}
\end{definition}

The main result in \cite{GKR-lowrank} provides a characterization of
entrywise functions mapping $\cP_n^l$ into $\cP_n^k$.

\begin{theorem}[{Guillot--Khare--Rajaratnam \cite[Theorem
B]{GKR-lowrank}}]\label{Tltok}
Let $0 < R \leq \infty$ and $I = [ 0, R )$ or $( -R, R )$. Fix
integers $n \geq 2$, $1 \leq k < n - 1$, and $2 \leq l \leq n$.
Suppose $f \in C^k( I )$. The following are equivalent.
\begin{enumerate}
\item $f[ A ] \in \cS_n^k$ for all $A \in \cP_n^l( I )$; 
\item $f( x ) = \sum_{k = 1}^r c_t x^{i_t}$ for some $c_t \in \R$ and
some $i_t \in \N$ such that 
\begin{equation}\label{eqn:sum_binom}
\sum_{t = 1}^r \binom{i_t + l - 1}{l - 1} \leq k. 
\end{equation}
\end{enumerate}
Similarly, $f[-]: \cP_n^l( I ) \to \cP_n^k$ if and only if $f$
satisfies (2) and $c_t \geq 0$ for all~$t$. Moreover, if
$I = [ 0, R )$ and $k \leq n - 3$, then the assumption that
$f \in C^k(I)$ is not required. 
\end{theorem}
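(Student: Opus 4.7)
The implication $(2) \Rightarrow (1)$ rests on a rank bound for Schur powers of low-rank matrices. If $A \in \cP_n^l(I)$, factor $A = BB^T$ for some $n \times l$ real matrix $B$ with rows $\beta_1, \ldots, \beta_n \in \R^l$. The identity $A_{jk}^m = \langle \beta_j, \beta_k \rangle^m = \langle \beta_j^{\otimes m}, \beta_k^{\otimes m} \rangle$ realises $A^{\circ m}$ as a Gram matrix of vectors in the symmetric power $\mathrm{Sym}^m(\R^l)$, which has dimension $\binom{m+l-1}{l-1}$. Hence $\rk A^{\circ m} \leq \binom{m+l-1}{l-1}$, and if $f(x) = \sum_{t=1}^{r} c_t x^{i_t}$ with $\sum_t \binom{i_t+l-1}{l-1} \leq k$, then $f[A] = \sum_t c_t A^{\circ i_t}$ has rank at most $k$ by subadditivity. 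If moreover $c_t \geq 0$, the Schur product theorem forces $f[A] \in \cP_n$, so $f[A] \in \cP_n^k$.

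For $(1) \Rightarrow (2)$, I would probe the preserver $f$ with a multi-parameter family of rank-$l$ test matrices and Taylor-expand. Concretely, fix an interior point $a \in I$ and generic vectors $\bu_1, \ldots, \bu_{l-1} \in \R^n$ with pairwise distinct coordinates, and consider
\[
A(\mathbf{t}) := a \bone_{n \times n} + \sum_{r=1}^{l-1} t_r \bu_r \bu_r^T, \qquad \mathbf{t} \in [0, \epsilon)^{l-1},
\]
which lies in $\cP_n^l(I)$ for $\epsilon$ small enough. Multivariate Taylor expansion of $f \in C^k(I)$ about $a$ yields
\[
f[A(\mathbf{t})] = \sum_{|\alpha| \leq k} \frac{f^{(|\alpha|)}(a)}{\alpha!} \, \mathbf{t}^\alpha \, \bv_\alpha \bv_\alpha^T + o(\|\mathbf{t}\|^k),
\]
where $\bv_\alpha := \bu_1^{\circ \alpha_1} \circ \cdots \circ \bu_{l-1}^{\circ \alpha_{l-1}}$ and $\alpha \in \Z_+^{l-1}$. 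The hypothesis $\rk f[A(\mathbf{t})] \leq k$ forces all $(k+1) \times (k+1)$ minors to vanish identically in $\mathbf{t}$; extracting their coefficients via the determinant identity of Theorem~\ref{Phorn} (and its multivariate generalisation) and exploiting the generic linear independence of the $\bv_\alpha$ then forces $f^{(m)}(a) = 0$ outside a finite set of orders $\{i_1, \ldots, i_r\}$. To obtain the sharp accounting $\sum_t \binom{i_t + l - 1}{l-1} \leq k$ (rather than $\binom{i_t + l - 2}{l - 2}$, which is all the $l-1$-parameter family alone yields), one must complement this with further test matrices, for example perturbations of the form $A = (B_0 + \sum_r s_r E_r)(B_0 + \sum_r s_r E_r)^T$ with $B_0$ a generic $n \times l$ matrix of full column rank, which probe all $l$ Gram directions; iterating the determinant extraction across all these families and all base points $a \in I$ then yields the polynomial form of $f$ globally.

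For the positivity refinement $\cP_n^l \to \cP_n^k$, once $f$ is known to be $\sum_t c_t x^{i_t}$, evaluate on rank-one matrices $A = \bu \bu^T \in \cP_n^l(\R_+)$: then $f[\bu \bu^T] = \sum_t c_t \bu^{\circ i_t} (\bu^{\circ i_t})^T$, a sum of rank-one PSD matrices whose generating vectors $\bu^{\circ i_t}$ are linearly independent for generic $\bu$ (a Vandermonde argument), so positive semidefiniteness forces each $c_t \geq 0$. The removal of the $C^k$ hypothesis when $I = [0, R)$ and $k \leq n - 3$ proceeds via the continuity-to-smoothness pipeline behind Theorem~\ref{Thorn}: a Vasudeva-type mid-convexity argument (as around Proposition~\ref{Pconvex}) yields continuity, while mollification combined with the Boas--Widder converse to the divided-difference mean-value theorem propagates up to $C^{n-3}$, which suffices. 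The principal obstacle I anticipate is the sharp combinatorial and rank bookkeeping referenced above: cleanly matching the symmetric-tensor count $\binom{i_t + l - 1}{l-1}$ to the surviving Taylor coefficients of $f$ requires either a well-chosen $l$-parameter test family or a careful induction on the Taylor jet, and in particular requires understanding precisely how the Gram directions $\bv_\alpha \bv_\alpha^T$ from different orders $|\alpha|$ overlap inside $\cS_n$.
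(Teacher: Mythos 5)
Your treatment of $(2)\Rightarrow(1)$ is correct and is essentially the argument the survey itself gives: the survey expands $A^{\circ i}=\sum_{m_1+\cdots+m_l=i}\binom{i}{m_1,\ldots,m_l}\bw_\bm\bw_\bm^T$ with $\bw_\bm:=u_1^{\circ m_1}\circ\cdots\circ u_l^{\circ m_l}$ and counts the $\binom{i+l-1}{l-1}$ summands, which is exactly your symmetric-power Gram computation in different notation. Your derivation of $c_t\geq 0$ from rank-one test matrices via linear independence of the vectors $\bu^{\circ i_t}$ is also sound (note $r\leq k<n$, so a generalized Vandermonde argument applies). For these parts there is nothing to quarrel with.

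The genuine gap is in $(1)\Rightarrow(2)$, which is the substance of the theorem; the survey explicitly declines to prove this direction (``much more challenging; see \cite{GKR-lowrank}''), and your sketch does not close it either — you flag the ``sharp combinatorial and rank bookkeeping'' yourself, and the proposed fix (further test families of the form $(B_0+\sum_r s_rE_r)(B_0+\sum_r s_rE_r)^T$) is not carried out. Two specific problems. First, your Taylor-jet analysis at a point $a$, even if the vanishing-minor extraction were made rigorous, would at best show that few derivatives of $f$ survive at each base point; passing from this pointwise conclusion to $f$ being globally a fixed sum of monomials $\sum_t c_tx^{i_t}$ on all of $I$ is a separate step that you assert (``iterating \ldots across all base points'') but do not supply. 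Second, you are trying to extract the sharp count $\sum_t\binom{i_t+l-1}{l-1}\leq k$ from the jet analysis, where the $(l-1)$-parameter family genuinely only yields $\binom{m+l-2}{l-2}$ directions per order $m$; the count is more naturally obtained \emph{after} polynomiality is established, by computing the exact generic rank of $f[A]=\sum_tc_tA^{\circ i_t}$ for a generic rank-$l$ matrix $A=\sum_ju_ju_j^T$ via linear independence of the full family $\{\bw_\bm\}$ over all compositions $\bm$ of each $i_t$ into $l$ parts. Separating the argument into (a) forcing $f$ to be a polynomial with few monomials and (b) an exact generic-rank computation would remove the obstacle you identify, but as written the proposal proves only the easy implication.
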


Notice that Theorem~\ref{Tltok} is a fixed-dimension result with rank
constraints. This may be considered a refinement of a similar,
dimension-free result with rank constraints shown in~\cite{AP}, in
which the authors arrive at the same conclusion as in part (2)
above. We compare the two settings: in~\cite{AP}, (a)~the hypotheses
held for all dimensions $N$ rather than in a fixed dimension; (b)~the
test matrices were a larger set in each dimension, compared to just
the positive matrices considered in Theorem~\ref{Tltok}; (c)~the test
matrices did not consist only of rank-one matrices, similar to
Theorem~\ref{Tltok}; and (d)~the test functions $f$ in the
dimension-free case were assumed to be measurable, rather than $C^k$
as in the fixed-dimension case.  Thus, Theorem~\ref{Tltok} is (a
refinement of) the fixed-dimension case of the first main result
in~\cite{AP}.%
\footnote{We also point out the second main result in
\textit{loc. cit.}, that is, \cite[Theorem~2]{AP}, which classifies
all continuous entrywise maps $f : \C \to \C$ that obey similar rank
constraints in all dimensions. Such maps are necessarily of the form
$g( z ) = \sum_{j = 1}^p \beta_j z^{m_j} (\overline{z})^{n_j}$, where
the exponents $m_j$ and $n_j$ are non-negative integers. This should
immediately remind the reader of Rudin's conjecture in the
`dimension-free' case, and its resolution by Herz; see
Theorem~\ref{Therz}.}

The $(2) \implies (1)$ implication in Theorem \ref{Tltok} is clear.
Indeed, let $i \geq 0$ and
$A = \sum_{j = 1}^l u_j u_j^T \in \cP_n^l( I )$. Then
\begin{equation*}
A^{\circ i} = \sum_{m_1 + \cdots + m_l = i} %
\binom{i}{ m_1, \ldots, m_l } \bw_\bm \bw_\bm^T \qquad %
\text{where } \bw_\bm := %
u_1^{\circ m_1} \circ \cdots \circ u_l^{\circ m_l}
\end{equation*}
and $\displaystyle \binom{i}{m_1, \ldots, m_l}$ is a
multinomial coefficient. Note that there are exactly
$\binom{i + l - 1}{l - 1}$ terms in the previous summation. Therefore
$\rk A^{\circ i} \leq \binom{i + l - 1}{l - 1}$, and so $(1)$ easily
follows from $(2)$. The proof that $(1) \implies (2)$ is much more
challenging; see \cite{GKR-lowrank} for details.

In \cite{GKR-sparse}, the authors focus on the case where sparsity
constraints are imposed to the matrices instead of rank constraints.
Positive semidefinite matrices with zeros according to graphs arise
naturally in many applications. For example, in the theory of Markov
random fields in probability theory (\cite{lauritzen,whittaker}), the
nodes of a graph $G$ represent components of a random vector, and
edges represent the dependency structure between nodes. Thus, absence
of an edge implies marginal or conditional independence between the
corresponding random variables, and leads to zeros in the associated
covariance or correlation matrix (or its inverse). Such models
therefore yield parsimonious representations of dependency structures.
Characterizing entrywise functions preserving positivity for matrices
with zeros according to a graph is thus of tremendous interest for
modern applications. Obtaining such characterizations is, however,
much more involved than the original problem considered by Schoenberg
as one has to enforce and maintain the sparsity constraint. The
problem of characterizing functions preserving positivity for sparse
matrices is also intimately linked to problems in spectral graph
theory and many other problems (see
e.g.~\cite{Hogben_2005,Agler_et_al_88, pinkus04,Brualdi2011}).

As before, for a given graph $G = ( V, E )$ on the finite vertex set
$V = \{ 1, \ldots, N \}$, we denote by $\cP_G( I )$ the set of
positive-semidefinite matrices with entries in $I$ and zeros according
to $G$, as in (\ref{EPG}). Given a function $f: \R \to \R$ and
$A \in \cS_{| G |}( \R )$, denote by $f_G[ A ]$ the matrix such that
\begin{equation*}
f_G[A]_{j k} := \begin{cases}
f( a_{j k} ) & \text{ if } ( j, k ) \in E \text{ or } j = k, \\
 0 & \text{otherwise}.
\end{cases}
\end{equation*} 

The first main result in \cite{GKR-sparse} is an explicit
characterization of the entrywise positive preservers of $\cP_G$ for
any collection of trees (other than copies of $K_2$). Following
Vasudeva's classification for $\cP_{K_2}$ in Theorem~\ref{TK2}, trees are
the only other graphs for which such a classification is currently known.

\begin{theorem}[{Guillot--Khare--Rajaratnam \cite[Theorem
A]{GKR-sparse}}]\label{GKR-sparse-ThmA}
Suppose $I = [ 0, R )$ for some $0 < R \leq \infty$, and
$f : I \to \R_+$. Let $G$ be a tree with at least $3$ vertices, and
let $A_3$ denote the path graph on $3$ vertices. The following are
equivalent.
\begin{enumerate}
\item $f_G[ A ] \in \cP_G$ for every $A \in \cP_G( I )$;
\item $f_T[ A ] \in \cP_T$ for all trees $T$ and all matrices
$A \in \cP_T( I )$;
\item $f_{A_3}[ A ] \in \cP_{A_3}$ for every $A \in \cP_{A_3}( I )$;
\item The function $f$ satisfies
\begin{equation}\label{Emidconvex}
f\bigl( \sqrt{x y} \bigr)^2 \leq f( x ) f( y ) \qquad %
\text{for all } x, y \in I
\end{equation}
and is super-additive on $I$, that is,
\begin{equation}\label{Esuper}
f( x + y ) \geq f( x ) + f( y ) \qquad %
\text{whenever } x, y, x + y \in I.
\end{equation}
\end{enumerate}
\end{theorem}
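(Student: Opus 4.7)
The plan is to establish the cyclic implications $(2) \Rightarrow (1) \Rightarrow (3) \Rightarrow (4) \Rightarrow (2)$. The first two are structural: $(2) \Rightarrow (1)$ is immediate since $G$ is itself a tree. For $(1) \Rightarrow (3)$, I would select any induced $A_3$ inside $G$ (which exists because $G$ has $\geq 3$ vertices) and pad any $M \in \cP_{A_3}(I)$ with zeros to form $M \oplus 0 \in \cP_G(I)$. The zero at position $(1,3)$ of $M$ aligns with the non-edge $(1,3)$ of $G$ precisely because the $A_3$ is \emph{induced}, so the $3 \times 3$ principal submatrix of $f_G[M \oplus 0] \in \cP_G$ at these vertices equals $f_{A_3}[M]$, which is therefore PSD.

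For $(3) \Rightarrow (4)$, two test matrices suffice. For~(\ref{Emidconvex}), the rank-one $A_3$-patterned matrix with top-left $2 \times 2$ block $\bigl[\begin{smallmatrix} x & \sqrt{xy} \\ \sqrt{xy} & y \end{smallmatrix}\bigr]$ and vanishing third row and column lies in $\cP_{A_3}(I)$; requiring $f_{A_3}[\,\cdot\,]$ to be PSD forces $f(x)f(y) \geq f(\sqrt{xy})^2$. For~(\ref{Esuper}), I would use
\[
M_2 := \begin{bmatrix} x & x & 0 \\ x & x+z & z \\ 0 & z & z \end{bmatrix},
\]
whose two $2 \times 2$ principal minors both equal $xz$ and whose $3 \times 3$ determinant vanishes, so $M_2 \in \cP_{A_3}(I)$. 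A direct expansion of $\det f_{A_3}[M_2] \geq 0$ yields $f(x)f(z)\bigl[f(x+z) - f(x) - f(z)\bigr] \geq 0$, whence super-additivity follows. The degenerate case $f \equiv 0$ is trivial; otherwise a Vasudeva-style argument (as around Proposition~\ref{Pconvex}) shows $f > 0$ on the interior, legitimizing the division by $f(x)f(z)$.

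The main work, and the principal obstacle, is $(4) \Rightarrow (2)$, which I would prove by induction on $|V(T)|$. For the base case $T = A_3$, given $A \in \cP_{A_3}(I)$ with diagonal entries $a, c, e$ (positive) and off-diagonal entries $b = a_{12}, d = a_{23}$, decompose $A = \bu\bu^T + \bv\bv^T + tE_{22}$ with $\bu = (\sqrt{a}, b/\sqrt{a}, 0)^T$, $\bv = (0, d/\sqrt{e}, \sqrt{e})^T$, and $t = c - b^2/a - d^2/e \geq 0$ by non-negativity of $\det A$. Then mid-convexity forces the top-left $2 \times 2$ block of $f$ applied to $\bu\bu^T$, and the bottom-right $2 \times 2$ block of $f$ applied to $\bv\bv^T$, to be PSD, while super-additivity iterated on $c = b^2/a + d^2/e + t$ yields $f(c) \geq f(b^2/a) + f(d^2/e) + f(t)$; hence $f_{A_3}[A]$ Loewner-dominates a sum of these two blocks and a non-negative multiple of $E_{22}$, all PSD. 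For the inductive step $|V(T)| \geq 4$, pick a leaf $v$ with neighbor $u$ and Schur-complement $v$ to write $A' = B' + (a_{uv}^2/a_{vv})\, e_u e_u^T$ with $B' \in \cP_{T-v}(I)$; the induction hypothesis gives $f_{T-v}[B'] \in \cP_{T-v}$, and super-additivity at $a_{uu} = B'_{uu} + a_{uv}^2/a_{vv}$ combined with mid-convexity on the edge-supported block $\bigl[\begin{smallmatrix} f(a_{uv}^2/a_{vv}) & f(a_{uv}) \\ f(a_{uv}) & f(a_{vv}) \end{smallmatrix}\bigr]$ suffices to exhibit $f_T[A]$ as a Loewner sum of two PSD matrices. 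The delicate step is coordinating these two inequalities at the shared vertex $u$ so that the excess $f(a_{uu}) - f(B'_{uu}) - f(a_{uv}^2/a_{vv}) \geq 0$ (precisely what super-additivity delivers) while ensuring every intermediate matrix stays within $\cP_{T-v}(I)$ with entries in $[0,R)$; the case $a_{vv} = 0$ (forcing $a_{uv} = 0$) reduces trivially to the induction hypothesis.
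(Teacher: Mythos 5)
Your argument is correct. The survey states this theorem without reproducing a proof (it only cites \cite{GKR-sparse}), and your leaf-peeling induction via Schur complements is essentially the original argument, which decomposes a tree-patterned positive semidefinite matrix into edge-supported $2\times 2$ blocks, with \eqref{Emidconvex} certifying each block and \eqref{Esuper} absorbing the surplus on the diagonal at shared vertices; your test matrices for $(3)\Rightarrow(4)$ and the exact decomposition $f_{A_3}[A]=B_1+B_2+(f(c)-f(b^2/a)-f(d^2/e))E_{22}$ in the base case both check out. Three routine points should still be written down. First, statement (2) quantifies over \emph{all} trees, including those on one or two vertices; the $K_2$ case needs either the observation that a non-negative super-additive $f$ is non-decreasing, or the same splitting $c=b^2/a+(c-b^2/a)$ that you use elsewhere. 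Second, super-additivity at $x=y=0$ forces $f(0)=0$; you need this both in the inductive step when $a_{vv}=0$ (the edge entry of $f_T[A]$ at $(u,v)$ is $f(0)$, not automatically $0$) and to obtain \eqref{Esuper} at the endpoint $x=0$ in $(3)\Rightarrow(4)$, since your determinant identity only yields $f(x)f(z)\bigl[f(x+z)-f(x)-f(z)\bigr]\geq 0$ and the division step needs supplementing there. Third, the base case should dispose of $a=0$ or $e=0$ (which force $b=0$ or $d=0$) before dividing by $\sqrt{a}$ and $\sqrt{e}$. None of these affects the structure of the proof.
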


The implication $(4) \implies (1)$ was further extended to all chordal
graphs: it is the following result with $c = 2$ and $d = 1$.

\begin{theorem}[Guillot--Khare--Rajaratnam \cite{GKR-critG}]
Let $G$ be a chordal graph with a perfect elimination ordering of its
vertices $\{ v_1, \ldots, v_n \}$. For all $1 \leq k \leq n$, denote
by $G_k$ the induced subgraph on $G$ formed by
$\{ v_1, \ldots, v_k \}$, so that the neighbors of $v_k$ in $G_k$ form
a clique. Define $c = \omega( G )$ to be the clique number of $G$, and
let
\begin{equation*}
d := \max\{ \deg_{G_k}( v_k ) : k = 1, \ldots, n \}.
\end{equation*}
If $f : \R \to \R$ is any function such that $f[-]$ preserves
positivity on $\cP_c^1( \R )$ and $f[ M + N ] \geq f[ M ] + f[ N ]$
for all $M \in \cP_d$ and $N \in \cP_d^1$, then $f[-]$ preserves
positivity on $\cP_G( \R )$. [Here, $\cP_d^1$ denotes the matrices in
$\cP_d$ of rank at most one.]
\end{theorem}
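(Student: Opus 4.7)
The plan is to induct on $k$, with the inductive hypothesis that $f[A] \succeq 0$ for every $A \in \cP_{G_k}(\R)$; taking $k = n$ gives the theorem. I would first extract $f(0) = 0$ from the hypotheses: the $\cP_c^1$-hypothesis applied to the zero matrix gives $f(0) \bone_{c \times c} \succeq 0$, so $f(0) \geq 0$, while super-additivity with $M = N = 0$ yields $f(0) \bone_{d \times d} \preceq 0$, so $f(0) \leq 0$. The base case $k = 1$ reduces to $f \geq 0$ on $\R_+$, which follows from the $\cP_c^1$-hypothesis applied to $a \, e_1 e_1^T$: with $f(0) = 0$, the only nonzero entry of the resulting matrix is $f(a)$ in the $(1,1)$ slot, so positive semidefiniteness gives $f(a) \geq 0$.

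For the inductive step, fix $A \in \cP_{G_{k+1}}(\R)$ and write $v := v_{k+1}$, $a := A_{vv}$, and $C := N_{G_{k+1}}(v)$. By the perfect-elimination hypothesis, $C$ is a clique in $G_{k+1}$ (hence in $G_k$) with $|C| \leq d$. Set $X := \{v_1, \ldots, v_k\} \setminus C$; since $G_{k+1}$ has no edges between $X$ and $v$, the matrix splits in block form
\[
A = \begin{bmatrix} A_X & A_{XC} & 0 \\ A_{XC}^T & A_C & \bv \\ 0 & \bv^T & a \end{bmatrix}.
\]
Assuming $a > 0$ (the degenerate case $a = 0$ forces $\bv = 0$ by positive semidefiniteness of $A$ and reduces immediately to the inductive hypothesis via $f(0) = 0$), I would perform the key additive decomposition $A = A' + M$, with
\[
A' := \begin{bmatrix} A_X & A_{XC} & 0 \\ A_{XC}^T & A_C - \bv \bv^T / a & 0 \\ 0 & 0 & 0 \end{bmatrix}, \qquad
M := \frac{1}{a} \begin{bmatrix} 0 \\ \bv \\ a \end{bmatrix} \begin{bmatrix} 0 & \bv^T & a \end{bmatrix}.
\]
The top-left $k \times k$ block of $A'$ is the Schur complement of $a$ in $A$, so positive semidefinite; it lies in $\cP_{G_k}(\R)$ because the correction $-\bv\bv^T/a$ alters only entries inside the clique block $C \times C$, where $G_k$ places no sparsity constraint. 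The rank-one summand $M$ is positive semidefinite and supported on the clique $C \cup \{v\}$ of size at most $d + 1 \leq c$.

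Assembling the pieces: because $f(0) = 0$, the matrix $f[A']$ is block diagonal with its single nonzero block equal to $f$ applied to a matrix in $\cP_{G_k}(\R)$, hence positive semidefinite by induction; likewise, zero-padding the support of $M$ produces a matrix in $\cP_c^1(\R)$, so $f[M]$ is positive semidefinite by the $\cP_c^1$-hypothesis. Super-additivity then yields
\[
f[A] = f[A' + M] \succeq f[A'] + f[M] \succeq 0,
\]
closing the induction.

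The main technical obstacle is the last invocation of super-additivity. As worded, the hypothesis concerns $M \in \cP_d$ and $N \in \cP_d^1$, while the decomposition produces matrices of ambient size $k + 1$, which can far exceed $d$. Making the argument go through therefore requires reading the super-additivity dimension-independently --- with $d$ controlling the rank and effective support of the perturbation $N$ rather than the ambient matrix size --- or else providing a lifting step that upgrades Loewner-order super-additivity at size $d$ to arbitrary ambient size, provided the rank-one summand is supported on a clique of size at most $d + 1$. Pinning down this scope, and verifying that the Schur-complement decomposition genuinely fits the intended reading of the hypothesis at each inductive step, is where the argument demands the most care.
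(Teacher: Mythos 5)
Your overall strategy --- induction along the perfect elimination ordering, with a FitzGerald--Horn-style splitting of $A$ into a Schur-complement part lying in $\cP_{G_k}$ plus a rank-one part supported on the clique $C \cup \{v\}$ --- is the right one, and your preliminary steps ($f(0)=0$, the base case, membership of the Schur complement in $\cP_{G_k}(\R)$, and the bound $|C\cup\{v\}| \le d+1 \le c$) are all correct. But the gap you flag at the end is genuine and is not a matter of ``reading the hypothesis dimension-independently'': even after restricting the inequality $f[A'+M] \succeq f[A'] + f[M]$ to the support of $M$ (legitimate, since $f(0)=0$ makes the difference vanish off $(C\cup\{v\})\times(C\cup\{v\})$), you need super-additivity for matrices of size $|C|+1$, which can equal $d+1$, whereas the hypothesis supplies it only up to size $d$. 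The calibration of the two hypotheses ($\cP_d$ versus $\cP_c^1$ with $c \ge d+1$) is telling you that super-additivity is meant to be used only on the open neighborhood $C$, while the closed neighborhood $C\cup\{v\}$ is meant to be fed to the rank-one hypothesis.

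The repair is a slight rearrangement of your decomposition. Write $A = \begin{bmatrix} B & \bw \\ \bw^T & a \end{bmatrix}$ with $\bw$ supported on $C$, and (for $a>0$) decompose only the $k\times k$ block as $B = (B - \bw\bw^T/a) + \bw\bw^T/a$, where the rank-one correction is supported on $C\times C$ with $|C| \le d$. Applying the hypothesis there (after zero-padding to size $d$, again using $f(0)=0$) gives that
\[
f[A] \;-\; \begin{bmatrix} f[B - \bw\bw^T/a] & 0 \\ 0 & 0 \end{bmatrix} \;-\; f\Bigl[\tfrac{1}{a}\begin{bmatrix}\bw\\ a\end{bmatrix}\begin{bmatrix}\bw\\ a\end{bmatrix}^T\Bigr]
\]
is supported on $C \times C$ and equals exactly the super-additivity defect $f[B_{CC}] - f[(B-\bw\bw^T/a)_{CC}] - f[(\bw\bw^T/a)_{CC}] \succeq 0$; note in particular that the $v$-row, $v$-column and $(v,v)$ entries cancel identically because the last displayed term already reproduces $f(\bw_i)$ and $f(a)$ there. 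The first subtracted matrix is positive semidefinite by induction, and the second is $f$ applied to a rank-one positive semidefinite matrix supported on the clique $C\cup\{v\}$ of size at most $d+1 \le c$, hence positive semidefinite by the $\cP_c^1$ hypothesis. Summing the three positive semidefinite pieces gives $f[A] \succeq 0$ and closes the induction without ever invoking super-additivity beyond size $d$.
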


See \cite{GKR-critG} for other sufficient conditions for a general
entrywise function to preserve positivity on $\cP_G$ for $G$ chordal.

To state the final result in this section, recall that Schoenberg's
theorem (Theorem~\ref{Tschoenberg}) shows that entrywise functions
preserving positivity for all matrices (that is, according to the
family of complete graphs $K_n$ for $n \geq 1$) are absolutely
monotonic on the positive axis. It is not clear if functions
satisfying (\ref{Emidconvex}) and (\ref{Esuper}) in Theorem
\ref{GKR-sparse-ThmA} are necessarily absolutely monotonic, or even
analytic. As shown in \cite[Proposition 4.2]{GKR-sparse}, the critical
exponent (see Definition \ref{DcriticalExp}) of every tree is~$1$.
Hence, functions satisfying (\ref{Emidconvex}) and (\ref{Esuper}) do
not need to be analytic. The second main result in \cite{GKR-sparse}
demonstrates that even if the function is analytic, it can in fact
have arbitrarily long strings of negative Taylor coefficients.

\begin{theorem}%
[{Guillot--Khare--Rajaratnam \cite[Theorem B]{GKR-sparse}}]
There exists an entire function
$f( z ) = \sum_{n = 0}^\infty a_n z^n$ such that 
\begin{enumerate}
\item $a_n \in [ -1, 1 ]$ for every $n \geq 0$;
\item The sequence $( a_n )_{n \geq 0}$ contains arbitrarily long
strings of negative numbers;
\item For every tree $G$, $f_G[ A ] \in \cP_G$ for every
$A \in \cP_G\bigl( \R_+ \bigr)$.
\end{enumerate}
In particular, if $\Delta( G )$ denotes the maximum degree of the
vertices of $G$, then there exists a family $G_n$ of graphs and an
entire function $f$ that is not absolutely monotonic, such that
\begin{enumerate}
\item $\sup_{n \geq 1} \Delta( G_n ) = \infty$;
\item $f_{G_n}[ A ] \in \cP_{G_n}$ for every
$A \in \cP_{G_n}( \R_+ )$.
\end{enumerate}
\end{theorem}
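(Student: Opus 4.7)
By Theorem~\ref{GKR-sparse-ThmA}, applied to the path graph $A_3$ (a tree), property~(3) is equivalent to requiring $f : \R_+ \to \R_+$ to be multiplicatively mid-convex and super-additive on $\R_+$. Super-additivity at $x = y = 0$ forces $a_0 = 0$. Hence the task reduces to constructing an entire function $f( z ) = \sum_{n \geq 1} a_n z^n$ with $a_n \in [ -1, 1 ]$, arbitrarily long consecutive runs of negative $a_n$'s, and these two inequalities holding on $\R_+$.

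The plan is to build $f$ as a superposition of widely separated ``blocks,'' each of which consists of a long string of small negative coefficients sandwiched between two large positive ``anchor'' coefficients. Concretely, I would choose rapidly increasing degrees $n_1 \ll n_2 \ll \cdots$ and block lengths $L_k \to \infty$ with $n_k - L_k - 1 > n_{k - 1}$, and set
\[
f( x ) = \sum_{k = 1}^\infty b_k \Bigl( \gamma_k x^{n_k - L_k - 1} - \sum_{j = 1}^{L_k} \eta_{k, j} x^{n_k - j} + x^{n_k} \Bigr),
\]
with positive weights $b_k, \gamma_k, \eta_{k, j}$. Normalising $b_k$ keeps $| a_n | \leq 1$, rapid decay $b_k^{1/n_k} \to 0$ ensures entireness, and by construction the coefficient of $x^m$ is negative precisely for $m \in \{ n_k - L_k, \ldots, n_k - 1 \}$ for some $k$; since $L_k \to \infty$, the sequence $( a_n )$ contains arbitrarily long runs of negatives.

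To verify super-additivity and multiplicative mid-convexity, the strategy is to reduce both to blockwise inequalities. For super-additivity I would expand
\[
f( x + y ) - f( x ) - f( y ) = \sum_{k = 1}^\infty b_k \bigl( P_k( x + y ) - P_k( x ) - P_k( y ) \bigr),
\]
where $P_k$ denotes the $k$th block, and show each summand is non-negative on $\R_+^2$ by absorbing the $L_k$ negative cross-terms (from the $\eta_{k, j}$ pieces, whose degrees lie strictly between $n_k - L_k - 1$ and $n_k$) into the strictly positive cross-terms produced by the anchors $\gamma_k x^{n_k - L_k - 1}$ and $x^{n_k}$; this will be feasible provided each $\eta_{k, j}$ is chosen sufficiently small relative to $\gamma_k$ and $1$. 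A parallel analysis of $\log f( e^t )$ as a function of $t \in \R$ reduces multiplicative mid-convexity to a log-concavity-type inequality which is again enforceable block by block, and the non-negativity $f \geq 0$ on $\R_+$ follows from the same anchor-dominance estimate.

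The main obstacle will be the global, unbounded nature of these inequalities on $\R_+^2$: a perturbation that is safe at one scale can be dangerous at another, and cross-contributions between blocks can compound. The remedy will be aggressive scale separation, taking $n_{k + 1} / n_k$ enormous so that at any fixed $x > 0$ at most one block is numerically dominant, with all other contributions either negligible or uniformly absorbable into its anchors; the remaining inequalities then reduce, in each block's window of activity, to finitely many polynomial inequalities on $\R_+^2$ that can be satisfied by shrinking the $\eta_{k, j}$ further. Once the construction is in hand, the final two assertions follow immediately: take $G_n := K_{1, n}$, the star on $n + 1$ vertices, which is a tree with $\Delta( G_n ) = n \to \infty$, and observe that $f$ cannot be absolutely monotonic because it has negative Taylor coefficients.
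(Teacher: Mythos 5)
The survey states this theorem with only a citation to \cite{GKR-sparse} and contains no proof of it, so I am judging your argument on its own terms. Your reduction is correct: by Theorem~\ref{GKR-sparse-ThmA}, whose condition (2) covers all trees, assertion (3) is equivalent to $f$ being non-negative, super-additive and multiplicatively mid-convex on $\R_+$, and super-additivity does force $a_0 = 0$. The super-additive half of your programme is also sound: super-additivity is closed under sums of non-negative functions, and each block $b_k P_k$ is individually super-additive once the $\eta_{k,j}$ are small, because every negative cross-monomial $x^i y^{n_k - j - i}$ has exponent vector in the convex hull of the exponent vectors of the cross-monomials generated by the two anchors of the same block, so a weighted AM--GM absorbs it. The star graphs $K_{1,n}$ for the final assertions are fine.

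The gap is multiplicative mid-convexity, and for the construction as written it is fatal. Setting $F(t) := f(e^t) = \sum_m a_m e^{mt}$, one has $F''F - (F')^2 = \tfrac{1}{2}\sum_{m \neq m'} a_m a_{m'}(m - m')^2 e^{(m+m')t}$, so multiplicative mid-convexity is equivalent to $Q(x) := \sum_{m < m'} a_m a_{m'} (m-m')^2 x^{m+m'} \geq 0$ for all $x > 0$. The lowest-degree term of $Q$ for your $f$ comes from the unique pair of the two smallest exponents in the support, $(n_1 - L_1 - 1,\, n_1 - L_1)$, and equals $-b_1^2 \gamma_1 \eta_{1,L_1} x^{2n_1 - 2L_1 - 1} < 0$; hence $Q(x) < 0$ for all small $x > 0$ regardless of how the parameters are shrunk, and $f$ fails (3). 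The same mechanism shows each block fails at its top end: the pair $(n_k - 1, n_k)$ contributes $-b_k^2\eta_{k,1}x^{2n_k-1}$, which is the top-degree term of the block's own $Q$. So the inequality is \emph{not} enforceable block by block, and your remedy of extreme scale separation points the wrong way: the only positive pairs of degree exceeding $2n_k - 1$ involve block $k+1$, so any rescue must come from cross-block products, with $\eta_{k,1}$ then depending on the parameters of the later block. The clean repair is structural rather than quantitative: both target inequalities are closed under sums of non-negative functions (mid-convexity via Cauchy--Schwarz), so it suffices that each block individually satisfies them, and this holds if each block carries \emph{two} consecutive positive anchors at each end, say $P_k(x) = x^{p_k} + x^{p_k+1} - \sum_j \eta_{k,j} x^{m_{k,j}} + x^{q_k-1} + x^{q_k}$ with $p_k + 1 < m_{k,j} < q_k - 1$; then every dangerous degree of $Q$ lies strictly between $2p_k+1$ and $2q_k - 1$, both of which carry positive pairs, and a weighted AM--GM absorbs the negative terms once $\eta_{k,j} = O\bigl(L_k^{-1}(q_k - p_k)^{-2}\bigr)$.
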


\bibliographystyle{plain}
\bibliography{Positive-bib}

\end{document}